\newcommand{\er}{{\Diamond}}
\newcommand{\Aa}{{\mathcal A}}
\newcommand{\Mor}{{\rm Mor}}
\newcommand{\Obj}{{\rm Obj}}
\renewcommand{\Hat}{\widehat}
\newcommand{\pbar}{{\ov {\p}_J}}
\newcommand{\Cc}{{\mathcal C}}
\newcommand{\Kk}{{\mathcal K}}
\newcommand{\im}{{\rm im\,}}
\newcommand{\less}{{\smallsetminus}}
\newcommand{\p}{{\partial}}
\newcommand{\al}{{\alpha}}
\newcommand{\be}{{\beta}}
\newcommand{\eps}{{\varepsilon}}
\newcommand{\de}{{\delta}}
\newcommand{\De}{{\Delta}}
\newcommand{\Ga}{{\Gamma}}
\newcommand{\io}{{\iota}}
\newcommand{\ka}{{\kappa}}
\newcommand{\si}{{\sigma}}
\newcommand{\Uu}{{\mathcal U}}
\newcommand{\Bb}{{\mathcal B}}
\newcommand{\Ww}{{\mathcal W}}
\newcommand{\Mm}{{\mathcal M}}
\newcommand{\Tt}{{\mathcal T}}
\newcommand{\oMm}{{\overline {\Mm}}}
\newcommand{\ov}{\overline}
\newcommand{\id}{{\rm id}}
\newcommand{\rd}{{\rm d}}
\newcommand{\Ii}{{\mathcal I}}
\newcommand{\Qq}{{\mathcal Q}}
\newcommand{\Vv}{{\mathcal V}}
\newcommand{\N}{{\mathbb N}}
\newcommand{\Q}{{\mathbb Q}}
\newcommand{\R}{{\mathbb R}}
\newcommand{\E}{{\mathbb E}}
\newcommand{\Z}{{\mathbb Z}}
\newcommand{\Nn}{{\mathcal N}}
\newcommand{\ev}{{\rm ev}}
\newcommand{\bB}{{\bf B}}
\newcommand{\bC}{{\bf C}}
\newcommand{\bE}{{\bf E}}
\newcommand{\bK}{{\bf K}}
\newcommand{\bX}{{\bf X}}
\newcommand{\bZ}{{\bf Z}}
\newcommand{\s}{{\mathfrak s}}
\newtheorem{theorem}{Theorem}[subsection]
\newtheorem{thm}[theorem]{Theorem}
\newtheorem{lemma}[theorem]{Lemma}
\newtheorem{prop}[theorem]{Proposition}
\newtheorem{definition}[theorem]{Definition}
\newtheorem{defn}[theorem]{Definition}
\newtheorem{example}[theorem]{Example}
\newtheorem{remark}[theorem]{Remark}
\newtheorem{rmk}[theorem]{Remark}
\numberwithin{figure}{subsection}
\numberwithin{equation}{subsection}
\newcommand{\MS}{{\medskip}}
\newcommand{\NI}{{\noindent}}
\newcommand{\pr}{{\rm pr}}
   \newcounter{qcounter}
\newenvironment{itemlist}
   { \begin{list} {$\bullet$}
         {  \setlength{\itemsep}{.5ex} \setlength{\leftmargin}{2.5ex} } }
   { \end{list} }
\newcommand*{\longhookrightarrow}{\ensuremath{\lhook\joinrel\relbar\joinrel\rightarrow}}
\newcommand\quotient[2]{
        \mathchoice
            {% \displaystyle
                \text{\raise1ex\hbox{$#1$}\Big/\lower1ex\hbox{$#2$}}
            }
            {% \textstyle
                #1\,/\,#2
            }
            {% \scriptstyle
                #1\,/\,#2
            }
            {% \scriptscriptstyle
                #1\,/\,#2
            }
    }
\newcommand\quot[2]{
                \text{\raise1ex\hbox{$#1$}/\lower1ex\hbox{$\scriptstyle#2$}}
  }
\newcommand\quo[2]{
                \text{\raise1ex\hbox{$#1\!\!$}/\lower1ex\hbox{$\!\scriptstyle#2$}}
  }
\newcommand\qu[2]{
                \text{\raise.8ex\hbox{$\scriptstyle#1\!$}/\lower.8ex\hbox{$\!\scriptstyle#2$}}
  }
\newcommand\qq[2]{
                \text{\raise.8ex\hbox{$#1\!$}/\lower.8ex\hbox{$#2$}}
}
\newenvironment{myquote}[1]%
 {\begin{list}{}%
         {\setlength{\leftmargin}{#1}}%
         {\setlength{\rightmargin}{#1}}%
         \item[]%
 }
 {\end{list}}
 \title[The topology of Kuranishi atlases]{The topology of Kuranishi atlases}
 \author{Dusa McDuff}
\address{Department of Mathematics,
 Barnard College, Columbia University}
\email{dusa@math.columbia.edu}
\author{Katrin Wehrheim}
\address{Department of Mathematics, UC Berkeley}
\email{katrin@math.berkeley.edu}
\thanks{partially supported by NSF grants  
DMS 0905191, DMS 1308669 and DMS 0844188}
\keywords{virtual fundamental class, Kuranishi atlas, Gromov--Witten invariant, pseudoholomorphic curve}
\subjclass[2010]{53D05,54B15,57R17}
\begin{document}
\maketitle

\begin{abstract}
Kuranishi structures were introduced in the 1990s by Fukaya and Ono for the purpose of assigning a virtual cycle to moduli spaces of pseudoholomorphic curves that cannot be regularized by geometric methods. Starting from the same core idea (patching local finite dimensional reductions) we develop a theory of topological Kuranishi atlases and cobordisms that transparently resolves algebraic and topological challenges in this virtual regularization approach. 
It applies to any Kuranishi-type setting, e.g.\ atlases with isotropy, boundary and corners, or lack of differentiable structure. 
\end{abstract}

\tableofcontents
%%%%%%%%%%%%%%%%%%%%%%%%%%%%%%%%%%%%%%%%%%%%%%%%%%%%%%%%%%%%%%%%%%%%%%%%%%%%%%%%%%%%%%%%%%%%%%%%%%%%%%%%%%%%%%%%%%
\section{Introduction}
%%%%%%%%%%%%%%%%%%%%%%%%%%%%%%%%%%%%%%%%%%%%%%%%%%%%%%%%%%%%%%%%%%%%%%%%%%%%%%%%%%%%%%%%%%%%%%%%%%%%%%%%%%%%%%%%%%

\NI {\bf Historical Context:}
Abstract regularization methods were developed in symplectic geometry in the 1990s for the purpose of assigning a virtual fundamental cycle to moduli spaces of pseudoholomorphic curves that cannot be regularized by geometric methods. In purely topological terms, such {\it regularization methods} aim to answer the following question:

\begin{myquote}{0ex}
{\it 
Given a surjection $\pr: E \to B$ with a canonical zero section $0:B\to E$ (i.e.\ $\pr\circ 0 = \id_B$) and a designated section $s:B\to E$ with compact zero set $s^{-1}(0)=\{b\in B \,|\, s(b)=0(b) \}$, what extra structure is needed to induce a homology class on $s^{-1}(0)$ that generalizes the Euler class of vector bundles?
}
\end{myquote}

\NI
The current abstract regularization approaches in symplectic geometry roughly fall into three classes:
The {\it Kuranishi approach}, introduced by Fukaya-Ono \cite{FO} and implicitly Li-Tian \cite{LT}, and further developed by Fukaya-Oh-Ohta-Ono \cite{FOOO} and Joyce \cite{J1} in the 2000s, works with local finite dimensional reductions -- classically smooth sections $s_\io:U_\io\to E_\io$ of orbibundles in which both base and fiber are finite dimensional -- and obtains the global zero set $s^{-1}(0)=\qu{\cup s_\io^{-1}(0)}{\sim}$ as a quotient by transition data.
The {\it obstruction bundle approach} introduced by Liu-Tian \cite{LiuT},
Ruan~\cite{Ruan}, and Siebert \cite{Sieb} works with the zero set of a continuous map $s:B\to E$ which has local obstruction bundles $E_\io\to U_\io\subset B$ that cover the cokernel of $\rd s|_{U_\io}$ in local, but generally not compatible, smooth structures.
The {\it polyfold approach} developed by Hofer-Wysocki-Zehnder [HWZ1--4] in the 2000s also works with a global section $s:B\to E$ but equips the bundle with a global generalized smooth structure in which $s$ is Fredholm.

The original goal of our discussions, beginning with \cite{w:msritalk} in 2009, which have motivated much new work in this field \cite{FOOO12,J2,pardon}, was to clarify how these approaches resolve the fundamental difficulty in regularizing pseudoholomorphic curve moduli spaces: Elements of $s^{-1}(0)$ are equivalence classes of  pseudoholomorphic maps modulo reparametrization by a finite dimensional group of automorphisms. This action is smooth on finite dimensional sets of smooth maps, but it is nowhere differentiable as action on a classical Banach space of maps; see e.g.\ \cite[\S3]{MW1}. 
While one of the cornerstones of the polyfold approach is a new notion of smoothness that includes infinite dimensional reparametrization actions, the other approaches need to deal with this issue only in the notion of compatibility between different local obstruction bundles resp.\ local finite dimensional reductions. 
We communicated in \cite{MW0} a variety of fundamental issues: The obstruction bundle approach as explained in \cite{Mcv} makes differentiability assumptions on the transition data that do guarantee a virtual fundamental class, but which generally do not  hold in the analytic setting for pseudoholomorphic curves. 
In the Kuranishi approach we found that geometric construction of local finite dimensional reductions following \cite{LT} does yield smooth transition data. 
However, as of 2011, algebraically inconsistent notions of Kuranishi structure were still in use (see \cite[\S2.5]{MW1}), and fundamental topological issues were not addressed in generalizing
the perturbative construction of the Euler class to the Kuranishi context (see \cite[\S2.6]{MW1}).

\MS
\NI {\bf Aim of this paper: } We develop a theory of topological Kuranishi atlases and cobordisms that transparently resolves the algebraic and topological issues of Kuranishi regularization. To demonstrate the universal applicability of our theory -- originally developed in \cite{MW0} in the context of smooth atlases with trivial isotropy -- we formulate it in a context of atlases that consist of continuous maps $s_\io:U_\io\to E_\io$ between locally compact metric spaces.
Thus our theory allows bases $U_\io$ with boundary and corners, sections $s_\io$ with weak differentiability properties, or arising as quotients by isotropy group actions. 
Our theory then provides a framework for the regularization, in particular a global ambient space and section $s:B\to E$ made up of local sections modulo transition data. To obtain a virtual fundamental class $[s^{-1}(0)]$ from this framework, one still has to add suitable differentiability and orientation assumptions, and in that context construct compatible transverse perturbations of the local zero sets $(s_\io+\nu_\io)^{-1}(0)$, or more abstract compatible local Euler classes as in \cite{pardon}.

However, our framework allows for a straightforward perturbation theory in which compatibility,  Hausdorffness, and compactness of the perturbed zero set are automatic. We demonstrate this in the case of smooth Kuranishi atlases with trivial isotropy in \cite{MW1} and extend it to finite but not necessarily effective isotropy in \cite{MW2}. Moreover, \cite{MW:GW,Cast,Mcn} show that Gromov-Witten moduli spaces fit into this framework. The following describes the new abstract framework and scope of applications in more detail.

\MS
\NI {\bf Moduli Spaces of Pseudoholomorphic Curves:}
In the applications to symplectic geometry, one wishes to assign a fundamental cycle to a compactified moduli space $\oMm$ of pseudoholomorphic curves in a symplectic manifold $M$.
Since in the algebro-geometric setting (when $M$ carries a complex structure) the space $\oMm$ carries a virtual fundamental class, one expects well defined homological information also in the symplectic setting. In that case, $M$ carries a contractible space of almost complex structures $J$.
Thus elements of $\oMm$ are not described algebraically but analytically: For fixed $J$, any element of the moduli space $\oMm$ (i.e.\ $J$-curve) has a neighbourhood $F\subset \oMm$ that is homeomorphic $F\cong \qu{\pbar^{-1}(0)}{\Ga}$ to the zero set of a Fredholm section $\pbar$ that is equivariant under a finite isotropy group $\Ga$.
In the best case scenario (equivariant transversality of $\pbar$) this makes $\oMm$ a smooth oriented orbifold, which has a fundamental class in the rational homology of $\oMm$. Moreover, if transversality can be achieved in families, then this homological information is independent of the choice of $J$ via cobordisms. 
However, while any choice of $J$ yields an equivariant Fredholm section, many situations do not allow for a $J$ that also achieves transversality.
This is a pervasive problem in symplectic geometry, where most modern tools -- from the Floer Theory used in the proof of the Arnold Conjecture to the Fukaya categories involved in Mirror Symmetry -- 
require the regularization of various moduli spaces. However, unless a suitable type of injectivity of the $J$-curves is known a priori, no variation of geometric structure (e.g.\ the choice of $J$) will yield equivariant transverse perturbations.

In such cases, the Kuranishi approach aims to implement the perfect obstruction theory of algebraic geometry by choosing obstruction spaces $E$ which locally cover the cokernel of the Fredholm sections $\pbar$ and yield finite dimensional reductions $F\simeq s^{-1}(0) / \Ga$. Such Kuranishi charts roughly consist of a $\Ga$-equivariant section $s=\pbar|_U:U\to E$ of a finite rank bundle $E\to U=\pbar^{-1}(E)$. 
Depending on the specific context, the base $U$ might have additional structure such as boundary and corners (on which the chart is given as fiber product of Kuranishi charts on other moduli spaces), evaluation maps to $M$, or forgetful maps to Deligne-Mumford moduli spaces, and the section $s$ might only be stratified smooth.
However, the key difference to algebraic geometry is in the uniqueness: Already the local Fredholm description involves non-canonical choices, and a transition map between different charts is generally just an identification of zero sets $s_1^{-1}(0) / \Ga \cong s_2^{-1}(0) / \Ga$. 
An extension to a map $U_1\to U_2$ between the base spaces has to be constructed and is not unique either.

\MS

\NI {\bf Ideas and Challenges of Kuranishi Regularization:}
Given a cover of a compact metric space $X= \bigcup F_\io$ (e.g.\ $X=\oMm$) by Kuranishi charts $F_\io \cong\qu{s_\io^{-1}(0)}{\Ga_\io}$ involving $\Ga_\io$-equivariant sections $s_\io:U_\io\to E_\io$, and a suitable compatibility notion between these charts, one wishes to regularize such a space $X$ with Kuranishi atlas $\Kk=\{(U_\io,E_\io,\Ga_\io,s_\io)\}$ by means of abstract perturbations of the local sections. The expected degree of this class is the fixed dimension $D:= \dim U_\io - {\rm rank}\, E_\io$ of charts in the atlas, and pre-2011 regularization results claim virtual moduli cycles in the following sense:
{\it Given suitable evaluation maps $\ev:U_\io\to M$ there exists a class of perturbations $\nu=\{\nu_\io\}$ such that the push-forwards $\ev:(s_\io + \nu_\io)^{-1}(0)\to M$ define cycles $\ev_*[X]_\nu \in C_D(M)$ whose homology class is independent of $\nu$.}

Our framework allows for a more direct regularization in terms of cycles that represent a homology class on $X$, which can then be pushed forward by evaluation maps. In the case of trivial isotropy, \cite{MW1} builds on the present paper to prove the following:

\begin{myquote}{0ex}
{\it 
Let  $X$ be a compact metrizable space.
Then any cobordism class of $D$-dimensional weak additive Kuranishi atlases $\Kk$ on $X$ determines uniquely
\begin{itemize}
\item
a {\bf virtual moduli cycle}, that is a cobordism class of smooth, compact manifolds, and 
\item
a {\bf virtual fundamental class} $[X]^{vir}_\Kk\in \check{H}_D(X;\Q)$ in \v{C}ech homology.
\end{itemize}
}
\end{myquote}

\NI
In the case of nontrivial isotropy, the analogous result -- with ``weighted branched manifolds" in place of manifolds -- is proven in \cite{MW2} by combining the present paper with the techniques in \cite{MW1} applied to a more complicated categorical setting.
Under reasonable differentiability assumptions on the local sections $s_\io:U_\io\to E_\io$, transverse (multi-valued) perturbations and hence a smooth (weighted branched) structure 
are easily achieved locally, leaving the following challenges:
\begin{itemlist}
\item[(a)]  
Capture compatibility between overlapping local models appropriately;
\item[(b)]
construct compatible transverse (possibly multivalued) perturbations;
\item[(c)]
ensure that the resulting locally smooth zero sets remain compact and Hausdorff (resp.\ in the case of nontrivial isotropy have consistent weights on their branches);
\item[(d)]
capture orientation data that induces coherent orientations on the perturbed zero sets;
\item[(e)]
establish suitable independence from the choice of perturbations;
\item[(f)]
capture the effect of varying $J$ in a cobordism theory;
\item[(g)]
appropriately describe the invariant information obtained from perturbed zero sets.
\end{itemlist}

In the case of a section $s:U\to E$ of an (orbi)bundle (i.e.\ a Kuranishi atlas with a single chart), challenges (a) and (b) above are absent so that generic (multi-valued) perturbations yield locally smooth solution sets. Challenge (c) is resolved by the ambient space $U$ that is Hausdorff and locally compact.
Straightforward cobordism theories deal with challenges (d) and (e), and finally a metric on the ambient space $U$ allows for the construction of $[s^{-1}(0)]^{vir}\in H_{\scriptscriptstyle{\rm rk}\, E - \dim U}(s^{-1}(0);\Q)$ as inverse limit in  \v{C}ech 
homology.
However, once several charts are involved, their base spaces will usually have different dimensions and hence are at best related by embeddings $\phi_{IJ}:U_{IJ}\hookrightarrow U_J$ of open subsets $U_{IJ}\subset U_I$ into a larger dimensional domain $U_J$. 
This immediately causes fundamental challenges:

\begin{itemlist}
\item[(a)]  
The cocycle condition $\phi_{JK}\circ\phi_{IJ}=\phi_{IK}$ holds automatically only on the zero sets $s_I^{-1}(0)$, which are related via embeddings to the moduli space $X$.
In order to be able to patch local perturbations of these zero sets together, this condition must be extended to open subsets of the base spaces $U_I$ as an axiom on the transition maps $\phi_{IJ}$. 
However, the natural cocycle condition that is obtained from analytical constructions -- equality on the overlap -- does not even induce an ambient set $\bigsqcup U_I /\!\! \sim$ since the transition maps only induce a transitive relation $\sim$ if each $\phi_{IK}$ extends $\phi_{JK}\circ\phi_{IJ}$.

\item[(b)]
While one could easily add a transverse perturbation $\nu_I:U_I\to E_I$ to each section $s_I$, patching of the perturbed zero sets $(s_I+\nu_I)^{-1}(0)$ requires compatible perturbations in the sense that $\nu_J|_{\im\phi_{IJ}} = (\phi_{IJ})_*\nu_I$. 
Unless charts are totally ordered, compatible with the direction of transition maps, this yields more conditions than degrees of freedom.
Moreover, there will be obstructions to transversality and even smoothness of perturbations 
unless images of transition maps intersect nicely.

\item[(c)]
Given compatible transverse perturbations, the perturbed space 
$\bigsqcup (s_I+\nu_I)^{-1}(0) /\!\! \sim$ has smooth charts, but there is no good reason for the quotient topology to be Hausdorff other than an embedding into a Hausdorff ambient space $\bigsqcup U_I /\!\! \sim$.
While there are types of atlases that guarantee Hausdorffness, this requires even stronger compatibility of the charts, in particular the domains of $\phi_{IK}$ and $\phi_{JK}\circ\phi_{IJ}$ need to agree exactly.

Moreover, the quotient topology on such an ambient space is not locally compact or metrizable if there is a nontrivial transition map between domains of different dimension. Thus there is neither a natural notion of ``small perturbations'' nor a good reason why they should yield a compact perturbed space
$\bigsqcup (s_I+\nu_I)^{-1}(0) / \!\! \sim$.
\end{itemlist}

\noindent
On the upside, it turns out that our approaches to overcoming these obstacles also naturally yield all the tools necessary for the direct construction of the virtual fundamental class. 

\MS

\NI {\bf Key Notions and Results:}
In order to obtain a coherent theory that resolves these challenges transparently, we develop in Section~\ref{s:chart} notions of topological Kuranishi charts and coordinate changes that underlie all existing notions in e.g.\ \cite{FO,FOOO,J1,MW1,MW2,pardon}, see Remark~\ref{rmk:topchart}.
Our resolutions of the challenges (a)-- (g) then proceed as follows.

\begin{itemlist}
\item[(a)]  
Definition~\ref{def:Ku} introduces the notion of {\bf topological Kuranishi atlas} on a compact metrizable space $X$, consisting of a covering $X=\bigcup_{i=1}^N F_i$ by basic charts with footprints $F_i\simeq s_i^{-1}(0)$, whose compatibility is captured by transition charts with footprints $F_I=\bigcap_{i\in I} F_i$ and directional coordinate changes $U_I\supset U_{IJ}\hookrightarrow U_J$ for each $I\subset J$ that satisfy a cocycle condition. Such an atlas $\Kk$ can equivalently be seen as a bundle functor between topological categories $\pr_\Kk: \bB_\Kk\to\bE_\Kk$ together with a section functor $\s_\Kk:\bB_\Kk\to\bE_\Kk$ and an embedding $\iota_\Kk: X \to |\Kk|$ to the zero set $|\s_\Kk^{-1}(0)|$.
This categorical language allows us to succintly express compatibility conditions between maps and spaces of interest, and it yields a {\bf virtual neighbourhood} $|\Kk|:= \bigsqcup U_I / \!\! \sim$ of $X\cong\io_\Kk(X)$ in Definition~\ref{def:Knbhd}.

Definition~\ref{def:Ku3} introduces the notion of {\bf filtered weak topological Kuranishi atlas}, which is the structure that naturally arises from local Fredholm descriptions of a moduli space. As we demonstrate for Gromov--Witten moduli spaces in \cite{MW:GW}, these satisfy a weaker form of the cocycle condition (equality on the overlap) but have a natural filtration arising from ``additive choices of obstruction spaces''.

Theorem~\ref{thm:K} then shows that every filtered weak topological Kuranishi atlas has a {\bf tame shrinking} which satisfies the stronger cocycle and filtration conditions stated in Definition~\ref{def:tame}.
For such a tame atlas $\Kk$, the virtual neighbourhood $|\Kk|$ is Hausdorff with embeddings $\pi_\Kk: U_I \to |\Kk|$. Moreover $|\Kk|$ can be equipped with a metric topology that is compatible with metrics on the domains $U_I$, and this tame metric shrinking is unique up to a notion of metric cobordism in Definition~\ref{def:mCKS}.

\item[(b)]
Since the category $\bB_\Kk$ has too many morphisms for us to be able to construct a nontrivial perturbation functor $\nu: \bB_{\Kk}\to \bE_{\Kk}$, we introduce in Definition~\ref{def:vicin} the notion of a {\bf reduction} $\bB_{\Kk}|_{\Vv}$.
It is the full subcategory determined by a precompact subset of objects $\Vv=\bigsqcup V_I\sqsubset\Obj_{\bB_\Kk}$ whose realization $\pi_\Kk(\Vv)\subset |\Kk|$ still contains the zero set $|\s_\Kk|^{-1}(0)$, but whose domains $V_I\sqsubset U_I$ only have overlaps $\pi_\Kk(V_I)\cap \pi_\Kk(V_J)\neq\emptyset$ if there is a direct coordinate change $U_I\supset U_{IJ}\hookrightarrow V_J$.
Theorem~\ref{thm:red} shows that reductions exist, are unique up to a notion of cobordism reduction, and have various subtle 
refinements.
In particular, a reduction $\Vv$ induces a precompact subset $\pi_\Kk(\Vv)\subset|\Kk|$. While this generally is not an open subset, it is the closest analogue to a precompact neighbourhood of the zero set $|\s_\Kk|^{-1}(0)\simeq X$
-- it controls compactness of perturbed zero sets, see Section~\ref{ss:pert}.

\item[(c)]
Given {\bf nested reductions} $\Cc\sqsubset\Vv$, which exist by Lemma~\ref{le:delred}, the remaining challenge -- given suitable smooth structure -- is to construct perturbations $\nu: \bB_{\Kk}|_\Vv\to \bE_{\Kk}|_\Vv$ of $\s_\Kk|_\Vv$ so that the local zero sets $(s_I|_{V_I} + \nu_I)^{-1}(0)\subset V_I$ are cut out transversely and contained in $\pi_\Kk^{-1}(\pi_\Kk(\Cc))$.
In order to obtain uniqueness, one also needs to interpolate different choices $\nu_0,\nu_1$ by a perturbation over the Kuranishi concordance $[0,1]\times \Kk$.
(For trivial isotropy this is performed in \cite{MW1}; for nontrivial isotropy essentially the same construction is used in \cite{MW2} to obtain multi-valued perturbations.)
Then the realization $\big|(\s_\Kk|_\Vv+\nu)^{-1}(0)\big|$ of the zero set is sequentially compact by Theorem~\ref{thm:zeroS0}.
Since it also is a locally smooth subset of a Hausdorff space, it forms a closed manifold (resp.\ branched manifold). Moreover, this is unique up to cobordism by interpolation of perturbations.

\item[(d)]
Basic notions of determinant bundles and orientations of smooth Kuranishi structures are introduced in \cite{MW1} and extended to nontrivial isotropy in \cite{MW2}.

\item[(e)]
Towards proving independence of the virtual fundamental class from the multitude of choices (in particular shrinking, metric, and perturbation), Section~\ref{s:Kcobord} develops notions of {\bf concordance between topological Kuranishi atlases} with various extra structures.
Theorem~\ref{thm:cobord2} then shows that the metric shrinkings constructed by Theorem~\ref{thm:K} are unique up to concordance.
This framework also allows one to make the perturbation constructions (e.g.\ in \cite{MW1}) unique up to perturbations over the concordance.
 
\item[(f)]
We generalize the notion of concordance to a notion of {\bf topological Kuranishi cobordism} between two topological Kuranishi atlases on different compact moduli spaces in Definition~\ref{def:CKS}.
Our cobordisms only have boundary, no corners, in the sense that they are topological Kuranishi atlases on compact spaces such as a union of compact moduli spaces ${\textstyle \bigcup_{t\in [0,1]}} \{t\}\times \oMm(J_t)$, in which all data has boundary collar structure near $t=0,1$.
Section~\ref{s:Kcobord} develops the theory of (a) for Kuranishi cobordisms, and since Section~\ref{s:red} resp.\ \cite{MW1,MW2} treat Kuranishi cobordisms in parallel with Kuranishi atlases, 
the approach described here will allow us to conclude, for example, that the the Gromov-Witten virtual moduli cycles for different choices of $J$ are cobordant in a suitable sense.

\item[(g)]
In any setting in which existence of transverse perturbations (c) and a compatible notion of orientations (d) is established, one can now capture the invariant information both as a smooth cobordism class and as a \v{C}ech homology class on $X$.
This hinges on the topological model $|\Kk|$ for a metrizable neighbourhood of $X$ in which $X$ appears as the zero set of a section $|\s_\Kk|$ of a finite-dimensional ``bundle" $\pr: |\bE_\Kk|\to |\Kk|$.
For any $\eps>0$, Theorem~\ref{thm:red} yields nested reductions with $\pi_\Kk(\Cc)\subset B_\eps(X)$ contained in the $\eps$-neighbourhood of $X$, so that the perturbation construction yields a smooth compact zero set $|\s_\Kk+\nu|\subset B_\eps(X)$, independent of $\nu$ up to cobordism. The resulting cobordism class of closed (possibly weighted branched) submanifolds $|\bZ_\nu|$ forms the virtual moduli cycle and 
(if oriented) represents an element $[\bZ_\nu]\in H_D(B_\eps(X);\Q)$.  
The virtual fundamental class $[X]^{vir}_\Kk\in H_D(X;\Q)= \underset{\leftarrow }\lim\, \check{H}_D(B_\eps(X);\Q)$ is then obtained as inverse limit as $\eps\to0$.\footnote{
Here we cannot work with integral \v{C}ech homology since it does not even satisfy the exactness axiom. However, in the case of trivial isotropy an integral theory could be obtained using Steenrod homology \cite{Mi}. 
}
\end{itemlist}

\medskip
\noindent
{\bf Acknowledgements:}
We would like to thank
Mohammed Abouzaid,
Tom Coates, 
Kenji Fukaya,
Tom Mrowka,
Kaoru Ono,
Yongbin Ruan,
Dietmar Salamon,
Bernd Siebert,
Cliff Taubes,
Gang Tian,
and
Aleksey Zinger
for encouragement and enlightening discussions about this project,
and Jingchen Niu for pointing out some gaps in an earlier version.
We moreover thank MSRI, IAS, BIRS and SCGP for hospitality.

%%%%%%%%%%%%%%%%%%%%%%%%%%%%%%%%%%%%%%%%%%%%%%%%%%%%%%%%%%%%%%
\section{Topological Kuranishi atlases
}
\label{s:chart}
%%%%%%%%%%%%%%%%%%%%%%%%%%%%%%%%%%%%%%%%%%%%%%%%%%%%%%%%%%%%%%

Throughout, $X$ is assumed to be a compact and metrizable space.
The first two sections of this chapter introduce the notions of topological Kuranishi charts for $X$ and topological coordinate changes between them. Our definitions differ significantly from e.g.\ \cite{FO,J1,pardon} but are motivated by capturing the topological essence of all these notions. Moreover, our insistence on specifying the domains of coordinate changes is new, as is our notion of Kuranishi atlas (rather than Kuranishi structure) and its interpretation in terms of categories in Section~\ref{ss:Ksdef}.
The main result of this chapter is the novel construction of a virtual neighbourhood of $X$ from a topological Kuranishi atlas in Section~\ref{ss:vnbhd}.

%%%%%%%%%%%%%%%%%%%%%%%%%%%%%%%%%%%%%%%%%%%%%%%%%%%%%%%%%%%%%%
\subsection{Charts and restrictions}\label{ss:chart} \hspace{1mm}\\ \vspace{-3mm}
%%%%%%%%%%%%%%%%%%%%%%%%%%%%%%%%%%%%%%%%%%%%%%%%%%%%%%%%%%%%%%

The most general notions of Kuranishi charts consist of 
\begin{itemize}
\item
a manifold $B$ with boundaries and corners and an action by a finite group $\Ga$;
\item
a $\Ga$-equivariant finite rank obstruction bundle $E \to B$;
\item
a $\Ga$-equivariant section $s: B\to E$;
\item
a homeomorphism $\qu{\s^{-1}(0)}{\Ga} \cong F$ to an open subset $F\subset X$.
\end{itemize}
In order to capture the topological information, we may forget any smoothness and replace the $\Ga$-equivariant bundle and section by the induced ``bundle map'' $\qu{E}{\Ga}\to\qu{B}{\Ga}$ and ``section'' $\underline{s}:\qu{B}{\Ga}\to\qu{E}{\Ga}$. If we moreover capture the linear structure of the bundle by the induced zero section $0:\qu{B}{\Ga}\to\qu{E}{\Ga}, [u]\mapsto [0\in \pr^{-1}(u)]$, then the zero set
$\underline{s}^{-1}(0)=\{x\in \qu{B}{\Ga} \,|\, \underline{s}(x)=0(x)\}$ is identified with $\qu{\s^{-1}(0)}{\Ga}$. 
Hence we obtain a topological Kuranishi chart in the following sense with the same footprint 
 $\underline{s}^{-1}(0)\cong\qu{\s^{-1}(0)}{\Ga}\cong F\subset X$.

\begin{defn}\label{def:tchart}
A {\bf topological Kuranishi chart} for $X$ with open footprint $F\subset X$ is a tuple $\bK = (U,\E,
\s,\psi)$ consisting of
\begin{itemize}
\item
the {\bf domain} $U$, which is a separable, locally compact metric space;
\item
the {\bf obstruction ``bundle''} which is a continuous map 
$\pr :\E \to U$
from a separable, locally compact metric space $\E$,
together with a 
{\bf zero section} ${0: U \to \E}$, which is a continuous map with $\pr \circ 0 = \id_{U}$;
\item
the {\bf section} $\s: U\to \E$, which is a continuous map with $\pr \circ \s = \id_{U}$;
\item
the {\bf footprint map} $\psi : \s^{-1}(0) \to X$, which is a homeomorphism 
between the {\bf zero set} $\s^{-1}(0):=
\s^{-1}(\im 0)=\{x\in U \,|\, \s(x)=0(x)\}$ and 
the {\bf footprint} ${\psi(\s^{-1}(0))=F}$.
\end{itemize}
\end{defn}
   
\begin{rmk}\rm    \label{rmk:topchart}
(i)   According to \cite{MW2}, a {\bf smooth Kuranishi chart} is a tuple as above, where $\E = U\times E/\Ga$ is the  product of a finite dimensional manifold with a finite dimensional vector space, modulo a smooth action by a finite group $\Ga$ (which is linear on $E$). Then $\pr:\E\to U$ is the obvious projection and $0: [u]\to [(u,0)]$ is the zero section.
To see that these and other notions of Kuranishi charts induce topological Kuranishi charts, note that finite dimensional manifolds and their quotients by the action of a finite group are automatically separable (i.e.\ contain a countable dense subset), locally compact, and metrizable. 
\MS

\NI (ii)
We will not use particular choices of metrics on topological Kuranishi charts, until we consider compatible metrics for topological Kuranishi atlases in Definition~\ref{def:metric}. 
So it would be more appropriate (but more cumbersome) to say that the domain $U$ of a topological Kuranishi chart is a separable, locally compact, metrizable topological space.
\MS

\NI (iii)
Both the domain $U$ and the bundle $\E$ are also second countable and hence Lindel\"of: Every open cover has a countable subcover. Indeed, these properties are equivalent to separability in metric spaces; see \cite[Ex.~4.5]{Mun}.
\MS

\NI (iv)
The zero section $0:U\to\E$ is a homeomorphism to a closed subset of $\E$. Indeed, its inverse is the projection $\pr|_{\im 0}$, and to check that $\im 0 = \{ 0(x) \,|\, x\in U\}\subset \E$ is closed consider a sequence $0(x_i)\to e_\infty$.
Its limit must be $e_\infty=\lim 0(x_i) = 0(\pr(e_\infty))$ since continuity of $\pr$ implies $x_i=\pr(0(x_i)) \to \pr(e_\infty)$ and $0$ is continuous.
It follows that the zero set $\s^{-1}(0) \subset U$ is a closed subset since it is the preimage of a closed subset under a continuous map.
$\hfill\er$
\end{rmk}

Since we aim to define a regularization of $X$, the most important datum of a Kuranishi chart is its footprint.
So, as long as the footprint is unchanged, we can vary the domain $U$ and section $\s$ without changing the chart in any important way. Nevertheless, we will always work with charts that have a fixed domain and section. In fact, our definition of a coordinate change between Kuranishi charts will crucially involve these domains.
Moreover, it will require the following notion of restriction of a chart to a smaller subset of its footprint.

\begin{defn} \label{def:restr}
Let $\bK$ be a topological Kuranishi chart and $F'\subset F$
an open subset of the footprint.
A {\bf restriction of $\bK$ to $\mathbf{\emph F\,'}$} is a topological Kuranishi chart of the form
$$
\bK' = \bK|_{U'} := \bigl(\, U' \,,\, \E'= \E|_{U'}  \,,\, \s'=\s|_{U'} \,,\, \psi'=\psi|_{U'\cap\s^{-1}(0)}\, \bigr)
$$
given by a choice of open subset $U'\subset U$ of the domain such that $U'\cap \s^{-1}(0)=\psi^{-1}(F')$.
In particular, $\bK'$ has footprint $\psi'(\s'^{-1}(0'))=F'$.
Here $\E|_{U'}$ denotes the obstruction bundle with total space $\E':=\pr^{-1}(U')$, projection $\pr':=\pr|_{\E'}$, and zero section $0':=0|_{U'}$.
\end{defn}

Note here that we will not always put the ``topological'' prefix in front of all Kuranishi data though we always mean to refer to the topological Kuranishi framework, unless we are explicitly discussing smooth structures or constructions of perturbations. However, if we had formally introduced a (smooth) Kuranishi framework, it would for the most part make sense to speak about both. 
For example, the following lemma constructs a restriction of a topological Kuranishi chart whose footprint is any given open subset of the original footprint. If the topological chart underlies a smooth Kuranishi chart, then a simple pullback of the restricted domain will yield the corresponding restriction of the smooth Kuranishi chart. 

The next lemma provides a tool for restricting to precompact domains, which we require for refinements of Kuranishi atlases in Sections~\ref{ss:shrink} and \ref{s:red}.
Here and throughout we will use the notation $V'\sqsubset V$ to mean that the inclusion $V'\hookrightarrow V$ is {\it precompact}. That is, ${\rm cl}_V(V')$ is compact, where ${\rm cl}_V(V')$ denotes the closure of $V'$ in the relative topology of $V$.  If both $V'$ and $V$ are contained in a compact space $X$, then $V'\sqsubset V$ is equivalent to the inclusion $\ov{V'}:={\rm cl}_X(V')\subset V$ of the closure of $V'$ with respect to the ambient topology.

\begin{lemma}\label{le:restr0}
Let $\bK$ be a topological Kuranishi chart. Then for any open subset $F'\subset F$ there exists a restriction $\bK'$ to $F'$ whose domain $U'$ is such that $\ov{U'}\cap \s^{-1}(0) = \psi^{-1}(\ov{F'})$. If moreover $F'\sqsubset F$ is precompact, then $U'$ can be chosen to be precompact.
\end{lemma}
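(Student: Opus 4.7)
Set $Z := \s^{-1}(0)$ and $Z' := \psi^{-1}(F')\subset Z$. Because $\psi$ is a homeomorphism onto $F$, the set $Z'$ is open in $Z$, and since $Z$ is closed in $U$ (Remark~\ref{rmk:topchart}(iv)) the closure $\ov{Z'}^U$ coincides with $\ov{Z'}^Z = \psi^{-1}(\ov{F'}^F) = \psi^{-1}(\ov{F'})$. Thus the target identity can be rewritten as $\ov{U'}\cap Z = \ov{Z'}^U$. The task is therefore to build an open set $U'\subset U$ that both intersects $Z$ exactly in $Z'$ and whose closure in $U$ picks up exactly the limit points of $Z'$ within $Z$.

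The plan is to exploit metrizability of $U$ to define $U'$ by a single continuous inequality. Fix a compatible metric $d$ on $U$ and set
\[
U' \;:=\; \bigl\{\, u\in U \,:\, d(u,Z') < d(u,Z\smallsetminus Z') \,\bigr\}.
\]
Both distance functions are continuous (and $Z\smallsetminus Z'$ is closed in $Z$, hence in $U$), so $U'$ is open. I would then check the two equalities:
\begin{itemize}
\item $U'\cap Z = Z'$: for $z\in Z'$ one has $d(z,Z')=0$ while $d(z,Z\smallsetminus Z')>0$ because $Z'$ is open in $Z$; for $z\in Z\smallsetminus Z'$ the strict inequality fails trivially.
\item $\ov{U'}\cap Z = \ov{Z'}^U$: the inclusion $\supset$ is immediate from $Z'\subset U'$ since $\ov{Z'}^U\subset Z$. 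For $\subset$, the key observation is that any $z\in Z\smallsetminus\ov{Z'}^U$ satisfies $d(z,Z')>0$ and $d(z,Z\smallsetminus Z')=0$, so $f(u):=d(u,Z')-d(u,Z\smallsetminus Z')$ is strictly positive at $z$; by continuity some neighbourhood of $z$ lies in $\{f>0\}=U\smallsetminus U'$, hence $z\notin\ov{U'}$.
\end{itemize}
The footprint of $\bK|_{U'}$ is then $\psi(U'\cap Z)=\psi(Z')=F'$, so this is indeed a restriction to $F'$ in the sense of Definition~\ref{def:restr}.

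For the precompact refinement, assume $F'\sqsubset F$, so that $\psi^{-1}(\ov{F'})=\ov{Z'}^U$ is compact in $U$. Using local compactness of $U$, cover this compact set by finitely many open sets with compact closure and let $N$ be their (open) union, so that $\ov{N}$ is compact and $Z'\subset\ov{Z'}^U\subset N$. Replacing $U'$ by $U'\cap N$ leaves the intersection with $Z$ unchanged (since $Z'\subset N$) and gives $\ov{U'\cap N}\subset\ov{N}$ compact. The inclusion $U'\cap N\subset U'$ preserves the upper bound $\ov{U'\cap N}\cap Z\subset \ov{Z'}^U$, while the lower bound $\ov{Z'}^U\subset\ov{U'\cap N}$ still follows from $Z'\subset U'\cap N$. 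Hence $U'\cap N$ is the required precompact restriction.

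The main subtlety — and the reason a naive choice of open set $U'$ with $U'\cap Z = Z'$ does not suffice — is controlling the closure of $U'$ from outside $Z$: points in $\ov{U'}\cap Z$ could in principle be limits of sequences in $U\smallsetminus Z$ approaching parts of $Z$ that lie outside $\ov{Z'}^Z$. The simultaneous comparison $d(u,Z')<d(u,Z\smallsetminus Z')$ forces such approaches to stay near $Z'$ in $Z$, which is precisely what rules this out.
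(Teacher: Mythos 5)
Your proof is correct and follows essentially the same route as the paper: both define $U'$ by the distance comparison $d(\cdot,\psi^{-1}(F'))<d(\cdot,\psi^{-1}(F\setminus F'))$ and obtain the precompact case from local compactness of $U$ via a finite covering of the compact set $\psi^{-1}(\ov{F'})$. The only cosmetic differences are that the paper cuts down by a preliminary open set $V$ with $V\cap\s^{-1}(0)=\psi^{-1}(F')$ (chosen precompact in the second case) while you intersect with a precompact neighbourhood $N$ afterwards, and that you verify the closure identity by continuity of $d(\cdot,Z')-d(\cdot,Z\setminus Z')$ rather than by the paper's sequence argument.
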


\begin{proof}
Since $F'\subset F$ is open and $\psi:\s^{-1}(0)\to F$ is a homeomorphism in the relative topology of $\s^{-1}(0)\subset U$, there exists an open set $V\subset U$ such that $\psi^{-1}(F')=U'\cap \s^{-1}(0)$.
If $F'\sqsubset F$ then we claim that $U'\subset V$ can be chosen  so that  in addition its closure 
intersects $\s^{-1}(0)$ in $\psi^{-1}(\ov{F'})$.
To arrange this we define
$$
U' := \bigl\{x\in V \,\big|\, d(x, \psi^{-1}(F')) < d(x, \psi^{-1}(F\less F'))\bigr\},
$$
where
$d(x,A): = \inf_{a\in A} d(x,a)$
denotes the distance between the point $x$ and the subset $A\subset U$ with respect to any metric $d$ on 
$U$. Then $U'$ is an open subset of $V$.
By construction, its intersection with $s^{-1}(0)=\psi^{-1}(F)$ is $V\cap \psi^{-1}(F')=\psi^{-1}(F')$.
To see that $\ov{U'} \cap \s^{-1}(0) \subset \psi^{-1}(\ov{F'})$, consider a sequence 
$x_n\in U'$ that converges to $x_\infty\in \psi^{-1}(F)$.
If $x_\infty\in \psi^{-1}(F\less F')$ then by definition of $U'$ there are points $y_n\in \psi^{-1}(F')$ such that $d(x_n,y_n)< d(x_n,x_\infty)$.
This implies $d(x_n,y_n)\to 0$, hence we also get convergence $y_n\to x_\infty$, which proves $x_\infty\in \ov{\psi^{-1}(F')} = \psi^{-1}(\ov{F'})$, where the last equality is by the homeomorphism property of $\psi$.
Finally, the same homeomorphism property implies the inclusion $\psi^{-1}(\ov{F'}) = \ov{\psi^{-1}(F')} \subset \ov{U'} \cap \s^{-1}(0)$, and thus equality.
This proves the first statement.

The second statement will hold if we show that for precompact $F$ we may choose $V$, and hence $U'\subset V$ to be precompact in $U$.
For that purpose we use the homeomorphism property of $\psi$ and closedness of $\s^{-1}(0)\subset U$ to deduce that $\psi^{-1}(F')\subset U$ is a precompact set in a locally compact space.
So it has a precompact open neighbourhood $V\sqsubset U$, since each point in $\ov{\psi^{-1}(F')}$ 
has a precompact neighbourhood by local compactness of $U$ and a compactness argument provides a finite covering of $\ov{\psi^{-1}(F')}$ by such precompact neighbourhoods. 
\end{proof}

%%%%%%%%%%%%%%%%%%%%%%%%%%%%%%%%%%%%%%%%%%%%%%%%%%%%%%%%%%%%%%
\subsection{Coordinate changes} \label{ss:coord}   \hspace{1mm}\\ \vspace{-3mm}
%%%%%%%%%%%%%%%%%%%%%%%%%%%%%%%%%%%%%%%%%%%%%%%%%%%%%%%%%%%%%%

The following notion of coordinate change is key to the definition of Kuranishi atlases.
Here we begin using notation that will also appear in our definition of Kuranishi atlases. For now, $\bK_I=(U_I,\E_I,
\s_I,\psi_I)$ and $\bK_J=(U_J,\E_J,\s_J,\psi_J)$ just denote different Kuranishi charts for the same space $X$.
Also recall that the data of the obstruction bundles $\E_I$,  resp.\ $\E_J$, implicitly contains projections $\pr_I$,
resp.\ $\pr_J$,  and zero sections $0_I$, resp.\ $0_J$. 

\begin{figure}[htbp] 
   \centering
   \includegraphics[width=4in]{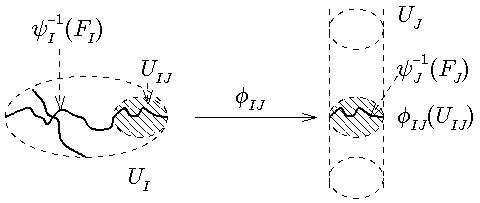}
   \caption{A coordinate change in which $\dim U_{J} =\dim U_I+ 1$.
   Both $U_{IJ}$ and its image $\phi_{IJ}(U_{IJ})$ are shaded.}
   \label{fig:2}
\end{figure}

\begin{defn}\label{def:tchange}
Let $\bK_I$ and $\bK_J$ be topological Kuranishi charts such that $F_I\cap F_J$ is nonempty.
A {\bf topological coordinate change} from $\bK_I$ to $\bK_J$ is a map $\Hat\Phi: \bK_I|_{U_{IJ}}\to \bK_J$ defined on a restriction of $\bK_I$ to $F_I\cap F_J$. More precisely:  
\begin{itemize}
\item
The {\bf domain} of the coordinate change is an open subset $U_{IJ}\subset U_I$ such that
$\s_I^{-1}(0_I)\cap U_{IJ} = \psi_I^{-1}(F_I\cap F_J)$.
\item 
The {\bf map} of the coordinate change is a topological embedding (i.e.\ homeomorphism to its image) 
$\Hat\Phi: \E_I|_{U_{IJ}}:=\pr_I^{-1}(U_{IJ}) \to \E_J$ that satisfies the following.
\begin{enumerate}
\item
It is a bundle map, i.e.\ we have $\pr_J \circ \Hat\Phi = \phi \circ\pr_I |_{\pr_I^{-1}(U_{IJ})}$ for a topological embedding $\phi: U_{IJ}\to U_J$, and it is linear in the sense that $0_J \circ \phi  = \Hat\Phi \circ 0_I|_{U_{IJ}}$.
\item
It intertwines the sections in the sense that $\s_J \circ \phi  = \Hat\Phi \circ \s_I|_{U_{IJ}}$.
\item
It restricts to the transition map induced from the footprints in $X$ in the sense that
$\phi|_{\psi_I^{-1}(F_I\cap F_J)}=\psi_J^{-1} \circ\psi_I : U_{IJ}\cap \s_I^{-1}(0_I) \to \s_J^{-1}(0_J)$.
\end{enumerate}
\end{itemize}

In particular, the following diagrams commute:
\begin{align} 
& \qquad\quad\;\;
 \begin{array} {ccc}
{\E_I|_{U_{IJ}}}& \stackrel{\Hat\Phi} \longrightarrow &
{\E_J} 
\phantom{\int_Quark}  \\
\phantom{sp} \downarrow {\pr_I}&&\downarrow {\pr_J} \phantom{spac}\\
\phantom{s}{U_{IJ}} & \stackrel{\phi} \longrightarrow &{U_J} \phantom{spacei}
\end{array} 
\qquad
 \begin{array} {ccc}
{\E_I|_{U_{IJ}}}& \stackrel{\Hat\Phi} \longrightarrow &
{\E_J} 
\phantom{\int_Quark}  \\
\phantom{sp} \uparrow {0_I}&&\uparrow {0_J} \phantom{spac}\\
\phantom{s}{U_{IJ}} & \stackrel{\phi} \longrightarrow &{U_J} \phantom{spacei}
\end{array}
 \notag \\
\label{eq:map-square}
& \qquad\qquad
 \begin{array} {ccc}
{\E_I|_{U_{IJ}}}& \stackrel{\Hat\Phi} \longrightarrow &
{\E_J} 
\phantom{\int_Quark}  \\
\phantom{sp} \uparrow {\s_I}&&\uparrow {\s_J} \phantom{spac}\\
\phantom{s}{U_{IJ}} & \stackrel{\phi} \longrightarrow &{U_J} \phantom{spacei}
\end{array} 
\qquad
 \begin{array} {ccc}
{U_{IJ}\cap \s_I^{-1}(0_I)} & \stackrel{\phi} \longrightarrow &{\s_J^{-1}(0_J)} \phantom{\int_Quark} \\
\phantom{spa} \downarrow{\psi_I}&&\downarrow{\psi_J} \phantom{space} \\
\phantom{s}{X} & \stackrel{{\rm Id}} \longrightarrow &{X}. \phantom{spaceiiii}
\end{array}
\end{align}
\end{defn}

The map $\Hat\Phi$ is not required to be locally surjective.
Indeed, the rank of the obstruction bundles $\E$ will typically be different for different charts.

\begin{rmk}\rm   \label{rmk:index}
A {\bf smooth coordinate change} between smooth Kuranishi charts, as defined in \cite{MW1} for trivial isotropy $\Ga=\{\id\}$, is a topological coordinate change $\Hat\Phi=(\phi,\Hat\phi)$ given by a smooth embedding of domains $\phi:U_{IJ}\to U_J$ and a linear injection of fibers $\Hat\phi:E_I\to E_J$, which satisfy an {\bf index condition}: $\rd\phi$ and $\Hat\phi$ identify the kernels and cokernels of $\rd s_I$ and $\rd s_J$. This is an essential requirement for the perturbative construction of the virtual fundamental cycle of a smooth Kuranishi atlas. However, the only consequence relevant in the topological context is the openness condition (iv) in Definition~\ref{def:Ku3}.
\hfill$\er$
\end{rmk}

\begin{example}\label{ex:change}  \rm
Here is  a basic example of a topological coordinate change with $I=\{1\}\subset J=\{1,2\}$ 
between charts on the finite set $X = \{-1,0,1\} $.
Consider the two Kuranishi charts with $U_1=(-2,2)$, $\E_1=U_1\times \R$,  $\s_1(x)=(x,(x^2-1)x^2)$ and 
$U_{12}=(-1,2)\times (-1,1)$, $\E_{12}=U_{12}\times \R^2$, $\s_{12} (x,y) = \bigl(x,y, (x^2-1)x^2 , y \bigr)$, with the obvious projections and zero maps and footprint maps given by the obvious identification.
Their footprints are $\{0,1\}= F_{12}\subset F_1= \{-1,0,1\} = X$, and both charts
have dimension $0$ although their domains and obstruction spaces are not locally diffeomorphic.
A natural coordinate change that extends the identification $\s_1^{-1}(0_1)\supset \{0,1\}\cong \{(0,0),(1,0)\} =\s_{12}^{-1}(0_{12})$ is the inclusion $\phi: x\mapsto (x,0)$ of $U_{1,12}:=(-1,2)$  onto $(-1,2)\times \{0\}\subset U_{12}$ together with $\Hat\Phi: (x,v)\mapsto (x,0,v,0)$.
$\hfill\er$
\end{example}

Note that coordinate changes are in general {\it unidirectional} since the map $\E_I|_{U_{IJ}}\to\E_J$ is not assumed to have open image.
Note also that the footprint of the intermediate chart $\bK_I|_{U_{IJ}}$ is always the full intersection $F_I\cap F_J$. 
Moreover, in Kuranishi atlases we will only have coordinate changes when $F_J\subset F_I$, so that this intersection is $F_I\cap F_J=F_J$.
By abuse of notation, we often denote a coordinate change by $\Hat\Phi: \bK_I\to \bK_J$, thereby indicating the choice of a domain $U_{IJ}\subset U_I$ and a map $\Hat\Phi:\bK_I|_{U_{IJ}}\to\bK_J$.
Further, for clarity we usually add subscripts, writing $\Hat\Phi_{IJ} = (\phi_{IJ},\Hat\phi_{IJ}): \bK_I\to \bK_J$.
The next lemmas provide restrictions and compositions of coordinate changes.

\begin{lemma} \label{le:restrchange}
Let $\Hat\Phi:\bK_I|_{U_{IJ}}\to \bK_J$ be a topological coordinate change from $\bK_I$ to $\bK_J$, and let $\bK'_I=\bK_I|_{U'_I}$, $\bK'_J=\bK_J|_{U'_J}$ be restrictions to open subsets $F'_I\subset F_I, F'_J\subset F_J$ with $F_I'\cap F_J'\ne \emptyset$.
Then a {\bf restricted topological coordinate change} 
$\Hat\Phi|_{U'_{IJ}}: \bK'_I \to \bK'_J$
is given by any choice of open subset $U'_{IJ}\subset U_{IJ}$ of the domain such that
$$
U'_{IJ} \subset U'_I\cap \phi^{-1}(U'_J) , \qquad \psi_I(\s_I^{-1}(0_I)\cap U'_{IJ}) = F'_I \cap F'_J ,
$$
and the map
$\Hat\Phi|_{\pr_I^{-1}(U'_{IJ})} : \E_I|_{U'_{IJ}} \to \E_J$.
\end{lemma}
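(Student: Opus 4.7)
The plan is to verify that the stated data $(U'_{IJ}, \Hat\Phi|_{\pr_I^{-1}(U'_{IJ})})$ satisfies every clause of Definition~\ref{def:tchange} by restriction from the original coordinate change $\Hat\Phi:\bK_I|_{U_{IJ}}\to\bK_J$. The openness of $U'_{IJ}$ in $U'_I$ is immediate from $U'_{IJ}\subset U'_I$ being open in $U_I$, and the footprint condition $\s_I^{-1}(0_I)\cap U'_{IJ}= (\psi'_I)^{-1}(F'_I\cap F'_J)$ is precisely the hypothesis rephrased via the homeomorphism $\psi'_I=\psi_I|_{\s_I^{-1}(0_I)\cap U'_I}$. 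The topological embedding property of $\Hat\Phi|_{\pr_I^{-1}(U'_{IJ})}$ is automatic as a restriction of the embedding $\Hat\Phi$ to an open subset.

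The one substantive check is that the restricted map actually lands in $\E'_J=\pr_J^{-1}(U'_J)$ rather than merely in $\E_J$. First I would observe that $U'_{IJ}\subset\phi^{-1}(U'_J)$ gives $\phi(U'_{IJ})\subset U'_J$, so the embedding $\phi':=\phi|_{U'_{IJ}}$ maps into $U'_J$. Then the bundle property $\pr_J\circ\Hat\Phi=\phi\circ\pr_I$ yields $\pr_J(\Hat\Phi(\pr_I^{-1}(U'_{IJ})))=\phi(U'_{IJ})\subset U'_J$, so indeed $\Hat\Phi(\pr_I^{-1}(U'_{IJ}))\subset\E'_J$. The four commutative squares in \eqref{eq:map-square} then restrict trivially: bundle, linearity, and section-intertwining equations are inherited from $\Hat\Phi$ on the smaller domain, and footprint compatibility $\phi'|_{\s_I^{-1}(0_I)\cap U'_{IJ}}=(\psi'_J)^{-1}\circ\psi'_I$ follows from the same identity for $\phi$ together with the footprint conditions on $U'_I$ and $U'_J$.

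I would round out the argument by noting existence of a set $U'_{IJ}$ satisfying both listed conditions: take $V:=U_{IJ}\cap U'_I\cap\phi^{-1}(U'_J)$, which is open in $U_I$ by continuity of $\phi$. For $x\in\s_I^{-1}(0_I)\cap V$, the three intersections force $\psi_I(x)\in F_I\cap F_J$, $\psi_I(x)\in F'_I$, and (using $\psi_J\circ\phi=\psi_I$ on the zero set together with $\phi(x)\in U'_J\cap\s_J^{-1}(0_J)=\psi_J^{-1}(F'_J)$) also $\psi_I(x)\in F'_J$. Conversely any $y\in F'_I\cap F'_J\subset F_I\cap F_J$ has the preimage $\psi_I^{-1}(y)$ lying in $U_{IJ}$ and $U'_I$, and $\phi(\psi_I^{-1}(y))=\psi_J^{-1}(y)\in U'_J$ gives $\psi_I^{-1}(y)\in\phi^{-1}(U'_J)$, so $\psi_I^{-1}(y)\in V$. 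Thus $\psi_I(\s_I^{-1}(0_I)\cap V)=F'_I\cap F'_J$ as required.

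I expect no real obstacle here; the only place one must be careful is the direction of the footprint verification, where one needs the homeomorphism property of $\psi_I, \psi_J$ and the identity $\psi_J\circ\phi=\psi_I$ on the zero set to transport the condition across $\phi$. Everything else is a bookkeeping exercise in restricting commutative diagrams.
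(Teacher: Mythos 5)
Your proposal is correct and follows essentially the same route as the paper: verify the conditions of Definition~\ref{def:tchange} for the restricted data, and establish existence of a valid domain via $U_{IJ}\cap U'_I\cap\phi^{-1}(U'_J)$ with the two-sided footprint check using $\psi_J\circ\phi=\psi_I$ on the zero set. Your explicit observation that the image lands in $\E'_J=\pr_J^{-1}(U'_J)$ is left implicit in the paper's verification of the bundle-map identity, but it is the same mathematical content.
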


\begin{proof}
First note that restricted domains $U'_{IJ}\subset U_{IJ}$ always exist
 since we can choose e.g.\ $U'_{IJ} = U'_I \cap\phi^{-1}(U'_J)$, which is open in $U_{IJ}$ by the continuity of $\phi$ and has the required footprint since
$$
\psi_I\bigl(\s_I^{-1}(0_I) \cap U'_I \bigr) \cap  \psi_I\bigl(\s_I^{-1}(0_I) \cap \phi^{-1}(U'_J) \bigr)
\;=\; F'_I\cap \psi_J\bigl(\s_J^{-1}(0_J) \cap U'_J \bigr)
\;=\; F'_I\cap F'_J .
$$
Next, $\bK'_I|_{U'_{IJ}}=\bK_I|_{U'_{IJ}}$ is a restriction of $\bK'_I$ to $F'_I\cap F'_J$ since it has the required footprint
$$
\psi'_I({\s_I'}\,\!^{-1}(0_I)\cap U'_{IJ}) \;=\; \psi_I\bigl(\s_I^{-1}(0_I) \cap U'_{IJ} \bigr)
\;=\; F'_I\cap F'_J .
$$
Finally, $\Hat\Phi' := \Hat\Phi|_{\pr_I^{-1}(U'_{IJ})}$
is the required map since it satisfies the conditions of Definition~\ref{def:tchange} with the induced embedding $\phi':=\phi|_{U'_{IJ}}$,
\begin{enumerate}
\item
$\pr'_J \circ \Hat\Phi' 
=\pr_J \circ \Hat\Phi |_{\pr_I^{-1}(U'_{IJ})}
= \phi |_{U'_{IJ}} \circ\pr_I  |_{\pr_I^{-1}(U'_{IJ})}
= \phi' \circ\pr'_I|_{{\pr'_I}\,\!^{-1}(U'_{IJ})}$, \\
$0'_J \circ \phi'
= 0_J|_{U'_J} \circ \phi|_{U'_{IJ}}
= \Hat\Phi  |_{\pr_I^{-1}(U'_{IJ})}\circ 0_I|_{U'_{IJ}}
= \Hat\Phi' \circ 0'_I|_{U'_{IJ}}$;
\item
$\s'_J \circ \phi'
= \s_J|_{U'_J} \circ \phi|_{U'_{IJ}}
= \Hat\Phi  |_{\pr_I^{-1}(U'_{IJ})}\circ \s_I|_{U'_{IJ}}
= \Hat\Phi' \circ \s'_I|_{U'_{IJ}}$;
\item
$\phi' |_{{\psi'_I}\,\!^{-1}(F'_I\cap F'_J)} = \phi |_{\psi_I^{-1}(F'_I\cap F'_J)}
= \psi_J^{-1} \circ \psi_I |_{\psi_I^{-1}(F'_I\cap F'_J)}
= {\psi'_J}\,\!^{-1} \circ \psi'_I |_{\psi'_I\,\!^{-1}(F'_I\cap F'_J)}$.
\end{enumerate}
This completes the proof.
\end{proof}

\begin{lemma} \label{le:cccomp}
Let $\bK_I,\bK_J,\bK_K$ be topological Kuranishi charts such that
$ F_I\cap F_K \subset F_J$, and let $\Hat\Phi_{IJ}: \bK_I\to \bK_J$ and $\Hat\Phi_{JK}: \bK_J\to \bK_K$ be topological coordinate changes.
(That is, we are given restrictions
$\bK_I|_{U_{IJ}}$ to $F_I\cap F_J$ and $\bK_J|_{U_{JK}}$ to $F_J\cap F_K$ and maps $\Hat\Phi_{IJ}: \bK_I|_{U_{IJ}}\to \bK_J$, $\Hat\Phi_{JK}: \bK_J|_{U_{JK}}\to \bK_K$.)
Then the following holds.
\begin{enumerate}
\item
The domain $U_{IJK}:=\phi_{IJ}^{-1}(U_{JK}) \subset U_I$ defines a restriction $\bK_I|_{U_{IJK}}$
to $F_I \cap F_K$.
\item
The composition $\Hat\Phi_{JK}\circ\Hat\Phi_{IJ}: \E_I|_{U_{IJK}}\to \E_K$ 
defines a map ${\Hat\Phi_{IJK}:\bK_I|_{U_{IJK}}\to\bK_K}$
as in Definition~\ref{def:tchange}, which covers the embedding 
${\phi_{IJK}:=\phi_{JK}\circ\phi_{IJ}: U_{IJK}\to U_K}$.
\end{enumerate}
We denote the induced {\bf composite coordinate change} $\Hat\Phi_{IJK}$ by
$$
\Hat\Phi_{JK}\circ \Hat\Phi_{IJ}
:=\Hat\Phi_{IJK} : \; \bK_I|_{U_{IJK}} \; \to\; \bK_K.
$$
\end{lemma}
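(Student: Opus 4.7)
The plan is a direct verification that the composed domain and composed map satisfy the three parts of Definition~\ref{def:tchange}, with the inclusion $F_I\cap F_K\subset F_J$ being used exactly once --- to identify the footprint of the composed domain.

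First I would address part (i). The set $U_{IJK}=\phi_{IJ}^{-1}(U_{JK})$ is open in $U_{IJ}$ by continuity of $\phi_{IJ}$, and hence open in $U_I$ since $U_{IJ}\subset U_I$ is open. To compute $\s_I^{-1}(0_I)\cap U_{IJK}$, I would use the footprint identities of the two given coordinate changes. For $x\in\s_I^{-1}(0_I)\cap U_{IJ}$ we have $\phi_{IJ}(x)\in\s_J^{-1}(0_J)$ with $\psi_J(\phi_{IJ}(x))=\psi_I(x)$, and the condition $\phi_{IJ}(x)\in U_{JK}$ then reads $\psi_I(x)\in F_J\cap F_K$ (using the footprint condition on $U_{JK}$). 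Hence $\s_I^{-1}(0_I)\cap U_{IJK} = \psi_I^{-1}(F_I\cap F_J\cap F_K)$. The hypothesis $F_I\cap F_K\subset F_J$ collapses this to $\psi_I^{-1}(F_I\cap F_K)$, which is the required footprint condition. Note also that $\psi_I^{-1}(F_I\cap F_K)\subset\psi_I^{-1}(F_I\cap F_J) = \s_I^{-1}(0_I)\cap U_{IJ}$ already lies in $U_{IJ}$, so no issue arises at the level of zero sets.

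Next I would verify part (ii). Since $\phi_{IJ}:U_{IJ}\to U_J$ restricts to an embedding $U_{IJK}\to U_{JK}$ (into the open subset of its range), and $\phi_{JK}:U_{JK}\to U_K$ is an embedding, their composition $\phi_{IJK}:=\phi_{JK}\circ\phi_{IJ}:U_{IJK}\to U_K$ is a topological embedding. The same is true of $\Hat\Phi_{IJK}:=\Hat\Phi_{JK}\circ\Hat\Phi_{IJ}$ on the restricted bundle $\E_I|_{U_{IJK}}=\pr_I^{-1}(U_{IJK})$. The axioms (i)--(iii) of Definition~\ref{def:tchange} all follow by stacking the two commutative squares~\eqref{eq:map-square} for $\Hat\Phi_{IJ}$ and $\Hat\Phi_{JK}$: the bundle/zero-section squares compose to give $\pr_K\circ\Hat\Phi_{IJK}=\phi_{IJK}\circ\pr_I$ and $0_K\circ\phi_{IJK}=\Hat\Phi_{IJK}\circ 0_I$; the section-intertwining squares compose similarly; and the footprint square gives $\phi_{IJK}|_{\psi_I^{-1}(F_I\cap F_K)}=\psi_K^{-1}\circ\psi_J\circ\psi_J^{-1}\circ\psi_I = \psi_K^{-1}\circ\psi_I$, as required.

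The only genuine content is the footprint computation in part (i), where the hypothesis $F_I\cap F_K\subset F_J$ enters. Everything else is a mechanical diagram chase, so no serious obstacle is anticipated; I would simply be careful to restrict $\Hat\Phi_{IJ}$ to $\pr_I^{-1}(U_{IJK})$ (using Lemma~\ref{le:restrchange} with $F'_I=F_I\cap F_K\subset F_I$ and $F'_J=F_J$) before composing with $\Hat\Phi_{JK}$, so that the composed map is defined on the correct open domain and lands in $\E_J|_{U_{JK}}$, where $\Hat\Phi_{JK}$ is defined.
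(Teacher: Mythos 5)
Your proposal is correct and follows essentially the same route as the paper: the footprint of $U_{IJK}$ is computed via $\psi_I=\psi_J\circ\phi_{IJ}$ on the zero set together with the footprint conditions on $U_{IJ}$ and $U_{JK}$, with the hypothesis $F_I\cap F_K\subset F_J$ used exactly to collapse $F_I\cap F_J\cap F_K$ to $F_I\cap F_K$, and part (ii) is the same diagram-composition argument (plus the observation that compositions and restrictions of topological embeddings are embeddings). The paper merely writes the identities out more explicitly; there is no substantive difference.
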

\begin{proof}
In order to check that $\bigl(U_{IJK},\E_I|_{U_{IJK}},
\s_I|_{U_{IJK}}, \psi_I|_{\s_I^{-1}(0_I)\cap U_{IJK}}\bigr)$
is the required restriction,
we need to verify that it has footprint $F_I\cap F_K$.
 Indeed, $\psi_I\bigl(\s_I^{-1}(0_I)\cap U_{IJK}\bigr) =F_I\cap F_K$ holds since we may decompose
$\psi_I=\psi_J\circ\phi_{IJ}$ on $\s_I^{-1}(0_I)\cap U_{IJ}$ with $U_{IJK}\subset U_{IJ}$,
and then combine the identities
\begin{align*}
\phi_{IJ}( \s_I^{-1}(0_I) \cap U_{IJK} )
&=\phi_{IJ}( \s_{IJ}^{-1}(0_I) ) \cap U_{JK} \subset \s_J^{-1}(0_J),
\\
\psi_J\bigl( \phi_{IJ}( \s_{IJ}^{-1}(0_I) ) \bigr)
&= \psi_I( \s_{IJ}^{-1}(0_I) ) = F_I\cap F_J ,
\\
\psi_J( \s_{J}^{-1}(0_J)\cap U_{JK}) &= F_J\cap F_K .
\end{align*}
Next, our assumption $F_I\cap F_K\subset F_J$
ensures that ${(F_I\cap F_J)\cap ( F_J\cap F_K) = F_I\cap F_K}$.
This proves the first assertion.
To prove the second claim, first note that both compositions and restrictions (to open subsets) of topological embeddings are again topological embeddings.
Moreover, we check the conditions of Definition~\ref{def:tchange}, 
\begin{itemize}
\item[-]
The composition is a bundle map, i.e.\ on $\pr_I^{-1}(U_{IJK})$ we have
$$
\pr_K\circ\Hat\Phi_{IJK}
= \pr_K\circ\Hat\Phi_{JK}\circ\Hat\Phi_{IJ} 
= \phi_{JK}\circ \pr_J\circ \Hat\Phi_{IJ} 
= \phi_{JK}\circ \phi_{IJ} \circ\pr_I
= \phi_{IJK}\circ\pr_I 
$$
and the weak form of linearity
$$
0_K \circ \phi_{IJK}
= 0_K \circ \phi_{JK} \circ \phi_{IJ}
= \Hat\Phi_{JK} \circ 0_J \circ \phi_{IJ}
= \Hat\Phi_{JK}\circ \Hat\Phi_{IJ} \circ 0_{I}
= \Hat\Phi_{IJK} \circ 0_{IK} .
$$
\item[-]
The sections are intertwined, i.e.\ on $U_{IJK}$ we have
$$
\s_K \circ \phi_{IJK}
= \s_K \circ \phi_{JK} \circ \phi_{IJ}
= \Hat\Phi_{JK} \circ \s_J \circ \phi_{IJ}
= \Hat\Phi_{JK}\circ \Hat\Phi_{IJ} \circ \s_{I}
= \Hat\Phi_{IJK} \circ \s_{IK} .
$$
\item[-]
On the zero set $U_{IJK}\cap \s_I^{-1}(0_I) = \psi_I^{-1}(F_I\cap F_K)$ we have
$$
\phi_{IJK}
= \phi_{JK} \circ \phi_{IJ}
= \bigl( \psi_K^{-1}\circ\psi_{J} \bigr)
\circ \bigl(\psi_{J}^{-1} \circ \psi_{I}\bigr)
= \psi_K^{-1}\circ \psi_{I} .
$$
\end{itemize}
This completes the proof.  \end{proof}

Finally, we introduce  two notions of equivalence between coordinate changes that may not have the same domain.
One can easily shown that these equivalence relations are compatible with composition, but we will not make use of this fact.

\begin{defn} \label{def:overlap}
Let $\Hat\Phi^\al :\bK_I|_{U^\al_{IJ}}\to \bK_J$ and  $\Hat\Phi^\be:\bK_I|_{U^\be_{IJ}}\to \bK_J$ be coordinate changes.
\begin{itemlist}
\item
We say the coordinate changes are {\bf equal on the overlap} and write $\Hat\Phi^\al\approx\Hat\Phi^\be$, if the restrictions of Lemma~\ref{le:restrchange} to $U'_{IJ}:=U^\al_{IJ}\cap U^\be_{IJ}$ yield equal maps
$\Hat\Phi^\al|_{U'_{IJ}} = \Hat\Phi^\be|_{U'_{IJ}} $.
\item
We say that $\Hat\Phi^\be$ {\bf extends} $\Hat\Phi^\al$ and write $\Hat\Phi^\al\subset\Hat\Phi^\be$,
if $U_{IJ}^\al\subset U_{IJ}^\be$ and the restriction of Lemma~\ref{le:restrchange} yields equal maps
$\Hat\Phi^\be|_{U_{IJ}^\al} = \Hat\Phi^\al $.
\end{itemlist}
\end{defn}

\subsection{Covering families and transition data}\label{ss:Ksdef} \hspace{1mm}\\ \vspace{-3mm}
%%%%%%%%%%%%%%%%%%%%%%%%%%%%%%%%%%%%%%%%%%%%%%%%%%%%%%%%%%%

This section defines the notion of topological Kuranishi atlas on $X$ and describes it in categorical terms.
There are various notions of ``Kuranishi structure", but in practice every such structure on a compact moduli space of holomorphic curves is constructed from basic building blocks as follows.

\begin{defn}\label{def:Kfamily}
\begin{itemlist}
\item
A {\bf covering family of basic charts} for $X$ is a finite collection $(\bK_i)_{i=1,\ldots,N}$ of topological Kuranishi charts for $X$ whose footprints cover $X=\bigcup_{i=1}^N F_i$.
\item
{\bf Transition data} for a covering family $(\bK_i)_{i=1,\ldots,N}$ is a collection of topological Kuranishi charts $(\bK_J)_{J\in\Ii_\Kk,|J|\ge 2}$ and coordinate changes $(\Hat\Phi_{I J})_{I,J\in\Ii_\Kk, I\subsetneq J}$ as follows:
\begin{enumerate}
\item
$\Ii_\Kk$ denotes the set of subsets $I\subset\{1,\ldots,N\}$ for which the intersection of footprints is nonempty, $F_I:= \; {\textstyle \bigcap_{i\in I}} F_i  \;\neq \; \emptyset \;$;
\item
$\bK_J$ is a topological Kuranishi chart for $X$ with footprint $F_J=\bigcap_{i\in J}F_i$ for each $J\in\Ii_\Kk$ with $|J|\ge 2$, and for one element sets $J=\{i\}$ we denote $\bK_{\{i\}}:=\bK_i$;
\item
$\Hat\Phi_{I J}$ is a topological coordinate change $\bK_{I} \to \bK_{J}$ for every $I,J\in\Ii_\Kk$ with $I\subsetneq J$.
\end{enumerate}
\end{itemlist}
 \end{defn}

The transition data for a covering family automatically satisfies a cocycle condition on the zero sets, where due to the footprint maps to $X$ we have for $I\subset J \subset K$
$$
\phi_{J K}\circ \phi_{I J}
= \psi_K^{-1}\circ\psi_J\circ\psi_J^{-1}\circ\psi_I
= \psi_K^{-1}\circ\psi_I
= \phi_{I K}
\qquad \text{on}\; s_I^{-1}(0)\cap U_{IK} .
$$
Since there is no natural ambient topological space into which the entire domains of the Kuranishi charts map, the cocycle condition on the complement of the zero sets has to be added as an axiom. 
For the embeddings between the domains of the charts however, there are three natural notions of cocycle condition with varying requirements on the domains of the coordinate changes.

\begin{defn}  \label{def:cocycle}
Let $\Kk=(\bK_I,\Hat\Phi_{I J})_{I,J\in\Ii_\Kk, I\subsetneq J}$ be a tuple of basic charts and transition data. Then for any $I,J,K\in\Ii_K$ with  $I\subsetneq J \subsetneq K$ we define the composed coordinate change $\Hat\Phi_{J K}\circ \Hat\Phi_{I J} : \bK_{I}  \to \bK_{K}$ as in Lemma~\ref{le:cccomp} with domain $\phi_{IJ}^{-1}(U_{JK})\subset U_I$.
Using the notions of Definition~\ref{def:overlap} we then say that the triple of coordinate changes
$\Hat\Phi_{I J}, \Hat\Phi_{J K}, \Hat\Phi_{I K}$ satisfies 
\begin{itemlist}
\item the {\bf weak cocycle condition}
if $\Hat\Phi_{J K}\circ \Hat\Phi_{I J} \approx \Hat\Phi_{I K}$, i.e.\ the coordinate changes are equal on the overlap;
\item the {\bf cocycle condition}
if $\Hat\Phi_{J K}\circ \Hat\Phi_{I J} \subset \Hat\Phi_{I K}$, i.e.\  $\Hat\Phi_{I K}$ extends the composed coordinate change;
\item the {\bf strong cocycle condition}
if $\Hat\Phi_{J K}\circ \Hat\Phi_{I J} = \Hat\Phi_{I K}$ are equal as coordinate changes.
\end{itemlist}
More explicitly, 
\begin{itemlist}
\item 
 the weak cocycle condition requires
 \begin{equation} \label{eq:wc}
\qquad
\Hat\Phi_{J K}\circ \Hat\Phi_{I J} = \Hat\Phi_{I K}
\qquad \text{on}\;\;
\pr_I^{-1}\bigl(\phi_{IJ}^{-1}(U_{JK}) \cap U_{IK} \bigr).
\end{equation}
\item 
 the cocycle condition requires \eqref{eq:wc} and $U_{IJK}:=\phi_{IJ}^{-1}(U_{JK}) \subset U_{IK}$; 
\item
the strong cocycle condition requires \eqref{eq:wc} and $U_{IJK}:=\phi_{IJ}^{-1}(U_{JK}) = U_{IK}$.
\end{itemlist}
\end{defn}

\begin{remark}\label{rmk:cocycle}\rm
For topological coordinate changes satisfying the weak cocycle condition, the second identity follows 
from the bundle map property (i) in Definition~\ref{def:tchange}.
The cocycle condition resp.\ strong cocycle condition require in addition $\phi_{IJ}^{-1}(U_{JK}) \subset U_{IK}$ resp.\ $\phi_{IJ}^{-1}(U_{JK}) =U_{IK}$.
$\hfill\er$
\end{remark}

The relevance of these versions is that the weak cocycle condition can be achieved in practice by constructions of finite dimensional reductions for holomorphic curve moduli spaces, whereas the strong cocycle condition is needed for our construction of a virtual moduli cycle in \cite{MW1} from perturbations of the sections in the Kuranishi charts.
The cocycle condition is an intermediate notion which is too strong to be constructed in practice and too weak to induce a virtual moduli cycle, but it does allow us to formulate Kuranishi atlases categorically. This in turn gives rise, via a topological realization of a category, to a virtual neighbourhood of $X$ into which all Kuranishi domains map.

\begin{defn}\label{def:Ku}
A {\bf topological Kuranishi atlas} on a compact metrizable space
$X$ is a tuple
$
\Kk=\bigl(\bK_I,\Hat\Phi_{I J}\bigr)_{I, J\in\Ii_\Kk, I\subsetneq J}
$
of a covering family of basic charts $(\bK_i)_{i=1,\ldots,N}$ 
and transition data $(\bK_J)_{|J|\ge 2}$, $(\Hat\Phi_{I J})_{I\subsetneq J}$ for $(\bK_i)$ as in Definition~\ref{def:Kfamily}, 
that consist of topological Kuranishi charts and topological coordinate changes, which 
satisfy the {\it cocycle condition} $\Hat\Phi_{J K}\circ \Hat\Phi_{I J} \subset \Hat\Phi_{I K}$ for every triple $I,J,K\in\Ii_K$ with $I\subsetneq J \subsetneq K$.
\end{defn}

\begin{rmk}\label{rmk:Ku}\rm
We have assumed from the beginning that $X$ is compact and metrizable.
Some version of  compactness is essential in order for $X$ to define a virtual fundamental class, but one might hope to weaken the metrizability assumption.  
However, we claim that any compact space $X$ that is covered by topological Kuranishi charts is automatically metrizable.
Indeed, one of the most basic properties of topological Kuranishi charts is that the footprint maps $\psi_i: \s_i^{-1}(0_i)\to X$ are homeomorphisms between subsets $\s_i^{-1}(0_i)\subset U_i$ of metrizable spaces and open subsets $F_i \subset X$. If $X$ is covered by such charts, it will be locally metrizable. Since compactness also implies paracompactness, such $X$ is metrizable by the Smirnov metrization theorem \cite[Thm.42.1]{Mun}.
$\hfill\er$
\end{rmk}

It is useful to think of the domains and obstruction spaces of a topological Kuranishi atlas as forming the following categories.

\begin{defn}\label{def:catKu}
Given a topological Kuranishi atlas $\Kk$ we define its {\bf domain category} $\bB_\Kk$ to consist of
the space of objects\footnote{
When forming categories such as $\bB_\Kk$, we take always the space of objects 
to be the disjoint union of the domains 
$U_I$, even if we happen to have defined the sets $U_I$ 
as subsets of some larger space such as $\R^2$ 
or a space of maps as in the Gromov--Witten case.
Similarly, the morphism space is a disjoint union of the $U_{IJ}$ even though $U_{IJ}\subset U_I$ for all $J\supset I$.}
$$
\Obj_{\bB_\Kk}:= \bigsqcup_{I\in \Ii_\Kk} U_I \ = \ \bigl\{ (I,x) \,\big|\, I\in\Ii_\Kk, x\in U_I \bigr\}
$$
and the space of morphisms
$$
\Mor_{\bB_\Kk}:= \bigsqcup_{I,J\in \Ii_\Kk, I\subset J} U_{IJ} \ = \ \bigl\{ (I,J,x) \,\big|\, I,J\in\Ii_\Kk, I\subset J, x\in U_{IJ} \bigr\}.
$$
Here we denote $U_{II}:= U_I$ for $I=J$, and for $I\subsetneq J$ use
the domain $U_{IJ}\subset U_I$ of the restriction $\bK_I|_{U_{IJ}}$ to $F_J$
that is part of the coordinate change $\Hat\Phi_{IJ} : \bK_I|_{U_{IJ}}\to \bK_J$.

Source and target of these morphisms are given by
$$
(I,J,x)\in\Mor_{\bB_\Kk}\bigl((I,x),(J,\phi_{IJ}(x))\bigr),
$$
where $\phi_{IJ}: U_{IJ}\to U_J$ is the  embedding given by $\Hat\Phi_{I J}$, and we denote $\phi_{II}:={\rm id}_{U_I}$.
Composition\footnote
{
Note that we write compositions in the categorical ordering here.} is defined by
$$
\bigl(I,J,x\bigr)\circ \bigl(J,K,y\bigr)
:= \bigl(I,K,x\bigr)
$$
for any $I\subset J \subset K$ and $x\in U_{IJ}, y\in  U_{JK}$ such that $\phi_{IJ}(x)=y$.

The {\bf obstruction category} $\bE_\Kk$ is defined in complete analogy to $\bB_\Kk$ to consist of
the spaces of objects $\Obj_{\bE_\Kk}:=\bigsqcup_{I\in\Ii_\Kk} \E_I$ and morphisms
$$
\Mor_{\bE_\Kk}: = \bigl\{ (I,J,e) \,\big|\, I,J\in\Ii_\Kk, I\subset J,  e\in \E_I|_{U_{IJ}} \bigr\},
$$
with source and target maps
$$
 (I,J,e) \mapsto (I,e) , \qquad  (I,J,e) \mapsto (J,\Hat\Phi_{IJ}(e)).
$$
\end{defn}

We also express the further parts of a topological Kuranishi atlas in categorical terms:

\begin{itemlist}
\item
The obstruction category $\bE_\Kk$ is a bundle over $\bB_\Kk$ in the sense that there is a functor
$\pr_\Kk:\bE_\Kk\to\bB_\Kk$ that is given on objects and morphisms by projection 
$(I,e)\mapsto (I,\pr_I(e))$ and $(I,J,e)\mapsto(I,J,\pr_I(e))$.

\item
The zero sections $0_I$ and sections $\s_I$ induce two continuous sections of this bundle, i.e.\ functors $0_\Kk:\bB_\Kk\to \bE_\Kk$ and $\s_\Kk:\bB_\Kk\to \bE_\Kk$ which act continuously on the spaces of objects and morphisms, and whose composite with the projection $\pr_\Kk: \bE_\Kk \to \bB_\Kk$ is the identity. More precisely, $\s_\Kk$ is given by $(I,x)\mapsto (I,\s_I(x))$ on objects and by $(I,J,x)\mapsto (I,J,\s_I(x))$ on morphisms, and analogously for $0_\Kk$.

\item
The zero sets of the sections $\bigsqcup_{I\in\Ii_\Kk} \{I\}\times \s_I^{-1}(0_I)\subset\Obj_{\bB_\Kk}$ form a very special strictly full subcategory $\s_\Kk^{-1}(0_\Kk)$ of $\bB_\Kk$. Namely, $\bB_\Kk$ splits into the subcategory $\s_\Kk^{-1}(0_\Kk)$ and its complement (given by the full subcategory with objects  $\{ (I,x) \,|\, \s_I(x)\neq 0_I(x) \}$) in the sense that there are no morphisms of $\bB_\Kk$ between the underlying sets of objects.
(This holds because, given any morphism $(I,J,x)$,
the injectivity of $\Hat\Phi_{IJ}$ ensures that we have 
$\s_I(x)=0_I(x) \;\Leftrightarrow\; \Hat\Phi_{IJ}(\s_I(x))=\Hat\Phi_{IJ}(0_I(x))
\;\Leftrightarrow\; \s_J(\phi_{IJ}(x))= 0_J(\phi_{IJ}(x))$.)

\item
The footprint maps $\psi_I$ give rise to a surjective functor $\psi_\Kk: \s_\Kk^{-1}(0_\Kk) \to \bX$ 
to the category $\bX$ with object space $X$ and trivial morphism spaces.
It is given by $(I,x)\mapsto \psi_I(x)$ on objects and by $(I,J,x)\mapsto {\rm id}_{\psi_I(x)}$ on morphisms.
\end{itemlist}

\begin{lemma}\label{le:Kcat}  
The categories $\bB_{\Kk}$ and $\bE_{\Kk}$  are well defined. 
\end{lemma}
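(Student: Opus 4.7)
The plan is to verify the three standard axioms for each of the categories $\bB_\Kk$ and $\bE_\Kk$: that source and target maps of morphisms land in the object space, that composition is everywhere defined on composable pairs and again lands in the morphism space, that composition is associative, and that identities exist and act as units. Since $\bE_\Kk$ is built in complete parallel to $\bB_\Kk$ (with the sets $\E_I|_{U_{IJ}}=\pr_I^{-1}(U_{IJ})$ and the bundle maps $\Hat\Phi_{IJ}$ in place of $U_{IJ}$ and $\phi_{IJ}$), the same argument will go through once the case of $\bB_\Kk$ is settled; the bundle-map property (i) of Definition~\ref{def:tchange} ensures that source, target, composition, and the projection $\pr_\Kk$ all commute with the projections $\pr_I$, so no independent verification is needed for $\bE_\Kk$.

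For $\bB_\Kk$, source and target are automatic from the convention $\phi_{II}=\id_{U_I}$, together with the fact that for $I\subsetneq J$ we have $\phi_{IJ}:U_{IJ}\to U_J$, so $(I,J,x)$ with $x\in U_{IJ}$ really has a target in $\Obj_{\bB_\Kk}$. Identity morphisms are the elements $(I,I,x)$, and the composition rule $(I,I,x)\circ (I,J,x) = (I,J,x)$ and $(I,J,x)\circ (J,J,\phi_{IJ}(x)) = (I,J,x)$ shows they act as two-sided units.

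The main step, and the only nontrivial point, is to check that the composition $(I,J,x)\circ(J,K,y):= (I,K,x)$, defined when $I\subset J\subset K$ and $\phi_{IJ}(x)=y\in U_{JK}$, genuinely defines a morphism with the correct target. First, we need $x\in U_{IK}$, i.e.\ the composed triple indeed lies in $\Mor_{\bB_\Kk}$; this is precisely where the cocycle condition of Definition~\ref{def:cocycle} is used, since it asserts $\phi_{IJ}^{-1}(U_{JK}) \subset U_{IK}$, and by assumption $x\in\phi_{IJ}^{-1}(U_{JK})$. Second, we need the target of $(I,K,x)$ to equal the target of $(J,K,y)$, i.e.\ $\phi_{IK}(x) = \phi_{JK}(\phi_{IJ}(x))$; this is the equality part of the cocycle condition, namely the inclusion $\Hat\Phi_{JK}\circ\Hat\Phi_{IJ} \subset \Hat\Phi_{IK}$ read on base spaces via $\pr_J\circ\Hat\Phi$.

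Associativity of a quadruple composition $I\subset J\subset K\subset L$ then reduces to checking that both $((I,J,x)\circ(J,K,y))\circ(K,L,z)$ and $(I,J,x)\circ((J,K,y)\circ(K,L,z))$ equal $(I,L,x)$. Once the inner compositions are seen to lie in the correct $U_{\cdot\cdot}$ by iterated application of the cocycle condition (in particular $\phi_{IK}^{-1}(U_{KL})\subset U_{IL}$ and $\phi_{IJ}^{-1}(U_{JL})\subset U_{IL}$, together with the equality $\phi_{IK}(x) = \phi_{JK}(\phi_{IJ}(x))$), the two expressions collapse to the same triple by the definition of composition. The hard part, if there is any, is purely bookkeeping: making sure every intermediate triple really lies in the set of morphisms, which is exactly what the cocycle condition is designed to guarantee.
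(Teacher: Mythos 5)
Your proposal is correct and follows essentially the same route as the paper: the only substantive point is that composability forces $x\in\phi_{IJ}^{-1}(U_{JK})\subset U_{IK}$ by the cocycle condition, identities are the $(I,I,x)$, associativity is a direct check yielding $(I,L,x)$ both ways, and $\bE_\Kk$ is handled by the parallel argument. Your explicit remark that the target-matching $\phi_{IK}(x)=\phi_{JK}(\phi_{IJ}(x))$ uses the equality part of the cocycle condition is a point the paper leaves implicit, but it is the same argument.
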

\begin{proof}  
We must check that the composition of morphisms in $\bB_{\Kk}$ is well defined and associative; 
the proof for $\bE_\Kk$ is analogous.
To see this, note that the composition $\bigl(I,J,x\bigr)\circ \bigl(J,K,y\bigr)$ only needs to be defined for $x=\phi_{IJ}^{-1}(y)\in\phi_{IJ}^{-1}(U_{JK})$, i.e.\ for $x\in U_{IJK}$ in the domain of the composed coordinate change $\Hat\Phi_{JK}\circ\Hat\Phi_{IJ}$, which by the cocycle condition
is contained in the domain of $\Hat\Phi_{IK}$, and hence $\bigl(I,K,x\bigr)$ is a well defined morphism.
With this said, identity morphisms are given by $\bigl(I,I,x\bigr)$ for all $x\in U_{II}=U_I$, and the composition is associative since for any $I\subset J \subset K\subset L$, and $x\in U_{IJ}, y\in U_{JK}, z\in U_{KL}$ the three morphisms
$\bigl(I,J,x\bigr), \bigl(J,K,y\bigr),\bigl(K,L,z\bigr)$ are composable iff
$y=\phi_{IJ}(x)$ and $z=\phi_{JK}(y)$. In that case we have
$$
\bigl(I,J,x\bigr)\circ \Bigl(\bigl(J,K,y\bigr) \circ \bigl(K,L,z)\bigr) \Bigr) \\
\;=\;
\bigl(I,J,x\bigr)\circ \bigl(J,L,\phi_{IJ}(x)\bigr)
\;=\;
\bigl(I,L,x\bigr)
$$
and $z=\phi_{JK}(\phi_{IJ}(x))=\phi_{IK}(x)$, hence
$$
\Bigl( \bigl(I,J,x\bigr)\circ \bigl(J,K,y\bigr) \Bigr) \circ \bigl(K,L,z\bigr)
\;=\; \bigl(I,K,x\bigr)\circ \bigl(K,L,\phi_{IK}(x)\bigr)
\;=\; \bigl(I,L,x\bigr),
$$
which proves associativity.
\end{proof}

%%%%%%%%%%%%%%%%%%%%%%%%%%%%%%%%%%%%%%%%%%%%%%%%
\subsection{The virtual neighbourhood} \label{ss:vnbhd} \hspace{1mm}\\ \vspace{-3mm}
%%%%%%%%%%%%%%%%%%%%%%%%%%%%%%%%%%%%%%%%%%%%%%%%

The categorical formulation of a topological  Kuranishi atlas $\Kk$ allows us to construct a topological space  $|\Kk|$ which contains a homeomorphic copy $\io_{\Kk}(X)\subset |\Kk|$ of $X$ and hence may be viewed as a virtual neighbourhood of $X$.

\begin{defn}  \label{def:Knbhd}
Let $\Kk$ be a  topological Kuranishi atlas for the compact space $X$.
Then the {\bf virtual neighbourhood} of $X$,
$$
|\Kk| := \Obj_{\bB_\Kk}/{\scriptstyle\sim}
$$
is the topological realization\footnote
{
As is usual in the theory of \'etale groupoids we take the realization of
the category $\bB_\Kk$ to be a quotient of its space of objects rather than the classifying space
of the category $\bB_\Kk$ (which is also sometimes called the topological realization).} 
of the category $\bB_\Kk$, that is the quotient of the object space $\Obj_{\bB_\Kk}$ by the equivalence relation generated by
$$
\Mor_{\bB_\Kk}\bigl((I,x),(J,y)\bigr) \ne \emptyset \quad \Longrightarrow \quad
(I,x) \sim (J,y) .
$$
We denote by  $\pi_\Kk:\Obj_{\bB_\Kk}\to |\Kk|$ the natural projection $(I,x)\mapsto [I,x]$, where $[I,x]\in|\Kk|$ denotes the equivalence class containing $(I,x)$.
We moreover equip $|\Kk|$ with the quotient topology, in which $\pi_\Kk$ is continuous.
Similarly, we define
$$
|\bE_\Kk|:=\Obj_{\bE_\Kk} /{\scriptstyle\sim}
$$
to be the topological realization of the obstruction category $\bE_\Kk$.  The natural projection $\Obj_{\bE_\Kk}\to |\bE_\Kk|$ is denoted $\pi_{\bE_\Kk}$.
\end{defn}

\begin{lemma} \label{le:Knbhd1}
The functor ${\rm pr}_\Kk:\bE_\Kk\to\bB_\Kk$ induces a continuous map
$$
|{\rm pr}_\Kk|:|\bE_\Kk| \to |\Kk|,
$$
which we call the {\bf obstruction bundle} of $\Kk$, although its fibers generally do not have the structure of a vector space.
However, the functors $0_\Kk$ and $\s_\Kk$ induce continuous maps
$$
|0_\Kk| : \; |\Kk| \to |\bE_\Kk| , \qquad\qquad |\s_\Kk|:|\Kk|\to |\bE_\Kk| .
$$
These maps are sections in the sense that $|\pr_\Kk|\circ|\s_\Kk| =  |\pr_\Kk|\circ |0_\Kk|= {\rm id}_{|\Kk|}$.
Moreover, there is a natural homeomorphism from the realization of the subcategory $\s_\Kk^{-1}(0_\Kk)$
(with quotient topology) to the zero set of the section
$|\s_\Kk|$, with the relative topology induced from $|\Kk|$,
$$
\bigr| \s_\Kk^{-1}(0_\Kk)\bigr| \;=\; 
\quotient{\s_\Kk^{-1}(0_\Kk)}{\sim_{\scriptscriptstyle \s_\Kk^{-1}(0_\Kk)}}
\;\overset{\cong}{\longrightarrow}\;
|\s_\Kk|^{-1}(|0_\Kk|), 
$$
where $|\s_\Kk|^{-1}(|0_\Kk|)  \,:=\;  \bigl\{p \in |\Kk| \,\big|\, |\s_\Kk|(p) = |0_\Kk|(p)  \bigr\}  \;\subset\; |\Kk|$.
Moreover, the footprint functor $\psi_\Kk: \s_\Kk^{-1}(0_\Kk) \to \bX$ descends to a homeomorphism $|\psi_\Kk| :  |\s_\Kk|^{-1}(|0_\Kk|) \to X$.   Its inverse is
$$
\io_{\Kk}:= |\psi_\Kk|^{-1} : \; X\;\longrightarrow\;  |\s_\Kk|^{-1}(|0_\Kk|) \;\subset\; |\Kk|, \qquad
p \;\mapsto\; [(I,\psi_I^{-1}(p))] ,
$$
where $[(I,\psi_I^{-1}(p))]$ is independent of  
the choice of $I\in\Ii_\Kk$ with $p\in F_I$.
\end{lemma}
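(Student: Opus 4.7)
The overall strategy is to first handle items (1)--(3) as a direct consequence of the fact that continuous functors between small topological categories descend to continuous maps on topological realizations, and then treat the two homeomorphism claims (4) and (5)--(6) using the "splitting" property of $\bB_\Kk$ together with Condition~(iii) of Definition~\ref{def:tchange}. Concretely, for any functor $F\colon \bC \to \bD$ with $F$ continuous on objects, the composition $\Obj_\bC \to \Obj_\bD \to |\bD|$ sends equivalent objects in $\bC$ to equal points in $|\bD|$ (since $F$ sends morphisms to morphisms), so by the universal property of the quotient topology we obtain a continuous $|F|\colon|\bC|\to|\bD|$ with $|F|\circ \pi_\bC = \pi_\bD \circ F$. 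Applying this to $\pr_\Kk$, $0_\Kk$, $\s_\Kk$ yields the three continuous maps; the section property follows by precomposing with $\pi_\Kk$ and using that $\pr_I\circ 0_I = \pr_I \circ \s_I = \id_{U_I}$ on objects, so both compositions agree with $\id$ after projection to $|\Kk|$ and hence equal $\id_{|\Kk|}$.

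For item (4), I would exploit the splitting observation recorded above Lemma~\ref{le:Kcat}: injectivity of each $\Hat\Phi_{IJ}$ forces any morphism $(I,J,x)$ in $\bB_\Kk$ to have $\s_I(x)=0_I(x)$ if and only if $\s_J(\phi_{IJ}(x)) = 0_J(\phi_{IJ}(x))$. Hence the equivalence relation on $\Obj_{\bB_\Kk}$ restricts to the equivalence relation on $\bigsqcup_I \s_I^{-1}(0_I)$, and no equivalence class contains both a zero and a non-zero point. This gives a well-defined continuous injection $|\s_\Kk^{-1}(0_\Kk)| \to |\Kk|$ whose image is precisely $|\s_\Kk|^{-1}(|0_\Kk|)$. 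To upgrade to a homeomorphism onto its image, I would show that every open saturated $W \subset \bigsqcup_I \s_I^{-1}(0_I)$ (in the subspace topology) extends to an open saturated $V \subset \Obj_{\bB_\Kk}$ with $V\cap \bigsqcup_I \s_I^{-1}(0_I) = W$: write $W_I = W_I'\cap \s_I^{-1}(0_I)$ with $W_I'\subset U_I$ open, and set $V = \bigsqcup_I W_I' \cup \bigsqcup_I (U_I\setminus \s_I^{-1}(0_I))$; the second summand is open because each $\s_I^{-1}(0_I)$ is closed in $U_I$ (Remark~\ref{rmk:topchart}(iv)), and saturation holds by the splitting.

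For items (5) and (6), the functor $\psi_\Kk$ is well-defined on morphisms because property (iii) of Definition~\ref{def:tchange} gives $\psi_J\circ\phi_{IJ} = \psi_I$ on $U_{IJ}\cap \s_I^{-1}(0_I)$, so it descends to a continuous $|\psi_\Kk|\colon |\s_\Kk^{-1}(0_\Kk)|\to X$. Surjectivity uses the covering property $X = \bigcup_i F_i$. Injectivity: if $\psi_I(x) = \psi_J(y) = p$, then $p \in F_I \cap F_J = F_{I\cup J}$, so $I\cup J \in \Ii_\Kk$, and both $(I,x)$ and $(J,y)$ are identified with $(I\cup J, \psi_{I\cup J}^{-1}(p))$ via the coordinate changes $\Hat\Phi_{I,I\cup J}$ and $\Hat\Phi_{J,I\cup J}$, using property (iii) again. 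This also shows that the assignment $p \mapsto [(I,\psi_I^{-1}(p))]$ is independent of the choice of $I$ with $p\in F_I$, hence defines $\io_\Kk$ as the set-theoretic inverse of $|\psi_\Kk|$. For continuity of $\io_\Kk$, given open $V \subset |\Kk|$, I would compute
\[
\io_\Kk^{-1}(V) \;=\; \bigcup_{i=1}^N \psi_i\bigl(\pi_\Kk^{-1}(V) \cap \s_i^{-1}(0_i)\bigr),
\]
which is open in $X$ because each $\pi_\Kk^{-1}(V)\cap U_i$ is open in $U_i$ and each $\psi_i$ is a homeomorphism onto the open $F_i\subset X$.

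The main obstacle is the topology-matching step in (4), i.e.\ verifying that the quotient topology on $|\s_\Kk^{-1}(0_\Kk)|$ coincides with the subspace topology inherited from $|\Kk|$. The continuous direction is automatic from the universal property, but the reverse requires the explicit extension of saturated open sets described above; this step is where both the injectivity built into each $\Hat\Phi_{IJ}$ and the closedness of the zero sections are essential, and it is the only non-formal ingredient in the whole argument.
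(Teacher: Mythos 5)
Your proposal is correct and follows essentially the same route as the paper: items (1)--(3) via realizations of continuous functors, the zero-set bijection via the splitting of $\bB_\Kk$ at $\s_\Kk^{-1}(0_\Kk)$, injectivity/well-definedness of $\io_\Kk$ via condition (iii) of Definition~\ref{def:tchange} through the chart $\bK_{I\cup J}$, and continuity of $\io_\Kk$ via the same footprint formula. The only (cosmetic) difference is in the topology-matching step: your saturated open extension $\bigsqcup_I\bigl(W_I'\cup(U_I\less \s_I^{-1}(0_I))\bigr)$ coincides as a set with the paper's $\bigsqcup_I (U_I\less C_I)$ built from footprint complements, and your direct verification of openness and saturation via the splitting and closedness of the zero sets is a valid, slightly shorter substitute for the paper's argument through $X$.
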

\begin{proof}
The existence, continuity, and identities for $|\pr_\Kk|$, $|0_\Kk|$, and $|\s_\Kk|$ follow from the continuity of, and identities between, the maps induced by $\pr_\Kk$, $0_\Kk$, and $\s_\Kk$ on the object space, together with the following general fact: Any functor $f:A\to B$, which is continuous on the object space, induces a continuous map between the realizations
(where these are given the quotient topology of each category).
Indeed, $|f|:|A|\to|B|$ is well defined since the functoriality of $f$ ensures $a\sim a' \Rightarrow f(a)\sim f(a')$. Then by definition we have $\pi_B\circ f = |f| \circ \pi_A$ with the projections $\pi_A:A\to |A|$ and $\pi_B:B\to |B|$.
To prove continuity of $|f|$ we need to check that for any open subset $U\subset |B|$ the preimage $|f|^{-1}(U)\subset|A|$ is open, i.e.\ by definition of the quotient topology, $\pi_A^{-1}\bigl(|f|^{-1}(U)\bigr)\subset A$ is open. But
$\pi_A^{-1}\bigl(|f|^{-1}(U)\bigr) = f^{-1}\bigl(\pi_B^{-1}(U)\bigr)$, which is open by the continuity of $\pi_B$ (by definition) and $f$ (by assumption).

Towards the last statement, first note that $|\s_\Kk|^{-1}(|0_\Kk|)= \qu{\s_\Kk^{-1}(0_\Kk)}{\sim}  \subset |\Kk|$ is given by the $\sim$ equivalence classes for which one and hence every representative lies in the subcategory $\s_\Kk^{-1}(0_\Kk)$.
Next, recall that the equivalence relation $\sim$ on $\Obj_{\bB_\Kk}$ that defines $|\Kk|$ is given by the embeddings $\phi_{IJ}$, their inverses, and compositions. Since these generators intertwine the zero sets $\s_I^{-1}(0_I)$ and the footprint maps $\psi_I:\s_I^{-1}(0_I)\to F_I$, we have the useful observations
\begin{align} \label{eq:useful1}
\psi_I(x)=\psi_J(y) \quad &\Longrightarrow \quad \;  (I,x)\sim (J,y) , \\
 \label{eq:useful2}
(I,x)\sim (J,y) , \; \s_I(x)=0_I(x) \quad &\Longrightarrow \quad \;  \s_J(y)=0_J(y), \; \psi_I(x)=\psi_J(y).
\end{align}
In particular, \eqref{eq:useful2} implies that the equivalence relation $\sim$ on $\Obj_{\bB_\Kk}$ that defines $|\Kk|=\qu{\Obj_{\bB_\Kk}}{\sim}$, restricted to the objects $\{(I,x) \,|\, \s_I(x)=0_I(x)\}$ of the subcategory $\s_\Kk^{-1}(0_\Kk)$, coincides with the equivalence relation $\sim_{\scriptscriptstyle \s_\Kk^{-1}(0_\Kk)}$ generated by the morphisms of $\s_\Kk^{-1}(0_\Kk)$.
Hence the map $\bigr| \s_\Kk^{-1}(0_\Kk)\bigr| \to |\s_\Kk|^{-1}(|0_\Kk|)$, $[(I,x)]_{\scriptscriptstyle \s_\Kk^{-1}(0_\Kk)} \mapsto [(I,x)]_{\Kk}$ is a bijection. It also is continuous because it is the realization of the functor $\s_\Kk^{-1}(0_\Kk)\to \bB_\Kk$ given by the continuous embedding of the object space.

To check that the inverse is continuous, consider an open subset $Z\subset \bigr| \s_\Kk^{-1}(0_\Kk)\bigr|$, 
that is with open preimage $\pi_\Kk^{-1}(Z)\subset \{(I,x) \,|\, \s_I(x)=0_I(x)\}$.
The latter is given the relative topology induced from $\Obj_{\bB_\Kk}$, hence we have
$\pi_\Kk^{-1}(Z) = 
\Ww\cap \{(I,x) \,|\, \s_I(x)=0_I(x)\}$ for some open subset $\Ww\subset\Obj_{\bB_\Kk}$. 
Now we 
claim that $\Ww$ can be chosen so that 
$\Ww=\pi_\Kk^{-1}(\pi_\Kk(\Ww))$, 
and hence $\pi_\Kk(\Ww)\subset|\Kk|$ is open, and
$\pi_\Kk(\Ww)\cap |\s_\Kk|^{-1}(|0_\Kk|)=Z$.
For that purpose note that each footprint $\psi_I(\pi_\Kk^{-1}(Z)\cap U_I)\subset X$ is open since $\psi_I$ is a homeomorphism from $\s_I^{-1}(0_I)$ to an open subset of $X$ and $Z_I:= \pi_\Kk^{-1}(Z)\cap U_I \subset \s_I^{-1}(0_I)$ is open by assumption.
Hence the finite union $\bigcup_{I\in\Ii_\Kk}\psi_I(Z_I)\subset X$ is open, thus has a closed complement, so that each preimage $C_J := \psi_J^{-1}\bigl( X\less \bigcup_I \psi_I(Z_I)\bigr)\subset U_J$ is also closed by the homeomorphism property of the footprint map $\psi_I$. 
Moreover, by \eqref{eq:useful1} and \eqref{eq:useful2}  the morphisms in $\bB_\Kk$ on the zero sets are determined by the footprint functors, so that we have $\psi_J^{-1}(\psi_I(Z_I))=\pi_\Kk^{-1}(\pi_\Kk(Z_I))\cap U_J=\pi_\Kk^{-1}(Z \cap \pi_\Kk(U_I))\cap U_J$ for each $I,J\in\Ii_\Kk$, and thus
$C_J = \s_J^{-1}(0_J) \less \pi_\Kk^{-1}(Z)$.
With that we obtain an open set
$\Ww: = {\textstyle \bigsqcup_{I\in\Ii_\Kk}} \bigl( U_I \less C_I \bigr) \subset \Obj_{\bB_\Kk}$ such that $\pi_\Kk(\Ww)\subset|\Kk|$ is open since $\Ww$ is invariant under the equivalence relation by $\pi_\Kk$, namely
$$
\pi_\Kk(\Ww) \;=\;  {\textstyle \bigcup_{I\in\Ii_\Kk}} \pi_\Kk( U_I ) \less \bigl( \pi_\Kk(\s_I^{-1}(0_I))  \less Z\bigr)  \;=\; |\Kk| \less  \bigl( |\s_\Kk^{-1}(0_\Kk)| \less Z\bigr) 
$$
so that its preimage is $\Ww$ and hence open, by the identity
$$
\pi_\Kk^{-1}(\pi_\Kk(\Ww))
\;=\; {\textstyle \bigsqcup_{I\in\Ii_\Kk}}  U_I \cap \pi_\Kk^{-1}\bigl( |\Kk| \less  ( |\s_\Kk^{-1}(0_\Kk)| \less Z ) \bigr) 
 \;=\; {\textstyle \bigsqcup_{I\in\Ii_\Kk}}  U_I \less \bigl( \s_I^{-1}(0_I) \less Z \bigr).
$$
Finally, using the above, we check that $\pi_\Kk(\Ww)$ has the required intersection
\begin{align*}
\pi_\Kk(\Ww) \cap  |\s_\Kk|^{-1}(|0_\Kk|)
\;=\; \bigl(  |\Kk| \less  \bigl( |\s_\Kk^{-1}(0_\Kk)| \less Z \bigr) \bigr)  \cap  |\s_\Kk|^{-1}(|0_\Kk|)
\;=\; Z .
\end{align*}
This proves the homeomorphism between the quotient space $\bigl|\s_\Kk^{-1}(0_\Kk)\bigr|$ and 
the subspace $ |\s_\Kk|^{-1}(|0_\Kk|)$.

Next, recall that $\psi_\Kk$ is a surjective functor from $\s_\Kk^{-1}(0_\Kk)$ to $\bX$ with objects $X$ (i.e.\ the footprints $F_I = \psi_I (\s_I^{-1}(0_I))$ cover $X$).
Hence the above general argument for realizations of functors shows that $|\psi_\Kk|$ is well defined, surjective, and continuous when $|\psi_\Kk|$ is considered as a map from the quotient space $|\s_\Kk^{-1}(0_\Kk)|$ to $X$.

The map $|\psi_\Kk|=\io_\Kk^{-1}$ considered here is given by composing this realization of the functor $\psi_\Kk$ with the natural homeomorphism $\bigl|\s_\Kk^{-1}(0_\Kk)\bigr|\overset{\cong}{\to} |\s_\Kk|^{-1}(|0_\Kk|)$.
So it remains to check continuity of its inverse $\io_{\Kk}$ with respect to the subspace topology on $|\s_\Kk|^{-1}(|0_\Kk|)\subset|\Kk|$. 
For that purpose we need to consider an open subset $V\subset|\Kk|$, that is $\pi_\Kk^{-1}(V)\subset \Obj_{\bB_\Kk}$ is open. Since $\Obj_{\bB_\Kk}$ is a disjoint union that means $\pi_\Kk^{-1}(V)=\bigsqcup_{I\in\Ii_\Kk} \{I\}\times W_I$ is a union of open subsets $W_I\subset U_I$.  So in the relative topology $W_I\cap \s_I^{-1}(0_I)\subset \s_I^{-1}(0_I)$ is open, as is its image under the homeomorphism $\psi_I: \s_I^{-1}(0_I) \to F_I \subset X$.
Therefore
$$
\io_{\Kk}^{-1}(V)
\;=\;  |\psi_{\Kk}|(V)
\;=\;\psi_\Kk\left(\s_\Kk^{-1}(0_\Kk)\cap
{\textstyle
\bigsqcup_{I\in\Ii_\Kk}} \{I\}\times W_I\right)
\,=\;
{\textstyle \bigcup_{I\in\Ii_\Kk}} \psi_I(W_I\cap \s_I^{-1}(0_I))
$$
is open in $X$ since it is a union of open subsets.  This completes the proof.
\end{proof}

Note that the injectivity of $\io_{\Kk}:X\to|\Kk|$ could be seen directly from the injectivity property \eqref{eq:useful2} of the equivalence relation $\sim$ on $\s_\Kk^{-1}(0_\Kk)\subset\Obj_{\bB_\Kk}$.
In particular, this property implies injectivity of the projection of the zero sets in fixed charts, $\pi_\Kk :\s_I^{-1}(0_I) \to |\Kk|$. This injectivity however only holds on the zero set.
On $U_I\less \s_I^{-1}(0_I)$, the projections $\pi_\Kk: U_I\to |\Kk|$ need not be injective,
as Example~\ref{ex:Knbhd} below shows.

\MS
The remainder of this section is a collections of examples which show that -- beyond the embedding of $X$ -- 
the virtual neighbourhood generally only has undesirable properties:
The maps from the domains $U_I$ of the charts to $|\Kk|$ need not be injective, $|\Kk|$ need not be Hausdorff, and $|\Kk|$ is -- except in very simple cases -- neither metrizable nor locally compact.

\begin{example}[Failure of Injectivity]\label{ex:Knbhd}\rm
The circle $X=S^1=\R/\Z$ can be covered by a single ``global'' smooth Kuranishi chart $\bK_0$ of dimension $1$ with domain $U_0= S^1\times \R$, obstruction space $E_0 =\R$, section map $s_0 = \pr_\R: U_0\to E_0 = \R$, and footprint map $\psi_i= \pr_{S^1}$.
A slightly more complicated smooth Kuranishi atlas (involving transition charts but still no cocycle conditions) can be obtained by the open cover $S^1=F_1\cup F_2\cup F_3$ with $F_i=(\frac i3,\frac{i+2}3) \subset \R/\Z$ such that all pairwise intersections $F_{ij}:=F_i\cap F_j \neq \emptyset$ are nonempty, but the triple intersection $F_1\cap F_2\cap F_3$ is empty. We obtain a covering family of basic charts $\bigl(\bK_i:=\bK_0|_{U_i}\bigr)_{i=1,2,3}$ with these footprints by restricting $\bK_0$ to the open domains $U_i:=F_i\times (-1,1)\subset S^1\times \R$.
Similarly, we obtain transition charts $\bK_{ij}:=\bK_0|_{U_i\cap U_j}$ and coordinate changes $\Hat\Phi_{i, ij}:= \Hat\Phi_{0,0}|_{U_i\cap U_j}$ by restricting the identity map $\Hat\Phi_{0,0}=({\rm id}_{U_0}, {\rm id}_{E_0}):\bK_0 \to \bK_0$ to the overlap $U_{ij}:= U_i\cap U_j$.
These are well defined for any pair $i,j\in\{1,2,3\}$ (and satisfy all cocycle conditions), but for a Kuranishi atlas it suffices to restrict to $i<j$. That is, the transition charts $\bK_{12},\bK_{13},\bK_{23}$ and corresponding coordinate changes
$\Hat\Phi_{1,12}, \Hat\Phi_{2,12}, \Hat\Phi_{1,13}, \Hat\Phi_{3,13}, \Hat\Phi_{2,23}, \Hat\Phi_{3,23}$ form transition data, for which the cocycle condition is vacuous.
The realization of this 
Kuranishi atlas is $|\Kk|=U_1\cup U_2\cup U_3\subset S^1\times \R$, and the maps $U_i\to |\Kk|$ are injective.

However, keeping the same basic charts $\bK_1, \bK_2$, and transition data for $i,j\in\{1,2\}$, we may choose $\bK_3$ to have the same form as $\bK_0$ but with domain $U_3\subset (0,2)\times \R$ such that the projection $\pi:\R\times \R \to S^1\times \R$ embeds $U_3\cap (\R\times\{0\})=(1,\frac 23)\times\{0\}$ to $F_3\times\{0\}$. We can moreover choose $U_3$ so large that the inverse image of $U_1\cap U_2$ meets $U_3$ in two components $\pi^{-1}(U_1\cap U_2)\cap U_3 = V_3^1 \sqcup V_3^2$ with $\pi(V_3^1)=\pi(V_3^2)$, but there are continuous lifts $\pi^{-1}: U_i \cap \pi(U_3) \to U_3$ with $V^i_3\subset\pi^{-1}(U_i)$; cf.\ Figure~\ref{fig:3}.
These intersections $V^i_3\subset U_3$ necessarily lie outside of the zero section $s_3^{-1}(0)=F_3\times\{0\}$, though their closure might intersect it.
Then it remains to construct transition data from $\bK_i$ for $i=1,2$ to $\bK_3$.
We choose the transition charts as restrictions $\bK_{i3}:= \bK_3|_{U_{i3}}$ of $\bK_3$ to the domains $U_{i3}:= \pi^{-1}(U_1)\cap U_3$, with transition maps $\Hat\Phi_{3,i3}:=\Hat\Phi_{3,3}|_{U_{i3}}$. Finally, we construct the transition maps $\Hat\Phi_{i,i3}:\bK_i|_{U_{i,i3}}\to\bK_3$ for $i=1,2$ by the identity $\Hat\phi_{i,i3}:={\rm id}_{E_i}$ on the identical obstruction spaces $E_i=E_3=E_0$ and the lift
$\phi_{i,i3}:= \pi^{-1}$ on the domain $U_{i,i3}: = U_1\cap \pi(U_3)$.

\begin{figure}[htbp] 
   \centering
   \includegraphics[width=4in]{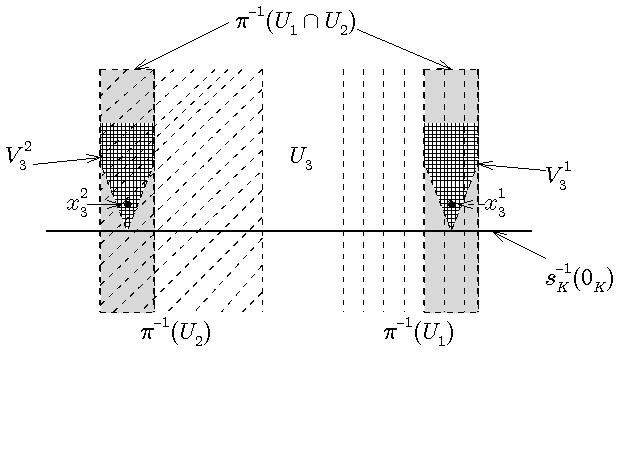}
   \vspace{-2cm}
   \caption{
The lift $\pi^{-1}(U_1\cap U_2)$ is shown as two light grey strips in $\R\times \R$, intersecting the dark grey region $U_3$ in the two shaded sets $V_3^1, V_3^2$.  The domains $U_1,U_2\subset S^1\times\R$ lift injectively to the dashed sets. The points $x_3^1\neq x_3^2 \in U_3$ have the same image in $|\Kk|\subset S^1\times\R$.
}
   \label{fig:3}
\end{figure}

This again defines a smooth Kuranishi atlas with vacuous cocycle condition, but the map $\pi_\Kk: U_3\to |\Kk|$ is not injective.
Indeed any point $x_3^1\in V_3^1\subset U_3$ is identified $[x_3^1]=[x_3^2]\in|\Kk|$ with the corresponding point $x_3^2\in V_3^2$ with $\pi(x_3^1)=\pi(x_3^2)= y \in S^1\times\R$.
Indeed, denoting by $(ij, z)$ the point $z$ considered as an element of $U_{ij}$ (which is just a simplified version of the previous notation $(I,x)$ for a point $x\in U_I$), we have
\begin{equation}\label{eq:equiv123}
(3,x_3^1)\sim (13, x_3^1)\sim (1,y)\sim(12,y)\sim(2,y)\sim (23,x_3^2)\sim (3,x_3^2),
\end{equation}
 where each equivalence is induced by the relevant coordinate change.
Since there are such points $x_3^1$ arbitrarily close to the zero set $s_3^{-1}(0)=F_3\times\{0\}$, the projection $\pi_\Kk:U_3\to |\Kk|$ is not injective on any neighborhood of the zero set $s_3^{-1}(0)$.
$\hfill\er$
\end{example}

Next, we give a simple example where $|\Kk|$ is not Hausdorff in any neighbourhood of $\io_\Kk(X)$ even though the map $s\times t:\Mor_{\bB_\Kk}\to \Obj_{\bB_\Kk}\times \Obj_{\bB_\Kk}$ is proper.

\begin{example}[Failure of Hausdorff property]  \label{ex:Haus}\rm
We construct a smooth Kuranishi atlas for $X := \R$, starting with a basic chart whose footprint $F_1=\R$ already covers $X$,
$$
\bK_1 := \bigl(\, U_1=\R^2 \,,\, E_1=\R \,,\, s_1(x,y)=y \,,\, \psi_1(x,0)=x \,\bigr) .
$$
We then construct a second basic chart $\bK_2:=\bK_1|_{U_2}$ with footprint $F_2=(0,\infty)\subset\R$ and the transition chart $\bK_{12}:=\bK_1|_{U_{12}}$ as restrictions of $\bK_1$ to the domains
$$
U_2: = \{-y<x\le0\}\cup \{x>0\}
, \qquad
U_{12} := \{ x>0\} .
$$
This induces coordinate changes $\Hat\Phi_{i,12}:=\Hat\Phi_{1,1}|_{U_{i,12}} : \bK_i|_{U_{i,12}}\to\bK_{12}$ for $i=1,2$ given by restriction of the trivial coordinate change $\bigl(\phi_{1,1}={\rm id}_{\R^2} , \Hat\phi_{1,1}={\rm id}_\R\bigr)$ to $U_{i,12} := U_{12}$.
This defines a Kuranishi atlas since there are no compositions of coordinate changes for which a cocycle condition needs to be checked.
Moreover, $s\times t$ is proper because
on each of the finitely many connected components of $\Mor_{\bB_\Kk}$ the target map $t$ restricts to a homeomorphism to a connected component of $\Obj_{\bB_\Kk}$. (For example, $t:\Mor_{\bB_\Kk} \supset U_{i,12} \to U_{12} \subset \Obj_{\bB_\Kk}$ is the identity.)

On the other hand the images in $|\Kk|$ of the points $(0,y)\in U_1$ and $(0,y)\in U_2$
for $y>0$ have no disjoint neighbourhoods
since for every $x>0$
$$
\bigl( 1, (x,y)\bigr) \sim
\bigl( 12, (x,y)\bigr) \sim
\bigl( 2, (x,y)\bigr) .
$$
Therefore $\io_\Kk(X)$ does not have a Hausdorff neighbourhood in $|\Kk|$.
$\hfill\er$
\end{example}

In Sections~\ref{ss:Haus}  and \ref{ss:shrink} below we will achieve both the injectivity and the Hausdorff property by a subtle shrinking of the domains of charts and coordinate changes. However, we are still unable to make the virtual neighbourhood $|\Kk|$ locally compact or even metrizable, due to the following natural example.

\begin{example}
[Failure of metrizability and local compactness]  \label{ex:Khomeo}
\rm
For simplicity we will give an example with noncompact $X = \R$. (A similar example can be constructed with $X = S^1$.)
We construct a smooth Kuranishi atlas $\Kk$ on $X$ by two basic charts, 
$\bK_1 = (U_1=\R, E_1=\{0\}, s=0,\psi_1=\id)$ and
$$
\bK_2 = \bigl(U_2=(0,\infty)\times \R,\ E_2=\R, \ s_2(x,y)= y,\ \psi_2(x,y)= x\bigr),
$$
one transition chart $\bK_{12} = \bK_2|_{U_{12}}$ with domain $U_{12} := U_2$, and the coordinate changes $\Hat\Phi_{i,12}$ induced by the natural embeddings of the domains $U_{1,12} := (0,\infty)\hookrightarrow (0,\infty)\times\{0\}$ and $U_{2,12} := U_2\hookrightarrow U_2$.
Then as a set $|\Kk| = \bigl(U_1\sqcup U_2\sqcup U_{12}\bigr)/\sim$
can be identified with $\bigl(\R\times\{0\}\bigr) \cup \bigl( (0,\infty)\times\R\bigr) \subset \R^2$.
However, the quotient topology at $(0,0)\in|\Kk|$ is strictly stronger than the subspace topology.
That is, for any $O\subset\R^2$ open the induced subset $O\cap|\Kk|\subset|\Kk|$ is open, but some open subsets of $|\Kk|$ cannot be represented in this way.
In fact,  
for any $\eps>0$ and continuous function $f:(0,\eps)\to (0,\infty)$,
the set
$$
U_{f,\eps} \, :=\; \bigl\{ [x] \,\big|\, x\in U_1, |x|< \eps \}  \;\cup\; \bigl\{ [(x,y)] \,\big|\, (x,y)\in U_2,  |x|< \eps , |y|<f(x)\} \;\subset\; |\Kk|
$$
is open in the quotient topology.  Moreover these sets form a basis for the 
neighbourhoods of 
$[(0,0)]$ in the quotient topology. 
To see this, let $V\subset |\Kk|$ be open in the quotient topology. Then,
since $\pi_\Kk^{-1}(V)\cap U_1$ is a neighbourhood of $0$, there is 
$\eps>0$ so that 
$\{ (x,0) \,|\,  |x|<\eps \} \subset V$.  Further, define 
$f:\{x\in\R \,|\, 0<x<\eps \} \to (0,\infty)$ by
$f(x) := \sup \{\de \,|\, B_\de(x,0)\subset V\}$, where $B_\de(x,0)$ is the open ball in $\R^2$ with radius $\de$. Then 
$f(x)>0$ for all 
$0<x<\eps$ because $\pi_\Kk^{-1}(V)\cap U_2$ is a neighbourhood of $(0,x)$.
The triangle inequality implies that 
$f(x')\ge f(x) - |x'-x|$ for all  $0<x,x'<\eps$. Hence $|f(x)- f(x')|\le |x'-x|$, so that $f$ is continuous.  
Thus we have constructed a neighbourhood $U_{f,\eps}\subset |\Kk|$ of $[(0,0)]$ of the above type with $U_{f,\eps}\subset V$.

We will use this to see that the point $[(0,0)]$ does not have a countable neighbourhood basis in the quotient topology.
Indeed, suppose by contradiction that $(U_k)_{k\in\N}$ is such a basis, then by the above we can iteratively find $1>\eps_k>0$ and $f_k:(0,\eps_k)\to(0,\infty)$ so that $U_{f_k,\eps_k}\subset U_k\cap U_{f_{k-1},\frac 12 \eps_{k-1}}$ (with $U_{f_0,\frac 12 \eps_0}$ replaced by $|\Kk|$). In particular, the inclusion $U_{f_k,\eps_k}\subset U_{f_{k-1},\frac 12 \eps_{k-1}}$ implies $\eps_k < \eps_{k-1}$.
Now there exists a continuous function $g:(0,1)\to (0,\infty)$ such that $g(\frac 12\eps_k) < f_k(\frac 12 \eps_k)$ for all $k\in\N$. 
Then the neighbourhood $U_{g,1}$ does not contain any of the $U_k$ because $U_{g,1}\supset U_k \supset U_{f_k,\eps_k}$ implies that $g(\frac 12\eps_k) \geq f_k(\frac 12 \eps_k)$.
This contradicts the assumption that $(U_k)_{k\in\N}$ is a neighbourhood basis of $[(0,0)]$, hence there exists no countable neighbourhood basis.

Note also that the point $[(0,0)]\in|\Kk|$ has no compact neighbourhood with respect to the subspace topology from $\R^2$, and hence neither  with respect to the stronger quotient topology on $|\Kk|$. 
The same failure of local compactness and metrizability occurs for any Kuranishi atlas that involves coordinate changes between charts with domains of different dimension (more precisely the issue arises from an embedding $U_{IJ}\to U_J$ if $U_{IJ}\subset U_I$ is not just a connected component and $\dim U_I < \dim U_J$). In particular, the 
tame Kuranishi atlases that we will work with to achieve the Hausdorff property, will -- except in trivial cases -- always exclude local compactness or metrizability.
$\hfill\er$
\end{example}

\begin{rmk}\label{rmk:Khomeo}\rm  
For the Kuranishi atlas in Example~\ref{ex:Khomeo} there exists an exhausting sequence $\ov{\Aa^n}\subset \ov{\Aa^{n+1}}$ of closed subsets of $\bigsqcup_{I\in \Ii_\Kk} U_I$ with the properties
\begin{itemize}
\item  
each $\pi_\Kk(\ov{\Aa^n})$ contains $\iota_\Kk(X)$;
\item  
each $\pi_\Kk(\ov{\Aa^n})\subset |\Kk|$ is metrizable and locally compact in the subspace topology;
\item 
$\bigcup_{n\in\N} \ov{\Aa^n} = \bigsqcup_{I\in \Ii_\Kk} U_I$.
\end{itemize}
For example, we can take $\ov{\Aa^n}$ to be the disjoint union of the closed sets 
$$
\ov{A_1^n}= [-n,n]\subset U_1, \qquad 
\ov{A_{2}^n} : = \{(x,y)\in U_2 \,\big|\,  x \geq \tfrac 1n, |y| \leq  n\},
$$
and any closed subset $\ov{A_{12}^n} \subset \ov{A_2^n}$.
However, in the limit $[(0,0)]$ becomes a ``bad point'' because its neighbourhoods have to involve open subsets of $U_2$.  

In fact, if we altered Example~\ref{ex:Khomeo} to a Kuranishi atlas for the compact space $X=S^1$, then we could choose $\ov{\Aa^n}$ compact, so that the subspace and quotient topologies on  $\pi_\Kk(\ov{\Aa^n})$
coincide by Proposition~\ref{prop:Ktopl1}~(ii). We emphasize the subspace topology above because that is the one inherited by (open) subsets of $\ov{\Aa^n}$.  For example, the quotient topology on $\pi_\Kk(\Aa^n)$, where $\Aa^n: = \bigcup_I {\rm int}(\ov{A_I^n})$ has the same bad properties at $[(\frac 1n,0)]$ as the quotient topology on $|\Kk|$ has at $[(0,0)]$, while the subspace topology on $\pi_\Kk(\Aa_n)$ is metrizable.
 We prove in Proposition~\ref{prop:Ktopl1} that a similar statement holds for all $\Kk$,
 though there we only consider a fixed set $\ov\Aa$ since we have no need for an exhaustion of the domains.
$\hfill\er$
 \end{rmk}

%%%%%%%%%%%%%%%%%%%%%%%%%%%%%%%%%%%%%%%%%%%%%%%%%%%%%%%%%%%%%%%%
\section{Topological taming of Kuranishi atlases}\label{s:Ks}
%%%%%%%%%%%%%%%%%%%%%%%%%%%%%%%%%%%%%%%%%%%%%%%%%%%%%%%%%%%%%%%%

Having defined the notion of topological Kuranishi atlas on a compact metrizable space $X$ (which we fix throughout), the previous chapter constructed a virtual neighbourhood $|\Kk|$ for $X$. However, Examples~\ref{ex:Haus} and~\ref{ex:Knbhd} showed that $|\Kk|$ need not be Hausdorff and that the maps from the domains $U_I$ of the charts to $|\Kk|$ need not be injective.
Moreover, in practice one can construct only weak Kuranishi atlases in the sense of Definition~\ref{def:Kwk}, although they do often have the filtration property of Definition~\ref{def:Ku3}.
The main result of this chapter is then Theorem~\ref{thm:K}, which states that given a filtered weak
topological Kuranishi atlas one can construct a topological Kuranishi atlas $\Kk$, whose neighborhood $|\Kk|$ is Hausdorff and has the injectivity property, and that moreover is well defined up to a natural notion of cobordism.  This cobordism theory is developed in Section~\ref{s:Kcobord}.

%%%%%%%%%%%%%%%%%%%%%%%%%%%%%%%%%%%%%%%%%%%%%%%%%%%%%%%%%%%%%%%
\subsection{Filtrations, Metrics, and Tameness} \label{ss:tame} \hspace{1mm}\\ \vspace{-3mm}
%%%%%%%%%%%%%%%%%%%%%%%%%%%%%%%%%%%%%%%%%%%%%%%%%%%%%%%%%%%%%%%

We begin by introducing the notion of a filtered weak topological Kuranishi atlas, which can be constructed 
in practice on compactified holomorphic curve moduli spaces, as outlined in \cite{MW1,Mcn}.
We then introduce tameness conditions for topological Kuranishi atlases that imply the Hausdorff property of the virtual neighbourhood. Finally, we provide tools for refining topological Kuranishi atlases to achieve the tameness condition.

\begin{defn}\label{def:Kwk}
A {\bf weak topological Kuranishi atlas} on a compact metrizable space $X$ is a covering family of basic charts with transition data $\Kk=\bigl(\bK_I,\Hat\Phi_{I J}\bigr)_{I, J\in\Ii_\Kk, I\subsetneq J}$ as in Definition \ref{def:Kfamily}, that satisfy the {\it weak cocycle condition} of Definition~\ref{def:cocycle}, that is for every triple $I,J,K\in\Ii_K$ with $I\subsetneq J \subsetneq K$ we have equality on the overlap $\Hat\Phi_{J K}\circ \Hat\Phi_{I J} \approx \Hat\Phi_{I K}$.
\end{defn}

\begin{rmk}\rm
In practice one does not need to construct a metric on the underlying topological space $X$. Rather, it suffices to check that $X$ is compact and 
has a weak topological Kuranishi atlas.
Then $X$ is automatically metrizable as in Remark~\ref{rmk:Ku}.
$\hfill\er$
\end{rmk}

This weaker notion of Kuranishi atlas is crucial for two reasons. Firstly, in the application to moduli spaces of holomorphic curves, it is not clear how to construct Kuranishi atlases that satisfy the cocycle condition.
Secondly, it is hard to preserve the cocycle condition while manipulating Kuranishi atlases, for example by shrinking as we do below.
Note that if $\Kk$ is only a weak Kuranishi atlas then we cannot define its domain category $\bB_{\Kk}$ precisely as in Definition~\ref{def:catKu} since the given set of morphisms is not closed under composition.  We will deal with this by simply not considering this category unless $\Kk$ is a Kuranishi atlas, i.e.\ satisfies the standard cocycle condition in Definition~\ref{def:cocycle}.

On the other hand, the constructions of transition data in practice, e.g.\ in \cite{MW:GW,Mcn},  use a sum construction which has the effect of adding the obstruction bundles 
and thus yields an additivity property $\E_I = {\textstyle \bigoplus_{i\in I}} \; \Hat\Phi_{iI}(\E_i)$ in the smooth context. It generalizes to the following filtration property in the topological context.
Here we simplify the notation by writing $\Hat\Phi_{i I}:= \Hat\Phi_{\{i\} I}$ for the coordinate change $\bK_i =\bK_{\{i\}} \to \bK_I$ where $i\in I$.
This notion uses subsets $\E_{IJ}\subset \E_J$  that play the role of $\Hat\Phi_{IJ} (\E_I) \subset \E_J$, and allows us to formulate a topological version of the index condition, which can only be formulated for Kuranishi charts and coordinate changes whose structure maps have well defined differentials. (Then the index condition requires the differentials to identify kernel and cokernel of the charts.)
This generalized index property will be crucial in the taming construction of Proposition~\ref{prop:proper}.

\begin{defn}\label{def:Ku3}  
Let $\Kk$ be a  weak topological Kuranishi atlas.  We say that $\Kk$ is {\bf filtered} if 
it is equipped with a {\bf filtration}, that is a tuple of closed subsets $\E_{IJ}\subset \E_J$ for each $I,J\in \Ii_\Kk$ with $I\subset J$, that satisfy the following conditions:

\begin{enumerate}
\item $\E_{JJ}= \E_J$ and $\E_{\emptyset J} = \im 0_J$ for all $J\in\Ii_\Kk$;
\item 
$\Hat\Phi_{JK}\bigl(
\pr_J^{-1}(U_{JK})\cap
\E_{IJ}\bigr) = \E_{IK}\cap \pr_K^{-1}(\im \phi_{JK})$ for all $I,J,K\in\Ii_\Kk$ with
${I\subset J\subsetneq K}$;
\item  $\E_{IJ}\cap \E_{HJ} = \E_{(I\cap H)J}$ for all $I,H,J\in\Ii_\Kk$ with $I, H \subset J$;
\item  $\im \phi_{IJ}$ is an open subset of $\s_J^{-1}(\E_{IJ})$
for all $I,J\in \Ii_\Kk$ with $I\subsetneq J$.
\end{enumerate}
\end{defn}

\begin{rmk}\rm  
Applying condition (ii) above to any triple $I=I \subsetneq J$ for $J\in\Ii_\Kk$ gives
\begin{equation}\label{eq:filt}
\im \Hat\Phi_{IJ} = \Hat\Phi_{IJ} (\pr_I^{-1}(U_{IJ}))
=\E_{IJ}\cap \pr_J^{-1}(\im\phi_{IJ}).
\end{equation}
In particular, by the compatibility of coordinate changes $\s_J\circ\phi_{IJ}=\Hat\Phi_{IJ}\circ\s_I|_{U_{IJ}}$
we obtain
$$
\s_J\bigl(\phi_{IJ}(U_{IJ})\bigr) = \Hat\Phi_{IJ}(\s_I(U_{IJ})) \subset  
\im\Hat\Phi_{IJ}
\subset \E_{IJ}.
$$
In other words, the first three conditions above imply the inclusion $\phi_{IJ}(U_{IJ}) \subset \s_J^{-1}(\E_{IJ})$.
Condition (iv) strengthens this by saying the image is open.  
This can be viewed as topological version of the index condition, since for smooth coordinate changes it follows from the index condition.
$\hfill\er$
\end{rmk}

\begin{lemma} \label{le:Ku3}
For any filtration $(\E_{IJ})_{I\subset J}$ on a weak topological Kuranishi atlas $\Kk$ we have for any $H,I,J\in\Ii_\Kk$
\begin{align} \label{eq:CIJ}
 H,I\subset J
\quad\Longrightarrow\quad &
\s_J^{-1}(\E_{IJ}) \cap \s_J^{-1}(\E_{HJ}) = \s_J^{-1}(\E_{(I\cap H)J}), 
\end{align}
in particular
\begin{align}\label{eq:CIJ0}
H\cap I=\emptyset \quad\Longrightarrow\quad & \s_J^{-1}(\E_{IJ}) \cap \s_J^{-1}(\E_{HJ}) = \s_J^{-1}(0_J).
\end{align}
\end{lemma}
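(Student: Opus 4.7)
The plan is to derive both identities directly from the filtration axioms together with a basic set-theoretic identity about preimages.

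First I would recall the elementary fact that for any map $f:A\to B$ and any subsets $S,T\subset B$ one has $f^{-1}(S)\cap f^{-1}(T) = f^{-1}(S\cap T)$. Applied to $f=\s_J$, $S=\E_{IJ}$, and $T=\E_{HJ}$ this gives
\[
\s_J^{-1}(\E_{IJ}) \cap \s_J^{-1}(\E_{HJ}) \;=\; \s_J^{-1}(\E_{IJ}\cap \E_{HJ}).
\]
Then I would apply the filtration compatibility condition (iii) in Definition~\ref{def:Ku3}, which asserts $\E_{IJ}\cap \E_{HJ} = \E_{(I\cap H)J}$, to rewrite the right hand side as $\s_J^{-1}(\E_{(I\cap H)J})$. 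This establishes \eqref{eq:CIJ}.

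For the special case \eqref{eq:CIJ0}, I would substitute $I\cap H = \emptyset$ into the identity just proved, so that the right hand side becomes $\s_J^{-1}(\E_{\emptyset J})$. By condition (i) of Definition~\ref{def:Ku3} we have $\E_{\emptyset J} = \im 0_J$, and by the definition of the zero set $\s_J^{-1}(0_J) := \s_J^{-1}(\im 0_J)$ (see Definition~\ref{def:tchart}), so $\s_J^{-1}(\E_{\emptyset J}) = \s_J^{-1}(0_J)$, which yields \eqref{eq:CIJ0}.

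There is no real obstacle here: both claims are immediate consequences of the axioms once the preimage identity is invoked. The only minor point worth noting is that the notation $\s_J^{-1}(0_J)$ in \eqref{eq:CIJ0} refers to the zero set as defined in Definition~\ref{def:tchart}, i.e.\ the preimage of the image of the zero section, which is precisely the content delivered by condition (i) of the filtration.
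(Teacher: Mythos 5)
Your proof is correct and follows essentially the same route as the paper: the paper's proof likewise takes preimages under $\s_J$ of the filtration identity (iii) and invokes (i) together with the convention $\s_J^{-1}(0_J)=\s_J^{-1}(\im 0_J)$ for the case $H\cap I=\emptyset$. Your write-up simply makes explicit the elementary preimage identity that the paper leaves implicit.
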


\begin{proof}    
These statements all follow from applying $\s_J$ to the defining property 
(iii), and making use of (i) in case $H\cap I=\emptyset$.
\end{proof}

\begin{example}\label{ex:Ku3} \rm
The prototypical example of a smooth atlas that is not filtered  is one that has two basic charts $\bK_1, \bK_2$ with overlapping (but distinct) footprints, sections $\s_1,\s_2$ whose zero sets $\s_1^{-1}(0_1), \s_2^{-1}(0_2)$ have empty interiors, and the same obstruction space,  
which we understand to mean that all bundles $\E_i=U_i\times E$ and $\E_{12}=U_{12}\times E$
have the same fiber~$E$. 
In this case the index condition of Remark~\ref{rmk:index} implies that each $\im\phi_{i(12)}$ is an open submanifold of $U_{12}$ that contains $\s_{12}^{-1}(0_{12})$, see \cite{MW1}.
If this atlas were also filtered, then conditions (iii) and (iv) in Definition~\ref{def:Ku3} would imply that $\s_{12}^{-1}(\E_{1(12)})\cap \s_{12}^{-1}(\E_{2(12)})=\s_{12}^{-1}(0_{12})$ contains the open neighbourhood $\im \phi_{1(12)} \cap \im \phi_{2(12)}$ of the zero set $\s_{12}^{-1}(0_{12})$.  
This is possible only if $\im \phi_{1(12)} \cap \im \phi_{2(12)}=\s_{12}^{-1}(0_{12})$, which implies that $\psi_i^{-1}(F_{12})$ coincides with the open set $U_{i(12)}$,  contradicting the assumption on the sections $\s_i$.
$\hfill\er$
\end{example}

Next, we introduce a notion of metrics on topological Kuranishi atlases that will be part of the main result.

\begin{defn}\label{def:metric}  
A topological Kuranishi atlas $\Kk$ is said to be {\bf metrizable} if there is a bounded metric $d$ on the set $|\Kk|$ such that for each $I\in \Ii_\Kk$ the pullback metric $d_I:=(\pi_\Kk|_{U_I})^*d$ on $U_I$ induces the given topology on the 
domain $U_I$.
In this situation we call $d$ an {\bf admissible metric} on $|\Kk|$. 
A {\bf metric topological Kuranishi atlas} is a pair $(\Kk,d)$ consisting of a metrizable topological Kuranishi atlas together with a choice of  admissible metric $d$.
For a metric topological Kuranishi atlas, we denote the $\de$-neighbourhoods of subsets $Q\subset |\Kk|$ resp.\ $A\subset U_I$ for $\de>0$ by
\begin{align*}
B_\de(Q) &\,:=\; \bigl\{w\in |\Kk|\ | \ \exists q\in Q : d(w,q)<\de \bigr\}, \\
B^I_\de(A) &\,:=\; \bigl\{x\in U_I\ | \ \exists a\in A : d_I(x,a)<\de \bigr\}.
\end{align*}
\end{defn}

It is important to note that an admissible metric generally does not induce the quotient topology on $|\Kk|$, since this may not be not metrizable by Example~\ref{ex:Khomeo}.
However, the following shows that the metric topology on $|\Kk|$ is weaker (has fewer open sets) than the quotient topology.

\begin{lemma}\label{le:metric}  
Suppose that $d$ is an admissible metric on the virtual neighbourhood $|\Kk|$ of a
topological Kuranishi atlas $\Kk$.
Then the following holds.
\begin{enumerate}
\item
The identity $\id_{|\Kk|} :|\Kk| \to (|\Kk|,d)$ is continuous as a map from the quotient topology to the metric topology on $|\Kk|$.
\item
In particular, each set $B_\de(Q)$ is open in the quotient topology on $|\Kk|$, so that the 
existence of an admissible metric implies that
$|\Kk|$ is Hausdorff.
\item 
The embeddings $\phi_{IJ}$ that are part of the coordinate changes 
for $I\subsetneq J\in\Ii_\Kk$
are isometries when considered as maps $(U_{IJ},d_I)\to (U_J,d_J)$.
\end{enumerate}
\end{lemma}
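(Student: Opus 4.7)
The plan is to deduce all three assertions directly from the defining identity $d_I(x,y) = d\bigl(\pi_\Kk(x), \pi_\Kk(y)\bigr)$ built into the pullback $d_I := (\pi_\Kk|_{U_I})^*d$. Part (i) carries the main content; parts (ii) and (iii) then drop out almost immediately.

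For (i), I would invoke the universal property of the quotient topology on $|\Kk| = \Obj_{\bB_\Kk}/\!\sim$: a map $f:|\Kk|\to Y$ is continuous for the quotient topology iff $f\circ\pi_\Kk$ is continuous on the disjoint union $\bigsqcup_{I\in\Ii_\Kk} U_I$, which in turn holds iff each restriction $f\circ\pi_\Kk|_{U_I}:U_I\to Y$ is continuous. Applying this to $f = \id_{|\Kk|}$ viewed as a map to $(|\Kk|,d)$, the relevant restrictions are the maps $\pi_\Kk|_{U_I}:(U_I,d_I)\to(|\Kk|,d)$, which by construction are isometric. Since by admissibility the pullback $d_I$ induces the original topology on $U_I$, each such restriction is continuous, and (i) follows.

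For (ii), every $\de$-neighbourhood $B_\de(Q)$ is open in the metric topology on $|\Kk|$ by definition, so (i) transports it to an open set in the quotient topology. The Hausdorff property then follows by the standard argument: for distinct $p,q\in|\Kk|$ set $r:=d(p,q)>0$; the metric balls $B_{r/3}(\{p\})$ and $B_{r/3}(\{q\})$ are disjoint by the triangle inequality and are open in the quotient topology by the previous sentence, hence separate $p$ and $q$.

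For (iii), the key observation is that whenever $x\in U_{IJ}$, the morphism $(I,J,x)\in\Mor_{\bB_\Kk}$ identifies $(I,x)$ with $(J,\phi_{IJ}(x))$ in the equivalence relation of Definition~\ref{def:Knbhd}, so $\pi_\Kk(x) = \pi_\Kk(\phi_{IJ}(x))$. Combining this with the pullback identity for $d_I$ and $d_J$ yields, for any $x,y\in U_{IJ}$,
\[
d_J(\phi_{IJ}(x),\phi_{IJ}(y)) = d\bigl(\pi_\Kk(\phi_{IJ}(x)),\pi_\Kk(\phi_{IJ}(y))\bigr) = d\bigl(\pi_\Kk(x),\pi_\Kk(y)\bigr) = d_I(x,y),
\]
which is the isometry claim. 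There is no serious obstacle; the lemma is essentially a tautological unpacking of admissibility. The only point worth checking with care is that $d_I$ restricted to $U_{IJ}\subset U_I$ coincides with the pullback of $d$ along $\pi_\Kk|_{U_{IJ}}$, which is automatic since pullback of metrics commutes with restriction to subsets.
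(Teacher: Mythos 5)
Your proposal is correct and takes essentially the same approach as the paper: in both arguments everything reduces to the fact that each $\pi_\Kk|_{U_I}$ is distance-preserving for the pullback metric $d_I$, which by admissibility induces the given topology on $U_I$, and your part (iii) is exactly the paper's argument. The only cosmetic difference is in (i)--(ii), where you invoke the universal property of the quotient topology, while the paper verifies directly via the triangle inequality that each $U_I\cap\pi_\Kk^{-1}(B_\de(Q))$ is open in $U_I$.
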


\begin{proof} 
Since the neighbourhoods of the form $B_\de(Q)$ define the metric topology, it suffices to prove that these are also open in the quotient topology, i.e.\ that each subset $U_I\cap \pi_\Kk^{-1}(B_\de(Q))$ is open in $U_I$. 
So consider $x\in U_I$ with $\pi_\Kk(x)\in B_\de(Q)$. By hypothesis there is $q\in Q$ and $\eps>0$ such that $d(\pi_\Kk(x),q)<\de-\eps$, and compatibility of metrics and the triangle inequality then imply the inclusion $\pi_\Kk(B^I_\eps(x))\subset B_\eps(Q)\subset B_\de(Q)$.
Thus $B^I_\eps(x)$ is a neighbourhood of $x\in U_I$ contained in $U_I\cap \pi_\Kk^{-1}(B_\de(Q))$. 
This proves the openness required for (i) 
and (ii).
Since every metric space is Hausdorff, $|\Kk|$ is therefore Hausdorff in the quotient topology as stated in (ii).   
Claim (iii) follows from the construction of $d_I,d_J$ as pullback of $d$ under $\pi_\Kk$ and the fact that $\pi_\Kk(\phi_{IJ}(x))=\pi_\Kk(x)$ for $x\in U_{IJ}$. 
\end{proof}

One might hope to achieve the Hausdorff property by constructing an admissible metric, but the existence of the latter is highly nontrivial. Instead, in a refinement process that will take up the remainder of this chapter, we will first construct a Kuranishi atlas whose virtual neighbourhood has the Hausdorff property, then prove metrizability of certain subspaces, and finally obtain an admissible metric by pullback to a further refined Kuranishi atlas. 
This process will prove the following theorem whose formulation uses the notions of shrinking from Definition~\ref{def:shr}, tameness from Definition~\ref{def:tame}, preshrunk tame shrinking from Proposition~\ref{prop:metric}, and concordance from Definition~\ref{def:Kcobord}.

\begin{thm}\label{thm:K}
Let $\Kk$ be a filtered weak topological Kuranishi atlas.
Then there exists a preshrunk tame shrinking of $\Kk$. It provides a metrizable tame topological Kuranishi atlas $\Kk'$ with domains $(U'_I\subset U_I)_{I\in\Ii_{\Kk'}}$ such that the realizations $|\Kk'|$ and $|\bE_{\Kk'}|$ are Hausdorff in the quotient topology.
In addition, for each $I\in \Ii_{\Kk'} = \Ii_\Kk$ the projection maps $\pi_{\Kk'}: U_I'\to |\Kk'|$ and
$\pi_{\Kk'}:
\E'_I:=\pr_I^{-1}(U'_I)
\to |\bE_{\Kk'}|$ are homeomorphisms onto their images and fit into a commutative diagram
$$
\begin{array}{ccc} 
\E'_I & \stackrel{\pi_{\Kk'}}\longhookrightarrow & |\bE_{\Kk'}|  \quad \\
\;\; \downarrow \scriptstyle \pr_I    & & \;\; \downarrow \scriptstyle |\pr_{\Kk'}| \\
U_I' &
\stackrel{\pi_{\Kk'}} \longhookrightarrow  &|\Kk'| \quad 
\end{array}
$$
Any two such preshrunk tame shrinkings with choices of admissible metrics are concordant by a metric tame topological Kuranishi concordance whose realization also has the above Hausdorff and homeomorphism properties.
\end{thm}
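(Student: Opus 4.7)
The strategy is to construct a preshrunk tame shrinking in three stages—precompact shrinking, upgrade to tameness via the filtration, and metric construction—then obtain uniqueness by running the same construction on the concordance atlas $[0,1] \times \Kk$.

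For existence, I would first apply Lemma~\ref{le:restr0} iteratively to produce precompact open shrinkings $V_i \sqsubset U_i$ of the basic charts whose footprints $F'_i \sqsubset F_i$ still cover $X$, followed by precompact shrinkings $V_I \sqsubset U_I$ of the transition charts and restrictions $V_{IJ} \subset V_I \cap \phi_{IJ}^{-1}(V_J)$ of the coordinate changes via Lemma~\ref{le:restrchange}. The essential step is to refine these further so that the strong cocycle condition $\phi_{IJ}^{-1}(V_{JK}) = V_{IK}$ holds, which together with the filtration yields the tameness of Definition~\ref{def:tame}. I would proceed by induction on $|J|$, using the openness condition (iv) of Definition~\ref{def:Ku3} to maximally extend $V_{IJ}$ inside $V_I \cap \phi_{IJ}^{-1}(V_J)$, and then using the intersection identity~\eqref{eq:CIJ} to verify that $\phi_{IJ}(V_{IJ})$ and $\phi_{HJ}(V_{HJ})$ inside $V_J$ meet exactly along $\phi_{(I\cap H)J}(V_{(I\cap H)J})$, which is what makes the strong cocycle condition self-consistent across all triples. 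A further preshrinking step, needed for the subsequent metric construction, then yields the domains $U'_I$ of $\Kk'$.

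Given the tame shrinking $\Kk'$, the Hausdorff property of $|\Kk'|$ and the injectivity of $\pi_{\Kk'}: U'_I \hookrightarrow |\Kk'|$ follow from a separation argument: under the strong cocycle and filtration conditions, distinct equivalence classes $[I,x] \neq [J,y]$ always admit representatives separable by saturated open neighborhoods in $\Obj_{\bB_{\Kk'}}$, either by working in the common larger chart $\bK_{I \cup J}$ when $I \cup J \in \Ii_{\Kk'}$ (using the filtration to prevent unexpected collisions in $V_{I \cup J}$), or by appealing to disjoint footprints otherwise. The homeomorphism part of the diagram then follows from local compactness of $U'_I$ and the openness of $\pi_{\Kk'}$ onto its image. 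To construct an admissible metric $d$ on $|\Kk'|$, I would build compatible metrics $d_I$ on each $U'_I$ by working inductively down the partial order on $\Ii_{\Kk'}$ and pulling back via each $\phi_{IJ}$, invoking Lemma~\ref{le:metric}~(iii) as the isometry constraint on overlaps; since $\Ii_{\Kk'}$ is finite and the strong cocycle condition ensures the pullback metrics agree on all nested overlaps, the resulting family glues to a bounded metric on $|\Kk'|$ with the required property.

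For the uniqueness statement, given two preshrunk tame metric shrinkings $(\Kk'_0, d_0)$ and $(\Kk'_1, d_1)$, the product atlas $[0,1] \times \Kk$ inherits a filtered weak topological Kuranishi atlas structure on $[0,1] \times X$ by taking products of domains and sections with $\id_{[0,1]}$, with the filtration pulled back from $\Kk$. Applying the existence construction to this atlas---while enforcing at each shrinking step collar-compatibility with $(\Kk'_0, d_0)$ near $t = 0$ and $(\Kk'_1, d_1)$ near $t = 1$---yields the required metric tame topological Kuranishi concordance. The principal technical obstacle throughout is propagating the strong cocycle condition through the inductive shrinking: any modification of $V_{IK}$ forces simultaneous modifications of all $V_{IJ}$ and $V_{JK}$ with $I \subsetneq J \subsetneq K$, and the resulting web of constraints is only consistent because the filtration conditions (iii) and (iv) of Definition~\ref{def:Ku3} force the various coordinate-change images in $V_J$ to interact only through the closed strata $\s_J^{-1}(\E_{IJ})$. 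Without the filtration, one merely has the weak cocycle condition, which is why weak atlases cannot be tamed directly.
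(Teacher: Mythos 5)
Your high-level outline matches the paper's skeleton (existence of a tame shrinking via iterative domain reduction using the filtration, Hausdorffness from tameness, a pre-shrinking step before building the metric, and uniqueness via the concordance $[0,1]\times\Kk$ with collar control). However, two substantial gaps remain, and I want to focus on those.

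First, the tameness construction. You say "induct on $|J|$ using condition (iv) to maximally extend $V_{IJ}$ inside $V_I \cap \phi_{IJ}^{-1}(V_J)$" and then "use \eqref{eq:CIJ} to verify" the intersection pattern. This does not confront the real obstacle. Condition (iv) and "maximal extension" do give you the second tameness identity $\phi_{IJ}(U_{IK}) = U_{JK}\cap \s_J^{-1}(\E_{IJ})$ at each stage (this is essentially Step~B in the paper's Proposition~\ref{prop:proper}), but they do nothing to force the first tameness identity $U_{IJ}\cap U_{IK}= U_{I (J\cup K)}$. That identity is a constraint among domains of coordinate changes out of the \emph{same} chart $\bK_I$ to different targets, and "maximal" choices generically fail it. The paper's proof makes this the central difficulty, and it is resolved by Lemma~\ref{le:set}: a delicate point-set construction that, given constraining open sets $W_K$ meeting the zero set along $Z_K = \bigcap_{i\in K} Z_i$, produces open $U_K \subset W_K$ with $U_J\cap U_K = U_{J\cup K}$. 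Your proposal has no analogue of this lemma, and the inductive step you describe leaves \eqref{eq:tame1} unverified. Note also that the paper inducts on $|I|$ (the source index), not $|J|$, which is dictated by the direction of the coordinate changes; the induction has to be set up so that adjusting $U_{IJ}$ for a fixed $I$ of given size doesn't invalidate earlier levels, and that constraint drives the two-step (A and B) structure of the proof.

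Second, the metric construction. You propose to build metrics $d_I$ on each $U'_I$ pulling back along the $\phi_{IJ}$ and to "glue" them to a metric on $|\Kk'|$. This is a genuinely different route from the paper's, and the gluing step is not justified. Even if you arrange the $\phi_{IJ}$ to be isometries between the $(U'_{IJ},d_I)$ and their images, defining $d([I,x],[J,y])$ when $(I,x)$ and $(J,y)$ lie in no common chart requires either a chain/infimum construction (whose triangle inequality and compatibility with each $d_I$ you would still have to prove — and the chain infimum can shrink distances below $d_I$ on a single chart) or some ad hoc extension. The paper sidesteps this entirely: it uses the pre-shrinking $\Kk_{sh}$ with domains $\Aa = \bigsqcup U_I^{sh} \sqsubset \Obj_{\bB_{\Kk'}}$ precompact in the intermediate tame atlas $\Kk'$, applies Proposition~\ref{prop:Ktopl1}~(iv) (Urysohn metrization on the compact Hausdorff subset $\pi_{\Kk'}(\ov\Aa) \subset |\Kk'|$), and then pulls back via the injection $\io:|\Kk_{sh}| \to |\Kk'|$ from Lemma~\ref{le:injtameshr}. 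This both explains precisely \emph{why} a pre-shrinking is needed (you assert it is needed but not why) and avoids the gluing problem you'd otherwise have to solve. Your version as written has a gap at "the resulting family glues to a bounded metric on $|\Kk'|$."

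Your Hausdorffness sketch is closer to the paper but compressed: the separation argument you describe is essentially what Lemma~\ref{le:Ku2} enables, but the paper in fact proves Hausdorffness by showing the equivalence relation is closed and invoking Lemma~\ref{le:bourb}; direct separation by saturated opens would still rest on the same structural characterization of $\sim$. The uniqueness-via-concordance outline is fine.
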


\begin{proof} 
The key step is Proposition~\ref{prop:proper}, which establishes the existence of a tame shrinking. 
As we show in Proposition~\ref{prop:metric}, the existence of a metric tame shrinking is an easy consequence. 
Uniqueness up to metric tame concordance is proven by applying Theorem~\ref{thm:cobord2} to the 
product concordance $[0,1]\times \Kk$ with product filtration.
By Proposition~\ref{prop:Khomeo} and Lemma~\ref{le:cob0} for the concordance, tameness implies the Hausdorff and homeomorphism properties.
The diagram commutes since it arises as the realization of commuting functors to $\pr_{\Kk'}:\bE_{\Kk'}\to\bB_{\Kk'}$.
\end{proof}

The Hausdorff property for the virtual neighbourhood $|\Kk|$ will require the following control of the domains of coordinate changes, which we will achieve in Section~\ref{ss:shrink} by a shrinking from a filtered weak Kuranishi atlas.

\begin{defn}\label{def:tame}
A weak topological Kuranishi atlas is {\bf tame} if it is equipped with a filtration $(\E_{IJ})_{I\subset J}$ such that for all $I,J,K\in\Ii_\Kk$ we have
\begin{align}\label{eq:tame1}
U_{IJ}\cap U_{IK}&\;=\; U_{I (J\cup K)}\qquad\qquad
\quad\;
\forall I\subset J,K ;\\
\label{eq:tame2}
\phi_{IJ}(U_{IK}) &\;=\; U_{JK}\cap 
\s_J^{-1}(\E_{IJ})
 \qquad\forall I\subset J\subset K.
\end{align}
Here we allow equalities, using the notation $U_{II}:=U_I$ and $\phi_{II}:={\rm Id}_{U_I}$.
Further, to allow for the possibility that $J\cup K\notin\Ii_\Kk$, we define
$U_{IL}:=\emptyset$ for $L\subset \{1,\ldots,N\}$ with $L\notin \Ii_\Kk$.
Therefore \eqref{eq:tame1} includes the condition
$$
U_{IJ}\cap U_{IK}\ne \emptyset
\quad \Longrightarrow \quad F_J\cap F_K \ne \emptyset  \qquad \bigl( \quad \Longleftrightarrow\quad
J\cup K\in \Ii_\Kk \quad\bigr).
$$
\end{defn}

The notion of tameness generalizes the identities $F_J\cap F_K=F_{J\cup K}$ and $\psi_J^{-1}(F_{K}) = U_{JK}\cap \s_J^{-1}(0_J)$ between the footprints and zero sets, which we can include into \eqref{eq:tame1} and \eqref{eq:tame2} as the case $I = \emptyset$, by using the notation 
\begin{equation}\label{eq:empty}
U_{\emptyset J}: = F_J,\qquad \phi_{\emptyset J}:=\psi_J^{-1}, 
\qquad \E_{\emptyset J}:= \im 0_J .
\end{equation}
Indeed, the first tameness condition \eqref{eq:tame1} extends the identity for intersections of footprints -- which is equivalent to $\psi_I^{-1}(F_J)\cap \psi_I^{-1}(F_K) = \psi_I^{-1}(F_{J\cup K})$ for all $I\subset J,K$ 
-- to the domains of the transition maps in $U_I$. 
In particular, with $I\subset J\subset K$ it implies nesting of the domains of the transition maps,
\begin{equation}\label{eq:tame4}
U_{IK}\subset U_{IJ} \qquad\forall I\subset J \subset K.
\end{equation}
(This in turn generalizes the $I=\emptyset$ case $F_K\subset F_J$ for $J \subset K$.)
The second tameness condition \eqref{eq:tame2} extends the relation between footprints and zero sets -- equivalent to $\phi_{IJ}(\psi_I^{-1}(F_K)) = U_{JK}\cap \s_J^{-1}(0_J)$ for all $I\subset J\subset K$ --
to a relation between domains of transition maps and preimages of corresponding subbundles by the section.
In particular, with $J=K$ it controls the image of the transition maps,
generalizing the $I=\emptyset$ case $\psi_J^{-1}(F_J) =  \s_J^{-1}(0_J)$ to
$\phi_{IJ}(U_{IJ}) =  \s_J^{-1}(\E_{IJ})$ for all $I\subset J$.
This strengthens the inclusion $\im\phi_{IJ}\subset  \s_J^{-1}(\E_{IJ})$ from Lemma~\ref{le:Ku3}.
As a result, tameness controls the topology and intersections of images of the coordinate changes as follows.

\begin{lemma}\label{le:phitrans} 
If $\Kk$ is a tame topological Kuranishi atlas, then 

the images of the transition maps $\phi_{IJ}$ and $\Hat\Phi_{IJ}$ are closed subsets of the Kuranishi domain $U_J$ resp.\ bundle $\E_J$,
\begin{equation}\label{eq:tame3}
\im\phi_{IJ} =  \s_J^{-1}(\E_{IJ}) \subset U_J , \qquad
\im\Hat\Phi_{IJ}=\E_{IJ}\cap (s_J\circ \pr_J)^{-1}(\E_{IJ}) \subset \E_J ,
\end{equation}
for any $I,J\in\Ii_\Kk$, $I\subset J$.
Moreover,
for any  $J\in\Ii_\Kk$, $H, I\subset J$ with $H \cap I\ne \emptyset$ we have
\begin{equation}\label{eq:tame5}
\im \phi_{H J}\cap\im \phi_{IJ}=\im \phi_{(H\cap I) J} .
\end{equation}
In case $H\cap I=\emptyset$ we have the intersection identity\footnote{This intersection identity is consistent with \eqref{eq:empty} since $F_H\cap F_I \supset F_J$ so that $\s_J^{-1}(0_J) = \psi_J^{-1}(F_J) = \psi_J^{-1}(F_H\cap F_I) =  \im \phi_{\emptyset J}$.} 
$\im \phi_{H J} \cap \im \phi_{IJ} = \s_J^{-1}(0_J)$.
\end{lemma}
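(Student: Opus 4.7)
The plan is to derive all identities directly from the tameness condition \eqref{eq:tame2} together with the closedness of the filtration sets $\E_{IJ}\subset \E_J$ and the intersection property (iii) of Definition~\ref{def:Ku3}.

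First I would prove the image identities \eqref{eq:tame3} by specializing \eqref{eq:tame2} to $K=J$: since $U_{JJ}=U_J$ this yields $\phi_{IJ}(U_{IJ}) = \s_J^{-1}(\E_{IJ})$ directly, establishing the first half of \eqref{eq:tame3}. Closedness of $\im \phi_{IJ}$ in $U_J$ is then automatic, since $\E_{IJ}\subset \E_J$ is closed by Definition~\ref{def:Ku3} and $\s_J$ is continuous. For the second half of \eqref{eq:tame3}, I would combine this with the identity $\im \Hat\Phi_{IJ} = \E_{IJ}\cap \pr_J^{-1}(\im\phi_{IJ})$ from \eqref{eq:filt}: substituting $\im\phi_{IJ}= \s_J^{-1}(\E_{IJ})$ gives exactly $\E_{IJ}\cap (\s_J\circ\pr_J)^{-1}(\E_{IJ})$, which is closed since it is the intersection of a closed set with the preimage of a closed set under a continuous map.

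Next I would deduce the intersection identity \eqref{eq:tame5} by a clean two-step computation. Using \eqref{eq:tame3} twice,
\begin{equation*}
\im\phi_{HJ}\cap \im\phi_{IJ}
\;=\;\s_J^{-1}(\E_{HJ})\cap \s_J^{-1}(\E_{IJ})
\;=\;\s_J^{-1}\bigl(\E_{HJ}\cap \E_{IJ}\bigr),
\end{equation*}
and then by the filtration axiom (iii), $\E_{HJ}\cap \E_{IJ}= \E_{(H\cap I)J}$, so the right-hand side equals $\s_J^{-1}(\E_{(H\cap I)J})$. When $H\cap I\ne\emptyset$, I would argue that $H\cap I\in\Ii_\Kk$ (since $F_J\subset F_{H\cap I}$ is nonempty), so \eqref{eq:tame3} applies again to give $\s_J^{-1}(\E_{(H\cap I)J})=\im\phi_{(H\cap I)J}$, as required.

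Finally, for the case $H\cap I=\emptyset$, I would appeal to the filtration axiom (i), which gives $\E_{\emptyset J}=\im 0_J$, so that the computation above collapses to $\s_J^{-1}(\im 0_J)=\s_J^{-1}(0_J)$, matching \eqref{eq:CIJ0}. There is no substantive obstacle here: the whole proof is bookkeeping once tameness~\eqref{eq:tame2} has given the key identity $\im\phi_{IJ}=\s_J^{-1}(\E_{IJ})$, which translates the image of an embedding into a section-preimage that is well behaved under intersection via the filtration.
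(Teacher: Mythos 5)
Your proof is correct and takes essentially the same route as the paper: specialize the tameness identity \eqref{eq:tame2} to $K=J$ to get $\im\phi_{IJ}=\s_J^{-1}(\E_{IJ})$, combine with \eqref{eq:filt} for $\im\Hat\Phi_{IJ}$, deduce closedness from closedness of $\E_{IJ}$, and then compute the intersection via filtration axiom (iii) (the paper cites the derived equation \eqref{eq:CIJ}, but that is the same computation you wrote out). No gaps.
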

\begin{proof} 
The identities in \eqref{eq:tame3} follow from \eqref{eq:tame2} with $J=K$ and \eqref{eq:filt}. Then the subsets $\im\phi_{IJ}\subset U_J$ and $\im\Hat\Phi_{IJ}\subset\E_J$ are closed since they are the preimages of closed subsets $\E_{IJ}\subset\E_J$ under the continuoussection $\s_J$ and projection $\pr_J$.

Moreover, tameness \eqref{eq:tame3} identifies $\im \phi_{L J}=\s_J^{-1}(\E_{LJ})$ for $L=H,I,H\cap I$, so that the intersection identity holds by the filtration property \eqref{eq:CIJ}.
In case $H\cap I=\emptyset$ it gives $\s_J^{-1}(\E_{HJ})\cap \s_J^{-1}(\E_{IJ}) = \s_J^{-1}(0_J)$ since $\E_{\emptyset J}=\im 0_J$.
\end{proof}

 The next lemma shows that every tame weak topological Kuranishi atlas satisfies the strong cocycle condition, and so is a topological Kuranishi atlas.

\begin{lemma}\label{le:tame0}
Suppose that the filtered weak topological Kuranishi atlas $\Kk$ satisfies the tameness conditions \eqref{eq:tame1}, \eqref{eq:tame2} for all $I,J,K\in\Ii_\Kk$ with $|I|\leq k$. Then for all $I\subset J\subset K$ with $|I|\leq k$ the strong cocycle condition in Definition~\ref{def:cocycle} is satisfied, i.e.\
$\Hat\Phi_{JK}\circ \Hat\Phi_{IJ}=\Hat\Phi_{IK}$ with equality of domains, in particular
$$
U_{IJ}\cap \phi_{IJ}^{-1}(U_{JK}) = U_{IK} .
$$
\end{lemma}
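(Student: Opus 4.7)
The heart of the statement is the domain identity $U_{IJ}\cap \phi_{IJ}^{-1}(U_{JK}) = U_{IK}$; once this is established, the strong cocycle condition follows almost immediately from the weak cocycle condition that is part of the definition of a weak topological Kuranishi atlas. So my plan is to first establish this domain identity, and then deduce $\Hat\Phi_{JK}\circ \Hat\Phi_{IJ}=\Hat\Phi_{IK}$ by reducing to equality on the overlap.

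For the forward inclusion $U_{IK}\subset U_{IJ}\cap \phi_{IJ}^{-1}(U_{JK})$, I would first invoke the nesting consequence \eqref{eq:tame4} of tameness condition \eqref{eq:tame1} (applied to the triple $I\subset J,K$, using $J\cup K=K$ since $J\subset K$) to get $U_{IK}\subset U_{IJ}$. Then tameness condition \eqref{eq:tame2} applied to $I\subset J\subset K$ yields $\phi_{IJ}(U_{IK})=U_{JK}\cap \s_J^{-1}(\E_{IJ})\subset U_{JK}$, hence $U_{IK}\subset \phi_{IJ}^{-1}(U_{JK})$.

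For the reverse inclusion, suppose $x\in U_{IJ}$ satisfies $\phi_{IJ}(x)\in U_{JK}$. Applying \eqref{eq:tame2} to the degenerate triple $I\subset J\subset J$ (which is legal since $|I|\le k$ and we use the convention $U_{JJ}=U_J$, $\phi_{JJ}=\id$) gives $\phi_{IJ}(U_{IJ})=\s_J^{-1}(\E_{IJ})$. Therefore $\phi_{IJ}(x)$ lies in $U_{JK}\cap \s_J^{-1}(\E_{IJ})$, which by \eqref{eq:tame2} for $I\subset J\subset K$ equals $\phi_{IJ}(U_{IK})$. Injectivity of the embedding $\phi_{IJ}$ then forces $x\in U_{IK}$, completing the identity $\phi_{IJ}^{-1}(U_{JK})=U_{IK}$ (noting that $\phi_{IJ}^{-1}(U_{JK})\subset U_{IJ}$ by definition of $\phi_{IJ}$).

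Finally, by Lemma~\ref{le:cccomp} the composite $\Hat\Phi_{JK}\circ \Hat\Phi_{IJ}$ has domain $\pr_I^{-1}(U_{IJK})$ with $U_{IJK}:=\phi_{IJ}^{-1}(U_{JK})$, and we have just shown $U_{IJK}=U_{IK}$, which is exactly the domain of $\Hat\Phi_{IK}$. The weak cocycle condition \eqref{eq:wc} gives the equality of the two maps on $\pr_I^{-1}(U_{IJK}\cap U_{IK})=\pr_I^{-1}(U_{IK})$, i.e.\ on their common domain. Hence $\Hat\Phi_{JK}\circ \Hat\Phi_{IJ}=\Hat\Phi_{IK}$ in the sense of Definition~\ref{def:cocycle}. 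The main (very mild) obstacle is keeping the bookkeeping straight in the reverse inclusion, in particular recognizing that \eqref{eq:tame2} must be invoked twice, once with $K=J$ to pin down the image of $\phi_{IJ}$, and once in its original form to identify the preimage of $U_{JK}$.
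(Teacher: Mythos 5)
Your proof is correct and follows essentially the same route as the paper: the paper also derives $\phi_{IJ}(U_{IK}) = U_{JK}\cap \s_J^{-1}(\E_{IJ}) = U_{JK}\cap\phi_{IJ}(U_{IJ})$ from \eqref{eq:tame2} (including the case $J=K$, i.e.\ \eqref{eq:tame3}), applies $\phi_{IJ}^{-1}$ using \eqref{eq:tame4} and injectivity to get the domain equality, and then observes that the weak cocycle condition on the now-identical domains is the strong cocycle condition. Your two-inclusion bookkeeping is just an expanded version of that same argument.
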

\begin{proof}
From the tameness conditions \eqref{eq:tame2} and \eqref{eq:tame3} we obtain for all $I\subset J\subset K$ with $|I|\leq k$
$$
\phi_{IJ}(U_{IK})
= U_{JK}\cap \s_J^{-1}(\E_{IJ}) 
= U_{JK}\cap \phi_{IJ}(U_{IJ}) .
$$
Applying $\phi_{IJ}^{-1}$ to both sides and using \eqref{eq:tame4} implies equality of the domains.
Then the weak cocycle condition $\phi_{JK}\circ \phi_{IJ}=\phi_{IK}$ on the overlap of domains is identical to the strong cocycle condition.
\end{proof}

Finally, the key feature of the tameness property is that it implies the topological properties claimed in Theorem~\ref{thm:K}.

\begin{prop}\label{prop:Khomeo}
Suppose that the topological Kuranishi atlas $\Kk$ is tame. Then $|\Kk|$ and $|\bE_\Kk|$ are Hausdorff, and for each $I\in\Ii_\Kk$ the quotient maps $\pi_{\Kk}|_{U_I}:U_I\to |\Kk|$ and $\pi_{\bE_\Kk}|_{\E_I}:\E_I\to |\bE_\Kk|$ are homeomorphisms onto their image.
\end{prop}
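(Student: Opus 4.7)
My plan hinges on a \emph{reduction lemma} that uses tameness to give a minimal description of the equivalence relation $\sim$ on $\Obj_{\bB_\Kk}$, from which injectivity, Hausdorffness, and the openness of the inverses all follow.

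\emph{Step 1 (reduction lemma).} I would first prove that for tame $\Kk$, $(I,x) \sim (J,y)$ iff $I \cup J \in \Ii_\Kk$, $x \in U_{I(I\cup J)}$, $y \in U_{J(I\cup J)}$, and $\phi_{I(I\cup J)}(x) = \phi_{J(I\cup J)}(y)$ in $U_{I \cup J}$. Any zig-zag chain of generating morphisms is simplified by induction on length: two composable forward (or backward) arrows collapse via the strong cocycle condition of Lemma~\ref{le:tame0}; a valley $(M_1, a) \leftarrow (M_2, b) \to (M_3, c)$ turns into a peak through $M_1 \cup M_3$ using \eqref{eq:tame1} and strong cocycle; and a peak $(I, x) \to (K, z) \leftarrow (J, y)$ with $K \supsetneq I \cup J$ pushes down to a peak through $I \cup J$ using the nesting \eqref{eq:tame4}, strong cocycle, and injectivity of $\phi_{(I \cup J) K}$. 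The analogous statement in $\Obj_{\bE_\Kk}$ follows because each $\Hat\phi_{IJ}$ is an injective bundle map. Setting $J = I$ in the reduction forces $x = y$, which gives injectivity of $\pi_\Kk|_{U_I}$ and, by the parallel argument on $\bE_\Kk$, of $\pi_{\bE_\Kk}|_{\E_I}$.

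\emph{Step 2 (Hausdorffness).} Given $[I,x] \neq [J,y]$, the reduction lemma pinpoints the failure of $\sim$ as one of: (a) $I \cup J \notin \Ii_\Kk$; (b) $x \notin U_{I(I \cup J)}$ or $y \notin U_{J(I \cup J)}$; (c) $\phi_{I(I \cup J)}(x) \neq \phi_{J(I \cup J)}(y)$ in the metrizable (hence Hausdorff) chart $U_{I \cup J}$. In case (c) I would separate the two images inside $U_{I \cup J}$ and pull back through the embeddings $\phi_{I(I\cup J)}, \phi_{J(I\cup J)}$; case (b) uses that $U_{I(I \cup J)} \subset U_I$ is open; case (a) is immediate since no $L$ can contain both $I$ and $J$. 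In each case the chosen open neighborhoods are extended to disjoint saturated open subsets of $\Obj_{\bB_\Kk}$ via the construction of Step 3, and the argument for $|\bE_\Kk|$ is parallel.

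\emph{Step 3 (homeomorphism onto image) and main obstacle.} Continuity of $\pi_\Kk|_{U_I}$ is automatic, so it remains to show openness onto the image. Given open $W \subset U_I$, I would construct an open, $\sim$-saturated $\widetilde V \subset \Obj_{\bB_\Kk}$ with $\widetilde V \cap U_I = W$, so that $\pi_\Kk(\widetilde V) \subset |\Kk|$ is open and meets $\pi_\Kk(U_I)$ in exactly $\pi_\Kk(W)$. For each $L \in \Ii_\Kk$ with $K_L := I \cup L \in \Ii_\Kk$, the reduction lemma identifies the saturation of $W$ in $U_L$ as $\phi_{LK_L}^{-1}\bigl(\phi_{IK_L}(W \cap U_{IK_L})\bigr)$, which lies inside the closed subset $\im \phi_{IK_L} = \s_{K_L}^{-1}(\E_{IK_L})$ of $U_{K_L}$ by \eqref{eq:tame3}. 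Since $\phi_{IK_L}$ is a topological embedding, $\phi_{IK_L}(W \cap U_{IK_L})$ extends to an open thickening $\widetilde W_{K_L} \subset U_{K_L}$ whose intersection with $\im \phi_{IK_L}$ is itself; setting $V_L := \phi_{LK_L}^{-1}(\widetilde W_{K_L})$ gives an open subset of $U_L$. The decisive step is to choose these thickenings consistently across varying $L$ so that $\widetilde V = \bigsqcup_L V_L$ is actually saturated; this must be forced by the intersection identity \eqref{eq:tame5} together with \eqref{eq:tame1}, \eqref{eq:tame2}, which prevent a thickening in one $V_L$ from introducing spurious cross-chart equivalences with points of any $V_M$ outside the intended saturation. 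The parallel construction on $\bE_\Kk$ handles $\pi_{\bE_\Kk}|_{\E_I}$. The main obstacle throughout is that transition images are closed rather than open in the target charts, so raw saturations of open sets are not themselves open, forcing a carefully globally consistent thickening to be built from the full strength of tameness.
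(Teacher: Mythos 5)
Your Step 1 is essentially correct: it is the peak characterization of Lemma~\ref{le:Ku2}~(a)(ii) together with the injectivity statement (c), and your direct descent from a peak over $K$ to a peak over $I\cup J$ via the nesting \eqref{eq:tame4}, the strong cocycle condition of Lemma~\ref{le:tame0}, and injectivity of $\phi_{(I\cup J)K}$ is a legitimate (in fact slightly leaner) substitute for the paper's Claims 1 and 2. The genuine gap is in Steps 2 and 3, i.e.\ in exactly the part that carries the content of the proposition. The ``decisive step'' of Step 3 is asserted, not proved, and it is not forced by \eqref{eq:tame1}, \eqref{eq:tame2}, \eqref{eq:tame5}: away from $\im\phi_{IK_L}$ each thickening $\widetilde W_{K_L}$ is completely unconstrained, so for a pure thickening point $x\in V_L\less\eps_L(W)$ that is equivalent to a point of $U_{L'}$, membership of that point in $V_{L'}=\phi_{L'K_{L'}}^{-1}(\widetilde W_{K_{L'}})$ depends on the unconstrained part of $\widetilde W_{K_{L'}}$; hence $\widetilde V$ is not saturated for independent choices. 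Arranging saturation would mean prescribing each $\widetilde W_{K_{L'}}$ along closed sets of the form $\im\phi_{(L\cup L')K_{L'}}$, which when dimensions jump have empty interior, so openness forces yet another round of thickening and re-saturation -- an iteration your plan neither organizes nor shows terminates with the trace condition $\widetilde V\cap U_I=W$ intact, let alone produces the two \emph{disjoint} saturated open sets that Step 2 defers to it. Moreover, case (b) of Step 2 cannot be handled by ``$U_{I(I\cup J)}\subset U_I$ is open'': if $x$ lies on the boundary of $U_{I(I\cup J)}$ inside $U_I$, then \emph{every} neighbourhood of $x$ meets $U_{I(I\cup J)}$ and hence contains points identified with points of $U_{I\cup J}$, and nothing in your case analysis prevents these from accumulating at points identified with a neighbourhood of $y$; this is precisely the mechanism of the Hausdorff failure in Example~\ref{ex:Haus}, and what excludes it for tame atlases is a closedness statement, not an openness statement.

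The missing idea is to work with closed saturated sets rather than open ones, because tameness makes saturation compatible with closedness: $\im\phi_{HJ}=\s_J^{-1}(\E_{HJ})$ is closed by \eqref{eq:tame3} and $\phi_{HJ}$ is a homeomorphism onto this closed set, so $\eps_J(C_H)=\phi_{HJ}(U_{HJ}\cap C_H)$ is closed in $U_J$ whenever $C_H\subset U_H$ is closed. The paper proves Hausdorffness by showing that the relation $R\subset\Obj_{\bB_\Kk}\times\Obj_{\bB_\Kk}$ is \emph{closed}, lifting related convergent sequences to $U_{I\cap J}$ via the valley form of Lemma~\ref{le:Ku2}~(a)(iii) and using closedness of $\im\phi_{(I\cap J)I}$ plus the embedding property to pass to the limit (with a footprint argument when $I\cap J=\emptyset$), and then invokes Lemma~\ref{le:bourb}: a quotient of a separable, locally compact metric space by a closed relation is Hausdorff -- a statement that itself requires a nontrivial exhaustion argument precisely because $\pi_\Kk$ is not an open map. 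For the homeomorphism property it likewise builds the \emph{closed complement}: given open $S_I\subset U_I$ it sets $\Qq=\bigl(\io_\Kk(X)\cup\bigcup_{H\subset I}\pi_\Kk(U_H)\bigr)\less\pi_\Kk(S_I)$ and checks chart by chart, using the formulas of Lemma~\ref{le:Ku2}~(d), that $U_J\cap\pi_\Kk^{-1}(\Qq)$ is closed, with a separate treatment of the zero-set piece $Q^0_J$ when $J$ is not comparable to $I$. I recommend recasting your Steps 2 and 3 along these lines; your Step 1 can stay as is.
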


The proof will take up the following section.
We end this section with further topological properties of the virtual neighbourhood of a tame Kuranishi atlas that will be useful when constructing an admissible metric in Section~\ref{ss:shrink} and eventually 
the virtual fundamental class in e.g.\ \cite{MW1}.  For that purpose we need to be careful in differentiating between the quotient and subspace topology on subsets of the virtual neighbourhood, as follows.

\begin{definition} \label{def:topologies}
For any subset $\Aa\subset \Obj_{\bB_\Kk}$ of the union of domains of a topological Kuranishi atlas $\Kk$, we denote by 
$$
\|\Aa\|:=\pi_\Kk(\Aa)\subset|\Kk| , 
\qquad \mbox{ resp. } \quad
|\Aa|:=\pi_\Kk(\Aa)\cong \quot{\Aa}{\sim}\ ,
$$
the set $\pi_\Kk(\Aa)$ equipped with its subspace topology induced from the inclusion $\pi_\Kk(\Aa)\subset|\Kk|$, resp.\ its quotient topology induced from the inclusion $\Aa\subset \Obj_{\bB_\Kk}$ and the equivalence relation $\sim$ on $\Obj_{\bB_\Kk}$ (which is generated by all morphisms in $\bB_\Kk$, not just those between elements of $\Aa$).
\end{definition}

\begin{remark} \label{rmk:hom} \rm
In many cases we will be able to identify different topologies on subsets of the virtual neighbourhood $|\Kk|$ by appealing to the following elementary {\bf nesting uniqueness of compact Hausdorff topologies}:

Let $f:X\to Y$ be a continuous bijection from a compact topological space $X$ to a Hausdorff space $Y$. Then $f$ is in fact a homeomorphism. 
Indeed, it suffices to see that $f$ is a closed map, i.e.\ maps closed sets to closed sets, since that implies continuity of $f^{-1}$. But any closed subset of $X$ is also compact, and its image in $Y$ under the continuous map $f$ is also compact, hence closed since $Y$ is Hausdorff.

In particular, if $Z$ is a set with nested compact Hausdorff topologies $\Tt_1\subset\Tt_2$, then $\id_Z: (Z,\Tt_2)\to (Z,\Tt_1)$ is a continuous bijection, hence homeomorphism, i.e.\ $\Tt_1=\Tt_2$.
$\hfill\er$
\end{remark}

\begin{prop}\label{prop:Ktopl1}  
Let $\Kk$ be a tame topological Kuranishi atlas.
\begin{enumerate}
\item
For any subset $\Aa\subset \Obj_{\bB_\Kk}$ the identity map $\id_{\pi_\Kk(\Aa)}: |\Aa| \to \|\Aa\|$ is continuous.
\item 
If $\Aa \sqsubset \Obj_{\bB_\Kk}$ is precompact, then both $|\ov\Aa|$ and $\|\ov\Aa\|$ are compact. In fact, the quotient and subspace topologies on $\pi_\Kk(\ov\Aa)$ coincide, that is $|\ov\Aa|=\|\ov\Aa\|$ as topological spaces.
\item
If $\Aa \sqsubset \Aa' \subset \Obj_{\bB_\Kk}$, then $\pi_\Kk(\ov{\Aa}) = \ov{\pi_\Kk(\Aa)}$ and $\pi_\Kk(\Aa) \sqsubset \pi_\Kk(\Aa')$ in the topological space $|\Kk|$.
\item
If  $\Aa \sqsubset \Obj_{\bB_\Kk}$ is precompact, then $\|\ov{\Aa}\|=|\ov\Aa|$ is metrizable; in particular this implies that $\|\Aa\|$ is metrizable.
\end{enumerate}

\end{prop}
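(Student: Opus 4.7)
The plan is to treat the four claims in order, using as principal ingredients the Hausdorff property of $|\Kk|$ from Proposition~\ref{prop:Khomeo} and the standard fact recorded in Remark~\ref{rmk:hom} that a continuous bijection from a compact space to a Hausdorff space is automatically a homeomorphism. Since $\Obj_{\bB_\Kk}=\bigsqcup_{I\in\Ii_\Kk}U_I$ is a finite disjoint union of locally compact metrizable spaces, it is itself locally compact, Hausdorff, and metrizable, so precompact subsets have compact closures.

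For (i), I would just unwind the two topologies. A set $V\subset\|\Aa\|$ is open in the subspace topology if and only if $V=W\cap\pi_\Kk(\Aa)$ for some open $W\subset|\Kk|$. By the definition of the quotient topology on $|\Kk|$, the preimage $\pi_\Kk^{-1}(W)$ is open in $\Obj_{\bB_\Kk}$, hence its intersection with $\Aa$ is open in $\Aa$; this is precisely the openness criterion for $V$ in the quotient topology on $|\Aa|$. For (ii), the precompact set $\Aa$ has compact closure $\ov\Aa$, so $|\ov\Aa|=\pi_\Kk(\ov\Aa)$ is compact as a continuous image of $\ov\Aa$ under the projection $\pi_\Kk|_{\ov\Aa}$. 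Meanwhile $\|\ov\Aa\|\subset|\Kk|$ is Hausdorff as a subspace of the Hausdorff space $|\Kk|$. Applying Remark~\ref{rmk:hom} to the continuous bijection $\id:|\ov\Aa|\to\|\ov\Aa\|$ provided by (i) forces it to be a homeomorphism, which identifies the two topologies and shows that $\|\ov\Aa\|$ is also compact.

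For (iii), the hypothesis $\Aa\sqsubset\Aa'$ together with local compactness and Hausdorffness of $\Obj_{\bB_\Kk}$ gives $\ov\Aa\subset\Aa'$. Since $\pi_\Kk(\ov\Aa)$ is compact by (ii) and $|\Kk|$ is Hausdorff, $\pi_\Kk(\ov\Aa)$ is closed in $|\Kk|$; this together with continuity of $\pi_\Kk$ (which implies $\pi_\Kk(\ov\Aa)\subset\ov{\pi_\Kk(\Aa)}$ via preimage of a closed set) yields the equality $\pi_\Kk(\ov\Aa)=\ov{\pi_\Kk(\Aa)}$. Because closure in the subspace topology of $\pi_\Kk(\Aa')$ is the intersection of the ambient closure with $\pi_\Kk(\Aa')$, the closure of $\pi_\Kk(\Aa)$ inside $\pi_\Kk(\Aa')$ equals $\pi_\Kk(\ov\Aa)\cap\pi_\Kk(\Aa')=\pi_\Kk(\ov\Aa)$, which is compact; this proves $\pi_\Kk(\Aa)\sqsubset\pi_\Kk(\Aa')$.

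For (iv), knowing from (ii) that $\|\ov\Aa\|=|\ov\Aa|$ is compact Hausdorff, I would exhibit it as the continuous image of the compact metric space $\ov\Aa$ under $\pi_\Kk|_{\ov\Aa}$ and apply the classical theorem that Hausdorff continuous images of compact metric spaces are metrizable. The cleanest proof pulls back continuous real-valued functions: the map $f\mapsto f\circ\pi_\Kk|_{\ov\Aa}$ is an isometric embedding $C(|\ov\Aa|,\R)\hookrightarrow C(\ov\Aa,\R)$ for the sup norm (using surjectivity of $\pi_\Kk|_{\ov\Aa}$), so separability of the target (compact metric) forces separability of the source, and a compact Hausdorff space whose algebra of continuous functions is separable is second countable, hence metrizable by Urysohn. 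The statement for $\|\Aa\|$ then follows because subspaces of metrizable spaces are metrizable. I expect this last step to be the main obstacle: one must leverage a nontrivial topological fact about quotients of compact metric spaces, although an alternative appeal to Hanai--Morita--Stone (closed continuous images of metric spaces with compact fibres are metrizable) also suffices since $\pi_\Kk|_{\ov\Aa}$ is automatically closed.
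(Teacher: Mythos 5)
Your parts (i)--(iii) follow the paper's own argument essentially verbatim (subspace-vs-quotient openness unwinding, the compact-to-Hausdorff bijection of Remark~\ref{rmk:hom}, and the closedness of the compact image to get $\pi_\Kk(\ov\Aa)=\ov{\pi_\Kk(\Aa)}$), and they are correct; the invocation of local compactness to get $\ov\Aa\subset\Aa'$ is superfluous but harmless, since Hausdorffness alone makes the compact relative closure closed in the ambient space. Where you genuinely diverge is (iv). The paper proves second countability of $\|\ov\Aa\|$ by hand: it transfers second countability of each $U_I$ to $\pi_\Kk(\ov A_I)$ using the homeomorphism property of $\pi_\Kk|_{U_I}$ from Proposition~\ref{prop:Khomeo}, then proves a standalone claim that a union of two compact second countable subsets of a Hausdorff space is second countable (an explicit basis construction with the sets $V_{ij}$), iterates over the finite index set, and finally applies Urysohn's metrization theorem. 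You instead note that $\ov\Aa$ is a compact metrizable space and invoke the classical fact that a Hausdorff continuous image of a compact metric space is metrizable, applied to the surjection $\pi_\Kk|_{\ov\Aa}:\ov\Aa\to\|\ov\Aa\|$, with a correct proof sketch via the isometric pullback $C(\|\ov\Aa\|,\R)\hookrightarrow C(\ov\Aa,\R)$ and separability, or alternatively Hanai--Morita--Stone. Both routes are valid. Yours is shorter, does not need the per-chart homeomorphism property of Proposition~\ref{prop:Khomeo} in this step, and leans on a standard (if less elementary) theorem; the paper's route is more self-contained, using only Urysohn metrization plus an explicit combinatorial basis argument, which is in keeping with its policy of keeping the point-set topology elementary. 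The final reduction from $\|\ov\Aa\|$ to $\|\Aa\|$ by restriction of a metric is the same in both.
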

\begin{proof}
To prove (i) recall that openness of $\Uu\subset\pi_\Kk(\Aa)$ in the subspace topology implies the existence of an open subset $\Ww \subset |\Kk|$ with $\Ww \cap \pi_\Kk(\Aa)=\Uu$. Then we have $\Aa \cap \pi_\Kk^{-1}(\Uu)=\Aa \cap \pi_\Kk^{-1}(\Ww)$, where $\pi_\Kk^{-1}(\Ww) \subset \bigsqcup_{I\in\Ii_\Kk}U_I$ is open by definition of the quotient topology on $|\Kk|$. However, that exactly implies openness of $\Aa \cap \pi_\Kk^{-1}(\Uu)\subset\Aa$ and thus of $\Uu$ in the quotient topology. This proves continuity.

The compactness assertions in (ii) follow from the compactness of $\ov\Aa$ together with the fact that both $\pi_\Kk: \Aa \to |\Kk|$ and $\pi_\Kk: \Aa \to {\Aa}/{\sim}$ are continuous maps.
Moreover, $\|\Aa\|$ is Hausdorff because its topology is induced by the Hausdorff topology on $|\Kk|$.
Therefore the identity map $|\ov\Aa|\to\|\ov\Aa\|$ is a continuous bijection from a compact space to a Hausdorff space, and hence a homeomorphism by Remark~\ref{rmk:hom}, which proves the equality of topologies.

In (iii), the continuity of $\pi_\Kk$ implies $\pi_\Kk(\ov \Aa)\subset \ov{\pi_\Kk(\Aa)}$ for the closure in $|\Kk|$. On the other hand, compactness of $\ov\Aa$ implies that $\pi_\Kk(\ov \Aa)$ is compact by (ii), in particular it is closed and contains $\pi_\Kk(\Aa)$, hence also contains $ \ov{\pi_\Kk(\Aa)}$. This proves equality $\pi_\Kk(\ov \Aa)=\ov{\pi_\Kk(\Aa)}$.  The last claim of (iii) then holds because $\ov{\pi_\Kk(\Aa)}= \pi_\Kk(\ov \Aa)\subset \pi_\Kk(\Aa')$, and $\pi_\Kk(\ov \Aa)$ is compact by (ii). 

To prove the metrizability in (iv), we will use Urysohn's metrization theorem, which says that any regular and second countable Hausdorff space is metrizable.
Here $\|\ov{\Aa}\|\subset |\Kk|$ is regular (i.e.\ points and closed sets have disjoint neighbourhoods) since it is a compact subset of a Hausdorff space. 
So it remains to establish second countability, i.e.\ to find a countable base for the topology, namely a countable collection of open sets, such that any other open set can be written as a union of part of the collection.

For that purpose first recall that each $U_I$ is second countable by 
Remark~\ref{rmk:topchart}~(iii).
This property is inherited by the subsets $\ov{A}_I\subset U_I$ for $I\in\Ii_\Kk$, and by their images $\pi_\Kk(\ov{A}_I)\subset|\Kk|$ via the homeomorphisms $\pi_\Kk|_{U_I}$ of Proposition~\ref{prop:Khomeo}.
Moreover, each $\pi_\Kk(\ov{A}_I)$ is compact since it is the image under the continuous map $\pi_\Kk$ of the closed subset $\ov{A}_I=\ov{\Aa}\cap U_I$ of the compact set $\ov{\Aa}$.
So, in order to prove second countability of the finite union $\|\ov{\Aa}\|=\bigcup_{I\in\Ii_\Kk} \pi_\Kk(\ov{A}_I)$ iteratively, it remains to establish second countability for a union of two compact second countable subsets, as follows.

\MS
\NI
{\bf Claim:} Let $B,C \subset Y$ be compact subsets of a Hausdorff space $Y$ such that $B,C $ are second countable in the subspace topologies. Then $B \cup  C$ is second countable in subspace topology.

\MS
To prove this claim, let $(V_i^B)_{i\in\N}$, resp.\ $(V_i^C)_{i\in\N}$, be countable  
bases of open neighbourhoods 
for $B$, resp.\ $C$. Then $(V_i^B \less C)_{i\in\N}$, resp.\ $(V_i^C \less B)_{i\in\N}$, are countable neighbourhood bases for 
$B\less C$,  resp.\ $C\less B$.
Moreover, $V_i^B \less C \subset B\cup C$,  and similarly $V_i^C \less B\subset B\cup C$,
is open since $(B\cup C)\less V_i^B = C \cup (B\less V_i^B)$ is a union of compact and hence closed sets.
To finish the construction of a countable neighborhood basis for $B\cup C$ we add to $(V_i^B \less C)_{i\in\N}$ and $(V_i^C \less B)_{i\in\N}$ the sets for $i,j\in\N$
$$
V_{ij}: = \bigl(V_i^B\less C\bigr) \cup (V_i^B\cap V_j^C)\cup \bigl(V_j^C\less B\bigr).
$$
To check that these are open in $BC:=B\cup C$ we rewrite their complement
\begin{align*}
(B\cup C)\less V_{ij} 
&=  \bigl(BC \less (V_j^C\less B) \bigr) \cap \bigl(BC \less (V_i^B\less C) \bigr) \cap \bigl(BC \less (V_i^B\cap V_j^C)\bigr)  \\
&= \bigl(B \cup (C\less V_j^C) \bigr)
\cap  \bigl( (B\less V_i^B)\cup C \bigr) \cap   \bigl(BC \less (V_i^B\cap V_j^C)\bigr) \\
&= \bigl( (B\less V_i^B) \cup (C\less V_j^C) \cup (B\cap C) \bigr)
 \less (V_i^B\cap V_j^C) \\
&= (B\less V_i^B) \cup (C\less V_j^C) 
\cup \bigl((B\cap C) \less (V_i^B\cap V_j^C)\bigr).
\end{align*}
Here each of $B$, $C$, and $B\cap C$ is compact, with relatively open subset $V_i^B$ resp.\ $V_j^C$ resp.\ $V_i^B\cap V_j^C= (V_i^B\cap C)\cap (V_j^C\cap B)$, so that $(B\cup C)\less V_{ij}$ is the union of compact sets $B\less V_i^B$, $C\less V_j^C$, and $(B\cap C) \less (V_i^B\cap V_j^C)$. This shows that $V_{ij}$ is open since its complement is closed.

Finally, the sets $V_{ij}$ together with the $V_i^B \less C$ and $V_i^C \less B$ form the required neighbourhood basis since any $x\in B\cup C$ either lies in $B\less C$ (hence in some $V_i^B \less C$),  in $C\less B$ (hence in some $V_i^C \less B$), or in $B\cap C$. In the latter case we find $i,j$ so that $x\in V_i^B\cap V_j^C \subset V_{ij}$. This proves the claim and thus proves that $\|\ov\Aa\|$ is metrizable.

In particular, $\|\Aa\|$ is metrizable in the subspace topology, by restriction of a metric on $\|\ov\Aa\|$, which finishes the proof of (iv).
\end{proof}

\begin{rmk}\label{rmk:Steen}\rm  
As a final topological remark, we note that when $\Kk$ is tame so that $|\Kk|$ is Hausdorff, then it follows from 
\cite[2.6]{STEEN} that the quotient topology on $|\Kk|$ is compactly generated. 
Thus, although its topology is not in general metrizable, $|\Kk|$ does belong to a well understood and well studied category of topological spaces.
\hfill$\er$
\end{rmk}

%%%%%%%%%%%%%%%%%%%%%%%%%%%%%%%%%%%%%%%%%%%%%%
\subsection{Proof of Hausdorff property} \label{ss:Haus}  \hspace{1mm}\\ \vspace{-3mm}
%%%%%%%%%%%%%%%%%%%%%%%%%%%%%%%%%%%%%%%%%%%%%%%

The filtration conditions (i) -- (iv) in Definition~\ref{def:Ku3} interact with the tameness identities \eqref{eq:tame1} and \eqref{eq:tame2} in a rather subtle way.  
As we will see in the proof of Proposition~\ref{prop:proper} (cf.\  the discussion following \eqref{eq:UJK(k)}), condition (iv) for filtrations is just strong enough to allow the inductive construction of a tame shrinking of an atlas that also satisfies filtration conditions (i)--(iii).
On the other hand, if conditions (i)--(iii) hold as well as the tameness identities, then the equivalence relation on $\Obj_{\bB_\Kk}$, given in Definition~\ref{def:Knbhd} by abstractly inverting the morphisms, simplifies in a way that we will establish in Lemma~\ref{le:Ku2}. 
This is the key tool in the proof in Proposition~\ref{prop:Khomeo} that tame atlases have Hausdorff realizations.
We begin the proof by reformulating the equivalence relation $\sim$ with the help of a partial order given by the morphisms -- more precisely the embeddings $\phi_{IJ}$ that are part of the coordinate changes.

\begin{definition} \label{def:preceq}
Let $\preceq$ denote the partial order on $\Obj_{\bB_\Kk}$ given by
$$
(I,x)\preceq(J,y) \quad :\Longleftrightarrow \quad \Mor_{\bB_\Kk}((I,x),(J,y))\neq\emptyset .
$$
That is, we have $(I,x)\preceq (I,y)$ iff $x\in U_{IJ}$ and $y=\phi_{IJ}(x)$.
Moreover, for any $I,J\in\Ii_\Kk$ and subset $S_I\subset U_I$ we denote the subset of points in $U_J$ that are equivalent to a point in $S_I$ by
$$
\eps_J(S_I) \,:=\; U_J \cap \pi_\Kk^{-1}(\pi_\Kk(S_I))  \;=\;
\bigl\{y\in U_J  \,\big|\, \exists\, x\in S_I : (I,x)\sim (J,y) \bigr\} \;\subset\; U_J.
$$
The partial order $\preceq$ on $\Obj_{\bE_\Kk}$ is defined analogously.
\end{definition}

The relation $\preceq$ on $\Obj_{\bE_\Kk}$ is very similar to that on $\Obj_{\bB_\Kk}$.  Indeed,
$(I,e)\preceq (J,f)$ implies  $(I,\pr_I(e))\preceq (J,\pr_J(f))$. Conversely, if $(I,x)\preceq (J,y)$ then for every $e\in \pr_I^{-1}(x)\subset\E_I$ there is a unique $f=\Hat\Phi_{IJ}(e)\in \E_J$ such that $(I,e)\preceq (J,f)$. 
Thus to ease notation we mostly work with the relation on $\Obj_{\bB_\Kk}$ though any statement about this has an immediate analog for the relation on $\Obj_{\bE_\Kk}$ (and vice versa).
Now we can reformulate the equivalence relation $\sim$ in terms of the partial order $\preceq$.

\begin{lemma} \label{lem:eqdef}
The equivalence relation $\sim$ on $\Obj_{\bB_\Kk}$ of Definition~\ref{def:Knbhd} is equivalently defined by $(I,x)\sim (J,y)$ iff there is a finite tuple of objects $(I_0, x_0), \ldots, (I_k, x_k)\in\Obj_{\bB_\Kk}$ such that
\begin{align}\label{eq:ch}
&(I,x) = (I_0,x_0)\preceq  (I_1,x_1) \succeq (I_2,x_2) \preceq  \ldots (I_k, x_k)=(J,y)  \\
\text{or}\qquad &
(I,x) = (I_0,x_0)\succeq  (I_1,x_1) \preceq (I_2,x_2) \succeq  \ldots (I_k, x_k)=(J,y) . \nonumber
\end{align}
\end{lemma}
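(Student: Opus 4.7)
The plan is to identify both $\sim$ and the zigzag relation $\sim_z$ defined by \eqref{eq:ch} as the smallest equivalence relation on $\Obj_{\bB_\Kk}$ containing $\preceq$, and conclude equality. The preliminary observation I would record is that $\preceq$ is transitive: by the cocycle condition built into Definition~\ref{def:Ku} (equivalently, by the existence of composition in $\bB_\Kk$ established in Lemma~\ref{le:Kcat}), whenever $(I,x)\preceq (J,y)\preceq(K,z)$ we have $x\in U_{IJ}\cap\phi_{IJ}^{-1}(U_{JK})\subset U_{IK}$ with $z=\phi_{IK}(x)$, hence $(I,x)\preceq (K,z)$. Note also that by Definition~\ref{def:Knbhd} the relation $\sim$ is, by construction, the smallest equivalence relation containing the relation given by existence of a morphism in $\bB_\Kk$, and this latter relation is precisely $\preceq$.

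First I would verify that $\sim_z$ is an equivalence relation that contains $\preceq$. Reflexivity is witnessed by the trivial length-zero chain; the inclusion $\preceq\,\subset\,\sim_z$ is immediate from length-one chains of the first pattern; symmetry follows from reversing a chain, which interchanges the two patterns in \eqref{eq:ch}. The only delicate point is transitivity: concatenating two alternating chains may produce two consecutive $\preceq$-steps or two consecutive $\succeq$-steps at the seam, but applying transitivity of $\preceq$ collapses any such pair $a\preceq b\preceq c$ into a single step $a\preceq c$ (and analogously for $\succeq$), and iterating this cleanup restores an alternating chain.

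Next I would show $\sim_z\,\subset\,\sim$ directly: each individual step in a zigzag chain is either of the form $\preceq$ or $\succeq$, both of which imply $\sim$ by Definition~\ref{def:Knbhd} together with symmetry of $\sim$, and transitivity of $\sim$ then gives $(I,x)\sim(J,y)$. Combined with the minimality of $\sim$ and the fact that $\sim_z$ is an equivalence relation containing $\preceq$, this yields $\sim_z=\sim$ as desired.

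The main obstacle is purely combinatorial, namely enforcing the strict alternation in \eqref{eq:ch}. The whole argument hinges on transitivity of $\preceq$, which is exactly the content of the cocycle condition for a topological Kuranishi atlas; without that condition one could only produce arbitrary chains of morphisms and would not be able to reduce them to the alternating form.
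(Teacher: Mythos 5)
Your proof is correct and takes essentially the same approach as the paper's: the key point in both is that the cocycle condition (equivalently, the closure of $\Mor_{\bB_\Kk}$ under composition established in Lemma~\ref{le:Kcat}) makes $\preceq$ transitive, so any chain of morphisms going back and forth can be reduced to the alternating form in \eqref{eq:ch} by collapsing consecutive same-direction steps. Your write-up is somewhat more explicit than the paper's (phrasing things in terms of the smallest equivalence relation containing $\preceq$, and spelling out reflexivity, symmetry, and the seam-cleanup for transitivity), but the substance and the crucial reliance on transitivity of $\preceq$ are identical.
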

\begin{proof}
The relation $\preceq$ is transitive by the cocycle condition \ref{def:Ku}
and antisymmetric since the transition maps are directed. In particular, we have $(I,x)\preceq (I,y)$ iff $x=y$.
The two definitions of $\sim$ are equivalent since, if \eqref{eq:ch} had consecutive morphisms $(I_{\ell-1},x_{\ell-1})\preceq (I_{\ell},x_{\ell}) \preceq (I_{\ell+1},x_{\ell+1})$, these could be composed to a single morphism $(I_{\ell-1},x_{\ell-1})\preceq (I_{\ell+1},x_{\ell+1})$ by the cocycle condition.  Similarly, any consecutive morphisms $(I_{\ell-1},x_{\ell-1})\succeq (I_{\ell},x_{\ell}) \succeq (I_{\ell+1},x_{\ell+1})$ can be composed to a single morphism $(I_{\ell-1},x_{\ell-1})\succeq (I_{\ell+1},x_{\ell+1})$.
\end{proof}

The following lemma for tame Kuranishi atlases further simplifies the definition of $\sim$ in terms of $\preceq$, and thus provide a more useful characterization of the sets $\eps_J(S_I)$, as well as good topological properties of the relation $\sim$ and the corresponding projection~$\pi_\Kk$.

\begin{lemma} \label{le:Ku2} 
Let $\Kk$ be a tame topological Kuranishi atlas.
\begin{enumerate}
\item [(a)] 
For $(I,x),(J,y)\in\Obj_{\bB_\Kk}$ the following are equivalent.
\begin{enumerate}
\item[(i)] $(I,x)\sim (J,y)$;
\item[(ii)] $(I,x)\preceq (I\cup J,z) \succeq (J,y)$ for some $z\in U_{I\cup J}$ (in particular $I\cup J\in\Ii_\Kk$);
\item[(iii)] $(I,x)\succeq (I\cap J,w) \preceq (J,y)$ for some $w\in U_{I\cap J}$ (in particular $\emptyset\ne I\cap J\in \Ii_\Kk$), or $I\cap J=\emptyset$, $\s_I(x)=0_I(x)$, $\s_J(y)=0_J(y)$, and $\psi_I(x)=\psi_J(y)$.
\end{enumerate}

\item[(b)] 
For $(I,e),(J,f)\in\Obj_{\bE_\Kk}$ the following are equivalent.
\begin{enumerate}
\item[(i)] $(I,e)\sim (J,f)$;
\item[(ii)] $(I,e)\preceq (I\cup J,g) \succeq (J,f)$ for some $g\in \E_{I\cup J}$ (in particular $I\cup J\in\Ii_\Kk$);
\item[(iii)] $(I,e)\succeq (I\cap J,d) \preceq (J,f)$ for some $d\in \E_{I\cap J}$ (in particular $\emptyset\ne I\cap J\in \Ii_\Kk$), or $I\cap J=\emptyset$ and $e=\s_I(x)=0_I(x)$, $f=\s_J(y)=0_J(y)$ for some $x\in U_I$, $y\in U_J$ with $\psi_I(x)=\psi_J(y)$.
\end{enumerate}

\item[(c)]
$\pi_\Kk:U_I \to |\Kk|$ and $\pi_\Kk: 
\E_I \to |\bE_\Kk|$ are injective for each $I\in\Ii_\Kk$, that is 
$(I,p)\sim (I,q)$ implies $p=q$ in both cases $p,q\in U_I$ or $p,q\in \E_I$.
In particular, the elements $z$ and $w$ in {\rm (a)} resp.\ $g$ and $d$ in {\rm (b)} are automatically unique.
\item[(d)]
For any $I,J\in\Ii_\Kk$ and any subset $S_I\subset U_I$ we have
\begin{align*}
\eps_J(S_I)  \;:=\; U_J\cap \pi_\Kk^{-1}\bigl(\pi_\Kk(S_I)\bigr)
&\;=\; \phi_{J(I\cup J)}^{-1}\bigl( \phi_{I (I\cup J)}(U_{I(I\cup J)}\cap S_I) \bigr) \\
&\;=\; \phi_{(I\cap J) J}\bigl( U_{(I\cap J)I} \cap \phi_{(I\cap J)I}^{-1}(S_I) \bigr) ,
\end{align*}
where in case $I\cap J=\emptyset$ the second identity is 
$\eps_J(S_I) = \psi_J^{-1}\bigl(\psi_I(\s_I^{-1}(0_I)\cap S_I) \bigr)$, 
consistently with \eqref{eq:empty}.
In particular we have
$$
\eps_J(U_I) \;:=\; U_J\cap \pi_\Kk^{-1}\bigl(\pi_\Kk(U_I)\bigr) \;=\; 
 U_{J(I\cup J)}\cap 
  \s_J^{-1}(\E_{(I\cap J)J}).
$$
Analogously for any $I,J\in\Ii_\Kk$ and any subset $S_I\subset \E_I$ we have
\begin{align*}
\Hat\eps_J(S_I)  \;:=\; \pi_{\bE_\Kk}^{-1}\bigl(\pi_{\bE_\Kk}(S_I)\bigr)
&\;=\; \Hat\Phi_{J(I\cup J)}^{-1}\bigl( \Hat\Phi_{I (I\cup J)}\bigl(\pr_I^{-1}(U_{I(I\cup J)})\cap S_I \bigr) \bigr) \\
&\;=\; \Hat\Phi_{(I\cap J) J}\bigl( \pr_I^{-1}(U_{I(I\cup J)}) \cap \Hat\Phi_{(I\cap J)I}^{-1}(S_I) \bigr) ,
\end{align*}
which in case $I\cap J=\emptyset$ is
$\; \Hat\eps_J(S_I)= 0_J\bigl(\psi_J^{-1}\bigl(\psi_I(\pr_I (\im 0_I\cap S_I) ) \bigr)\bigr)$.
\end{enumerate}
\end{lemma}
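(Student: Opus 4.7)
The plan is to prove (a) by inducting on the length of the chain characterization from Lemma \ref{lem:eqdef}, using tameness to collapse any alternating chain of length $\geq 3$ into a length-2 zigzag of one of the two prescribed forms; parts (b), (c), (d) then follow quickly. The heart of the argument is two ``smoothing'' moves on chains. \emph{Peak reduction:} suppose $(I_0, x_0) \preceq (K, z) \succeq (I_2, x_2)$, i.e.\ $\phi_{I_0 K}(x_0) = z = \phi_{I_2 K}(x_2)$. If $I_0 \cap I_2 \neq \emptyset$, then \eqref{eq:tame5} gives $z \in \im\phi_{I_0 K} \cap \im\phi_{I_2 K} = \im\phi_{(I_0 \cap I_2) K}$, so there is a unique $w \in U_{(I_0 \cap I_2) K}$ with $\phi_{(I_0 \cap I_2) K}(w) = z$. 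The strong cocycle equalities $U_{(I_0 \cap I_2) K} = U_{(I_0 \cap I_2) I_0} \cap \phi_{(I_0 \cap I_2) I_0}^{-1}(U_{I_0 K})$ and $\phi_{I_0 K} \circ \phi_{(I_0 \cap I_2) I_0} = \phi_{(I_0 \cap I_2) K}$ from Lemma \ref{le:tame0}, combined with injectivity of $\phi_{I_0 K}$, force $\phi_{(I_0 \cap I_2) I_0}(w) = x_0$; the same argument for $I_2$ yields $(I_0, x_0) \succeq (I_0 \cap I_2, w) \preceq (I_2, x_2)$. When $I_0 \cap I_2 = \emptyset$, the intersection identity in Lemma \ref{le:phitrans} (resting on \eqref{eq:CIJ0} and \eqref{eq:tame3}) gives $z \in \s_K^{-1}(0_K)$, and then section-intertwining plus injectivity of $\Hat\Phi_{I_0 K}$ forces $x_0 \in \s_{I_0}^{-1}(0_{I_0})$, similarly $x_2 \in \s_{I_2}^{-1}(0_{I_2})$, with matching footprints via $\psi_K$.

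\emph{Valley reduction:} suppose $(I_0, x_0) \succeq (L, w) \preceq (I_2, x_2)$, so $L \subset I_0, I_2$. By \eqref{eq:tame1}, $w \in U_{L I_0} \cap U_{L I_2} = U_{L(I_0 \cup I_2)}$; nonemptiness of this set forces $I_0 \cup I_2 \in \Ii_\Kk$ via the conventions of Definition~\ref{def:tame}. Setting $z := \phi_{L(I_0 \cup I_2)}(w)$ and applying the strong cocycle decomposition $U_{L(I_0 \cup I_2)} = U_{L I_0} \cap \phi_{L I_0}^{-1}(U_{I_0(I_0 \cup I_2)})$ gives $x_0 = \phi_{L I_0}(w) \in U_{I_0(I_0 \cup I_2)}$ with $\phi_{I_0(I_0 \cup I_2)}(x_0) = z$, and analogously for $I_2$. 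Thus any alternating chain \eqref{eq:ch} of length $\geq 3$ contains an interior peak or valley; applying the corresponding reduction and then composing any resulting consecutive $\preceq$'s (or $\succeq$'s) via the cocycle condition strictly shortens the chain, so iteration terminates at length $\leq 2$ and directly produces (ii) or (iii). Length-$\leq 1$ chains are handled trivially using $I \cup J = J$ resp.\ $I \cap J = I$ when $I \subset J$. Part (b) follows by the same argument on $\Obj_{\bE_\Kk}$: the bundle maps $\Hat\Phi_{IJ}$ are injective and satisfy the analogous strong cocycle over $\phi_{IJ}$, so peak/valley reductions go through verbatim on $(I, e) \preceq (J, f)$; alternatively, apply (a) to the projected pair $(\pr_I(e), \pr_J(f))$ and lift the resulting length-2 zigzag uniquely along $\Hat\Phi$.

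For (c): if $(I, p) \sim (I, q)$, apply (a)(iii) with $J = I$. Since $I \cap I = I \neq \emptyset$, there is $w \in U_{I \cap I} = U_I$ with $(I, p) \succeq (I, w) \preceq (I, q)$, and $\phi_{II} = \id_{U_I}$ gives $p = w = q$; the $\E_I$ case is identical. For (d): $y \in \eps_J(S_I)$ iff $(J, y) \sim (I, x)$ for some $x \in S_I$, and (a)(ii) rewrites this as the existence of $z \in U_{I \cup J}$ with $\phi_{J(I \cup J)}(y) = z = \phi_{I(I \cup J)}(x)$, $x \in S_I$, which is the first formula; (a)(iii) gives the second, with the $I \cap J = \emptyset$ case absorbed via the footprint-map conventions \eqref{eq:empty}. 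The special case $S_I = U_I$ simplifies the second formula by using \eqref{eq:tame2} to rewrite $\phi_{(I \cap J) I}(U_{(I \cap J) I}) = U_I \cap \s_I^{-1}(\E_{(I \cap J) I})$ and then applying \eqref{eq:tame2} a second time under $\phi_{(I \cap J) J}$ to land in $U_{J(I \cup J)} \cap \s_J^{-1}(\E_{(I \cap J) J})$. The main obstacle throughout is the careful bookkeeping of domain containments: the peak and valley reductions depend essentially on the precise equality-of-domains part of Lemma \ref{le:tame0}, not merely on equality of composed transition maps on their overlap, since one needs to know that the produced elements $w$ and $z$ lie in the intended single-transition domains rather than merely where compositions happen to agree. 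The degenerate $I_0 \cap I_2 = \emptyset$ case in the peak reduction additionally requires genuine use of the filtration axiom \eqref{eq:CIJ0} to trap the situation into the zero set.
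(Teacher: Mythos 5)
Your proof is correct and takes essentially the same route as the paper: your ``peak reduction'' and ``valley reduction'' are exactly the paper's Claim 1 and Claim 2 (stated there for $\bE_\Kk$ and then specialized to $\bB_\Kk$ via zero sections, whereas you work in $\bB_\Kk$ and lift along $\Hat\Phi$ — both directions are fine), and both arguments ride on the strong cocycle condition from Lemma~\ref{le:tame0}, the tameness identities \eqref{eq:tame1}--\eqref{eq:tame5}, and the filtration property \eqref{eq:CIJ0}.

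One loose end you should tidy up is the degenerate peak mid-chain. If you reduce at an interior peak $(I_0,x_0)\preceq(K,z)\succeq(I_2,x_2)$ with $I_0\cap I_2=\emptyset$, your argument yields only that $x_0,x_2$ lie in the zero sets with matching footprints; there is no $w\in U_{\emptyset}$ to supply a valley, so the produced data does not splice into the rest of the chain by composition. The fix is the observation that morphisms in $\bB_\Kk$ preserve zero sets, so if any $x_\ell$ lies on $\s_{I_\ell}^{-1}(0_{I_\ell})$ then all do, and the whole chain is governed by footprints; one then finishes by manufacturing the valley through $\psi_{I_0\cup I_k}^{-1}$ as in Definition~\ref{def:tchange}~(iii). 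The paper sidesteps this by (a) building a zero-set alternative into the hypotheses of Claim 2, so that (iii)$\Rightarrow$(ii) is always available, and (b) iterating valley reductions first (which never degenerate, since their bottom index lies in $\Ii_\Kk$ and is nonempty) until a single peak remains, applying peak reduction only at the final step. Your greedy ``reduce any interior peak or valley'' works but must explicitly invoke the zero-set footprint argument when a degenerate peak occurs, or you should fix the reduction order to valleys-first as the paper does.
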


\begin{proof}
We first prove the following claims which generalize the implications 
(iii) $\Rightarrow$ (ii) and (ii) $\Rightarrow$ (iii) in (a) and (b).

\medskip
\noindent
{\bf Claim 1:} {\it Suppose that $(I,e)\preceq (K,g) \succeq (J,f)$ for some $(K,g)\in\Obj_{\bE_\Kk}$.
If in addition $I\cap J\neq\emptyset$, then there exists $d\in \E_{I\cap J}$ such that  $(I,e)\succeq (I\cap J,d) \preceq (J,f)$.

Moreover, 
denote $x:=\pr_I(e)$ and $y:=\pr_J(f)$. 
Then $\s_I(x)\neq 0_I(x)$ or $e\neq 0_I(x)$ both imply $I\cap J\neq\emptyset$, whereas in case $\s_I(x)=e=0_I(x)$ we may have $I\cap J=\emptyset$ but always $f=\s_J(y)=0_J(y)$ and $y=\psi_J^{-1}\bigl(\psi_I(x)\bigr)$.
}

\MS
\NI
Before we begin, note that $\Hat\Phi_{IK}(e)=g=\Hat\Phi_{JK}(f)$ implies $\phi_{IK}(x)=z=\phi_{JK}(y)$ for the projection to the base $z:=\pr_K(g)$. 

Now suppose first that $e\ne 0_I(x)$. Then $(I,e)\preceq (K,g)$ implies  $g=\Hat\Phi_{IK}(e)\ne \Hat\Phi_{IK}(0_I(x)) = 0_K(z)$, and $(K,g) \succeq (J,f)$ implies $g\in \Hat\Phi_{JK}(\E_J)$, so that $\Hat\Phi_{IK}(\E_I)\cap \Hat\Phi_{JK}(\E_J)\not\subset \im 0_K$.
The same follows in case $\s_I(x)\ne 0_I(x)$ from $0_K(z) =\Hat\Phi_{IK}(0_I(x)) \ne \Hat\Phi_{IK}(\s_I(x)) = \s_K(z)= \Hat\Phi_{JK}(\s_J(y))$.
On the other hand, the filtration property \eqref{eq:filt} gives
$\Hat\Phi_{IK}(\E_I)\cap \Hat\Phi_{JK}(\E_J) \subset \E_{IK}\cap\E_{JK}$, which in case
$I\cap J = \emptyset$ by the filtration properties equals to $\E_{(I\cap J) K}=\E_{\emptyset K}=\im 0_K$. This shows $I\cap J \neq \emptyset$ as claimed.

Next, assuming $I\cap J \neq \emptyset$ for any reason, the filtration properties, \eqref{eq:filt}, and tameness \eqref{eq:tame5} give 
\begin{align*}
 g \;\in \; \Hat\Phi_{IK}(\E_I) \cap \Hat\Phi_{JK}(\E_J) 
&\;=\;  
\E_{IK}\cap\pr_K^{-1}(\im\phi_{IK})\cap\E_{JK}\cap\pr_K^{-1}(\im\phi_{JK}) \\
& \;=\;
\E_{(I\cap J)K} \cap \pr_K^{-1}(\im\phi_{(I\cap J) K})
\;=\;
\Hat\Phi_{(I\cap J) K}(\E_{(I\cap J) K}) .  
\end{align*}
Therefore we have $g=\Hat\Phi_{(I\cap J)K}(d)$ for some $d\in \E_{(I\cap J)K}$.
We also have $g=\Hat\Phi_{IK}(e)$ by assumption, and Lemma \ref{le:tame0} implies that $\Hat\Phi_{(I\cap J)K}(d) = \Hat\Phi_{IK}\bigl( \Hat\Phi_{(I\cap J)I}(d)\bigr)$, so the elements $e$ and $\phi_{(I\cap J)I}(d)$ of  $\E_I$ have the same image under $\Hat\Phi_{IK}$. Since the latter is injective we deduce $e=\Hat\Phi_{(I\cap J) I}(d)$.
Similarly, $f=\Hat\Phi_{(I\cap J) J}(d)$ follows from $\Hat\Phi_{(I\cap J)K} = \Hat\Phi_{JK}\circ \Hat\Phi_{(I\cap J)J}$.

Finally, in case $\s_I(x)=e=0_I(x)$ the assertions $\s_J(y)=0_J(y)$ and $\psi_I(x)=\psi_J(y)$ hold in general weak Kuranishi atlases, as noted in \eqref{eq:useful2}. Moreover, in that case we have $\Hat\Phi_{JK}(0_J(y))=0_K(z) = \Hat\Phi_{IK}(e) = g= \Hat\Phi_{JK}(f)$, so that injectivity of $\Hat\Phi_{JK}$ implies $f=0_J(y)$.

\medskip

\noindent
{\bf Claim 2:} {\it Suppose $(I,e)\succeq (H,d) \preceq (J,f)$ for some $(H,d)\in\Obj_{\bE_\Kk}$. Then there exists $g\in \E_{I\cup J}$ such that $(I,e)\preceq (I\cup J,g) \succeq (J,f)$.
The same conclusion holds under the assumption $e=0_I(x)$ and $f=0_J(y)$ for $x\in\s_I^{-1}(0_I)$, $y\in\s_J^{-1}(0_J)$ with $\psi_I(x)=\psi_J(y)$.
}

\MS\NI
The fact that both $\Hat\Phi_{HI}(d)=e$ and $\Hat\Phi_{HJ}(d)=f$ are defined implies $d\in \pr_H^{-1}(U_{HI}\cap U_{HJ})$, 
which by \eqref{eq:tame1} means $d\in \pr_H^{-1}( U_{H (I\cup J)})$ so that $g:=\Hat\Phi_{H (I\cup J)}(d)\in \E_{I\cup J}$ is defined.
The strong cocycle condition in Definition~\ref{def:cocycle}, proved in Lemma~\ref{le:tame0}, then implies
$$
g = \Hat\Phi_{H (I\cup J)}(d) = \Hat\Phi_{I(I\cup J)}\bigl(\Hat\Phi_{H I}(d)\bigr) =  \Hat\Phi_{I(I\cup J)}(e) .
$$
Similarly,  $g= \Hat\Phi_{J(I\cup J)}(f)$ follows from $f=\Hat\Phi_{HJ}(d)$ and the strong cocycle condition.
Finally, in case $e=0_I(x)=\s_I(x)$ and $f=0_J(y)=\s_J(y)$ with $\psi_I(x)=\psi_J(y)$ we conclude $\emptyset\neq F_I\cap F_J = F_{I\cup J}$ so that $K:=I\cup J\in \Ii_\Kk$. In particular there exists $z\in U_K$ with $\psi_K(z)=\psi_I(x)$, so that the property of coordinate changes $\phi_{IK}|_{\psi_I^{-1}(F_K)}=\psi_I^{-1}\circ\psi_K$ (from Definition~\ref{def:tchange}~(iii) for $I\subset K$) implies $x\in U_{IK}$ with $\phi_{IK}(x)=z$. Then the linearity of coordinate changes (Definition~\ref{def:tchange}~(i)) implies $\Hat\Phi_{IK}(0_I(x))=0_K(z)$ and hence $(I,0_I(x))\preceq (K,0_K(z))$, and we analogously obtain $(J,0_J(y))\preceq (K,0_K(z))$, which proves the claim with $g=0_K(z)$.

\medskip

The now established Claims 1 and 2 proves (ii) $\Leftrightarrow$ (iii) in the setting of (b) as well as (a), in the latter case by applying the claims to $e=0_I(x), f=0_J(y), g=0_{I\cup J}(z), d=0_{I\cap J}(w)$.
Moreover, (ii) $\Rightarrow$ (i) holds by definition of the equivalence relation $\sim$ for both (a) and (b).
Finally, the implication (i) $\Rightarrow$ (ii) for (b) is proven by considering a chain of morphisms as in \eqref{eq:ch}, applying Claim 2 to replace every other occurrence of 
$\ldots  \succeq  (I_\ell,e_\ell) \preceq \ldots$ with $\ldots  \preceq  (I'_\ell,e'_\ell) \succeq \ldots$, 
and then composing consecutive morphisms in the same direction (using the cocycle condition as in Lemma~\ref{lem:eqdef}). 
Unless we started with $(I,e) \preceq(J,f)$ or $(I,e) \succeq (J,f)$ which are both special cases of (ii), this shortens the chain and can be iterated until we reach a chain of type $(I,e) \preceq (K,g) \succeq (J,f)$. Then Claim 1 either provides a chain $(I,e) \succeq (I\cap J,d) \preceq (J,f)$ or shows that $x=\s_I(x)=0_J(x)$ and $f=\s_J(y)=0_J(y)$ for some $\psi_I(x)=\psi_J(y)$, upon which Claim 2 provides the required chain of type  (ii).
Repeating the same argument for the zero sections applied to $x,y,z,w$ to prove (i) $\Rightarrow$ (ii) in (a) completes the proofs of (a) and (b).
 
Next, part (c) is a consequence of (i)$\Rightarrow$(ii) since $\phi_{II}={\rm Id}_{U_I}$ and 
$\Hat\Phi_{II}={\rm Id}_{\E_I}$.  
The uniqueness of $z$ follows from the representation $z\in \pi_\Kk^{-1}(\pi_\Kk(x))\cap U_{I\cup J}$  in terms of the projection $\pi_\Kk$, and similar for the other elements.

The formulas for $\eps_J(S_I)$ 
and $\E_J\cap\pi_{\bE_\Kk}^{-1}\bigl(\pi_{\bE_\Kk}(S_I)\bigr)$
in (d) follow from the equivalent definitions of $\sim$ in~(a)
resp.\
(b). 
In case $S_I=U_I$ we start from 
$$
\eps_J(U_I) = \phi_{J(I\cup J)}^{-1}\bigl( \phi_{I (I\cup J)}(U_{I (I\cup J)}\cap U_I) \cap \im\phi_{J(I\cup J)} \bigr)
$$
and then use \eqref{eq:tame5}, the strong cocycle condition $\phi_{(I\cap J) (I\cup J)}=\phi_{J (I\cup J)}\circ\phi_{(I\cap J) J}$, and \eqref{eq:tame3} to obtain the claimed
\begin{align*}
\eps_J(U_I) &
\;=\;  
\phi_{J(I\cup J)}^{-1}\bigl( \im \phi_{I (I\cup J)} \cap \im\phi_{J(I\cup J)} \bigr) 
\;=\;  
\phi_{J(I\cup J)}^{-1}\bigl( \im \phi_{(I\cap J) (I\cup J)} \bigr) \\
&\;=\;  
U_{J(I\cup J)}\cap  \im \phi_{(I\cap J) J}
\;=\;  U_{J(I\cup J)} \cap \s_J^{-1}(\E_{(I\cap J)J}).
\end{align*}
This completes the proof.
\end{proof}

With these preparations we can finally show that the 
tameness conditions imply the Hausdorff and homeomorphism properties claimed in Theorem~\ref{thm:K}.

\begin{proof}[Proof of Proposition~\ref{prop:Khomeo}]
We first prove the statement for $|\Kk|$, and then give the necessary extensions of the argument for $|\bE_\Kk|$.
To see that $|\Kk|$ is Hausdorff, note first that the equivalence relation on $O: =  \Obj_{\bB_\Kk}=  \bigsqcup_{I\in \Ii_\Kk} U_I$ 
 is closed, i.e.\ that the subset 
$$
R: = \bigl\{\bigl((I,x), (J,y)\bigr) \ | \ (I,x)\sim (J,y)\bigr\}\subset O\times O
$$
is closed. Since $O\times O$ is the disjoint union of the metrizable sets $U_I\times U_J$
for the finite index set $I,J\in \Ii_\Kk$, closedness of $R$ will follow if we show that for all pairs $I,J$ and all convergent sequences $x^\nu\to x^\infty$ in $U_I$, $y^\nu\to y^\infty$ in $U_J$ with $(I,x^\nu)\sim (J,y^\nu)$ for all $\nu$, we have $(I,x^\infty)\sim (J,y^\infty)$. 
For that purpose denote $H:=I\cap J$.
In case $H=\emptyset$, Lemma~\ref{le:Ku2}~(a) implies 
$\s_I(x^\nu)=0_I(x^\nu)$, $\s_J(y^\nu)=0_J(y^\nu)$, 
and $\psi_I(x^\nu)=\psi_J(y^\nu)$, all of which persist in the limit by continuity of $\s_\bullet$ and $\psi_\bullet$. Thus we have $\psi_I(x^\infty)=\psi_J(y^\infty)$, which by \eqref{eq:useful1} implies $(I,x^\infty)\sim (J,y^\infty)$. 
In case $H\neq\emptyset$, Lemma~\ref{le:Ku2}~(a)  provides a sequence $w^\nu\in U_H$ such that $x^\nu = \phi_{HI}(w^\nu)$ and $y^\nu = \phi_{HJ}(w^\nu)$.
Since $\phi_{HI}(U_{HI})=\s_I^{-1}(\E_H)\subset U_I$ is closed by Lemma~\ref{le:phitrans}, it contains the limit $x^\infty$, and since $\phi_{HI}$ is a homeomorphism to its image we deduce convergence $w^\nu\to w^\infty\in U_{HI}$ to a preimage of $x^\infty=\phi_{HI}(w^\infty)$.  Then by continuity of the transition map we obtain $\phi_{HJ}(w^\infty) = y^\infty$, so that $(I, x^\infty)\sim (J, y^\infty)$ as claimed.  

With closedness of $R\subset O\times O$ established, we can appeal to the well known fact
that whenever a space $O$ is a separable, locally compact, metric space, then its quotient $\qu{O}{R}$ by a closed relation $R\subset O\times O$ is Hausdorff.  
For completeness, we give a proof in Lemma~\ref{le:bourb} below.
In our case, $O=\bigsqcup_{I\in\Ii_\Kk} U_I$ satisfies these assumptions since it is the finite disjoint union of spaces $U_I$ of this kind
by Definition~\ref{def:tchart}.
So this proves that $|\Kk|=\qu{O}{R}$ is Hausdorff.

To make the same argument for $|\bE_\Kk|$, we replace the $U_I$ by  the spaces $\E_I$,
each of which is a separable, locally compact, metric space by Definition~\ref{def:tchart} as well.
We then establish closedness of the relation by the same arguments as above, 
applied to sequences $e^\nu\to e^\infty$ in $\E_I$, $f^\nu\to f^\infty$ in $\E_J$ with $(I,e^\nu)\sim (J,f^\nu)$ whose base points $x^\nu=\pr_I(e^\nu)$, $y^\nu=\pr_J(f^\nu)$ converge to $(I,x^\infty)\sim (J,y^\infty)$ by the above.
In case $I\cap J=\emptyset$, Lemma~\ref{le:Ku2}~(b) implies $e^\nu=0_I(x^\nu)$, $f^\nu=0_J(y^\nu)$ which by continuity of $0_\bullet$ implies $(I,e^\infty=0_I(x^\infty))\sim (J,f^\infty=0_J(y^\infty))$.
In case $H:=I\cap J \neq\emptyset$, Lemma~\ref{le:Ku2}~(b) provides $g^\nu\in \E_H$ such that $e^\nu = \Hat\Phi_{HI}(g^\nu)$. 
In this case \eqref{eq:filt} together with 
Lemma~\ref{le:phitrans} 
implies that the limit $e^\infty$ is contained in the intersection of closed subsets $\Hat\Phi_{HI}(\E_H)=\E_{HI}\cap \pr_I^{-1}(\s_I^{-1}(\E_H))$ of $\E_I$.
Then the homeomorphism properties of $\Hat\Phi_{HI}$ and $\Hat\Phi_{HJ}$ imply $g^\nu\to g^\infty\in \E_{HI}$ with $e^\infty=\Hat\Phi_{HI}(g^\infty)$ and $\Hat\Phi_{HJ}(g^\infty) = f^\infty$, so that $(I, e^\infty)\sim (J, f^\infty)$ confirms closedness of the relation.
The Hausdorff property of the quotient then follows as before.

To show that $\pi_\Kk|_{U_I}$ is a homeomorphism onto its image, first recall that it is injective by Lemma \ref{le:Ku2}~(c).
It is moreover continuous since $|\Kk|$ is equipped with the quotient topology. Hence it remains to show that $\pi_\Kk|_{U_I}$ is an open map to its image, i.e.\ for a given open subset $S_I\subset U_I$ we must find an open subset $\Ww\subset |\Kk|$ such that $\Ww\cap \pi_{\Kk}(U_I) = \pi_\Kk(S_I)$. 
Equivalently, we set $\Ww:=|\Kk|\less\Qq$ and construct the complement 
$$
\Qq := \bigl(\io_\Kk(X) \cup {\textstyle \bigcup_{H\subset I}} \pi_\Kk(U_H)\bigr) \less \pi_\Kk(S_I)\;\subset\; |\Kk| .
$$
With that the intersection identity follows from $\Qq\cap \pi_{\Kk}(U_I) = \pi_\Kk(U_I)\less \pi_\Kk(S_I)$, so it remains to show that $U_J\cap \pi_\Kk^{-1}(\Qq)$ is closed for each $J$.
In case $J\subset I$ we have $U_J\cap \pi_\Kk^{-1}(\Qq) = U_J\less \eps_J(S_I)$,
which is closed iff $\eps_J(S_I)$ is open.
Indeed, Lemma~\ref{le:Ku2}~(d) gives $\eps_J(S_I)=\phi_{JI}^{-1}(S_I)\subset U_J$, and this is open since $S_I\subset U_I$ and hence $S_I\cap U_{IJ}\subset U_{IJ}$ is open and $\phi_{JI}$ is continuous.

In case $J\not\subset I$ 
to show that $U_J\cap \pi_\Kk^{-1}(\Qq)$ is closed, we 
express it 
as the union of 
$Q^0_J:=U_J\cap \pi_\Kk^{-1}(\io_\Kk(X)\less \pi_\Kk(S_I))$ with the union over $H\subset I$ of 
\begin{align*}
Q_{JH} \,:=\;
U_J\cap \pi_\Kk^{-1} \bigl( \pi_\Kk(U_H)\less \pi_\Kk(S_I)\bigr) 
& = U_J\cap \pi_\Kk^{-1} \bigl( \pi_\Kk(U_H\less \eps_H(S_I))\bigr) \\
& = \eps_J\bigl(U_H\less \phi_{HI}^{-1}(S_I)\bigr) \;=\; \eps_J(C_H) .
\end{align*}
Note here that $C_H:=U_H\less \phi_{HI}^{-1}(S_I) \subset U_H$ is closed since as above 
$\phi_{HI}^{-1}(S_I) \subset U_H$ is open.
We moreover claim that this union can be simplified to
\begin{equation}\label{qqquark}
U_J\cap \pi_\Kk^{-1}(\Qq) \; = \; Q^0_J \; \cup \bigcup_{H\subset I\cap J} \eps_J(C_H) .
\end{equation}
Indeed, in case $H\cap J= \emptyset$ we have
$\eps_J(C_H) \subset  Q^0_J$ since Lemma~\ref{le:Ku2}~(d) gives 
\begin{align*}
\eps_J\bigl(U_H\less \phi_{HI}^{-1}(S_I)\bigr) 
&= \psi_J^{-1}\bigl(\psi_{H} \bigl(\s_H^{-1}(0_H)\less \phi_{HI}^{-1}(S_I) \bigr)\bigr) \\
&= \psi_J^{-1}\bigl(F_H \less \bigl(F_I \cap \pi_\Kk(S_I) \bigr)\bigr)
\subset U_J\cap \pi_\Kk^{-1}(\io_\Kk(X)\less \pi_\Kk(S_I)) = Q^0_J.
\end{align*}
For $H\not\subset J$ with $H\cap J\neq \emptyset$ 
we have $\eps_J(C_H)\subset \eps_J(C_{H\cap J})$ since
Lemma~\ref{le:Ku2}~(d) gives 
$$
\eps_J\bigl(U_H\less \phi_{HI}^{-1}(S_I)\bigr) 
= \phi_{(H\cap J)J}\bigl(
U_{(H\cap J)J} \cap
\phi_{(H\cap J)H}^{-1} (U_H\less \phi_{HI}^{-1}(S_I))\bigr) 
\subset
U_J \cap \pi_\Kk^{-1}( C_{H\cap J} ),
$$
where $\phi_{(H\cap J)H}^{-1}\bigl( U_H\less \phi_{HI}^{-1}(S_I) \bigr) \subset  U_{H\cap J}\less \phi_{(H\cap J)I}^{-1}(S_I) = C_{H\cap J}$ by the cocycle condition.
This confirms \eqref{qqquark}.

It remains to show that $Q^0_J$ and $\eps_J(C_H)$ for $H\subset I\cap J$ are closed.
For the latter, Lemma~\ref{le:Ku2}~(d) gives 
$\eps_J(U_H\less \phi_{HI}^{-1}(S_I))=\phi_{HJ}(U_{HJ}\cap C_H)$,
which is closed in $U_J$ since closedness of $C_H\subset U_H$ implies relative closedness of $U_{HJ}\cap C_H \subset U_{HJ}$, and $\phi_{HJ}: U_{HJ} \to U_J$ is a homeomorphism onto a closed subset of $U_J$ by 
Lemma~\ref{le:phitrans}.
Finally,
$$
Q^0_J = 
U_J\cap \pi_\Kk^{-1}(\io_\Kk(X)\less \pi_\Kk(S_I))  \;= \; 
s_J^{-1}(0_J) \less \psi_J^{-1}\bigl( F_J \cap \psi_I(\s_I^{-1}(0_I)\cap S_I)\bigr)
$$
is closed in $s_J^{-1}(0_J)$ and hence in $U_J$, since both $F_J$ and $\psi_I(\s_I^{-1}(0_I)\cap S_I)$ are open in $X$, and $\psi_J:s_J^{-1}(0_J)\to X$ is continuous.
Thus $U_J\cap \pi_\Kk^{-1}(\Qq)$ is closed, as claimed, which finishes the proof of the homeomorphism property of $\pi_\Kk|_{U_I}$.

The analogous homeomorphism statement for $|\bE_\Kk|$ is proven by the same constructions, using part (b) of Lemma~\ref{le:Ku2} instead of part (a) and replacing $|\Kk|$, $U_L$, $U_{KL}$, $\phi_{KL}$, $\eps_{KL}$ by $|\bE_\Kk|$, $\E_L$, $\pr_K^{-1}(U_{KL})$, $\Hat\Phi_{KL}$, $\Hat\eps_{KL}$, respectively,  and instead of $\iota_\Kk(X)$ using the subset $|0_\Kk|\bigl(\iota_{\Kk}(X)\bigr)= \bigl| \bigsqcup_{I\in\Ii_\Kk} 0_I(\psi_I^{-1}(F_I)) \bigr| \subset |\bE_\Kk|$.
Then 
$$
\Hat Q^0_J := \E_J\cap \pi_{\bE_\Kk}^{-1}\bigl(|0_\Kk|\bigl(\iota_{\Kk}(X)\bigr)\less \pi_{\bE_\Kk}(S_I) \bigr)  \;= \; 
0_J \bigl( Q^0_J \bigr)
$$
is closed in $\E_J$ since $0_J$ is a homeomorphism to its closed image by Remark~\ref{rmk:topchart}~(iv), and $Q^0_J=\s_J^{-1}(0_J) \less \psi_J^{-1}\bigl( F_J \cap \psi_I(\s_I^{-1}(0_I)\cap S_I)\bigr)\subset U_J$ was shown to be closed above.
\end{proof}

\begin{lemma}[\cite{Bourb} Exercise 19, \S10, Chapter 1]\label{le:bourb}
Let $O$ be a separable, locally compact, metric space, and suppose that an equivalence relation $\sim$ on $O$ is given by a closed subset $R\subset O\times O$. 
Then the quotient topology on $\qu{O}{\sim}$ is Hausdorff.  
\end{lemma}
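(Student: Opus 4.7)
\medskip
The goal is to show that any two distinct equivalence classes $[x]\ne [y]\in\qu{O}{\sim}$ admit disjoint open neighbourhoods in $\qu{O}{\sim}$, which via the projection $\pi: O\to \qu{O}{\sim}$ means producing disjoint saturated open subsets $\Tilde U\ni x$ and $\Tilde V\ni y$ of $O$. The plan is to first produce disjoint \emph{closed} saturated neighbourhoods, using closedness of $R$ together with local compactness, and then to thicken these to saturated \emph{open} neighbourhoods by an exhaustion argument that uses $\sigma$-compactness and normality of the metric space $O$.

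For the first step, I would start from $(x,y)\notin R$ and use that $R$ is closed to find open $W_x\ni x$ and $W_y\ni y$ with $(W_x\times W_y)\cap R=\emptyset$; by local compactness and regularity of $O$, these may be shrunk so that $\ov{W_x}, \ov{W_y}$ are compact and $(\ov{W_x}\times \ov{W_y})\cap R=\emptyset$ still. The crucial observation is that for any compact $K\subset O$ the saturation $\pi^{-1}(\pi(K))$ equals ${\rm pr}_1\bigl(R\cap (O\times K)\bigr)$, the first-factor projection of a closed subset of $O\times K$. Since projection along a compact factor is a closed map, $\pi^{-1}(\pi(K))$ is closed in $O$. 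Applied to $K=\ov{W_x}$ and $K=\ov{W_y}$ this yields closed saturated sets $A_x:=\pi^{-1}(\pi(\ov{W_x}))$ and $A_y:=\pi^{-1}(\pi(\ov{W_y}))$ containing $x$ and $y$. They are disjoint because any common point $z$ would give $z\sim c_x\in \ov{W_x}$ and $z\sim c_y\in \ov{W_y}$, forcing $(c_x,c_y)\in R\cap(\ov{W_x}\times\ov{W_y})=\emptyset$.

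The main obstacle is the final step: converting the disjoint closed saturated sets $A_x,A_y$ into disjoint saturated \emph{open} sets, since a naive thickening via Urysohn/normality destroys saturatedness and saturating an open set need not preserve openness. Here I would use that $O$ is $\sigma$-compact (being separable and locally compact) to choose a compact exhaustion $O=\bigcup_{n\in\N}K_n$ with $K_n\sqsubset {\rm int}(K_{n+1})$, and then inductively build nested saturated open neighbourhoods of $A_x$ and $A_y$ one exhaustion level at a time: at stage $n$ one uses normality of $O$ to separate the current closed saturated sets by open neighbourhoods within $K_{n+1}$, and then replaces these by the saturation of a slightly shrunk compact sub-piece (again closed by the above), keeping everything disjoint throughout. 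Taking unions over $n$ produces the desired saturated open separation. This delicate interplay between saturation and openness is the technical heart of the argument and is precisely what is carried out in Bourbaki's Exercise 19, \S10, Chapter 1 of \emph{Topologie G\'en\'erale}, which the paper cites.
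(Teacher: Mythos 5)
Your first half is correct and complete, and it is the same key mechanism the paper uses (there packaged as ``$\pi$ is a closed map on a compact piece''): from $(x,y)\notin R$ you extract compact product neighbourhoods missing $R$, and the identity $\pi^{-1}(\pi(K))=\pr_1\bigl(R\cap(O\times K)\bigr)$ together with closedness of the projection along a compact factor shows that saturations of compact sets are closed, giving disjoint closed saturated sets $A_x\ni x$, $A_y\ni y$. Your route then stays upstairs in $O$, in contrast to the paper, which passes to the compact Hausdorff quotients $X_k=\ov{O_k}/\!\sim$ of an exhaustion and builds compatible open sets there; both are legitimate, and yours is the classical Bourbaki line.

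The problem is that the second half -- which you yourself identify as the technical heart -- is not actually proved: it is sketched and then delegated to the cited Bourbaki exercise, i.e.\ to the statement being proved (the paper supplies this proof precisely because the reference is only an exercise). Moreover, the sketch as written would fail if implemented literally. If at stage $n$ you separate the current closed saturated sets inside $K_{n+1}$ by disjoint open sets via normality and then saturate compact sub-pieces of each, the two saturations need not be disjoint: saturation does not preserve disjointness of non-saturated sets, and the normality separation carries no information about the relation away from $S_n,T_n$. The missing idea is to choose the two enlargements \emph{sequentially}, using that disjointness from a \emph{saturated} set survives saturation: pick a compact neighbourhood $A'$ of $S_n\cap K_{n+1}$ disjoint from $T_n$ (then $\operatorname{sat}(A')\cap T_n=\emptyset$ because $T_n$ is saturated), then a compact neighbourhood $B'$ of $T_n\cap K_{n+1}$ disjoint from the closed set $S_n\cup\operatorname{sat}(A')$, and set $S_{n+1}:=S_n\cup\operatorname{sat}(A')$, $T_{n+1}:=T_n\cup\operatorname{sat}(B')$. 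A second unaddressed point is why the final union is open at all: each $S_n$ is closed (a finite union of saturations of compacts), and openness of $\bigcup_n S_n$ only follows because $S_{n+1}$ contains a neighbourhood of $S_n\cap K_{n+1}$ and the sets ${\rm int}(K_n)$ exhaust $O$; this argument needs to be made explicit. With these two repairs your approach does close up and gives a proof genuinely different in implementation from the paper's, but as submitted the decisive step is a gap.
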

 \begin{proof}
In the following let $\pi:O\to \qu{O}{\sim}$ denote the projection to the quotient, which is continuous by definition of the quotient topology.
We begin by considering the special case when $O$ and hence $\qu{O}{\sim}$ is compact.

\MS
\NI  {\bf Step 1:}
{\it If in addition $O$ is compact, then $\qu{O}{\sim}$ is compact, normal, and Hausdorff.
In particular, any two disjoint closed sets $A_1,A_2\subset \qu{O}{\sim}$ have open neighhourhoods $A_1\subset U_1$, $A_2\subset U_2$ in $\qu{O}{\sim}$ with disjoint closures $\ov{U_1}\cap\ov{U_2}=\emptyset$.
}

\MS
\NI
The key in this case is the fact that the projection $\pi:O\to \qu{O}{\sim}$ is a closed map (maps closed sets to closed sets).
To check this, let $C\subset O$ be a closed subset. Then $\pi(C)\subset \qu{O}{\sim}$ is closed in the quotient topology iff $\pi^{-1}\bigl(\qu{O}{\sim}\less \pi(C)\bigr)= O \less \pi^{-1}\bigl(\pi(C)\bigr)$ is open, i.e.\ $\pi^{-1}\bigl(\pi(C)\bigr)\subset O$ is closed.
Now $\pi^{-1}\bigl(\pi(C)\bigr) = {\rm pr}_2 \bigl((C\times O)\cap R\bigr)$ is indeed closed since it is the projection of an intersection of two closed sets, and the projection $\pr_2:O\times O\to O$ to the second factor is a continuous map between compact Hausdorff spaces, so maps closed hence compact sets to compact hence closed sets.

Now the quotient of a Hausdorff space with closed projection is Hausdorff by \cite[Exercise~31.7]{Mun}. Moreover, $\qu{O}{\sim}=\pi(O)$ is compact since it is the image of a compact set $O$ under the continuous projection $\pi$, and compact Hausdorff spaces are normal by \cite[Theorem~32.3]{Mun}.  Finally, in a normal space, disjoint closed sets can be separated by neighbourhoods with disjoint closures by \cite[Exercise~31.2]{Mun}.  

\MS
\NI  {\bf Step 2:}
{\it  
In the general case, $O=\bigcup_{k\in\N} O_k$  is exhausted by a nested sequence 
$O_k \subset O_{k+1}$ of open subsets with compact closures $\ov {O_k}\subset O_{k+1}$.
Moreover, the quotients $X_k:=\qu{\ov{O_k}}{\sim}$ are compact and Hausdorff with 
continuous injections $\iota_{k \ell}: X_k\to X_\ell$  for $k<\ell$
that are induced by the injective inclusions $j_k: X_k\to \qu{O}{\sim}$ in the sense that 
$j_\ell\circ\iota_{k \ell}= j_k$.}

\MS

By hypothesis, each point $x\in O$ has a metric neighbourhood $B_{\eps}(x)$ that is contained in a precompact neighbourhood $V$ and so has compact closure $\ov B_{\eps}(x)$.
Recall that separable metric spaces are also second countable and hence Lindel\"of: Every open cover has a countable subcover.
Hence we may cover $O$ by a countable number of the precompact 
neighbourhoods $\bigl(B_{\eps_i}(x_i)\bigr)_{i\in\N}$.  
Then define $O_1: = B_{\eps_1}(x_1)$, and, given $O_k$ with compact closure $\ov O_k$, choose a finite subset $A_k$ so that $\ov O_k\subset \bigcup_{i\in A_k} B_{\eps_i}(x_i)$ and then define   
$O_{k+1}: = B_{\eps_{k+1}}(x_{k+1}) \cup \bigcup_{i\in A_k} B_{\eps_i}(x_i)$.  Then 
 $O=\bigcup_{k\in\N} O_k$ because $B_{\eps_k}(x_k)\subset O_k$ for each $k$.

Now the Hausdorff property of $\pi_k:\ov{O_k}\to X_k$ follows from Step 1, and compactness 
of $X_k$ follows from compactness of $\ov{O_k}$ and continuity of the projection $\pi_k:\ov{O_k}\to X_k$.
The inclusions $j_k$ are induced by the operation $\pi\circ\pi_k^{-1}$ on sets. These take points to points, and they are injective by transitivity of the relation $\sim$.
The inclusions 
$\iota_{k\ell}$  are induced by the operation $\pi_\ell\circ\pi_k^{-1}$, which take points to points due to $\ov{O_k}\subset\ov{O_\ell}$ for $\ell>k$, and are injective by transitivity. 
The identity  
$j_\ell\circ\iota_{k\ell}= j_k$ 
follows from the representation in terms of the projections and transitivity.
Finally,  $\iota_{k\ell }$  is continuous since $V\subset X_\ell$ open means $\pi_\ell^{-1}(V) = \Uu \cap \ov {O_\ell}$ for some $\Uu\subset O$ open, and then $\pi_k^{-1}\bigl(\iota_{k\ell}^{-1} (V)\bigr)=\pi_\ell^{-1}(V) \cap \ov O_k = \Uu \cap \ov O_k$ is open in $\ov O_k$ as well.

\MS
\NI  
{\bf Step 3:}
{\it 
Given disjoint points $x^1, x^2 \in \pi_1(O_1)\subset X_1$, there are open neighbourhoods $B_k^i\subset X_k$ of $\iota_{1k}(x^i)$ for $i=1,2$ and $k\in\N$, satisfying}
\begin{equation} \label{topinduct}
\ov {B_k^1} \cap \ov {B_k^2} = \emptyset \qquad\quad\text{and}\qquad\quad
\iota_{k(k+1)}(B_k^i)= \iota_{k(k+1)}(X_k) \cap B_{k+1}^i \quad\text{for}\; i=1,2 .
\end{equation}

\MS
\NI
To prove this we can use the normal Hausdorff property for the compact quotients $X_k$ proved in Step 1.
We start the inductive construction by choosing $B_1^1, B_1^2\subset X_1$ to be open neighbourhoods of $x^1\neq x^2$ in $X_1$ with disjoint closures.
For the inductive step, suppose that we have found open neighbourhoods $B_\ell^i\subset X_\ell$ of $x^i_\ell:=\iota_{1\ell}(x^i)$ for $\ell=1,\ldots,k$.
Then the continuous inclusion $\iota_k:=\iota_{k(k+1)}$ maps the disjoint compact sets $\ov{B_k^1}, \ov{B_k^2}\subset X_k$ to disjoint compact sets  $\iota_k\bigl(\ov{B_k^1}\bigr)\cap \iota_k\bigl(\ov{B_k^2}\bigr) = \emptyset$ in $X_{k+1}$ which contain $\iota_k(x^i_k)=x^i_{k+1}$ for $i=1,2$. By the normal Hausdorff property of $X_{k+1}$, we find open neighbourhoods $U^i_{k+1}\subset X_{k+1}$ of $\iota_k\bigl(\ov{B_k^i}\bigr)$ for $i=1,2$ with disjoint closures.

Then $B_{k+1}^i := U^i_{k+1} \less \iota_k\bigl(X_k \less B^i_k\bigr)$ for $i=1,2$ also have disjoint closures, still contain $x^i_{k+1}\in \iota_k\bigl(B^i_k\bigr) \subset U^i_{k+1}$, and are open since $\iota_k\bigl(X_k \less B^i_k\bigr)$ is the compact and hence closed image of a compact set under a continuous map.
Finally, they satisfy the second iteration condition $ \iota_k(X_k) \cap B_{k+1}^i = \iota_k(B_k^i)$ in \eqref{topinduct} since
$\iota_k(X_k) = \iota_k(B_k^i) \cap \iota_k\bigl(X_k \less B^i_k\bigr) \bigr)$ with $\iota_k(B_k^i) \subset U^i_{k+1}$.

\MS
\NI  
{\bf Step 4:}
{\it We prove the Hausdorff property of $\qu{O}{\sim}$ in the general case.}

\MS
\NI
Given disjoint points $y_1\neq y_2$ in $\qu{O}{\sim}$, we can without loss of generality forget about the first few exhausting subsets so that $y_1,y_2\in \pi(O_1)$, and hence we can view $y_i=j_1(x^i)$ as disjoint points $x^1,x^2\in X_1$ and $\iota_{1k}(x^i)=j_k^{-1}(y_i)\in X_k$ for each $k\in\N$, and apply Step 3.
We then claim that
$$
U_i := {\textstyle \bigcup_{ k\in\N}}  \pi\bigl( O_k \cap \pi_k^{-1}(B_k^i)\bigr) \;\subset\;\qu{O}{\sim}
$$
are disjoint open neighbourhoods of $y_i$ for $i=1,2$. 
For that purpose note that iteration of the second part of \eqref{topinduct} implies 
$\iota_{k\ell }(B_k^i)= \iota_{k \ell} (X_k) \cap B_\ell^i$ for $\ell>k$.
For $\ell\geq k$ this identity and the inclusion $O_k\subset O_\ell$ implies
$$
O_\ell \cap \pi^{-1}\bigl( \pi\bigl( O_k \cap \pi_k^{-1}(B_k^i)\bigr)\bigr)
= O_\ell \cap \pi_\ell^{-1} \bigl( \iota_{k\ell } \bigl( \pi_k( O_k ) \cap B_k^i\bigr)\bigr)
\subset O_\ell \cap \pi_\ell^{-1} \bigl( B_\ell^i\bigr) ,
$$
whereas for $\ell<k$ we use the equivalent formulation $\iota_{\ell k}^{-1}(B_k^i)=B_\ell^i$
and the inclusion $\ov{O_\ell}\subset O_k$ to obtain
$$
O_\ell \cap \pi^{-1}\bigl( \pi\bigl( O_k \cap \pi_k^{-1}(B_k^i)\bigr)\bigr)
= 
O_\ell \cap \pi_\ell^{-1}\bigl( \iota_{\ell k}^{-1} \bigl( \pi_k(O_k) \cap B_k^i \bigr)\bigr)
= 
O_\ell \cap \pi_\ell^{-1}\bigl( B_\ell^i \bigr) .
$$
Together this implies $O_\ell \cap \pi^{-1}(U_i) = O_\ell \cap \pi_\ell^{-1}\bigl( B_\ell^i \bigr)$, where $\pi_k^{-1}(B_k^i)$ is open in the relative topology on $\ov{O_k}$ (because $B_k^i$ is open and $\pi_k$ continuous), i.e.\ $\pi_k^{-1}(B_k^i)=\Bb^i_\ell\cap \ov{O_k}$ for some open subset $\Bb^i_\ell\subset O$. This implies that each $O_\ell \cap \pi^{-1}(U_i) = O_\ell \cap \Bb^i_\ell$ is an intersection of open subsets, and hence their union $\pi^{-1}(U_i)$ is open in $O=\bigcup_{\ell\in\N} O_\ell$.

Note also that each $U_i$ contains $y_i \in \pi(O_1) \cap \pi\bigl(\pi_1^{-1}(B^i_1))$, so we have found open neighborhoods of $y_1,y_2\in\qu{O}{\sim}$. They are disjoint since for w.l.o.g. $\ell\ge k\in\N$ we have
$$
\pi\bigl( O_k \cap \pi_k^{-1}(B_k^1)\bigr)
\cap 
\pi\bigl( O_\ell \cap \pi_\ell^{-1}(B_\ell^2)\bigr)
\;\subset\;
\pi\bigl( \pi_\ell^{-1}\bigl(
\iota_{k\ell }\bigl( B_k^1 \bigr)  \cap B_\ell^2 \bigr)\bigr) ,
$$
which is empty since the construction \eqref{topinduct}  implies $\iota_{k\ell }\bigl( B_k^1 \bigr)\subset B_\ell^1$ and $B_\ell^1\cap B_\ell^2=\emptyset$.
This concludes the proof of Step 4 and thus the Lemma.
\end{proof}

%%%%%%%%%%%%%%%%%%%%%%%%%%%%%%%%%%%%%%%%%%%%%%
\subsection{Tame shrinkings} \label{ss:shrink} \hspace{1mm}\\ \vspace{-3mm}
%%%%%%%%%%%%%%%%%%%%%%%%%%%%%%%%%%%%%%%%%%%%%%%

The purpose of this section is to prove the existence part of Theorem~\ref{thm:K} by giving a general construction of a metric tame topological Kuranishi atlas starting from a filtered weak topological Kuranishi atlas in Propositions~\ref{prop:proper} and \ref{prop:metric}.
The construction uses a shrinking of the footprints along with the domains of charts and transition maps, as follows.

\begin{defn}\label{def:shr0}
Let $(F_i)_{i=1,\ldots,N}$ be an open cover of a compact space $X$.  We say that $(F_i')_{i=1,\ldots,N}$ is a {\bf shrinking} of $(F_i)$ if $F_i'\sqsubset F_i$ are
precompact open subsets which cover $X= \bigcup_{i=1,\ldots,N} F'_i$, and are such that for all subsets $I\subset \{1,\ldots,N\}$ we have
\begin{equation} \label{same FI}
F_I: = {\textstyle\bigcap_{i\in I}} F_i \;\ne\; \emptyset
\qquad\Longrightarrow\qquad
F'_I: = {\textstyle\bigcap_{i\in I}} F'_i \;\ne\; \emptyset .
\end{equation}
\end{defn}

Recall here that precompactness $V'\sqsubset V$ is defined as the relative closure of $V'$ in $V$ being compact.  If $V$ is contained in a compact space $X$, then $V'\sqsubset V$ is equivalent to the closure $\ov{V'}$ in the ambient space $X$ being contained in $V$.

\begin{defn}\label{def:shr}
Let $\Kk=(\bK_I,\Hat\Phi_{I J})_{I, J\in\Ii_\Kk, I\subsetneq J}$ be a weak topological Kuranishi atlas.   We say that a weak topological Kuranishi atlas $\Kk'=(\bK_I',\Hat\Phi_{I J}')_{I, J\in\Ii_{\Kk'}, I\subsetneq J}$ is a {\bf shrinking} of $\Kk$~if
\begin{enumerate}
\item  
the footprint cover $(F_i')_{i=1,\ldots,N'}$ is a shrinking of the cover $(F_i)_{i=1,\ldots,N}$,
in particular the numbers $N=N'$ of basic charts and index sets $\Ii_{\Kk'} = \Ii_\Kk$ agree;
\item
for each $I\in\Ii_\Kk$ the chart $\bK'_I$ is the restriction of $\bK_I$ to a precompact domain 
$U_I'\sqsubset U_I$
as in Definition \ref{def:restr};
\item
for each $I,J\in\Ii_\Kk$ with $I\subsetneq J$ the coordinate change $\Hat\Phi_{IJ}'$ is the restriction of $\Hat\Phi_{IJ}$  to the open subset $U'_{IJ}: =  \phi_{IJ}^{-1}(U'_J)\cap U'_I$
 as in Lemma~\ref{le:restrchange}.
\end{enumerate}
\end{defn}

Note that a shrinking is determined by the choice of domains $U'_I\sqsubset U_I$,
thus can be considered as the restriction of $\Kk$ to the subset $\bigsqcup_{I\in\Ii_\Kk} U_I'\subset
\Obj_{\bB_\Kk}$. 
Any such restriction of a weak topological Kuranishi atlas preserves the weak cocycle condition (since it only requires equality on overlaps), and hence is a shrinking.
The next lemma shows that shrinking preserves the filtration. More precisely, any shrinking of a filtered atlas will be equipped with a canonical filtration as follows.

\begin{lemma}\label{le:filter}  
Any shrinking of a filtered weak topological Kuranishi atlas $\Kk$ is canonically filtered: 
Given a filtration $(\E_{IJ}\subset \E_J)_{I\subset J}$ of $\Kk$ and a shrinking 
$\Kk'$ induced by domains $U'_I\sqsubset U_I$, the canonical filtration on $\Kk'$ is given by $\E_{IJ}' := \E_{IJ}\cap \pr_J^{-1}(U_J')$.
\end{lemma}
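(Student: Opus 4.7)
The plan is to directly verify the four filtration axioms (i)--(iv) of Definition~\ref{def:Ku3} for the candidate collection $(\E_{IJ}')_{I\subset J}$, working from the original filtration on $\Kk$ together with the definitions $U_I' \sqsubset U_I$, $U_{IJ}' = \phi_{IJ}^{-1}(U_J') \cap U_I'$, and $\E_J' = \pr_J^{-1}(U_J')$. The subset $\E_{IJ}'\subset \E_J'$ is automatically closed since $\E_{IJ}$ is closed in $\E_J$. Properties (i) and (iii) are essentially bookkeeping: using $\E_{JJ}=\E_J$ and $\E_{\emptyset J} = \im 0_J$ from the hypothesis, intersection with $\pr_J^{-1}(U_J')$ gives $\E_{JJ}' = \E_J'$ and $\E_{\emptyset J}' = 0_J(U_J') = \im 0_J'$; for (iii), $(\E_{IJ}\cap \E_{HJ})\cap\pr_J^{-1}(U_J') = \E_{(I\cap H)J}\cap\pr_J^{-1}(U_J') = \E_{(I\cap H)J}'$.

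For the compatibility (ii), I would argue by two inclusions. Given $e \in \pr_J^{-1}(U_{JK}')\cap \E_{IJ}'$, since $U_{JK}'\subset U_J'$ and $\phi_{JK}(U_{JK}')\subset U_K'$, the image $\Hat\Phi_{JK}(e)$ lies in $\pr_K^{-1}(U_K')$, and the original (ii) places it in $\E_{IK}\cap\pr_K^{-1}(\im\phi_{JK}')$, giving $\subset$. Conversely, a point $f\in \E_{IK}'\cap\pr_K^{-1}(\im\phi_{JK}')$ satisfies $\pr_K(f)\in\phi_{JK}(U_{JK}')$, and by the original (ii) has a unique preimage $e\in\pr_J^{-1}(U_{JK})\cap\E_{IJ}$; injectivity of $\phi_{JK}$ forces $\pr_J(e)\in U_{JK}'\subset U_J'$, so $e \in \pr_J^{-1}(U_{JK}')\cap\E_{IJ}'$.

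The main obstacle, and the only step that uses topology beyond set intersections, is the openness axiom (iv): showing $\im \phi_{IJ}'$ is open in $\s_J'^{-1}(\E_{IJ}')$. First I would identify $\s_J'^{-1}(\E_{IJ}') = U_J'\cap \s_J^{-1}(\E_{IJ})$, using $\pr_J\circ\s_J = \id$ to absorb the condition that $\s_J(x)\in\pr_J^{-1}(U_J')$ into $x\in U_J'$. By the original (iv) there is an open $V\subset U_J$ with $\im\phi_{IJ} = V\cap \s_J^{-1}(\E_{IJ})$. Since $\phi_{IJ}:U_{IJ}\to U_J$ is a topological embedding, the subset $\phi_{IJ}(U_I'\cap U_{IJ})$ is open in $\im\phi_{IJ}$ in its subspace topology, so there is an open $W\subset U_J$ with $\phi_{IJ}(U_I'\cap U_{IJ}) = W\cap \im\phi_{IJ}$. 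Combining with the identity $\im\phi_{IJ}' = U_J'\cap \phi_{IJ}(U_I'\cap U_{IJ})$ (which follows from injectivity of $\phi_{IJ}$ and the definition of $U_{IJ}'$) yields $\im\phi_{IJ}' = (V\cap W)\cap \s_J'^{-1}(\E_{IJ}')$, exhibiting the required openness.

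In summary, the proof reduces to careful intersection arithmetic, with the only substantive ingredient being the homeomorphism-onto-image property of $\phi_{IJ}$ used to transfer openness through the shrinking. No further structural hypotheses on $\Kk$ are needed.
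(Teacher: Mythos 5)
Your proof is correct and follows essentially the same route as the paper: a direct verification of the four filtration axioms, using the bundle-map/injectivity properties of $\Hat\Phi_{JK}$ for (ii) (the paper phrases this as a chain of set equalities rather than your double inclusion, a cosmetic difference) and the homeomorphism-onto-image property of $\phi_{IJ}$ for the openness in (iv). No gaps.
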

\begin{proof}  
The conditions in Definition~\ref{def:Ku3} are preserved by the shrinking as follows:
\begin{enumerate}
\item 
$\E'_{JJ}= \E'_J$ and $\E'_{\emptyset J} = \im 0'_J$ holds since $\pr_J^{-1}(U_J')=\E'_J$.
\item 
For ${I\subset J\subsetneq K}$ we have
\begin{align*}
\Hat\Phi'_{JK}\bigl({\pr'_J}^{-1}(U'_{JK})\cap \E'_{IJ}\bigr) 
& =\Hat\Phi_{JK}\Bigl( \pr_J^{-1}\bigl( U_{JK} \cap U'_J \cap \phi_{JK}^{-1}(U'_K)  \bigr) \cap \E_{IJ} \Bigr)  \\
&= \Hat\Phi_{JK}\Bigl( \pr_J^{-1}\bigl(U'_J \cap \phi_{JK}^{-1}(U'_K)  \bigr) \Bigr) \cap \E_{IK}\cap \pr_K^{-1}(\im \phi_{JK}) \\
&= 
\pr_K^{-1}( U'_K) \cap \E_{IK} \cap \pr_K^{-1}\bigl(\phi_{JK}(U'_{JK})\bigr) \\
&= \E'_{IK}\cap {\pr'}_K^{-1}(\im \phi'_{JK}) ,
\end{align*} 
where the second and third equalities hold because $\Hat\Phi_{JK}$ is a bundle map;
\item 
for $I, H \subset J$ we have
$$
\E'_{IJ}\cap \E'_{HJ} 
= \E_{IJ}\cap \pr_J^{-1}(U_J') \cap \E_{HJ}
= \E_{(I\cap H)J} \cap \pr_J^{-1}(U_J') = \E'_{(I\cap H)J}.
$$
\item 
For $I\subsetneq J$ we can write the image $\im \phi'_{IJ}= \phi_{IJ}(U_{IJ}')= \phi_{IJ}(U_{IJ}\cap U'_I) \cap U'_J$ as the image under the homeomorphism $\phi_{IJ}$ of an open subset $U_{IJ}'\subset U_{IJ}$, which shows that $\im \phi'_{IJ}$ is an open subset of $\im\phi_{IJ}\cap U'_J$ and thus -- since $(\E_{IJ})$ is a filtration -- of 
${\s}_J^{-1}(\E_{IJ}) \cap U'_J = {\s}_J^{-1}\bigl(\E_{IJ} \cap \pr_J^{-1}(U'_J) \bigr) = {\s'}_J^{-1}(\E'_{IJ})$.
\end{enumerate}
This shows that filtrations are inherited by shrinkings.
\end{proof}

\begin{rmk}\rm \label{rmk:shrink}
Given two tame shrinkings $\Kk^0$ and $\Kk^1$ of the same weak topological Kuranishi atlas, one might hope to obtain a ``common refinement'' $\Kk^{01}$ by intersection $U^{01}_{IJ}:=U^0_{IJ}\cap U^1_{IJ}$ of the domains.
This could in particular simplify the proof of compatibility of the atlases $\Kk^0,\Kk^1$ in 
Theorem~\ref{thm:cobord2}.
However, for this to be a valid approach, the footprint covers $(F^0_i)_{i=1,\ldots,N}$ and $(F^1_i)_{i=1,\ldots,N}$ 
would have to be comparable in the sense that their intersections still cover $X=\bigcup_{i=1,\ldots,N} (F^0_i\cap F^1_i)$ and have the same index set, i.e.\ $F^0_I \cap F^1_I \neq \emptyset$ for all $I\in\Ii_\Kk$.
Once this is satisfied, one can check that $\Kk^{01}$ defines another tame shrinking of $\Kk$.
$\hfill\er$
\end{rmk}

We can now prove the main result of this section.

\begin{prop}  \label{prop:proper}
Every filtered weak topological Kuranishi atlas $\Kk$ has a
{\bf tame shrinking}:  a shrinking $\Kk'$ that is a tame topological Kuranishi atlas.
\end{prop}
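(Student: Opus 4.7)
\NI\textbf{Reduction.} A shrinking of $\Kk$ is determined by the choice of precompact chart domain shrinkings $U'_I\sqsubset U_I$, with coordinate change domains $U'_{IJ}:=U'_I\cap\phi_{IJ}^{-1}(U'_J)$ automatically inheriting the weak cocycle condition, and with canonical filtration $\E'_{IJ}:=\E_{IJ}\cap\pr_J^{-1}(U'_J)$ by Lemma~\ref{le:filter}. Once the tameness identities \eqref{eq:tame1} and \eqref{eq:tame2} are established, Lemma~\ref{le:tame0} upgrades the weak cocycle to the strong cocycle condition, making $\Kk'$ a tame topological Kuranishi atlas. Hence the task reduces to producing suitable precompact $U'_I\sqsubset U_I$ for each $I\in\Ii_\Kk$.

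\MS\NI\textbf{Construction.} First use compactness of $X$ to shrink the open footprint cover to a precompact cover $F'_i\sqsubset F_i$ preserving the nonempty intersection pattern (so that $\Ii_\Kk$ is unchanged), and apply Lemma~\ref{le:restr0} to obtain initial precompact $U^{(0)}_I\sqsubset U_I$ satisfying $\ov{U^{(0)}_I}\cap\s_I^{-1}(0_I)=\psi_I^{-1}(\ov{F'_I})$. Then proceed by downward induction on $k=|J|$: having already chosen $U'_K\sqsubset U^{(0)}_K$ for all $|K|>k$, pick $U'_J\sqsubset U^{(0)}_J$ for each $|J|=k$ satisfying simultaneously (a) $\ov{U'_J}\cap C_{IJ}=\emptyset$ for every $I\subsetneq J$, where $C_{IJ}:=\s_J^{-1}(\E_{IJ})\setminus\im\phi_{IJ}$; and (b) $\phi_{JK}^{-1}\bigl(U'_K\cap\s_K^{-1}(\E_{JK})\bigr)\subset U'_J$ for every $K\in\Ii_\Kk$ with $K\supsetneq J$. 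Filtration condition~(iv) of Definition~\ref{def:Ku3} ensures $\im\phi_{IJ}$ is open in the closed set $\s_J^{-1}(\E_{IJ})\subset U_J$, so each $C_{IJ}$ is closed in $U_J$, and is disjoint from the zero set $\s_J^{-1}(0_J)$ since $\s_J^{-1}(0_J)\subset\im\phi_{IJ}$ (because $F_J\subset F_I$ for $I\subsetneq J$). Crucially, filtration~(ii) combined with the inductive hypothesis~(a) for $K$ shows that $C_{IJ}$ is also disjoint from each inclusion set in (b): if $x\in\phi_{JK}^{-1}\bigl(U'_K\cap\s_K^{-1}(\E_{JK})\bigr)$ lies in $\s_J^{-1}(\E_{IJ})$, then filtration~(ii) gives $\phi_{JK}(x)\in U'_K\cap\s_K^{-1}(\E_{IK})\subset\im\phi_{IK}$ by (a) for $K$, whence the weak cocycle together with injectivity of $\phi_{JK}$ force $x\in\im\phi_{IJ}$. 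A distance-function construction as in Lemma~\ref{le:restr0} then produces the required precompact $U'_J$ containing the compact union of $\psi_J^{-1}(\ov{F'_J})$ with the (precompact) inclusion sets from~(b), while keeping its closure clear of the closed forbidden sets $C_{IJ}$.

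\MS\NI\textbf{Verification and main obstacle.} At the end of the induction, condition~(a) supplies $U'_J\cap\s_J^{-1}(\E_{IJ})\subset\im\phi_{IJ}$ and condition~(b) supplies the required preimage in $U'_I$; together with the weak cocycle identity on overlaps, these force tameness~\eqref{eq:tame2}. Tameness~\eqref{eq:tame1} then follows from \eqref{eq:tame2} combined with the intersection identity $\s_J^{-1}(\E_{IJ})\cap\s_J^{-1}(\E_{HJ})=\s_J^{-1}(\E_{(I\cap H)J})$ of Lemma~\ref{le:Ku3}. The main obstacle is precisely the simultaneous compatibility of (a) and (b): without the openness asserted in filtration~(iv), the forbidden sets $C_{IJ}$ could accumulate arbitrarily close to the zero set, obstructing any precompact shrinking that still covers $X$ via its footprints, so this property of the filtration is doing the essential work in the construction.
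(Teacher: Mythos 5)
There is a genuine gap, and it sits exactly where the difficulty of the proposition lies: your point-chasing arguments repeatedly use the identity $\phi_{IK}=\phi_{JK}\circ\phi_{IJ}$ at points where the weak cocycle condition gives no information. Concretely, in the step where you argue that $C_{IJ}$ is disjoint from $\phi_{JK}^{-1}\bigl(U'_K\cap\s_K^{-1}(\E_{JK})\bigr)$, you obtain $\phi_{JK}(x)=\phi_{IK}(y)$ for some $y\in U_{IK}$ and then invoke ``the weak cocycle together with injectivity of $\phi_{JK}$'' to conclude $x\in\im\phi_{IJ}$. But \eqref{eq:wc} only asserts $\phi_{JK}\circ\phi_{IJ}=\phi_{IK}$ on $\phi_{IJ}^{-1}(U_{JK})\cap U_{IK}$, and nothing guarantees that $y$ lies in $U_{IJ}$, let alone that $\phi_{IJ}(y)\in U_{JK}$; in a weak atlas the point $x$ may well lie in $\s_J^{-1}(\E_{IJ})\less\im\phi_{IJ}=C_{IJ}$ (note that $x\in\im\phi_{IJ}$ for all $x\in\s_J^{-1}(\E_{IJ})$ is precisely the tameness identity \eqref{eq:tame3}, which fails before shrinking). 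Hence your conditions (a) and (b) can be incompatible, and the downward induction on $|J|$ does not get off the ground. The same problem recurs in your verification step: to deduce \eqref{eq:tame2} for the induced domains $U'_{IJ}=U'_I\cap\phi_{IJ}^{-1}(U'_J)$ you need inclusions such as $U'_{IK}\subset U_{IJ}$ and $\phi_{IJ}^{-1}(U_{JK})\subset U_{IK}$, i.e.\ \eqref{eq:tame4} and the cocycle condition --- which are part of what must be constructed, not hypotheses. This is why the paper's proof runs the induction upward in $|I|$, shrinks the coordinate-change domains $U_{IJ}$ themselves (not only the $U_I$), and uses the tameness already established for $|H|<k$ (hence, via Lemma~\ref{le:tame0}, strong cocycle identities at those levels) at exactly the places where you appeal to the weak cocycle condition.

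A second, independent gap is the claim that \eqref{eq:tame1} follows from \eqref{eq:tame2} and Lemma~\ref{le:Ku3}. It does not: \eqref{eq:tame1} constrains intersections $U'_{IJ}\cap U'_{IK}$ for incomparable $J,K\supset I$, including the degenerate case $J\cup K\notin\Ii_\Kk$, where it forces $U'_{IJ}\cap U'_{IK}=\emptyset$; neither (a) nor (b) addresses such intersections, and \eqref{eq:tame2} only concerns nested triples $I\subset J\subset K$. In the paper this intersection pattern is exactly what the auxiliary Lemma~\ref{le:set} is engineered to produce (open sets with $U_J\cap U_K=U_{J\cup K}$ squeezed between prescribed sets $Z_{K'}\subset W_{K'}$), and it is a separate, essential ingredient of the construction. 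A smaller, repairable issue: you ask $U'_J$ to contain $\psi_J^{-1}(\ov{F'_J})$ together with neighbourhoods of the sets in (b), but a shrinking must satisfy $U'_J\cap\s_J^{-1}(0_J)=\psi_J^{-1}(F'_J)$ exactly, so the footprint condition needs more care than a plain distance-function neighbourhood provides.
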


Here the main challenge is to achieve the tameness conditions \eqref{eq:tame1}, \eqref{eq:tame2}, which relate the domains $U_{IJ}$ of the transition data with each other and with the filtration $\E_{IJ}$.
So although a shrinking is determined by the choice of the sets $U_I'\sqsubset U_I$, our goal is to achieve relations between the domains $U'_{IJ}:= \phi_{IJ}^{-1}(U'_J)\cap U'_I$ of the coordinate changes, and hence 
we cannot choose the sets $U_I'$ independently of each other.
Since the relevant conditions are expressed in terms of the $U_{IJ}'$, we will iteratively construct such domains $U_{IJ}^{(k)}$, including in particular $U_I^{(k)}:=U_{II}^{(k)}$.
The latter will form shrinkings in each step, though we prove it only up to the level $k$ of the iteration. 
Our construction is made possible only because of the filtration conditions on $\Kk$.

\begin{proof}[Proof of Proposition~\ref{prop:proper}]
In a first step, we will choose a shrinking $(F_i')_{i=1,\ldots,N}$ of the footprint cover $(F_i)_{i=1,\ldots,N}$ in the sense of Definition~\ref{def:shr0}. For that purpose we use the fact that $X=\bigcup_{I\in\Ii_\Kk} F_I$ is a finite open cover to choose $\de>0$ so that every nonempty $F_I$ contains some ball $B_\de(x_I)$. 
Then for every $i\in I$ the footprint $F_i$ contains the compact ball $\ov{B}_{\de/2}(x_I) \subset F_I \subset F_i$. Now we iteratively choose $F'_i\sqsubset F_i$ for $i=1,\ldots,N$ as open subset containing the compact set $C_i:=\bigl( X \less \bigl( \bigcup_{i<j} F'_i \cup \bigcup_{j>i} F_j \bigr) \cup \bigcup_{I\ni i} \ov{B}_{\de/2}(x_I)$. 
This is possible since $X=\bigcup_{i<j} F'_i \cup \bigcup_{j\ge i} F_j$ is an open cover in each step, and $X$ is normal. The ball construction ensures that $F'_I\bigcup_{i\in I} F'_i\neq\emptyset$ since it contains $\ov{B}_{\de/2}(x_I)$ for each $I\in\Ii_\Kk$.

In another preliminary step we find precompact open subsets $U_I^{(0)}\sqsubset U_I$ and open sets $U_{IJ}^{(0)}\subset U_{IJ}\cap U_I^{(0)}$ for all $I,J\in\Ii_\Kk$ such that 
\begin{equation}\label{eq:U(0)}
U_I^{(0)}\cap \s_I^{-1}(0_I) = \psi_I^{-1}(F_I'),\qquad U_{IJ}^{(0)}\cap \s_I^{-1}(0_I) = \psi_I^{-1}(F'_I\cap F_J').
\end{equation}
The restricted domains $U_I^{(0)}$ are provided by Lemma~\ref{le:restr0}. 
Next, we may take
\begin{equation}\label{eq:UIJ(0)}
U_{IJ}^{(0)}:= U_{IJ}\cap U_I^{(0)}\cap \phi_{IJ}^{-1}( U_J^{(0)}) 
\; \subset U_{IJ}\cap U_I^{(0)},
\end{equation}
which is open because $U_\bullet^{(0)}\subset U_\bullet$ is open and $\phi_{IJ}:U_{IJ}\to U_J$ is continuous.
It has the required footprint
$$
U_{IJ}^{(0)}\cap \s_I^{-1}(0_I) =U_{IJ}\cap \psi_I^{-1}(F_I')\cap \phi_{IJ}^{-1} \bigl(\psi_J^{-1}(F_J')\bigr) = \psi_I^{-1}(F_I'\cap F_J') =  \psi_I^{-1}(F_J').
$$
Therefore, this defines a weak topological Kuranishi atlas with footprints $F_I'$, which satisfies the conditions of Definition~\ref{def:shr} and so is a shrinking of $\Kk$.
Moreover, it inherits a canonical filtration by Lemma~\ref{le:filter}.

Now we will construct the required shrinking $\Kk'$ by choosing possibly smaller domains  $U_I'\subset U_I^{(0)}$ and $U_{IJ}'\subset U_{IJ}^{(0)}$ but will preserve the footprints $F_I'$.
We will also arrange $U_{IJ}' = U_I'\cap \phi_{IJ}^{-1}(U_J')$, so that $\Kk'$ is a shrinking of the original $\Kk$.  Since $\Kk'$ is automatically filtered by Lemma~\ref{le:filter}, we just need to make sure that it satisfies the tameness conditions~\eqref{eq:tame1}  and~\eqref{eq:tame2}.
By Lemma \ref{le:tame0} it will then satisfy the cocycle condition and hence will be a topological Kuranishi atlas.
We will construct the domains $U_I', U_{IJ}'$ by a finite iteration, starting with $U_I^{(0)}, U_{IJ}^{(0)}$.
Here we streamline the notation by setting $U_{I}^{(k)}:=U_{II}^{(k)}$ and extend the notation to all pairs of subsets $I\subset J\subset\{1,\ldots,N\}$ by setting $U_{IJ}^{(k)}=\emptyset$ if $J\notin\Ii_\Kk$. (Note that $J\in\Ii_\Kk$ and $I\subset J$ implies $I\in\Ii_\Kk$.)
Then in the $k$-th step we will construct open subsets $U_{IJ}^{(k)}\subset U_{IJ}^{(k-1)}$ for all $I\subset J\subset\{1,\ldots,N\}$ such that the following holds.

\begin{enumerate}
\item
The zero set conditions $U_{IJ}^{(k)}\cap \s_I^{-1}(0_I) = \psi_I^{-1}(F_J')$ hold for all $I\subset J$.
\item
The first tameness condition \eqref{eq:tame1} holds for all $I\subset J,K$ with $|I|\le k$, that is
$$
U_{IJ}^{(k)}\cap U_{IK}^{(k)}= U_{I (J\cup K)}^{(k)} .
$$
In particular, we have $U_{IK}^{(k)} \subset U_{IJ}^{(k)}$ for $I\subset J \subset K$ with $|I|\le k$.
\item
The second tameness condition \eqref{eq:tame2} holds for all $I\subset J\subset K$ with $|I|\le k$, 
with respect to the induced filtration $\E_{IJ}^{(k)}:=\E_{IJ}\cap \pr_J^{-1}(U_J^{(k)})$.
Since $\pr_J\circ\s_J =\id_{U_J}$ and $U_{JK}^{(k)}\subset U_J^{(k)}$ by (ii), this tameness is equivalent to
$$
\phi_{IJ}(U_{IK}^{(k)}) = U_{JK}^{(k)}\cap \s_J^{-1}(\E_{IJ}) .
$$
In particular we have $\phi_{IJ}(U_{IJ}^{(k)}) = U_{J}^{(k)}\cap  \s_J^{-1}(\E_{IJ})$  for all $I\subset J$ with $|I|\le k$.
\end{enumerate}\MS
Note that after the $k$-th step, the domains $U^{(k)}_{IJ}$ form a shrinking ``up to order $k$'' in the sense that
\begin{equation}\label{kclaim}
U_{IJ}^{(k)} = U_I^{(k)} \cap \phi_{IJ}^{-1}(U_J^{(k)}) \qquad \forall\; |I|\le k , I\subsetneq J .
\end{equation}
Indeed, for any such pair $I\subsetneq J$, property (iii) with $J=K$ implies 
$$
\phi_{IJ}(U_{IJ}^{(k)}) \;=\; U_{J}^{(k)}\cap \s_J^{-1}(\E_{IJ}) 
\;=\; U_{J}^{(k)}\cap \im\phi_{IJ},
$$ 
where the second equality holds because the first implies 
$U_{J}^{(k)}\cap \s_J^{-1}(\E_{IJ})  \subset \im\phi_{IJ}$,  
while also $\im \phi_{IJ}\subset \s_J^{-1}(\E_{IJ})$ 
because $\s_J\circ\phi_{IJ}=\Hat\phi_{IJ}\circ \s_I$.
Since $\phi_{IJ}$ is injective, this implies $U_{IJ}^{(k)}=\phi_{IJ}^{-1}(U_J^{(k)})$.
Now  \eqref{kclaim} follows since (ii) with $K=I$ implies $U_{IJ}^{(k)} \subset U_{II}^{(k)} = U_I^{(k)}$.

Thus, when the iteration is complete, that is when $k=M: =\max_{I\in \Ii_\Kk} |I|$, then $\Kk'$ is a shrinking of $\Kk$.
Moreover, the tameness conditions hold on $\Kk'$ by (ii) and (iii), and Lemma~\ref{le:tame0} implies that $\Kk'$ satisfies the strong cocycle condition. Hence $\Kk'$ is the desired tame topological Kuranishi atlas.
So it remains to implement the iteration. 

Our above choice of the domains $U_{IJ}^{(0)}$ completes the $0$-th step since conditions (ii) (iii) are vacuous. Now suppose that the $(k-1)$-th step is complete for some $k\geq 1$.
Then we define $U_{IJ}^{(k)}:=U_{IJ}^{(k-1)}$ for all $I\subset J$ with $|I|\leq k-1$. For $|I|=k$ we also set 
$U_{II}^{(k)}:=U_{II}^{(k-1)}$.
This ensures that (i) and (ii) continue to hold for $|I|<k$. In order to preserve (iii) for triples $H\subset I\subset J$ with $|H|<k$ we then require that the intersection 
$U_{IJ}^{(k)}\cap \s_I^{-1}(\E_{HI})= U_{IJ}^{(k-1)}\cap \s_I^{-1}(\E_{HI})$ is fixed.
In case
$H=\emptyset$, this is condition (i), and since $U_{IJ}^{(k)}\subset U_{IJ}^{(k-1)}$ it can generally be phrased as inclusion (i$'$) below.
With that it remains to construct the open sets $U_{IJ}^{(k)}\subset U_{IJ}^{(k-1)}$ as follows.
\begin{itemize}
\item[(i$'$)]
For all $H\subsetneq I\subset J$ with $|H|<k$ and $|I|\geq k$ we have $U_{IJ}^{(k-1)}\cap \s_I^{-1}(\E_{HI})
\subset U_{IJ}^{(k)}$. Here we include $H=\emptyset$, in which case the condition says that
$U_{IJ}^{(k-1)}\cap  \s_I^{-1}(0_I) \subset U_{IJ}^{(k)}$ (which implies $U_{IJ}^{(k)}\cap \s_I^{-1}(0) = \psi_I^{-1}(F_J')$, as explained above).
\item[(ii$'$)]
For all $I\subset J,K$ with $|I|= k$ we have
$U_{IJ}^{(k)}\cap U_{IK}^{(k)}= U_{I (J\cup K)}^{(k)}$.
\item[(iii$'$)]
For all $I\subsetneq J\subset K$ with $|I|=k$ we have
$\phi_{IJ}(U_{IK}^{(k)}) = U_{JK}^{(k)}\cap \s_J^{-1}(\E_{IJ})$. 
\end{itemize}

Here we also used the fact that (iii) is automatically satisfied for $I=J$ and so stated (iii$'$) only for $J\supsetneq I$.
By construction, the domains $U^{(k)}_{II}$ for $|I|=k$ already satisfy (i$'$), so we may now do this iteration step in two stages:

\MS\NI
{\bf Step A}\; will construct $U_{IK}^{(k)}$ for $|I|=k$ and $I\subsetneq K$ satisfying (i$'$),(ii$'$) and

\begin{itemize}
\item[(iii$''$)]
$U_{IK}^{(k)} \subset \phi_{IJ}^{-1}(U_{JK}^{(k-1)})$
for all $I\subsetneq J\subset K$ .
\end{itemize}
{\bf Step B} will construct $U_{JK}^{(k)}$ for $|J|>k$ and $J\subset K$ satisfying (i$'$) and (iii$'$).
\MS

\bigskip\NI
{\bf Step A:} We will accomplish this construction by applying Lemma \ref{le:set} below for fixed $I$ with $|I|=k$ to the metric space $U:=U_I$, its precompact open subset $U':=U^{(k)}_{II}$, the relatively closed subset
$$
Z :=
\bigcup_{H\subsetneq I} \bigl( U_{II}^{(k-1)} \cap  \s_I^{-1}(\E_{HI}) \bigr)  
\;=\; \bigcup_{H\subsetneq I} \phi_{HI}\bigl(U_{HI}^{(k-1)}\bigr)
\;\subset\; U'
$$
and the relatively open subsets for all $i\in\{1,\ldots,N\}\less I$
$$
Z_i := U^{(k-1)}_{I (I\cup\{i\})} \cap Z
\;=\; \bigcup_{H\subsetneq I} \bigl( U_{I(I\cup\{i\})}^{(k-1)} \cap  \s_I^{-1}(\E_{HI}) \bigr) 
\;=\; \bigcup_{H\subsetneq I} \phi_{HI}\bigl(U^{(k-1)}_{H (I\cup\{i\})} \bigr) .
$$
Here, by slight abuse of language, we define $\phi_{\emptyset I}\bigl(U_{\emptyset J}^{(k-1)}\bigr): =
U_{IJ}^{(k-1)} \cap \s_I^{-1}(0_I) $. 
Note that $Z\subset U'$ is relatively closed since $U_{II}^{(k-1)}=U_{II}^{(k)}=U'$ 
and $\E_{HI}\subset \E_I$ is closed for all $H$, and $Z_i\subset Z$ is relatively open since 
 $U^{(k-1)}_{I (I\cup\{i\})} \subset U'$ is open.
Also the above identities  for $Z$ and $Z_i$ in terms of $\phi_{HI}$
follow from (iii) for the triples $H\subset I \subset I$ and $H\subset I \subset I\cup\{i\}$ with $|H|<|I|=k$.

To understand the choice of these subsets, note that in case $k=1$ with $I=\{i_0\}$ the set $Z$ is given by $H=\emptyset$ and $U^{(0)}_{II}=U^{(0)}_{i_0} \sqsubset U_{i_0}$, hence $Z = U^{(0)}_{i_0} \cap \s_{i_0}^{-1}(0_{i_0}) = \psi_{i_0}^{-1}(F_{i_0}')$ is the preimage of the shrunk footprint and for $i\neq{i_0}$ we have $Z_{i}
= \psi_{i_0}^{-1}(F_{i_0}'\cap F'_i)$.
When $k\ge 1$, the index sets $K\subset\{1,\ldots,N\}$ containing $I$ as proper subset are in one-to-one correspondence with the nonempty index sets $K'\subset\{1,\ldots,N\}\less I$ via $K=I\cup K'$ and give rise to the relatively open sets
$$
Z_{K'} \,:=\; \bigcap_{i\in K'} Z_i
\;=\; Z \cap \bigcap_{i\in K'} U^{(k-1)}_{I (I\cup\{i\})}
\;=\; Z \cap U^{(k-1)}_{IK} .
$$
Here we used (ii) for $|H| < k$. We may also use the identity $Z_i=\bigcup_{H\subsetneq I} \ldots$ together with (ii) and
(iii) for $|H|<k$ to identify these sets with
\begin{equation}\label{eq:ZzK}
Z_{K'}
= \bigcup_{H\subsetneq I} \phi_{HI}\Bigl(\; \underset{i\in K'}{\textstyle \bigcap} U^{(k-1)}_{H (I\cup\{i\})} \Bigr)
= \bigcup_{H\subsetneq I} \phi_{HI}\bigl( U^{(k-1)}_{H (I\cup K')} \bigr)
= \bigcup_{H\subsetneq I} \bigl( U_{IK}^{(k-1)} \cap  \s_I^{-1}(\E_{HI}) \bigr) ,
\end{equation}
which explains their usefulness:
If we construct the new domains such that $Z_{K'}\subset U_{IK}^{(k)}$, then this implies the inclusion $U_{IK}^{(k-1)}\cap \s_I^{-1}(\E_{HI}) \subset Z_{K'} \subset U_{IK}^{(k)}$ required by (i$'$).

Finally, in order to achieve the inclusion condition
$U_{IK}^{(k)} \subset \phi_{IJ}^{-1}(U_{JK}^{(k-1)})$ of (iii$''$),
we fix the open subsets $W_{K'}$ for all $I\subsetneq K = I \cup K'$ as
\begin{equation}\label{eq:WwK}
W_{K'}: = \bigcap_{I\subset J\subset K} \bigl( \phi_{IJ}^{-1} (U^{(k-1)}_{JK}) \cap U^{(k-1)}_{IJ} \bigr) \;\subset\; U' .
\end{equation}
If we require $U_{IK}^{(k)}\subset W_{K'}$ then this ensures (iii$''$)
as well as $U_{IK}^{(k)}\subset U_{IK}^{(k-1)}$.
The latter follows from the inclusion $W_{K'}\subset U_{IK}^{(k-1)}$, which holds by definition \eqref{eq:WwK} with $J=K$.
Now if we can ensure that $W_{K'}\cap Z = Z_{K'}$, then Lemma \ref{le:set} provides choices of open subsets $U_{IK}^{(k)}\subset U'$ satisfying (ii$'$) and the inclusions $Z_{K'}\subset U_{IK}^{(k)}\subset W_{K'}$. The latter imply (i$'$) and the desired inclusion (iii$''$), as discussed above.

Hence it remains to check that the sets $W_{K'}$ in \eqref{eq:WwK} do satisfy the conditions
$W_{K'}\cap Z = Z_{K'}$.
To verify this, first note that $W_{K'}$ is contained in $U^{(k-1)}_{IJ}$ for all $J\supset I$, in particular
for
$J=K$, and hence
$$
W_{K'}\cap Z \;\subset\; U^{(k-1)}_{IK} \cap Z \;=\; Z_{K'}.
$$
It remains to check $Z_{K'} \subset W_{K'}$.
By~\eqref{eq:WwK} and the  expression for $Z_{K'}$ in the middle of~\eqref{eq:ZzK},
it suffices to show that for all $H\subsetneq I \subset J \subset K$
$$
\phi_{HI}\bigl( U^{(k-1)}_{H K} \bigr) \;\subset\; \phi_{IJ}^{-1} (U^{(k-1)}_{JK}) \cap U^{(k-1)}_{IJ} .
$$
But, (ii) for $H\subset J\subset K$ and (iii) for $H\subset I \subset J$ imply
$$
\phi_{HI}( U^{(k-1)}_{H K} ) \subset \phi_{HI}( U^{(k-1)}_{H J} ) \subset U^{(k-1)}_{IJ},
$$
so it remains
to check that
$\phi_{HI}\bigl( U^{(k-1)}_{H K} \bigr) \subset \phi_{IJ}^{-1} ( U^{(k-1)}_{JK} )$.
For that purpose we will use the weak cocycle condition $\phi_{IJ}^{-1}\circ\phi_{HJ}=\phi_{HI}$ on
$U_{H J}\cap \phi_{HI}(U_{IJ})$. Note that $U^{(k-1)}_{H K}$ lies in this domain since, by (ii) for $|H|<k$, it is a subset of $U^{(k-1)}_{H J}$, which by (iii) for $H\subset I\subset J$ is contained in $\phi_{HI}^{-1}(U_{IJ})$. This proves the first equality in
$$
\phi_{HI}\bigl( U^{(k-1)}_{H K} \bigr)
=\phi_{IJ}^{-1}\bigl( \phi_{HJ}\bigl( U^{(k-1)}_{H K} \bigr) \bigr)
\subset \phi_{IJ}^{-1} ( U^{(k-1)}_{JK} ),
$$
and the last inclusion holds by (iii) for $H\subset J\subset K$.
This finishes Step A.

\MS\NI
{\bf Step B:} The crucial requirement on the construction of the open sets $U_{JK}^{(k)}\subset U_{JK}^{(k-1)}$ for $|J|\geq k+1$ and $J\subset K$ is (iii$'$), that is
$$
U_{JK}^{(k)}\cap \s_J^{-1}(\E_{IJ}) 
 = \phi_{IJ}(U_{IK}^{(k)})
$$
 for all $I\subsetneq J$ with $|I|=k$.
Here $U_{IK}^{(k)}$ is fixed by Step A and satisfies $\phi_{IJ}(U_{IK}^{(k)})\subset U_{JK}^{(k-1)}\cap 
 \s_J^{-1}(\E_{IJ}) $  by (iii$''$),
where the second part of the inclusion is automatic by $\phi_{IJ}$ mapping to  $\s_J^{-1}(\E_{IJ})$.  
Hence the maximal subsets $U_{JK}^{(k)}\subset U_{JK}^{(k-1)}$ satisfying (iii$'$) are
\begin{equation}\label{eq:UJK(k)}
U_{JK}^{(k)}\,:=\; U_{JK}^{(k-1)}\less \bigcup_{I\subset J, |I|= k}
 \bigl(  \s_J^{-1}(\E_{IJ})   \less \phi_{IJ}(U^{(k)}_{IJ}) \bigr) .
\end{equation}
It remains to check that these subsets are open and satisfy (i$'$).
Here $U_{JK}^{(k)}$ is open since $ \s_J^{-1}(\E_{IJ})  
\subset U_J$ is closed and $\phi_{IJ}(U^{(k)}_{IJ})\subset  \s_J^{-1}(\E_{IJ})$  
 is relatively open by condition (iv) in Definition~\ref{def:Ku3}.
Finally, condition (i$'$), namely
$$
 U_{JK}^{(k-1)}\cap  \s_J^{-1}(\E_{HJ})  \subset U_{JK}^{(k)},
$$
follows from the following inclusions for all $H\subsetneq I \subsetneq J \subset K$ with $|I|=k$.
On the one hand, we have $U_{JK}^{(k-1)}\cap \s_J^{-1}(\E_{HJ}) \subset \s_J^{-1}(\E_{IJ})$ 
from the properties of 
the filtration on $\Kk$; on the other hand, (iii) for $|H|<k$ together with the weak cocycle condition on 
$U_{HK}^{(k-1)}\subset U_{HJ}\cap\phi_{HI}^{-1}(U_{IJ})$
and (i$'$) for $|I|=k$ imply
\begin{align*}
U_{JK}^{(k-1)}\cap \s_J^{-1}(\E_{HJ}) 
= \phi_{HJ}( U_{HK}^{(k-1)} )
&= \phi_{IJ}\bigl(\phi_{HI}( U_{HK}^{(k-1)} )\bigr) \\
&= \phi_{IJ}\bigl(U_{IK}^{(k-1)}\cap \s_I^{-1}(\E_{HI}) \bigr) 
\subset  \phi_{IJ}(U^{(k)}_{IJ}) .
\end{align*}
Hence no points of $ U_{JK}^{(k-1)}\cap \s_J^{-1}(\E_{HJ})$ 
 are removed from
$U_{JK}^{(k-1)}$ when we construct $U_{JK}^{(k)}$.
This finishes Step B and hence the $k$-th iteration step.

\MS
Since the order of $I\in\Ii_\Kk$ is bounded $|I|\leq N$ by the number of basic charts, this iteration provides a complete construction of the shrinking domains after at most $N$ steps. In fact, we obtain $U'_I=U^{(|I|-1)}_{II}$ and $U'_{IJ}=U^{(|I|)}_{IJ}$ since the iteration does not alter these domains in later steps.
\end{proof}

\begin{lemma} \label{le:set}
Let $U$ be a metric space, $U'\subset U$ a precompact open set, and $Z\subset U'$ a relatively closed subset. Suppose we are given a finite collection of relatively open subsets $Z_i\subset Z$ for $i=1,\ldots,N$ and open subsets $W_K\subset U'$ with
$$
W_K\cap Z= Z_K: = {\textstyle\bigcap_{i\in K}} Z_i
$$
 for all index sets $K\subset\{1,\ldots,N\}$.
Then there exist open subsets $U_K\subset W_K$ with $U_K\cap Z=Z_K$ and
$U_J\cap U_K = U_{J\cup K}$ for all $J,K\subset\{1,\ldots,N\}$. 
\end{lemma}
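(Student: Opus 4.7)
First, I reduce to a cleaner construction. The crucial structural observation is that the condition $U_J \cap U_K = U_{J\cup K}$ forces the entire family to be determined by the ``basic'' sets $V_i := U_{\{i\}}$, namely $U_K = \bigcap_{i\in K} V_i$ for $K\neq\emptyset$. Moreover, by replacing each $W_K$ with $W_K \cap \bigcap_{J\subsetneq K} W_J$ (which still satisfies $W_K\cap Z = Z_K$, since $Z_K \subset Z_J$ for $J\subset K$), I may assume the family $\{W_K\}$ is monotone in the sense $W_K \subset W_J$ whenever $J\subset K$. Setting $U_\emptyset := W_\emptyset$, the lemma reduces to finding open sets $V_1,\ldots,V_N \subset U'$ satisfying $V_i \subset W_{\{i\}}$, $V_i\cap Z = Z_i$, and $\bigcap_{i\in K} V_i \subset W_K$ for every $K$ with $|K|\geq 1$; then $U_K := \bigcap_{i\in K} V_i$ automatically fulfills all the lemma's requirements (with $U_\emptyset\cap U_K = U_K$ arising from $V_i \subset W_{\{i\}} \subset W_\emptyset$).

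Next I would construct each $V_i$ as a \emph{pinched tubular neighborhood} of $Z_i$,
$$V_i \;:=\; \bigl\{\, x\in W_{\{i\}} \,:\, d(x,Z_i) < c_i\cdot d(x,Z\setminus Z_i)^2 \,\bigr\},$$
where $c_i > 0$ are constants to be chosen small enough. Openness is immediate from continuity of the distance functions. The identity $V_i \cap Z = Z_i$ follows because $Z_i$ and $Z\setminus Z_i$ are disjoint, with $Z\setminus Z_i$ relatively closed in $Z$ and hence in $U'$ (using that $Z$ is closed in $U'$ and $Z_i$ is relatively open in $Z$): on $Z_i$ the left side vanishes while the right side is strictly positive, whereas on $Z\setminus Z_i$ the right side vanishes. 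The quadratic (rather than linear) rate is the key feature that forces $V_i$ to pinch into $Z_i$ near points of $Z\setminus Z_i$, which is what makes the intersections thin enough.

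The crux is verifying $\bigcap_{i\in K} V_i \subset W_K$ for suitable $c_i$. I would argue by contradiction and compactness: were it to fail uniformly in $(c_i)$, one could extract a sequence $p_n \in \bigcap_{i\in K} V_i^{(n)} \setminus W_K$ with $c_i^{(n)} \to 0$; precompactness of $U'$ in $U$ yields a convergent subsequence $p_n \to p_\infty \in \overline{U'}^U$, and the defining inequality with $c_i^{(n)}\to 0$ forces $d(p_\infty,Z_i)=0$ for each $i\in K$, so $p_\infty\in \bigcap_{i\in K}\overline{Z_i}^{U'} \subset Z$. Since $p_\infty \in U' \setminus W_K$ (closed), the quadratic pinching combined with the fact that $W_K$ is an open neighborhood of $Z_K$ forces $p_\infty\in \overline{Z_K}^Z\subset \overline{W_K}$, giving a contradiction.

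\textbf{Main obstacle.} The delicate step is the last one: deducing $p_\infty \in \overline{Z_K}^Z$ from $p_\infty \in \bigcap_{i\in K}\overline{Z_i}^Z$, since the latter can strictly contain the former when distinct $Z_i$'s share limit points in $Z$ not belonging to $Z_K$. Resolving this exploits the quadratic rate together with the disjointness structure of $Z\setminus Z_i$ among $i\in K$. If a fixed quadratic rate should prove insufficient (e.g., when some $W_K$ is extraordinarily thin near $Z_K$), one replaces the constant $c_i$ with a suitably chosen continuous positive function $c_i(x)$ bounded by $d(x,U'\setminus W_K)^{m}$ for large $m$; the precompactness of $U'$ and the finiteness of the family $\{W_K\}$ guarantee that such a choice always exists. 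The remainder of the proof is straightforward verification.
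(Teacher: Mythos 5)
The overall reduction you make is the same as the paper's (the paper also builds sets indexed by single indices and takes $U_K:=\bigcap_{i\in K}U_{f_i}$, so the intersection axiom is automatic), but the heart of the lemma is precisely the step you flag as the ``main obstacle'', and your construction of the $V_i$ does not get through it. Your $V_i$ are pinched only relative to $d(x,Z\setminus Z_i)$, at a fixed polynomial rate that makes no reference to the sets $W_K$ with $|K|\ge 2$; however, the hypothesis only says $W_K\cap Z=Z_K$ and puts no lower bound on how fast $W_K$ may pinch near $\ov{Z_K}\setminus Z_K$. Concretely, take $U=\R^2$, $U'$ a large open disc, $Z=\{(t,0):0\le t\le 1\}$, $Z_1=Z_2=Z_{12}=\{(t,0):0<t\le 1\}$, $W_{\{1\}}=W_{\{2\}}=U'\setminus\{(0,0)\}$, and $W_{\{1,2\}}=\{x\in U': d(x,Z_{12})<e^{-1/d(x,(0,0))}\}$; this $W_{\{1,2\}}$ is open and meets $Z$ exactly in $Z_{12}$, so it is admissible data. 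For any constants $c_1,c_2$ the points $x_s=(s,\tfrac{c}{2}s^2)$ with $c=\min(c_1,c_2)$ lie in $V_1\cap V_2$ for all small $s>0$ (here $d(x_s,Z_i)=\tfrac c2 s^2$ while $d(x_s,Z\setminus Z_i)\ge s$), yet $\tfrac c2 s^2>e^{-1/(2s)}$ for small $s$, so $x_s\notin W_{\{1,2\}}$. Hence $\bigcap_{i\in K}V_i\subset W_K$ fails for every choice of constants (and likewise for any fixed power in place of the square), so the compactness-and-contradiction argument you sketch cannot close the gap: the failure persists uniformly in $(c_i)$. A minor additional point: your limit $p_\infty$ only lies in the closure of $U'$ in $U$, so $p_\infty\in Z$ does not follow, since $Z$ is only relatively closed in $U'$.

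Your proposed repair, bounding $c_i(x)$ by $d(x,U'\setminus W_K)^m$, destroys the other requirement: since $W_K\cap Z=Z_K$, the function $d(\cdot,U'\setminus W_K)$ vanishes on $Z\setminus Z_K$, in particular on $Z_i\setminus Z_K$, which is typically nonempty; so either $c_i$ vanishes there (and then $V_i\cap Z\subsetneq Z_i$, violating the footprint condition) or $c_i$ is not so bounded (and the containment problem returns). The genuine difficulty is to impose the bound $d(\cdot,U'\setminus W_J)$ on $V_i$ only where it is needed, and this is exactly what the paper's proof does: it sets $f_i(z)=\min\{d(z,U'\setminus W_J)\mid i\in J,\ d(z,Z\setminus Z_i)=d(z,Z\setminus Z_J)\}$, a lower semicontinuous function that is still positive on all of $Z_i$ (the admissibility condition forces $z\in Z_J\subset W_J$), defines $U_{f_i}=\{x\in U'\mid d(x,Z)<\inf f_i|_{M_x}\}$ via the set $M_x\subset\ov Z$ of nearest points, and then verifies $U_K=\bigcap_{i\in K}U_{f_i}\subset W_K$ by noting that for $x\notin W_K$ and $z\in M_x$ the index $J=K$ is admissible for some $i\in K$. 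Without an ingredient of this kind (a conditional dependence of $V_i$ on all the $W_J$ with $J\ni i$), the lemma cannot be proved by pinching against $d(\cdot,Z\setminus Z_i)$ alone, so as it stands your argument has a genuine gap at its central step.
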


\begin{proof}
Let us first introduce a general construction of an open set $U_f\subset U'$ associated to any lower semi-continuous function $f:\overline{Z}\to [0,\infty)$, where $\overline{Z}\subset U$ denotes the closure in $U$.
By assumption, $\overline{Z}$ is compact, hence the distance function $\overline{Z}\to [0,\infty), z\mapsto d(x,z)$ for fixed $x\in U'$ achieves its minimum on a nonempty compact set $M_x\subset \overline{Z}$. Hence we have
$$
d(x,Z) := \inf_{z\in Z} d(x,z) = d(x,\overline{Z}) = \min_{z\in\overline Z} d(x,z) = \min_{z\in M_x} d(x,z)
$$
for the distance between $x$ and the set $Z$, or equivalently the closure $\overline{Z}$.
\MS

\NI {\bf Claim:}  {\it For any lower semi-continuous function $f:\overline{Z}\to [0,\infty)$ the
set}
$$
U_f := \bigl\{ x\in U' \,\big|\, d(x,Z) < \inf f|_{M_x} \bigr\} \subset U'
$$
{\it is open (in $U'$ or equivalently in $U$) and satisfies}
\begin{equation} \label{eq:supp}
U_f \cap Z  
 = \bigl\{ z\in Z \,\big|\, f(z)>0 \bigr\}.
\end{equation}

\NI {\it Proof of Claim.}
For $x\in Z$ we have $d(x,Z)=0$ and $M_x=\{x\}$, so $d(x,Z) < \inf f|_{M_x}$ is equivalent to $0<f(x)$. We prove openness of $U_f\subset U'$ by checking closedness of $U'\less U_f$.
Thus, we consider a convergent sequence $U'\ni x_i\to x_\infty\in U'$ with $d(x_i, Z)\geq \inf f|_{M_{x_i}}$ and aim to prove $d(x_\infty, Z)\geq \inf f|_{M_{x_\infty}}$.
Since $f$ is lower semi-continuous and each $M_{x_i}$ is compact, we may choose a sequence $z_i\in\overline{Z}$ with $z_i\in M_{x_i}$ and $f(z_i)=\inf f|_{M_{x_i}}$.
(Indeed, for fixed $i$ any minimizing sequence $z^\nu_i\in M_{x_i}$ with $\lim_{\nu\to\infty}f(z^\nu_i) = \inf f|_{M_{x_i}}$ has a convergent subsequence $z^\nu_i\to z_i\in M_{x_i}$ and the limit satisfies $f(z_i) \leq \lim f(z^\nu_i) = \inf f|_{M_{x_i}}$, hence $f(z_i) = \inf f|_{M_{x_i}}$.)
Since $\overline{Z}$ is compact, we may moreover choose a subsequence, again denoted by $(x_i)$ and $(z_i)$, such that $z_i\to z_\infty\in\overline{Z}$ converges. Then by continuity of the distance functions we deduce $z_\infty\in M_{x_\infty}$ from
$$
d(x_\infty, z_\infty) = \lim d(x_i,z_i) = \lim d(x_i, Z) = d(x_\infty, Z) ,
$$
and finally the lower semi-continuity of $f$ implies the claim
$$
d(x_\infty, Z) = \lim d(x_i,Z) \geq \lim f(z_i) \geq f(z_\infty) \geq \inf f|_{M_{x_\infty}} .
$$
\medskip

We now use this general construction to define the sets $U_K:=\bigcap_{i\in K} U_{f_i}$ as intersections of the subsets $U_{f_i}\subset U'$ arising from functions $f_i: \overline{Z}\to [0,\infty)$ defined
by
$$
f_i(z) := \min \bigl\{  d( z, U' \less W_J) \,\big|\, J\subset\{1,\ldots,N\} : \;  i\in J, \; d(z,Z\less Z_i) = d(z,Z\less Z_J) \bigr\}.
$$
To check that $f_i$ is indeed lower semi-continuous, consider a sequence $z_\nu\to z_\infty\in \overline{Z}$. Then $f_i(z_\nu)= d(z_\nu,U'\less W_{J^\nu})$ for some index sets $J^\nu$ with $i\in J^\nu$ and $d(z_\nu,Z\less Z_i) = d(z_\nu,Z\less Z_{J^\nu})$. Since the set of all index sets is finite, we may choose a subsequence, again denoted $(z_\nu)$, for which $J^\nu=J$ is constant. Then in the limit we also have $d(z_\infty,Z\less Z_i) = d(z_\infty,Z\less Z_J)$ and hence
$$
f_i(z_\infty) \leq d(z_\infty, U'\less W_J) = \lim d(z_\nu, U'\less W_J) = \lim f_i(z_\nu).
$$
Thus $f_i$ is lower semi-continuous.
Therefore, the above Claim implies that each $U_{f_i}$ is open, and hence also that each $U_K$ is open as the finite intersection of open sets.

The intersection property holds by construction:
$$
U_J\cap U_K = \bigcap_{i\in J} U_{f_i} \cap \bigcap_{i\in K} U_{f_i}
= \bigcap_{i\in J\cup K} U_{f_i} = U_{J\cup K}.
$$
To obtain $U_K\cap Z = \bigcap_{i\in K} \bigl( U_{f_i} \cap Z \bigr) = Z_K$ it suffices to verify that $U_{f_i}\cap Z = Z_i$.
In view of \eqref{eq:supp}, and unravelling the meaning of $f_i(z)>0$ for $z\in Z$, that means we have to prove the
following equivalence for $z\in Z$,
$$
z\in Z_i \quad \Longleftrightarrow \quad  d(z, U'\less W_J)>0  \quad \forall J\subset\{1,\ldots N\} : i\in J ,\; d(z,Z\less Z_i) = d(z,Z\less Z_J).
$$
Assuming the right hand side, we may choose $J=\{i\}$ to obtain $d(z,U'\less W_{\{i\}})>0$, and hence, since $U'\less W_{\{i\}}$ is closed, $z\in Z \less (U'\less W_{\{i\}})= Z_i$.
On the other hand, $z\in Z_i$ implies $d(z,Z\less Z_i)>0$ since $z\in Z$ and $Z\less Z_i \subset Z$ is relatively closed. So for any $J$ with $d(z,Z\less Z_i) = d(z,Z\less Z_J)$ we obtain
$d(z,Z\less Z_J)>0$.  Hence $z\in Z_J\subset W_J$, so that $d(z, U'\less W_J)>0$.
This proves the desired equivalence, and hence $U_K\cap Z = \bigcap_{i\in K}Z_i = Z_K$.

Finally, we need to check that $U_K\subset W_K$.
Unravelling the construction, note that $U_K$ is the set of all $x\in U'$ that satisfy
\begin{equation}\label{eq:UK}
d(x, Z) <  d( z , U' \less W_J)
\end{equation}
for all $z\in M_x$ and all $J\subset\{1,\ldots,N\}$ such that there exists $i\in J\cap K $ satisfying
$d(z,Z\less Z_i) = d(z,Z\less Z_J)$.
Now suppose by contradiction that there exists a point $x\in U_K\less W_K$, and pick $z\in M_x$.
Then $d(x,Z) = d(x, z) \geq d( z , U' \less W_K)$ since $x\in U'\less W_K$.
This contradicts \eqref{eq:UK} with $J=K$. On the other hand, the condition $d(z,Z\less Z_i) = d(z,Z\less Z_J)$ for $J=K$ is always satisfied for some $i\in K$ since we have $d(z,Z\less Z_K) = \min_{j\in K} d(z,Z\less Z_j)$.
This provides the contradiction and hence proves $U_K\subset W_K$.
\end{proof}

This lemma completes the proof that every weak filtered topological Kuranishi atlas has a tame shrinking.
We will return to these ideas in Section~\ref{s:Kcobord} when discussing cobordisms.
We end this section by constructing admissible metrics on certain tame shrinkings by pullback with the map in the following lemma. The proof uses the following lemma.

\begin{lemma}\label{le:injtameshr}  
Let $\Kk'$ be a tame shrinking of a tame topological Kuranishi atlas $\Kk$. Then the natural map $\io:|\Kk'|\to |\Kk|$ induced by the inclusion of domains $\io_I:U'_I\to U_I$ is injective.
\end{lemma}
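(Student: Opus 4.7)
The plan is to exploit the characterization of $\sim_\Kk$ supplied by Lemma~\ref{le:Ku2}(a)(iii), which is available because $\Kk$ is tame. Given $(I,x),(J,y)\in \Obj_{\bB_{\Kk'}}$ with $\iota([I,x]_{\Kk'})=\iota([J,y]_{\Kk'})$, i.e.\ $(I,x)\sim_\Kk(J,y)$, I will set $H:=I\cap J$ and split into the two cases of that lemma, aiming to upgrade the equivalence to one in $\bB_{\Kk'}$.

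In the case $H=\emptyset$, Lemma~\ref{le:Ku2}(a)(iii) gives $\s_I(x)=0_I(x)$, $\s_J(y)=0_J(y)$ and $\psi_I(x)=\psi_J(y)$. Because the sections, zero sections, and footprint maps of $\Kk'$ are by construction the restrictions of those of $\Kk$, the same identities hold with the primed data, so Lemma~\ref{le:Ku2}(a)(iii) applied to the tame atlas $\Kk'$ yields $(I,x)\sim_{\Kk'}(J,y)$. The case $I=J$ (so $H=I$) reduces to injectivity of $\pi_\Kk|_{U_I}$ (Lemma~\ref{le:Ku2}(c)) and is immediate.

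The main case is $H\neq\emptyset$, where Lemma~\ref{le:Ku2}(a)(iii) yields some $w\in U_H$ with $\phi_{HI}(w)=x$ and $\phi_{HJ}(w)=y$. The key step is to show $w\in U'_{HI}\cap U'_{HJ}$ (in particular $w\in U'_H$), from which the zigzag $(I,x)\succeq_{\Kk'}(H,w)\preceq_{\Kk'}(J,y)$ and hence $(I,x)\sim_{\Kk'}(J,y)$ will follow. For this, I use tameness \eqref{eq:tame2} with $J=K$ applied to $\Kk$ and to $\Kk'$ (with its canonical filtration $\E'_{HI}=\E_{HI}\cap\pr_I^{-1}(U'_I)$ from Lemma~\ref{le:filter}): the identity $\phi_{HI}(U_{HI})=\s_I^{-1}(\E_{HI})$ together with
\[
\phi'_{HI}(U'_{HI}) \;=\; (\s'_I)^{-1}(\E'_{HI}) \;=\; \s_I^{-1}(\E_{HI})\cap U'_I
\]
shows that $x=\phi_{HI}(w)\in U'_I\cap\phi_{HI}(U_{HI})=\phi'_{HI}(U'_{HI})$. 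Hence some $w'\in U'_{HI}$ satisfies $\phi'_{HI}(w')=x=\phi_{HI}(w)$, and injectivity of $\phi_{HI}$ forces $w'=w\in U'_{HI}\subset U'_H$. The symmetric argument (using $y\in U'_J$) places $w\in U'_{HJ}$, completing the proof.

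The only subtle point is this last pull-back step, and it is exactly here that both the filtration condition (iv) of Definition~\ref{def:Ku3} (used to make the image $\phi_{HI}(U_{HI})$ a nice subset of $\s_I^{-1}(\E_{HI})$) and the tameness of \emph{both} $\Kk$ and $\Kk'$ are essential; without tameness of $\Kk'$, one would not be able to guarantee that the unique preimage $w$ in $U_H$ actually lies in the shrunk domain $U'_H$.
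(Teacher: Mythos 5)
Your proof is correct and follows essentially the same route as the paper's: both treat the cases $I\cap J=\emptyset$ and $I\cap J\neq\emptyset$ via Lemma~\ref{le:Ku2}~(a), and in the nonempty case both use the tameness identity \eqref{eq:tame3} for $\Kk'$ (with the canonical restricted filtration of Lemma~\ref{le:filter}) together with injectivity of $\phi_{(I\cap J)I}$ to show that the intermediate point $w$ lies in the shrunk domains $U'_{(I\cap J)I}\cap U'_{(I\cap J)J}$. The only cosmetic difference is in the empty-intersection case, where the paper invokes the embedding $\io_{\Kk'}$ of $X$ while you reapply Lemma~\ref{le:Ku2}~(a)(iii) to $\Kk'$; these amount to the same observation.
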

\begin{proof} 
We write $U_I, U_{IJ}$ for the domains of the charts and coordinate changes of $\Kk$ and $U_I', U_{IJ}'$  for those of $\Kk'$, so that $U_I'\subset U_I, U_{IJ}'\subset U_{IJ}$ for all $I,J\in \Ii_\Kk = \Ii_{\Kk'}$.
Suppose that $\pi_\Kk(I,x) = \pi_\Kk(J,y)$ where $x\in U_I', y\in U_J'$.  Then we must show that $\pi_{\Kk'}(I,x) = \pi_{\Kk'}(J,y)$.  Since $\Kk$ is tame, 
Lemma~\ref{le:Ku2}~(a) implies that if $I\cap J\ne \emptyset$ there is $w\in U_{I\cap J}$ such that
$\phi_{(I\cap J)I} (w)$ is defined and equal to $x$.  Hence $x\in \s_I^{-1}(\E_{I\cap J})\cap U_I' =\phi_{(I\cap J)I} (U'_{(I\cap J)I})$ by the tameness equation \eqref{eq:tame3}  for $\Kk'$.
Therefore $w\in U'_{(I\cap J)I}$. 
Similarly, because $\phi_{(I\cap J)J} (w)$ is defined and equal to $y$, we have $w\in U'_{(I\cap J)J}$.
Then by definition of $\pi_{\Kk'}$ we deduce $\pi_{\Kk'}(I,x) = \pi_{\Kk'}(I\cap J,w) =\pi_{\Kk'}(J,y)$.
On the other hand, if $I\cap J = \emptyset$ then Lemma~\ref{le:Ku2}~(a) implies that $\psi_I(x) = \psi_J(y)$ so that  $\pi_{\Kk'}(I,x) = \pi_{\Kk'}(J,y)$ by the injectivity of $\io_\Kk: X\to |\Kk|$ proved in Lemma~\ref{le:Knbhd1}.
\end{proof}

In order to construct metric tame topological Kuranishi atlases, we will find it useful to consider tame shrinkings $\Kk_{sh}$ of a weak topological Kuranishi atlas $\Kk$ that are obtained as shrinkings of an intermediate tame shrinking $\Kk'$ of $\Kk$. For short we will call such $\Kk_{sh}$ a {\bf preshrunk tame shrinking} of $\Kk$.

\begin{prop}\label{prop:metric}  
Let $\Kk$ be a filtered weak topological Kuranishi atlas.
Then every preshrunk tame shrinking of $\Kk$ is metrizable. In particular, $\Kk$ has a metrizable tame shrinking.
\end{prop}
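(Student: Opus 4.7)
The plan is to first obtain a preshrunk tame shrinking by iterating Proposition~\ref{prop:proper}, and then construct an admissible metric by pulling back a metric from a compact subset of the realization of the intermediate tame shrinking. For existence, apply Proposition~\ref{prop:proper} to $\Kk$ to produce a tame shrinking $\Kk'$, and then apply it again to the filtered weak atlas $\Kk'$ (equipped with its canonical filtration from Lemma~\ref{le:filter}) to produce a tame shrinking $\Kk_{sh}$ of $\Kk'$. By definition this is a preshrunk tame shrinking of $\Kk$.

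For metrizability, let $\Kk_{sh}$ be any preshrunk tame shrinking with intermediate tame shrinking $\Kk'$, and write $U'_I$, $U^{sh}_I$ for their respective domains, so that $U^{sh}_I \sqsubset U'_I$. Set $\Aa := \bigsqcup_{I \in \Ii_\Kk} U^{sh}_I \sqsubset \Obj_{\bB_{\Kk'}}$, which is precompact because $\Ii_\Kk$ is finite. Then Proposition~\ref{prop:Ktopl1}~(ii),(iv) gives that $\|\ov\Aa\| \subset |\Kk'|$ is compact and metrizable in the subspace topology; choose any metric $d'$ on $\|\ov\Aa\|$ inducing this topology, which is automatically bounded by compactness. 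The inclusions $U^{sh}_I \hookrightarrow U'_I$ are compatible with the transition data of $\Kk'$ (which by Definition~\ref{def:shr} restrict to those of $\Kk_{sh}$), so they define a functor $\bB_{\Kk_{sh}} \to \bB_{\Kk'}$, whose realization is a continuous map $\iota: |\Kk_{sh}| \to |\Kk'|$ with image $\|\Aa\| \subset \|\ov\Aa\|$. The essential ingredient is that $\iota$ is injective by Lemma~\ref{le:injtameshr}, since both $\Kk_{sh}$ and $\Kk'$ are tame, so $d := \iota^* d'$ is a well-defined bounded metric on $|\Kk_{sh}|$.

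Admissibility of $d$ will follow by a bookkeeping argument: for each $I \in \Ii_{\Kk_{sh}} = \Ii_\Kk$, the object-level definition of the realization functor gives $\iota \circ \pi_{\Kk_{sh}}|_{U^{sh}_I} = \pi_{\Kk'}|_{U^{sh}_I}$, and hence $(\pi_{\Kk_{sh}}|_{U^{sh}_I})^* d = (\pi_{\Kk'}|_{U^{sh}_I})^* d'$. Since $\Kk'$ is tame, Proposition~\ref{prop:Khomeo} shows that $\pi_{\Kk'}|_{U'_I}$ is a homeomorphism onto its image, and its restriction to the open subset $U^{sh}_I$ is a homeomorphism onto $\pi_{\Kk'}(U^{sh}_I) \subset \|\ov\Aa\|$ equipped with the subspace topology induced by $d'$. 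Thus the pullback metric induces the original topology on $U^{sh}_I$, as required. The main subtlety I anticipate is the bookkeeping between quotient and subspace topologies on the various realizations; the only reason the target $|\Kk'|$ is useful at all is that the precompact subset $\|\ov\Aa\|$ carries a metric topology coinciding with its subspace topology from the (possibly non-metrizable) $|\Kk'|$, via Proposition~\ref{prop:Ktopl1}, and the injectivity from Lemma~\ref{le:injtameshr} is precisely what allows us to transport this metric back to all of $|\Kk_{sh}|$.
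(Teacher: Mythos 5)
Your proposal is correct and follows essentially the same strategy as the paper's proof: iterate Proposition~\ref{prop:proper} twice, apply Proposition~\ref{prop:Ktopl1}~(iv) to $\Aa=\bigsqcup_I U^{sh}_I\sqsubset\Obj_{\bB_{\Kk'}}$ to get a metric on $\|\ov\Aa\|$, use Lemma~\ref{le:injtameshr} to pull it back via the injection $\io:|\Kk_{sh}|\to|\Kk'|$, and check admissibility by factoring $\pi_{\Kk_{sh}}|_{U^{sh}_I}$ through $\io$ and invoking the homeomorphism property from Proposition~\ref{prop:Khomeo}. The bookkeeping you flag as the main subtlety is precisely the point the paper emphasizes as well, and your handling of it is sound.
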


\begin{proof}  
First use Proposition~\ref{prop:proper} to construct a  tame shrinking $\Kk'$ of $\Kk$ with domains $(U_I'\subset U_I)_{I\in \Ii_\Kk}$, and then use this result again to construct a tame shrinking $\Kk_{sh}$ of $\Kk'$ with domains $(U_I^{sh}\sqsubset U_I')_{I\in \Ii_\Kk}$. We claim that $\Kk_{sh}$ is metrizable. 

For that purpose we apply Proposition~\ref{prop:Ktopl1}~(iv) to the precompact subset $\Aa: = \bigsqcup_{I\in\Ii_\Kk} U_I^{sh}$ of $\Obj_{\bB_{\Kk'}}$ to obtain a metric $d'$ on $\pi_{\Kk'}(\ov\Aa)$ that induces the relative topology on the subset $\pi_{\Kk'}(\ov\Aa)$ of $|\Kk'|$, that is $\bigl( \pi_{\Kk'}(\ov\Aa) , d'\bigr) = \|\ov\Aa\|$.
Further, since $\pi_{\Kk'}(\ov\Aa)$ is compact, the metric $d'$ must be bounded. 
Now, by Lemma~\ref{le:injtameshr} the natural map $\io: |\Kk_{sh}|\to |\Kk'|$ is injective, with image $\pi_{\Kk_{sh}}(\Aa)$, so that the pullback $d:=\io^* d'$ is a bounded metric  on $|\Kk_{sh}|$ that is compatible with the relative topology induced by $|\Kk'|$; in other words $\io : \bigl(|\Kk_{sh}|, d \,\bigr) \to \|\Aa\|$ is an isometry.

Next, note that the pullback metric $d_I$ on $U^{sh}_I$ does give the usual topology since 
$\pi_{\Kk^{sh}}: U^{sh}_I \to \pi_{\Kk^{sh}}(U^{sh}_I) \subset \bigl( |\Kk^{sh}| , d \bigr)$  is a homeomorphism to its image.  
Indeed, by Lemma~\ref{le:injtameshr} it can also be written as $\pi_{\Kk^{sh}}|_{U^{sh}_I} = \io^{-1} \circ \pi_{\Kk'} \circ \io_I$ with the embedding $\io_I: U^{sh}_I \to U'_I$. The latter is a homeomorphism to its image, as is $\pi_{\Kk'} : U'_I \to \pi_{\Kk'}(U'_I) \subset |\Kk'|$ by Proposition~\ref{prop:Khomeo}, and $\io^{-1}$ by the definition of the metric topology on $|\Kk^{sh}|$.  
\end{proof}  

%\comment{DTODO - decide whether to add later}
%
%
%\begin{defn}\label{def:good}  
%A topological (cobordism) atlas $\Kk = \bigl(\bK_I, \Hat\Phi_{IJ}\bigr)_{I\subset J, I,J\in \Ii_\Kk}$ is said to be 
%{\bf good}  if
%\begin{itemize} \item[(i)] the realization $|\Kk|$ of $\Kk$ is Hausdorff in the quotient topology;
%\item[(ii)] for all $I\in \Ii_\Kk$ the map $\pi_\Kk: U_I\to |\Kk|$ is a homeomorphism to its image;
%\item[(iii)] $\Kk$ is metrizable, i.e.\ there is a metric $d$ on $|\Kk|$ 
%whose pullback by $\pi_\Kk|_{U_I}$ induces the given topology on $U_I$ for each $I\in \Ii_\Kk$;
%\item[(iv)] for all $I,J\in \Ii_\Kk$ with $I\subset J$ the image $\phi_{IJ}(U_{IJ})$ is a closed subset of $U_J$.
%\end{itemize}
%\end{defn}
%
% 
%\begin{lemma}\label{le:good} 
% Every preshrunk tame topological atlas is good.
%\end{lemma}
%\begin{proof}  Proposition~\ref{prop:Khomeo}  shows
%that every tame topological atlas satisfies  conditions (i) and (ii) above, while
% Lemma~\ref{le:metriz} shows that (iii) holds.  Further  
% (iv) holds by the tameness condition \eqref{eq:ttame2} with $J=K$.   
%\end{proof}
%
%\begin{rmk}\rm  
% Once we have constructed a well behaved virtual neighbourhood $|\Kk|$,
%condition~(iv) in Definition~\ref{def:Ku3} (reproduced as condition (iv) in Definition~\ref{def:good})
% is all that we need of the filtration/tameness conditions in order to construct the VFC.
%It will be used in the proof that the zero set of  a perturbed section is  compact; cf \cite{}.
% $\hfill\er$ 
%\end{rmk}
%
%}
%

%%%%%%%%%%%%%%%%%%%%%%%%%%%%%%%%%%%%%%%%%%%%%%%%%%%%%%%%%%%%%%
\section{Cobordisms of topological Kuranishi atlases}\label{s:Kcobord}
%%%%%%%%%%%%%%%%%%%%%%%%%%%%%%%%%%%%%%%%%%%%%%%%%%%%%%%%%%%%%%

Because there are many choices involved in constructing a Kuranishi atlas, and holomorphic curve moduli spaces in addition depend on the choice of an almost complex structure, it is important to have suitable notions of equivalence.
Since we are only interested here in constructing the virtual moduli cycle as cobordism class, resp.\ the virtual fundamental class as a homology class, a notion of uniqueness up to cobordism will suffice for our purposes.   
We will introduce (see Definition~\ref{def:CKS}) a general notion of {\bf topological Kuranishi cobordism}
to be a topological Kuranishi atlas on a compact metrizable space $Y$ with two collared boundary components $\p^0Y$  and $\p^1Y$.
However, if we are considering atlases over a fixed space $X$, then we will mostly work with
the  stronger equivalence relation of {\bf concordance}, which is a Kuranishi cobordism on the product $[0,1]\times X$. 

It turns out that, although the construction of a Kuranishi atlas on a fixed Gromov--Witten moduli space $X$ in \cite{Mcn,MW2} depends on many choices (for example of slicing conditions and obstruction spaces for the basic charts), the resulting atlas is unique up to concordance.
  
%%%%%%%%%%%%%%%%%%%%%%%%%%%%%%%%%%%%%%%%%%%%%%%%
  \subsection{Definitions and properties} \label{ss:Kcobord} \hspace{1mm}\\ \vspace{-3mm}
%%%%%%%%%%%%%%%%%%%%%%%%%%%%%%%%%%%%%%%%%%%%%%%%

In this section we first define these notions and then develop the abstract theory of cobordisms.  
The main result is Theorem~\ref{thm:cobord2}, which in particular implies that tame shrinkings are unique up to tame concordance. 
  
We begin with the notion of cobordism between topological Kuranishi atlases.
In order to define this so that it is transitive, we will need a special form of charts and coordinate changes at the boundary that allows for gluing of cobordisms.
Thus we will define a Kuranishi cobordism to be a Kuranishi atlas over a space $Y$ whose designated ``boundary components" $\p^0Y, \p^1Y \subset Y$ have collared neighbourhoods as follows.

\begin{defn} \label{def:Ycob}  
A {\bf collared cobordism} $(Y, \io^0_Y,\io^1_Y)$ is a separable, locally compact, 
metrizable space $Y$ together with disjoint (possibly empty)
closed subsets $\p^0 Y,$ $ \p^1 Y\subset Y$ and maps
$$
\io_Y^0:  [0,\eps)\times  \p Y^0  \to Y, \qquad  \io_Y^1:  (1-\eps, 1]\times  \p Y^1  \to Y
$$
for some $\eps>0$ that are {\bf collared neighbourhoods} in the following sense: 
They extend the inclusions $\io_Y^0(0,\cdot) : \p^0 Y\hookrightarrow Y$, resp.\ $\io_Y^1(1,\cdot) : \p^1 Y\hookrightarrow Y$, and are homeomorphisms onto disjoint open neighbourhoods of $\p^0 Y\subset Y$, resp.\ $\p^1 Y\subset Y$. 

We call $\p^0 Y$ and $\p^1 Y$ the {\bf boundary components} of $(Y, \io^0_Y,\io^1_Y)$.
\end{defn}

For the next definition, it is useful to introduce the notation
\begin{equation}\label{eq:Naleps}
A_\delta^0: = [0,\delta)  \qquad\text{and} \qquad A_\delta^1: = (1-\delta,1] \qquad\text{ where }\  0<\delta<\tfrac 12
\end{equation}
for collar neighbourhoods of 
$0$ resp.\ $1$ in $[0,1]$.

\begin{defn}\label{def:collarset}
If $(Y, \io_Y^0, \io_Y^1)$ is a collared cobordism, we say that an open subset $F\subset Y$ is {\bf collared} 
if there is $0<\delta\le\eps$ such that for $\al\in \{0,1\}$ we have
$$
F \cap \im (\io_Y^\al)\ne \emptyset
\;\; \Longleftrightarrow\;\;
F \cap \im (\io_Y^\al)
= \io_Y^\al( A^\al_\delta\times \p^\al F) .
$$
Here we denote by
$
\partial^\al F :=  F \cap \p^\al Y 
$
the intersection with the ``boundary component'' $\p^\al Y$ and allow one or both of $\p^\al F$ to be empty.
\end{defn}

Note that a collared subset $F\subset Y$ with empty 
``boundary'' $\p^\al F=\emptyset$ is in fact disjoint from the open neighbourhood $\im\io^\al_Y$ of the corresponding ``boundary component'' $\p^\al Y$.

\begin{example}\label{ex:natcol}\rm
In general the ``boundary components" $\p^\al Y$ 
are by no means uniquely determined by $Y$ or 
topological boundaries
of $Y$
in any sense, though the main example
of a collared cobordism 
is $Y = [0,1]\times X$, which has the natural ``boundary components'' $\{0\}\times X$ and $\{1\}\times X$. 
In this case we always take $\io_Y^\al$ to be the canonical extensions of the 
inclusions $\io_Y^\al(\al,\cdot): \{\al\}\times X \to [0,1]\times X$, for some choice of $0<\eps<\frac 12$.
More generally, we might  
consider a union of moduli spaces
$$
Y = {\textstyle \bigcup_{t\in [0,1]}} \{t\}\times \oMm_{0,k}(M,A,J_t) ,
$$
where $J_t$ is a family of
almost complex structures that are constant for $t$ near $0$ and~$1$. 
This again has canonical ``boundary components'' $\p^\al Y=\{\al\}\times \oMm_{0,k}(M,A,J_\al)$ and collared neighbourhoods $\io_Y^\al\bigl(t,(\al,p)\bigr)=(t,p)$ for sufficiently small choice of $\eps>0$.
$\hfill\er$
\end{example}

When constructing 
Kuranishi cobordisms
we will require all charts and coordinate changes in a sufficiently small collar to be of a compatible product form as introduced below.

\begin{defn} \label{def:Cchart}  
Let $(Y, \io_Y^0, \io_Y^1)$ be a compact collared cobordism.
\begin{itemlist}
\item
Let $\bK^\al=(U^\al,\E^\al,\s^\al,\psi^\al)$ be a topological Kuranishi chart for $\p^\al Y$, and let $A\subset[0,1]$ be a relatively open interval. Then we define the {\bf product chart} for $[0,1] \times \p^\al Y$ with footprint $A\times F^\al$ 
as
$$
A\times \bK^\al  :=\bigl(A \times U^\al, A\times \E^\al, \, \id_{A}\times (\s^\al\circ{\rm pr}_{U^\al} ) ,\, \id_{A}\times \psi^\al \bigr) ,
$$
where $A\times \E^\al$ denotes the obstruction bundle with 
projection $\id_{A}\times\pr^\al:A\times \E^\al\to A \times U^\al$ and 
 zero section $\id_{A}\times 0^\al: A\times U^\al \to A\times \E^\al$.
\item
A {\bf topological  Kuranishi chart with collared boundary} for $(Y, \io_Y^0, \io_Y^1)$
is a tuple $\bK = (U,\E ,\s,\psi)$ as in 
Definition~\ref{def:tchart}, with the following collar form requirements:
\begin{enumerate}
\item
The footprint $F =\psi (\s ^{-1}(0))\subset  Y$ 
is collared and intersects 
at least one of the boundary components $\p^\al Y$.
\item 
The domain $U$ and obstruction bundle $\E$ 
in addition carry the structure of 
collared cobordisms $(U,\io^0_U, \io^1_U)$ resp.\ $(\E, \io^0_{\E}, \io^1_{\E})$ whose boundary components $\partial^\al U$  and $\p^\al \E$ are nonempty iff $\p^\al F= F \cap \p^\al Y\ne \emptyset$. 
\item 
If $\partial^\al F \neq\emptyset$ then for some $\eps>0$ 
there is a topological Kuranishi chart $\partial^\al\bK$ for $\p^\al Y$ 
with footprint $\p^\al F$, domain $\p^\al U$, and obstruction bundle $\p^\al\E$,
and an embedding of the product chart $A_\eps^\al\times \p^\al \bK$ into $\bK$ in the following sense:
The boundary 
 embeddings $\iota_U^\al:A_\eps^\al \times \partial^\al U \hookrightarrow  U $ 
 and  $\iota_{\E}^\al:A_\eps^\al \times \partial^\al \E \hookrightarrow  \E $ 
 intertwine the projection, section, zero and footprint maps of the charts $A_\eps^\al\times \p^\al \bK$ and $\bK$ as in Definition~\ref{def:tchange}.
In particular, the footprint map is compatible with the boundary collars on $\bK$ and $Y$ in the sense that the following diagram commutes:
$$
  \begin{array} {ccc}
(\id_{A_\eps^\al}\times  \s^\al)^{-1}(\id_{A_\eps^\al}\times 0^\al) & \stackrel{\io_U^\al} \longrightarrow &{\s^{-1}(0)} \\
 \id_{A_\eps^\al}\times \psi^\al\downarrow\;\;\;\;\;\;\;\;\;&&\downarrow{\psi}  \\
\phantom{right}{A_\eps^\al\times \p^\al Y} & \stackrel{\io_Y^\al} \longrightarrow &{Y} \; .
\end{array}
$$
\end{enumerate}
\item
For any topological Kuranishi chart with collared boundary for $(Y, \io_Y^0, \io_Y^1)$
we call the resulting uniquely determined topological Kuranishi charts $\partial^\al\bK$ (with footprints in $\p^\al Y$) the {\bf restrictions of $\bK$ to the boundary}. 
\end{itemlist}
\end{defn}

We now define a coordinate change between charts on $Y$ that may have boundary.   Because in a Kuranishi atlas there is a coordinate change $\bK_I\to \bK_J$ only when $F_I\supset F_J$, we will  restrict to this case here.  (Definition~\ref{def:tchange} considered a more general scenario.)
In other words, we need not consider coordinate changes from a chart without boundary to a chart with boundary.

\begin{defn} \label{def:Ccc}
\begin{itemlist}
\item
Let $\Hat\Phi^\al_{IJ}:\bK^\al_I\to\bK^\al_J$ be a coordinate change between topological Kuranishi charts for $\p^\al Y$, and let $A_I,A_J\subset[0,1]$ be relatively open intervals.
Then the {\bf product coordinate change}
$\id_{A_I\cap A_J} \times \Hat\Phi^\al_{IJ}  : 
(A_I\cap A_J) \times \bK^\al_I \to A_J \times  \bK^\al_J$ 
is given by
$$
\id_{A_I\cap A_J}\times \Hat\Phi^\al_{IJ}: \; (A_I\cap A_J)\times  \E^\al_{I}\big|_{U_{IJ}} \;\to\; A_J\times  \E^\al_J.
$$
In particular, the embedding of domains is of the form
$\id_{A_I\cap A_J}\times \phi^\al_{IJ} : (A_I\cap A_J)\times U_{IJ} \to A_J\times  U_J$.
\item
Let $\bK _I,\bK _J$ be topological Kuranishi charts on  $(Y, \io_Y^0, \io_Y^1)$ such that only $\bK _I$ or both
$\bK _I,\bK _J$ have collared boundary.
Then a {\bf 
topological 
coordinate change with collared boundary} $\Hat\Phi_{IJ} :\bK _I\to\bK _J$ 
with domain $U_{IJ}$ satisfies the conditions in Definition~\ref{def:tchange}, with the following boundary variations and collar form requirement:
\begin{enumerate}
\item
The domain is a 
collared subset $U _{IJ}\subset U _I$
in the sense of Definition~\ref{def:collarset},
so has ``boundary components''
$\partial^\al U _{IJ}:= U _{IJ} \cap \partial^\al U _I$.
\item
If $F_J\cap \p^\al Y \ne \emptyset$ then $F_I\cap \p^\al Y \ne \emptyset$ 
and there is 
a  topological coordinate change $\partial^\al\Hat\Phi_{IJ} : \partial^\al\bK _I \to \partial^\al\bK _J$
with domain $\partial^\al U _{IJ}$ such that the restriction of $\Hat\Phi_{IJ}$ 
to $U_{IJ} \cap \io_{U_I}^\al(A^\al_\eps\times \partial^\al U _I)$
pulls back via the collar inclusions $\io^\al_{\E_I}, \io^\al_{\E_J}$ 
to the product $ {\rm id}_{A^\al_\eps} \times \partial^\al\Hat\Phi_{IJ}$
for some $\eps>0$.
In particular we have 
\begin{align*}
(\iota_{U_I}^\al)^{-1}(U _{IJ})
\cap \bigl(A^\al_\eps\times \partial^\al U _I \bigr)
&\;=\; A^\al_\eps\times \partial^\al U _{IJ}, \\
(\iota_{U_J}^\al)^{-1}(\im\phi _{IJ})
\cap \bigl(A^\al_\eps  \times \partial^\al U _J \bigr)
&\;=\;
 \phi _{IJ}( A^\al_\eps\times \partial^\al U _{IJ}) .
\end{align*}
\item
If $F_J\cap \p^\al Y= \emptyset$ but $F_I\cap \p^\al Y\ne \emptyset$ 
then 
we have $\p^\al U_{IJ}=\emptyset$ and hence also
$U _{IJ}\cap \iota_{U_I}^\al \bigl(A^\al_\eps\times \partial^\al U _I \bigr) = \emptyset$ for some $\eps>0$.
\end{enumerate}
\item
For any topological coordinate change with collared boundary $\Hat\Phi_{IJ} $ on $(Y, \io_Y^0, \io_Y^1)$  
we call the uniquely determined topological coordinate changes $\p^\al \Hat\Phi_{IJ}$ for  $\p^\al Y$ the {\bf restrictions of $\Hat\Phi_{IJ} $ to the boundary} for $\al=0,1$.
\end{itemlist}
\end{defn}

\begin{defn}\label{def:CKS}
A {\bf (weak) topological Kuranishi cobordism} on a compact collared cobordism  $(Y, \io_Y^0, \io_Y^1)$
is a tuple
$$
\Kk  = \bigl( \bK_{I} , \Hat\Phi_{IJ} \bigr)_{I,J\in \Ii_{\Kk}}
$$
of basic charts and transition data as in Definition~\ref{def:Ku} resp.\ \ref{def:Kwk}, with the following boundary variations and collar form requirements:
\begin{itemlist}
\item
The charts of $\Kk$  are either topological Kuranishi charts with collared boundary or standard topological Kuranishi charts whose footprints are precompactly contained in $Y\less (\p^0 Y\cup \p^1 Y)$.
\item
The coordinate changes  
$\Hat\Phi_{IJ}: \bK_{I} \to \bK_{J}$ 
are either standard coordinate changes on $Y\less (\p^0 Y\cup \p^1 Y)$  
between pairs of standard charts, or coordinate changes with collared boundary between 
pairs of charts, of which at least the first has collared boundary.
\end{itemlist}

A {\bf (weak) topological Kuranishi concordance} is a (weak) topological Kuranishi cobordism on a collared cobordism of product type $Y = [0,1]\times X$ with canonical collars as in Example~\ref{ex:natcol}.
\end{defn}

\begin{rmk}\label{rmk:restrict}\rm  
Let $(Y, \io_Y^0, \io_Y^1)$ be a compact collared cobordism.
Then any (weak) topological Kuranishi cobordism $\Kk$ on $(Y, \io_Y^0, \io_Y^1)$ induces by restriction (weak) topological Kuranishi atlases $\partial^\al\Kk$ on 
each boundary component $\p^\al Y$
with
\begin{itemlist}
\item 
basic charts $\p^\al\bK_i$ given by restriction of  basic charts of $\Kk$ with $F_i\cap \p^\al Y\neq\emptyset$;
\item 
index set $\Ii_{\p^\al\Kk}=\{I\in\Ii_{\Kk}\,|\, F_I\cap  \p^\al Y\neq\emptyset\}$;
\item 
transition charts $\p^\al\bK_I$ given by restriction of transition charts of $\Kk$;
\item
coordinate changes $\p^\al\Hat\Phi_{IJ}$ given by restriction of coordinate changes of $\Kk$.
\end{itemlist}
In this case we say that {\bf $\Kk$ is a cobordism from $\p^0\Kk$ to $\p^1\Kk$}
and call $\p^\al\Kk$ the {\bf restrictions of $\Kk$ to the boundary}.
In the special case when $\Kk$ is in fact a topological Kuranishi concordance, we also say that 
{\bf $\Kk$ is a concordance from $\p^0\Kk$ to $\p^1\Kk$}.
$\hfill\er$
\end{rmk}

With this language in hand, we can now introduce the cobordism and concordance relations between topological Kuranishi atlases.

\begin{defn}\label{def:Kcobord}
Let $\Kk^0, \Kk^1$ be (weak) topological Kuranishi atlases on compact metrizable spaces $X^0,X^1$.
\begin{itemlist}
\item 
$\Kk^0$ is {\bf (weakly) cobordant} to $\Kk^1$ if there exists a (weak) topological Kuranishi cobordism $\Kk$ from $\Kk^0$ to $\Kk^1$. Equivalently, $\Kk$ is an atlas on a compact collared cobordism $(Y, \io_Y^0, \io_Y^1)$ with boundary components $\p^\al Y = X^\al$ and boundary restrictions $\p^\al\Kk=\Kk^\al$ for $\al = 0,1$.
To be more precise, there are injections $\iota^\al:\Ii_{\Kk^\al} \hookrightarrow \Ii_{\Kk}$ for $\al=0,1$ such that $\im\iota^\al=\Ii_{\partial^\al\Kk}$ and for all $I,J\in\Ii_{\Kk^\al}$ we have
$$
\bK^\al_I = \p^\al \bK_{\iota^\al(I)}, \qquad
\Hat\Phi^\al_{IJ} = \p^\al \Hat\Phi_{\iota^\al(I) \iota^\al (J)} .
$$
\item
$\Kk^0$ is {\bf (weakly) concordant} to $\Kk^1$ if there exists a (weak) topological Kuranishi concordance $\Kk$ from $\Kk^0$ to $\Kk^1$.
Equivalently, the spaces $X^0=X^1=X$ are identical and $\Kk$ is a (weak) topological Kuranishi cobordism on $Y = [0,1]\times X$ with boundary restrictions $\p^\al\Kk=\Kk^\al$ for $\al = 0,1$ as above.
\end{itemlist}
\end{defn}

In the following we will usually identify the index sets $\Ii_{\Kk^\al}$ of cobordant Kuranishi atlases with the restricted index set $\Ii_{\partial^\al\Kk}$ in the cobordism index set $\Ii_{\Kk}$, so that $\Ii_{\Kk^0}, \Ii_{\Kk^1}\subset \Ii_{\Kk}$ are the (not necessarily disjoint) subsets of charts whose footprints intersect $\p^0 Y$ resp.\ $\p^1 Y$.

\begin{example} \label{ex:triv}\rm
Let $\Kk= \bigl( \bK_I, \Hat\Phi_{IJ}\bigr)_{I,J\in\Ii_\Kk}$ be a weak topological Kuranishi atlas on $X$.
Then the {\bf product Kuranishi concordance} $[0,1]\times \Kk$ from $\Kk$ to $\Kk$ is the weak 
topological Kuranishi  cobordism on $ [0,1]\times X$ consisting of the product charts $[0,1]\times \bK_I$ and the product coordinate changes $\id_{[0,1]}\times \Hat\Phi_{IJ}$ for $I,J\in\Ii_\Kk$.

Note that in this case all index sets are the same, 
$\Ii_{\Kk^0}=\Ii_{\Kk^1}=\Ii_{[0,1]\times \Kk}= \Ii_{\Kk}$.
$\hfill\er$
\end{example}

In the following we will extend the categorical and topological notions from sections \ref{ss:Ksdef} and \ref{ss:tame} to topological Kuranishi cobordisms before proving the uniqueness statements claimed there.
For that purpose it will often be convenient to work with the following uniform collar width.

\begin{remark} \rm \label{rmk:Ceps}   Let $\Kk$ be a (weak) topological Kuranishi cobordism.
Since the index set $\Ii_{\Kk}$ in Definition~\ref{def:CKS} is finite, there exists a uniform 
{\bf collar width} $\eps>0$ such that all collar embeddings  $\io^\al_Y$, $\io^\al_U$,
$\io^\al_\E$ are defined on a neighbourhood of
$\ov{A_{\eps}^\al}$, all coordinate changes 
between charts with nonempty boundary
are of collar form on $B^\al=A_{\eps}^\al$, and all charts without 
boundary have footprint contained in 
$Y\less \bigcup_{\al=0,1} \io_Y^\al(\ov{ A_{\eps}^\al}\times \p^\al Y)$.
In particular,  the footprints of the charts with nonempty boundary cover a neighbourhood of 
$\bigcup_{\al=0,1} \io_Y^\al(\ov{ A_{\eps}^\al}\times \p^\al Y) \subset Y$.
$\hfill\er$
\end{remark}

\begin{rmk}\rm  \label{rmk:cobordreal}
Let $\Kk$ be a topological Kuranishi cobordism.
Its associated categories $\bB_{\Kk}, \bE_{\Kk}$ with projection, section, and footprint functor, as well as their realizations $|\bB_{\Kk}|, |\bE_{\Kk}|$  are defined as for topological Kuranishi atlases without boundary in Section~\ref{ss:Ksdef}, and form cobordisms in the following sense.

\begin{itemlist}
\item
We can think of the virtual neighbourhood $|\Kk|$ of $Y$ as a collared cobordism with boundary components 
$\p^0|\Kk|\cong|\p^0\Kk|$ and $\p^1|\Kk|\cong|\p^1\Kk|$ 
in sense of Definition~\ref{def:Ycob}, with the exception that $|\Kk|$ is usually not locally compact or metrizable.
More precisely, using Remark~\ref{rmk:Ceps} we have collared neighbourhoods for some $\eps>0$,
$$
\iota_{|\Kk|}^0: [0,\eps) \times  |\p^0\Kk|  \hookrightarrow |\Kk|,  
\qquad
\iota_{|\Kk|}^1: (1-\eps,1]\times  |\p^1\Kk|   \hookrightarrow |\Kk| .
$$
These are induced by the natural functors $\iota^\al_{\bB_\Kk} : A^\al_\eps\times \bB_{\p^\al\Kk} \to \bB_\Kk$
given by the inclusions 
$\iota^\al_{U_I} : A^\al_\eps\times U^\al_I \hookrightarrow U_I$ 
on objects and 
$\iota^\al_{U_{IJ}} : A^\al_\eps\times U^\al_{IJ}\hookrightarrow U_{IJ}$
on morphisms, where $A^\al_\eps$ is defined in \eqref{eq:Naleps}. The axioms on the interaction of the coordinate changes with the collar neighbourhoods imply that the functors map to full subcategories that split $\bB_\Kk$ 
in the sense that there are no morphisms between any other object and this subcategory.
Hence the functors $\iota^\al_{\bB_\Kk}$ descend to topological embeddings $\iota_{|\Kk|}^\al : \bigl| A^\al_\eps \times \bB_{\p^\al\Kk}\bigr| \to |\Kk|$, i.e.\ homeomorphisms onto open subsets of $|\Kk|$. 
Here the product topology on 
$A^\al_\eps \times\bigl| \bB_{\p^\al\Kk}\bigr| \cong \bigl| A^\al_\eps \times \bB_{\p^\al\Kk}\bigr|$ coincides with the quotient topology by 
%Note to self : alternative reference \cite[Prop.~2.101]{James}: 
\cite[Ex.~29.11]{Mun}: Applied to the topological space $X:=\bigsqcup_{I\in\Ii_\Kk} U_I$ with equivalence relation $Y\subset X\times X$ induced by the morphisms, and the locally compact Hausdorff space $T:=A^\al_\eps$, it asserts that the product topology on $T\times \qu{X}{Y}$ coincides with the quotient topology induced by the product relation $T\times Y \hookrightarrow (T\times X) \times (T\times X)$. 

To check that $\iota_{|\Kk|}^\al$ are collared neighbourhoods in the sense of Definition~\ref{def:Ycob}, note that $\io^\al_{|\Kk|}\bigl(\{\al\}\times |\p^\al\Kk|\bigr)$ is contained in the open image of $\iota_{|\Kk|}^\al$.
Moreover, to see that $\iota_{|\Kk|}^\al\bigl(\{\al\} \times  |\p^\al\Kk| \bigr)\subset |\Kk|$ is closed we verify that its complement has open preimage in $\bigsqcup_{I\in\Ii_\Kk} U_I$ by noting that each $\io^\al_{U^\al_I}\bigl(\{\al\}\times U^\al_I\bigr) \subset U_I$ is closed. 
\smallskip
 
\item
The ``obstruction bundle'' consists of an analogous collared cobordism $|\bE_{\Kk}|$ with boundary components $\p^\al |\bE_{\Kk}|\cong |\bE_{\p^\al\Kk}|$ and a projection $|\pr_{\Kk}|: |\bE_{\Kk}|\to |\Kk|$  that has product form on the collared boundary $|\pr_{\Kk}| \circ \iota^\al_{|\bE_\Kk|}= \iota^\al_{|\Kk|}\circ \bigl(\id_{A^\al_\eps} \times  |\pr_{\p^\al\Kk}|\bigr)$ induced by the ``obstruction bundles'' of the boundary components, $|\pr_{\p^\al\Kk}|: |\bE_{\p^\al\Kk}|\to |\p^\al\Kk|$.
\smallskip

\item
The embeddings $\io^\al_{|\Kk|}$ extend the natural map between footprints
$$
|\s_{\Kk}|^{-1}(|0_\Kk|)
\;\;
\xleftarrow{\io_\Kk}  \;\;Y \;\; \xleftarrow{\io_Y^\al}    \;\;
A_\eps^\al \times \p^\al Y
\;\;\xrightarrow{{\rm id}\times \iota_{\p^\al\Kk}}\;\;
 A_\eps^\al\times |\s_{\p^\al\Kk}|^{-1}(|0_{\p^\al\Kk}|)  .
$$

\item 
If $\Kk$ is a topological Kuranishi concordance on $Y = [0,1]\times X$ then the footprint functor to $[0,1]\times X$  induces a continuous surjection
$$
{\rm pr}_{[0,1]}\circ \psi_{\Kk}
: \; \s_{\Kk}^{-1}(0_\Kk) \;\to\; [0,1]\times X  \;\to\;  [0,1] .
$$
In general we do not assume that this extends to a functor $\bB_{\Kk}\to [0,1]$.
However, all the topological Kuranishi concordances that we construct explicitly do have this property. 
 $\hfill\er$
\end{itemlist}
\end{rmk}

In order to extend e.g.\ the Hausdorff property of the realization $|\bB_{\Kk}|$ from Theorem~\ref{thm:K} to Kuranishi cobordisms, we need the notions of filtration and tameness.
In the following we introduce them for general topological Kuranishi cobordisms with the 
understanding that they can equally be applied to the special case of topological Kuranishi concordances.

\begin{defn}\label{def:Cccf} 
A (weak) topological Kuranishi cobordism $\Kk$  is said to be {\bf filtered} if it is equipped with a filtration $(\E_{IJ})_{I\subset J\in\Ii_\Kk}$ as in Definition~\ref{def:Ku3} that is compatible with the boundary collars in the sense that 
for some $\eps>0$ 
the following holds for  $\al=0,1$:
\begin{itemlist} 
\item
The restrictions 
$\bigl(\p^\al\E_{IJ} = \E_{IJ}\cap \p^\al \E_J\bigr)$
define a filtration of the boundary atlas $\p^\al \Kk$.
\item For each $I\subset J$ we have 
$\;
\io_{\E_J}^\al(A_\eps^\al\times \p^\al\E_{IJ})  = \E_{IJ}\cap \pr^{-1}_J \bigl(\io_{U_J}^\al\bigl( A_\eps^\al\times \p^\al U_J\bigr)\bigr)$.
\end{itemlist}
A filtered cobordism is said to be {\bf tame} if it also satisfies the conditions in Definition~\ref{def:tame}.
\end{defn}

\begin{rmk} \label{rmk:restrict2}\rm  
\NI
(i)
If $\Kk$ is a (weak) topological Kuranishi atlas with filtration $(\E_{IJ})_{I\subset J\in\Ii_\Kk}$, then the product concordance $[0,1]\times \Kk$ is equipped with the canonical filtration $([0,1]\times\E_{IJ})_{I,J\in\Ii_\Kk}$.

\MS
\NI
(ii)
If $\Kk$ is a tame topological Kuranishi atlas, then $[0,1]\times \Kk$ with the canonical filtration is a tame topological Kuranishi concordance.

\MS
\NI (iii) 
If $\Kk$ is a tame topological Kuranishi cobordism, then both restrictions $\p^\al \Kk$ are also tame with the induced filtrations $(\p^\al\E_{IJ})_{I,J\in \Ii_{\p^\al\Kk}}$. $\hfill\er$  
\end{rmk}

\begin{defn} 
Let $\Kk^0, \Kk^1$ be filtered (weak) topological Kuranishi atlases with filtrations $(\E^\al_{IJ})_{I,J\in \Ii_{\Kk^\al}}$.
\begin{itemlist}
\item 
$\Kk^0$ is {\bf filtered (weak) cobordant/concordant} to $\Kk^1$ if there exists a filtered (weak) topological Kuranishi cobordism/concordance $\Kk$ from $\Kk^0$ to $\Kk^1$ such that the induced filtrations $(\p^\al\E_{IJ})_{I,J\in \Ii_{\p^\al\Kk}}$ coincide with the given $(\E^\al_{IJ})_{I,J\in \Ii_{\Kk^\al}}$.
\item
If $\Kk^0, \Kk^1$ are in addition tame, then 
$\Kk^0$ is {\bf tame cobordant/concordant} to $\Kk^1$ if there exists a filtered topological Kuranishi cobordism/concordance $\Kk$ from $\Kk^0$ to $\Kk^1$ as above that in addition is tame.
\end{itemlist}
\end{defn}

While filtered versions of topological Kuranishi cobordisms are used to define suitable equivalence relations, the tameness notion is needed because of its topological implications.

\begin{lemma}\label{le:cob0}
Let $\Kk$ be a tame topological Kuranishi cobordism
(or concordance).
Then its realization $|\Kk|$ has the Hausdorff and homeomorphism properties stated in Theorem~\ref{thm:K}.
\end{lemma}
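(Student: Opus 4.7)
The plan is to observe that the argument of Proposition~\ref{prop:Khomeo} carries over essentially verbatim to the cobordism setting. That argument uses only the tameness conditions of Definition~\ref{def:tame} together with the fact that each chart domain is a separable, locally compact, metric space, and it never exploits the absence of boundary. By Definition~\ref{def:Cccf} a tame topological Kuranishi cobordism satisfies exactly the same tameness conditions, and by Definition~\ref{def:Ycob} the domain of any Kuranishi chart with collared boundary is separable, locally compact, and metrizable (standard charts satisfy this by Definition~\ref{def:tchart}). Since the index set $\Ii_\Kk$ is finite, the disjoint union $\Obj_{\bB_\Kk}=\bigsqcup_{I\in\Ii_\Kk} U_I$ inherits these properties.

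First I would verify that Lemma~\ref{le:Ku2}, the characterization of the equivalence relation $\sim$ via the partial order $\preceq$, holds for any tame topological Kuranishi cobordism. Its proof invokes only the filtration properties of Definition~\ref{def:Ku3}, the tameness identities \eqref{eq:tame1} and \eqref{eq:tame2}, the strong cocycle condition from Lemma~\ref{le:tame0} (whose proof is formal), and the footprint implication \eqref{eq:useful2}. Each ingredient is available in the cobordism case without modification, so Lemma~\ref{le:Ku2} applies as stated, as does Lemma~\ref{le:phitrans} on closedness of images of transition maps.

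Next, for the Hausdorff property of $|\Kk|$ and $|\bE_\Kk|$, I would run the sequential argument from the first part of the proof of Proposition~\ref{prop:Khomeo}. Given $x^\nu\to x^\infty$ in $U_I$ and $y^\nu\to y^\infty$ in $U_J$ with $(I,x^\nu)\sim(J,y^\nu)$, Lemma~\ref{le:Ku2}(a) splits the verification of $(I,x^\infty)\sim(J,y^\infty)$ into the case $I\cap J=\emptyset$ (handled by continuity of $\s_\bullet$ and $\psi_\bullet$) and the case $I\cap J\ne\emptyset$ (handled by passing to preimages $w^\nu\in U_{(I\cap J)I}$ in the closed image $\phi_{(I\cap J)I}(U_{(I\cap J)I})=\s_I^{-1}(\E_{(I\cap J)I})$ of Lemma~\ref{le:phitrans}). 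This shows $R\subset\Obj_{\bB_\Kk}\times\Obj_{\bB_\Kk}$ is closed, so Lemma~\ref{le:bourb} applied to the separable, locally compact, metric space $\Obj_{\bB_\Kk}$ produces a Hausdorff quotient. The argument for $|\bE_\Kk|$ is the analogous one, using part~(b) of Lemma~\ref{le:Ku2} and the spaces $\E_I$ in place of $U_I$.

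Finally, for the homeomorphism property of $\pi_\Kk|_{U_I}$ and $\pi_\Kk|_{\E_I}$, the same construction of the complement set
$\Qq=\bigl(\iota_\Kk(X)\cup\bigcup_{H\subset I}\pi_\Kk(U_H)\bigr)\smallsetminus\pi_\Kk(S_I)$
from Proposition~\ref{prop:Khomeo} produces, for any open $S_I\subset U_I$, an open neighbourhood $|\Kk|\smallsetminus\Qq$ of $\pi_\Kk(S_I)$ whose intersection with $\pi_\Kk(U_I)$ is exactly $\pi_\Kk(S_I)$. The decomposition of $U_J\cap\pi_\Kk^{-1}(\Qq)$ into the zero-set piece $Q_J^0$ and the pieces $\eps_J(C_H)$ indexed by $H\subset I\cap J$, together with closedness of each piece via tameness \eqref{eq:tame3} and the identities in Lemma~\ref{le:Ku2}(d), goes through unchanged. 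The expected obstacle would be interference from the collared boundary structure of $Y$, $U_I$ and $\E_I$ with the equivalence relation on $\Obj_{\bB_\Kk}$; but this obstacle is in fact absent, because tameness makes no reference to collars and the collar axioms in Definitions~\ref{def:Cchart} and~\ref{def:Ccc} do not introduce any new morphisms in $\bB_\Kk$ or $\bE_\Kk$ near the boundary. Hence every step of the atlas proof applies verbatim.
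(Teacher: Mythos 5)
Your proposal is correct and follows exactly the paper's route: the paper's proof of this lemma simply states that the properties follow by the same arguments as Proposition~\ref{prop:Khomeo}, the collared boundaries being irrelevant, which is precisely what you verify in detail. Your elaboration of why each ingredient (Lemma~\ref{le:Ku2}, Lemma~\ref{le:phitrans}, Lemma~\ref{le:bourb}, separability and local compactness of the domains) transfers unchanged is a sound justification of that one-line argument.
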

\begin{proof}
These properties are proven by precisely the same arguments as in Proposition~\ref{prop:Khomeo}.
The fact that some charts have collared boundaries is irrelevant in this context.
\end{proof}

Finally, all reasonable flavours of cobordism and concordance form equivalence relations.
We end this section by proving this fact in the cases that will be used in applications.

\begin{lemma}\label{lem:cobord1}
Filtered (weak) cobordism is an equivalence relation between (weak) filtered topological Kuranishi atlases.
\end{lemma}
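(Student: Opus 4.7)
The plan is to verify reflexivity, symmetry, and transitivity in turn; reflexivity and symmetry are essentially formal, while transitivity requires a careful gluing construction along the common boundary.

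For \textbf{reflexivity}, given any filtered (weak) topological Kuranishi atlas $\Kk$ on $X$ with filtration $(\E_{IJ})_{I\subset J}$, I would take the product concordance $[0,1]\times \Kk$ of Example~\ref{ex:triv} equipped with the canonical product filtration $([0,1]\times \E_{IJ})_{I\subset J}$ from Remark~\ref{rmk:restrict2}(i). Since product charts and product coordinate changes have collar form by construction on all of $[0,1]$, this is a filtered (weak) Kuranishi concordance (hence cobordism) from $\Kk$ to $\Kk$ whose restrictions to $\{0\}\times X$ and $\{1\}\times X$ recover $\Kk$ with its original filtration.

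For \textbf{symmetry}, suppose $\Kk$ is a filtered (weak) cobordism on $(Y,\iota_Y^0,\iota_Y^1)$ from $\Kk^0$ to $\Kk^1$. I would form the \emph{reversed} collared cobordism $(Y, \Ti\iota_Y^0, \Ti\iota_Y^1)$ with the same underlying space $Y$ but with boundary components $\Ti\p^0 Y := \p^1 Y$, $\Ti\p^1 Y := \p^0 Y$ and collar embeddings $\Ti\iota_Y^0(t,y) := \iota_Y^1(1-t,y)$ for $(t,y)\in A^0_\eps\times \p^1 Y$ and $\Ti\iota_Y^1(t,y) := \iota_Y^0(1-t,y)$ for $(t,y)\in A^1_\eps\times \p^0 Y$. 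Applying the same reparametrization $t\mapsto 1-t$ to the collar maps $\iota^\al_U$, $\iota^\al_\E$ of each chart and to the collar maps of each coordinate change produces a filtered (weak) Kuranishi cobordism with the same charts, coordinate changes, and filtration; only the labels of the two boundary components are swapped. Its restrictions to the (new) boundary components are therefore $\Kk^1$ and $\Kk^0$, with the same induced filtrations, giving the required reversed cobordism.

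For \textbf{transitivity}, suppose $\Kk^{01}$ is a filtered (weak) cobordism on $(Y^{01},\iota^0,\iota^1)$ from $\Kk^0$ to $\Kk^1$ and $\Kk^{12}$ on $(Y^{12},\kappa^0,\kappa^1)$ from $\Kk^1$ to $\Kk^2$. First, choose a common uniform collar width $\eps>0$ as in Remark~\ref{rmk:Ceps} for both cobordisms, and use a homeomorphism of $[0,1]$ to rescale $\Kk^{01}$ and $\Kk^{12}$ so that their collar neighbourhoods near $\p^1 Y^{01}$ and $\p^0 Y^{12}$ respectively both have width, say, $\tfrac12$. Form the glued space $Y := Y^{01}\sqcup Y^{12}/{\sim}$, where $\iota^1(1,y)\sim \kappa^0(0,y)$ for $y\in \p^1 Y^{01}=\p^0 Y^{12}=X^1$; the collar structures provide canonical collar embeddings of $\p^0 Y := \p^0 Y^{01}$ and $\p^1 Y := \p^1 Y^{12}$ into $Y$, and one checks that $Y$ is again a compact, separable, locally compact, metrizable collared cobordism. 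Next, identify the basic charts of $\Kk^{01}$ and $\Kk^{12}$ indexed by $\Ii_{\Kk^1}$ (which by assumption restrict to the same chart on $\Kk^1$) and glue their domains $U^{01}_i$ and $U^{12}_i$ along the identification of their collar neighbourhoods $A^1_\eps\times \p^1 U^{01}_i = A^1_\eps\times \p^0 U^{12}_i$ (with the $t\mapsto 1-t$ reparametrization on one side); the index set of the glued atlas $\Kk$ is then the disjoint union $\Ii_{\Kk^{01}}\cup_{\Ii_{\Kk^1}}\Ii_{\Kk^{12}}$. Do the same for all transition charts $\bK_I$ and coordinate changes $\Hat\Phi_{IJ}$ indexed by $I,J\in\Ii_{\Kk^1}$, and declare the domain $U_{IJ}$ of any coordinate change in which only one of $I$, $J$ lies in $\Ii_{\Kk^1}$ to be the one provided by the cobordism in which the non-$\Kk^1$ index lives. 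Because both $\Kk^{01}$ and $\Kk^{12}$ are in collar form with respect to $\Kk^1$, the glued transition data automatically satisfies the (weak) cocycle condition and the collar-form requirements of Definitions~\ref{def:Cchart}, \ref{def:Ccc}, \ref{def:CKS}. Finally, glue the filtrations $\E^{01}_{IJ}$ and $\E^{12}_{IJ}$: by the collar-compatibility condition in Definition~\ref{def:Cccf} both restrict to the prescribed filtration $\E^1_{IJ}$ on $\Kk^1$ in product form over $A^1_\eps$ resp.\ $A^0_\eps$, so they glue to a filtration of $\Kk$ whose boundary restrictions are $\E^0_{IJ}$ and $\E^2_{IJ}$.

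The main obstacle is the transitivity step; specifically, setting up the gluing of the domains of charts, transition charts, and their coordinate changes in such a way that (i) the result is again separable, locally compact, and metrizable, (ii) the collar forms on both sides match up precisely on the overlap so that the glued charts and coordinate changes again satisfy the collar-form axioms of Definition~\ref{def:CKS}, and (iii) the filtrations glue consistently. The rescaling of the collar parameter and the use of the uniform collar width $\eps$ from Remark~\ref{rmk:Ceps} are exactly what make all three points routine once organised correctly.
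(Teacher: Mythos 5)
Your proposal follows essentially the same route as the paper: the product concordance with product filtration for reflexivity, collar reversal for symmetry, and for transitivity the boundary connected sum in which charts and coordinate changes indexed by $\Ii_{\Kk^1}$ are glued along their matching collars while interior charts are kept unchanged, with the mixed coordinate changes taken from whichever cobordism contains the non-$\Ii_{\Kk^1}$ index and the filtrations glued or copied accordingly. The only difference is one of detail: the paper explicitly verifies the cocycle condition and the filtration axioms (ii) and (iv) in the mixed-index cases using the product form on the middle collar, steps you declare automatic, but your outline contains the same key ideas in the same order.
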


\begin{rmk} \rm 
As we saw, the existence of a filtration on a topological Kuranishi atlas resp.\ cobordism $\Kk$ 
is an essential hypothesis in Proposition~\ref{prop:Khomeo} and Lemma~\ref{le:cob0}.
However, when manipulating charts other than by shrinking the domains, one may easily destroy this property.  For example, when constructing concordances over $[0,1]\times X$ it is natural to use product charts of the form $A\times \bK_I$.  However one must guard against taking two products of the same basic chart with intersecting footprints, e.g.\ $[0,\frac 13)\times \bK_i $ and $(\frac 14,\frac 12)\times \bK_i$, since their obstruction bundles have the same fibers.
The natural transition chart for the overlap of footprints $ (\frac 14,\frac 13)\times F_i$ is
$ (\frac 14,\frac 13)\times \bK_i$ which again has an obstruction bundle with the same fiber and so, as in Example~\ref{ex:Ku3}, in general fails to be filtered. 
This means that in the following proof we have to construct cobordisms with great care. 
$\hfill\er$
\end{rmk}

\begin{proof}[Proof of Lemma~\ref{lem:cobord1}]
Filtered (weak) cobordism is reflexive by Remark~\ref{rmk:restrict2}~(i).

To check symmetry of the filtered cobordism relation,
suppose that $(Y, \io_Y^0, \io_Y^1)$ is a compact collared cobordism as in Definition~\ref{def:Ycob}, and that $\Kk$ is a filtered weak topological Kuranishi cobordism on $(Y, \io_Y^0, \io_Y^1)$
 from $\Kk^0$ to $\Kk^1$ as in Definition~\ref{def:Kcobord}.  
Then we may interchange the two boundary components of $Y$ to get a compact collared cobordism $(Y'
:=Y, \io_{Y'}^0, \io_{Y'}^1)$
from $\p^0Y':=\p^1Y$ to $\p^1Y':=\p^0Y$ 
with collars $\io_{Y'}^\al$ defined as composites with the reflection $\tau: [0,1]\to[0,1], t\mapsto 1-t$:
\begin{align*}
 \io_{Y'}^0:[0,\eps)\times \p^0Y' \;\;\stackrel{\tau\times \id} \to\;\; (1-\eps,1]\times \p^1Y \;\; \stackrel{\io_Y^1}\to \;\;Y = Y',\\
\io_{Y'}^1: (1-\eps,1]\times \p^1Y'\;\;  \stackrel{\tau\times \id} \to\;\;  [0,\eps)\times \p^0Y\;\;  \stackrel{\io_Y^0}\to \;\;Y = Y'.
 \end{align*}
If we similarly relabel and reparametrize the boundary collars of each chart in $\Kk$, we obtain a filtered cobordism $\Kk'$ over $Y'$ from $\Kk^1$ to $\Kk^0$,
which proves symmetry. 
 
The nontrivial point is transitivity. For that purpose consider two (weak) topological Kuranishi cobordisms,\footnote{
The prototypical example -- and source of our notation -- are two Kuranishi concordances which we would in a preliminary step shift to obtain cobordisms over $[0,1]\times X$ and $[1,2]\times X$. 
}
the first $\Kk^{[0,1]}$ on $(Y^{[0,1]}, \io_{Y^{[0,1]}}^0, \io_{Y^{[0,1]}}^1)$ from an atlas $\Kk^0$ on $Y^0= \p^0 Y^{[0,1]}$  to an atlas $\Kk^1$ on $Y^1= \p^1 Y^{[0,1]}$, and the second $\Kk^{[1,2]}$ on $(Y^{[1,2]}, \io_{Y^{[1,2]}}^0, \io_{Y^{[1,2]}}^1)$ from the atlas $\Kk^1$ on $Y^1 = \p^0 Y^{[1,2]}$ to an atlas $\Kk^2$ on $Y^2 = \p^1Y^{[1,2]}$.   
Our goal is to concatenate these to a (weak) topological Kuranishi cobordism
$\Kk^{[0,2]}$ on $(Y^{[0,2]}, \io_{Y^{[0,2]}}^0, \io_{Y^{[0,2]}}^1)$, which restricts to $\Kk^0$ on $Y^0 = \p^0 Y^{[0,2]}$ and to $\Kk^2$ on $Y^2 = \p^1 Y^{[0,2]}$.
As underlying space $Y^{[0,2]}$ we choose the boundary connected sum
$$
 Y^{[0,2]} = Y^{[0,1]} \underset{\scriptscriptstyle Y^1}{\cup}\, Y^{[1,2]} \; : = \;\quotient{ Y^{[0,1]}{\sqcup} Y^{[1,2]}}
{ \scriptstyle
 \iota^1_{Y^{[0,1]}}(1,y) \,\sim\, \iota^0_{Y^{[1,2]}}(1,y)\; \forall y\in Y^1}
$$
with boundary collars
$\iota^0_{Y^{[0,2]}}: = j_{[0,1]}\circ \iota^0_{Y^{[0,1]}}$, $\iota^1_{Y^{[0,2]}}: = j_{[1,2]}\circ \iota^1_{Y^{[1,2]}}$, 
where $j_{[i,i+1]}$ denotes the inclusion $Y^{[i,i+1]}\subset Y^{[0,2]}$.
We obtain an atlas $\Kk^{[0,2]}$ on $Y^{[0,2]}$ as follows:
\begin{itemlist}
\item
The index set $\displaystyle \; \Ii_{\Kk^{[0,2]}} :=\;\Ii_{[0,1)}  \;\sqcup\; \Ii_{\Kk^1} \;\sqcup\; \Ii_{(1,2]} \;$ is given by 
$$
\Ii_{[0,1)}:=\Ii_{\Kk^{[0,1]}}\less\iota_{\Kk^{[0,1]}}^1(\Ii_{\Kk^1}), \quad \Ii_{(1,2]}:=\Ii_{\Kk^{[1,2]}}\less\iota_{\Kk^{[1,2]}}^0(\Ii_{\Kk^1}).
$$
This partitions $\Ii_{\Kk^{[0,2]}}$ into those index sets in $\Ii_{\Kk^1}$ whose footprints intersect $\p^1 Y^{[0,1]}=Y^1=\p^0 Y^{[1,2]}$ and the index sets in $\Ii_{[0,1)}$ resp.\ $\Ii_{(1,2]}$ whose footprints are contained in $Y^{[0,1]}\less\io^1_{Y^{[0,1]}}((1-\eps,1]\times Y^1)\subset Y^{[0,2]}$, resp.\ 
$Y^{[1,2]}\less\io^0_{Y^{[1,2]}}([0,\eps)\times Y^1)\subset Y^{[0,2]}$.

\item
The charts are
$\bK^{[0,2]}_{I} := \bK^{[0,1]}_{I}$ for $I\in\Ii_{[0,1)}$, and $\bK^{[0,2]}_{I} := \bK^{[1,2]}_{I}$ for $I\in\Ii_{(1,2]}$.
For $I\in\Ii_{\Kk^1}$ denote by
$I^{01}=\iota_{\Kk^{[0,1]}}^1(I)\in\Ii_{\Kk^{[0,1]}}\less\Ii_{[0,1)}$, $I^{12}=\iota_{\Kk^{[1,2]}}^0(I)\in\Ii_{\Kk^{[1,2]}}\less\Ii_{(1,2]}$ the labels of the charts that restrict to $\bK_I$.
In particular this implies ${\partial^1 U^{[0,1]}_{I^{01}} = U^1_I =\partial^0 U^{[1,2]}_{I^{12}}}$.
Then define the glued chart (possibly with collared boundary at $Y^0$ or $Y^2$)
$$
\qquad
\bK^{[0,2]}_{I} :=
 \bK^{[0,1]}_{I^{01}}\underset{\scriptscriptstyle U^1_I}{\cup} \bK^{[1,2]}_{I^{12}} :=
\left( U^{[0,2]}_I \,, \ \E^{[0,2]}_I,\ 
\left\{ \begin{aligned}
\s^{[0,1]}_{I^{01}}\;,\; \psi^{[0,1]}_{I^{01}}  \quad &\text{on} \; U^{[0,1]}_{I^{01}}\\
 \s^{[1,2]}_{I^{12}}\;,\; \psi^{[1,2]}_{I^{12}}  \quad &\text{on} \;U^{[1,2]}_{I^{12}}
\end{aligned} \right\}
\right)
$$
where the domain and obstruction bundle are boundary connected sums
$$
U^{[0,2]}_{I} \;:=\; U^{[0,1]}_{I^{01}} \underset{\scriptscriptstyle U^1_I}{\cup}  U^{[1,2]}_{I^{12}}
\;:=\;
\quotient{ U^{[0,1]}_{I^{01}} \sqcup U^{[1,2]}_{I^{12}} }{ 
\iota^1_{U_{I^{01}}}(x) \sim \iota^0_{U_{I^{12}}}(x)\quad \forall x\in U^1_I
} 
$$
and similarly for $\E^{[0,2]}_I$. Further the projection $\pr_I: \E^{[0,2]}_I\to U^{[0,2]}_I$ and zero section 
$0_I: U^{[0,2]}_I\to \E^{[0,2]}_I$
are the obvious concatenation.
Note that due to the collar requirements, both domain and obstruction bundle contain an embedded
product $(1-\eps,1+\eps)\times  U^1_I$, $(1-\eps,1+\eps)\times  \E^1_I$ for some $\eps>0$.
Moreover the sections, projections, and footprint maps fit continuously since 
their pullbacks to this 
$(1-\eps,1+\eps)$-collar have product form and agree at the middle $1$-slice.
\vspace{.09in}
\item
The coordinate changes are $\Hat\Phi^{[0,2]}_{IJ} := \Hat\Phi^{[0,1]}_{IJ}$ for $I,J\in\Ii_{[0,1)}$, and $\Hat\Phi^{[0,2]}_{IJ} := \bK^{[1,2]}_{IJ}$ for $I,J\in\Ii_{(1,2]}$, and the following.
\vspace{.09in}
\begin{itemize}
\item[-]
For $I,J\in\Ii_{\Kk^1}$ the
coordinate charts corresponding to
$I^{01}, J^{01}\in\Ii_{\Kk^{[0,1]}}$, $I^{12},J^{12}\in\Ii_{\Kk^{[1,2]}}$
fit together to give a glued coordinate change
(possibly with collared boundary)
$$
\qquad
\Hat\Phi^{[0,2]}_{IJ} := \left( U^{[0,1]}_{I^{01}J^{01}} \underset{\scriptstyle U^1_{IJ}}\cup U^{[1,2]}_{I^{12}J^{12}} \; , \,
\left\{ \begin{aligned}
\Hat\Phi^{[0,1]}_{I^{01}}  \;\quad &\text{on} \; \E^{[0,1]}_{I^{01}J^{01}}\\
\Hat\Phi^{[1,2]}_{I^{12}}  \;\quad &\text{on} \; \E^{[1,2]}_{I^{12}J^{12}}
\end{aligned}
\right\}
\right) .
$$
Here the embeddings of obstruction bundles fit continuously since as before their 
restrictions to the product 
$(1-\eps,1+\eps) \times \E^1_I\big|_{U^1_{IJ}} \subset \E^{[0,2]}\big|_{U^{[0,2]}_{IJ}}$ 
have product form and agree on 
$\{1\}\times\E^1_I\big|_{U^1_{IJ}}$.
A similar remark applies to the domains.
\vspace{.09in}
\item[{-}]
For $J\in\Ii_{[0,1)}$ and $I\in\Ii_{\Kk^1}$ corresponding to $I^{01}\in\Ii_{\Kk^{[0,1]}}$ with $I^{01}\subsetneq J$ the coordinate change $\Hat\Phi^{[0,2]}_{IJ} := \Hat\Phi^{[0,1]}_{I^{01} J}$
is well defined with domain $U^{[0,1]}_{I^{01}J} \subset U^{[0,2]}_{IJ}$;
similarly for $J\in\Ii_{(1,2]}$,  $I\in\Ii_{\Kk^1}$.\vspace{.09in}
\end{itemize}
\end{itemlist}

Note that we need not construct coordinate changes from $I\in\Ii_{[0,1)}$ (or $I\in\Ii_{(1,2]}$) to $J\in\Ii_{\Kk^1}$
since in these cases $F_J$ is not a subset of  $F_I$.
Now we may define the basic charts in $\Kk^{[0,2]}$ to consist of the basic charts in $\Ii_{[0,1)}$ and $\Ii_{(1,2]}$ whose footprints are disjoint from $\{1\}\times X$, together with one glued chart for each basic chart in $\Ii_{\Kk^1}$ (which is constructed from a pair of charts in ${\Kk^{[0,1]}}$ and ${\Kk^{[1,2]}}$ with matching collared boundaries).
The further charts and coordinate changes constructed above then cover exactly the overlaps of the new basic charts, since the charts from $\Ii_{[0,1)}$ have no overlap with those arising from $\Ii_{(1,2]}$.
The weak cocycle condition  for charts or coordinate changes in $\Ii_{[0,1)}\sqcup\Ii_{(1,2]}$ then follows directly from the corresponding property of $\Kk^{[0,1]}$ and $\Kk^{[1,2]}$.
Furthermore,
for $I\in\Ii_{\Kk^{1}}$ the glued chart $\bK^{[0,2]}_{I}=\bK^{[0,1]}_{I^{01}}\underset{\bK^1_I}{\cup} \bK^{[1,2]}_{I^{12}}$ has restrictions (up to natural pullbacks)
\begin{align*}
&\bK^{[0,2]}_{I}\big|_{{\rm int}(U^{[0,1]}_{I^{01}})} = \bK^{[0,1]}_{I^{01}}\big|_{{\rm int}(U^{[0,1]}_{I^{01}})}, \qquad
\bK^{[0,2]}_{I}\big|_{{\rm int}(U^{[1,2]}_{I^{12}})} = \bK^{[1,2]}_{I^{12}}\big|_{{\rm int}(U^{[1,2]}_{I^{12}})}, \\
&\bK^{[0,2]}_{I}\big|_{(1-\eps,1+\eps)\times U^1_I } =(1-\eps,1+\eps)\times  \bK^1_{I}  .
\end{align*}
The cocycle condition for any tuple of coordinate changes can be checked separately for these restrictions (which cover the entire domain of $\bK^{[0,2]}_{I}$) and hence follows from the corresponding property of $\Kk^{[0,1]}$, $\Kk^{[1,2]}$, and $\Kk^{1}$.
Thus we have constructed a weak topological Kuranishi cobordism $\Kk^{[0,2]}$ from $\Kk^0$ to $\Kk^2$.  

Moreover, $\Kk^{[0,2]}$ satisfies the cocycle condition if both constituent Kuranishi cobordisms do. 
Indeed, the inclusion of domains $(\phi^{[0,2]}_{IJ})^{-1}(U^{[0,2]}_{JK}) \subset U^{[0,2]}_{IK}$
holds immediately if $I\subset J\subset K$ all lie in the same subset  $\Ii_{\Kk^{[0,1]}}, \Ii_{\Kk^1}, \Ii_{\Kk^{[1,2]}}$ of the index set.
The only other cases are $I\in \Ii_{\Kk^1}$ and $K\in \Ii_{[0,1)}$ resp.\ $K\in \Ii_{(1,2]}$ with the intermediate $J$ in one or the other subset.
For w.l.o.g.\ $K\in \Ii_{[0,1)}$ we have $U^{[0,2]}_{\bullet K}=U^{[0,1]}_{\bullet K}$ and 
$(\phi^{[0,2]}_{IJ})^{-1}(U^{[0,2]}_{JK})=(\phi^{[0,1]}_{IJ})^{-1}(U^{[0,1]}_{JK})$
since even in case $J\in \Ii_{\Kk^1}$ the map $\phi^{[0,2]}_{IJ}$ takes $U^{[0,2]}_{IJ}\less U^{[1,2]}_{IJ}$ to $U^{[0,2]}_{J}\less U^{[1,2]}_{J}$ due to its product form on the middle collar.
\smallskip

It remains to construct a filtration on $\Kk^{[0,2]}$ whose boundary restrictions are given filtrations $(\E_{IJ}^{[0,1]})_{I,J\in \Ii_{\Kk^{[0,1]}}}$ and $(\E_{IJ}^{[1,2]})_{I,J\in\Ii_{\Kk^{[1,2]}}}$.
For a chart 
$\bK^{[0,2]}_J$ with $J\in \Ii_{\Kk^{[0,1)}}$ we define $\E_{IJ}^{[0,2]}: = \E_{IJ}^{[0,1]}$ for all $I\subset J$.
Note here that although $J\in \Ii_{[0,1)}$, 
some of its subsets $I$ might lie in $\Ii_{\Kk^1}$.  However $\E_{IJ}^{[0,1]}$ is defined because
$I,J$ are both in the indexing set 
$\Ii_{\Kk^{[0,1]}}$.
Similarly, if  $J\in \Ii_{\Kk^{(1,2]}}$ we take  $\E_{IJ}^{[0,2]}: = \E_{IJ}^{[1,2]}$ for all  $I\subset J$.
Finally,  if $J\in \Ii_{\Kk^1}$ and $I\subset J$ we define 
$\E_{IJ}^{[0,2]}: = \E_{I^{01}J^{01}}^{[0,1]}\cup_{\E_{IJ}^1} \E_{I^{12}J^{12}}^{[1,2]}$
by concatenation. 
Note that this is well defined for all $I\subset J$ because $J\in  \Ii_{\Kk^1}$ implies that $I\in  \Ii_{\Kk^1}$
(since the footprint of $I$ contains that of $J$).
That these sets satisfy the filtration properties (i) and (iii) in Definition~\ref{def:Ku3} follow directly 
from the corresponding statements for the atlases $\Kk^{[0,1]}, \Kk^1, \Kk^{[1,2]}$. 
Conditions (ii) and (iv) concern the compatiblity with coordinate changes.  They are also immediate if they involve indices $I\subset J\subset K$ that all lie in the same subset  $\Ii_{\Kk^{[0,1]}}, \Ii_{\Kk^1}, \Ii_{\Kk^{[1,2]}}$ of the index set. The other cases in which we need to check property (ii),
\begin{equation}\label{eq:ii}
\Hat\Phi^{[0,2]}_{JK}\bigl(
(\pr^{[0,2]}_J)^{-1}(U^{[0,2]}_{JK})\cap
\E^{[0,2]}_{IJ}\bigr) = \E^{[0,2]}_{IK}\cap (\pr^{[0,2]}_K)^{-1}(\im \phi^{[0,2]}_{JK}),
\end{equation}
are as before $I\in \Ii_{\Kk^1}$ and $K\in \Ii_{[0,1)}$ resp.\ $K\in \Ii_{(1,2]}$.
Suppose w.l.o.g.\ $K\in \Ii_{[0,1)}$.  Then $I,J,K\in \Ii_{\Kk^{[0,1]}}$, and we can identify
$(\pr^{[0,2]}_J)^{-1}(U^{[0,2]}_{JK})$ with the subset $(\pr^{[0,1]}_J)^{-1}(U^{[0,1]}_{JK}) \subset \E^{[0,1]}_J$ 
on which $\Hat\Phi^{[0,2]}_{JK}=\Hat\Phi^{[0,1]}_{JK}$, so that \eqref{eq:ii} follows from the filtration property of $\Kk^{[0,1]}$, since we have $\E^{[0,2]}_{IK}=\E^{[0,1]}_{IK}\subset \E^{[0,1]}_K$ and 
$\E^{[0,1]}_K\cap (\pr^{[0,2]}_K)^{-1}(\im \phi^{[0,2]}_{JK}) = (\pr^{[0,1]}_K)^{-1}(\im \phi^{[0,1]}_{JK})$.

It remains to check the filtration property (iv) for $I\subsetneq J$ with w.l.o.g.\ $I\in \Ii_{\Kk^1}$ and $J\in  \Ii_{[0,1)}$. 
We have $U_{J}^{[0,2]}\cong U_{J}^{[0,1]}$ so that $\im \phi_{IJ}^{[0,2]} \cong \im \phi_{IJ}^{[0,1]}$ is an open subset of $(\s^{[0,1]}_J)^{-1}(\E^{[0,1]}_{IJ})$ by the filtration property of $\Kk^{[0,1]}$.
To see that $\im \phi_{IJ}^{[0,2]}$ is also an open subset of $(\s^{[0,2]}_J)^{-1}(\E^{[0,2]}_{IJ})$, observe that the latter is identified with $(\s^{[0,1]}_J)^{-1}(\E^{[0,1]}_{IJ})\subset U^{[0,2]}_J$ since $\E^{[0,2]}_{IJ}=\E^{[0,1]}_{IJ}\subset \E^{[0,1]}_J$ and $(\s^{[0,2]}_J)^{-1}(\E^{[0,1]}_J)\subset \pr^{[0,2]}_J(\E^{[0,1]}_J) =
U^{[0,1]}_J$.
This
shows transitivity of the filtered (weak) cobordism relation and thus
completes the proof.
\smallskip
\end{proof}

%%%%%%%%%%%%%%%%%%%%%%%%%%%%%%%%%%%%%%%%%%%%%%%%
\subsection{Construction of metric tame cobordisms} \hspace{1mm}\\ \vspace{-3mm}
%%%%%%%%%%%%%%%%%%%%%%%%%%%%%%%%%%%%%%%%%%%%%%%%

The final  task in this section is to construct tame concordances 
between different tame shrinkings in order to establish the uniqueness claimed in Theorem~\ref{thm:K}.  It will also be useful to have suitable metrics on these concordances,  
since they are used in the construction of perturbations.  
We therefore begin by discussing the notion of metric tame topological Kuranishi cobordism.
However, we prove in Proposition~\ref{prop:metcob} that this is an equivalence relation  in which the particular choice of metric is to a large extent irrelevant.
One difficulty here is that we are dealing with an arbitrary distance function, not a length metric such as a Riemannian metric. Hence we must build in resp.\ prove various elementary results that would be evident in the Riemannian case.

\begin{defn}\label{def:mCKS}
A {\bf metric tame topological Kuranishi cobordism} on $Y$ is a tame topological Kuranishi cobordism $\Kk$ equipped with a metric $d$ on $|\Kk|$ that satisfies the admissibility conditions of Definition~\ref{def:metric} and has a metric collar as follows:

There is $\eps>0$ such that for $\al=0,1$ 
the collaring maps $\io^\al_{|\Kk|}: A^\al_\eps\times |\p^\al\Kk|\to |\Kk|$ 
of Remark~\ref{rmk:cobordreal} are defined and pull back $d$  
to the product metric 
\begin{equation} \label{eq:epsprod}
(\io^\al_{|\Kk|})^* d 
\bigl((t,x),(t',x')\bigr) \;=\; |t'-t|  + d^\al(x,x') \qquad \text{on} \;\; A^\al_\eps\times |\p^\al\Kk| ,
\end{equation}
where the metric $d^\al$ on 
$|\p^\al\Kk|$ is given by pullback of the restriction of $d$ 
to $\p^\al |\Kk|  = \io^\al_{|\Kk|}\bigl(\{\al\}\times |\p^\al\Kk|\bigr)$, which we denote by
$$
d^\al \,:=\; d|_{|\p^\al \Kk|} \,:=\; \io^\al_{|\Kk|}(\al,\cdot)^* d.
$$ 
In addition, we require
for all $y\in |\Kk| \less \io^\al_{|\Kk|}  \bigl( A^\al_\eps\times  |\p^\al\Kk|\bigr)$ 
\begin{equation}\label{eq:epscoll}
d\bigl( y , \io^\al_{|\Kk|}(\al + t, x )\bigr) \;\ge\; 
\eps - |t|
\qquad \forall \;
(\al + t,x)\in  A^\al_\eps \times  |\p^\al\Kk| .
\end{equation}
We call a metric on 
$|\Kk|$ {\bf admissible} if it satisfies the conditions of Definition~\ref{def:metric}, 
{\bf $\eps$-collared} if it satisfies \eqref{eq:epsprod} and \eqref{eq:epscoll}, 
and {\bf collared} if it is $\eps$-collared for some $\eps>0$.
\end{defn}

Condition \eqref{eq:epscoll} controls the distance between points $\io^\al_{|\Kk|}(\al + t,x)$ in the collar and 
points $y$ outside of the collar.  
In particular, if $\de<\eps-|t|$, then the $\de$-ball around $\io^\al_{|\Kk|}(\al + t,x)$ is contained in the $\eps$-collar, while the $\de$-ball around 
$y\in |\Kk| \less \io^\al_{|\Kk|}  \bigl( A^\al_\eps\times  |\p^\al\Kk|\bigr)$
does not intersect the $|t|$-collar $\io^\al_{|\Kk|}\bigl(A^\al_{|t|}\times |\p^\al\Kk|\bigr)$.

\begin{example} \label{ex:mtriv}\rm
\NI (i) 
Any admissible metric $d$ on $|\Kk|$ for a topological Kuranishi atlas $\Kk$ induces an admissible 
collared metric $d_\R + d$ on $|[0,1]\times \Kk|\cong [0,1] \times |\Kk| $, given by 
$$
\bigr(d_\R + d \bigl)\bigl( (t,x) , (t',x') \bigr) = |t'-t| + d(x,x')  .
$$ 
For short, we call $d_\R + d$ a {\bf product metric.} 

\NI (ii)
Let $d$ be an admissible collared metric on $|\Kk|$ for a topological Kuranishi cobordism 
$\Kk$, and let $b$ be an upper bound of $d^\al:=d\big|_{|\p^\al\Kk|}$ for $\al=0,1$. Then we claim that for any $\ka>b$ the truncated metric $\min(d,\ka)$ given by $(x,y) \mapsto \min(d(x,y),\ka)$ is an admissible $\eps'$-collared metric for $\eps':=\min(\eps,\ka-b)$. 
Indeed, the metric $\min(d,\ka)$  is  admissible because it induces the same topology on each $U_I$ as $d$.
The product form on the collar is preserved since
\begin{equation}\label{eq:trunc}
|t-t'| < \ka-b \qquad\Longrightarrow\qquad   |t-t'| + d^\al(x,x')  \;\le\;  |t-t'|  + b \;<\; \ka ,
\end{equation}
and \eqref{eq:epscoll} holds since for $(\al + t,x)\in A^\al_{\eps'}\times  |\p^\al\Kk|$ we have 
\[
d\bigl( y , \io^\al_{|\Kk|}(\al + t,x) \bigr) \;\ge\; 
\begin{cases}
\eps - |t| \ge \eps' - |t|  &\quad\text{if} \; y\in |\Kk|\less \io^\al_{|\Kk|}\bigl( A^\al_\eps\times |\p^\al\Kk|\bigr) , \\
|t' - t| \ge \eps' - |t|  &\quad\text{if} \; y=\io^\al_{|\Kk|}(\al+t',x')\in \io^\al_{|\Kk|}\bigl(( A^\al_\eps\less A^\al_{\eps'} )\times  |\p^\al\Kk|\bigr) 
\end{cases}
\]
by \eqref{eq:epscoll} for $d$ resp.\ the product form of the metric on the $\eps$-collar, and moreover $\ka > \ka - b - t \geq \eps' - t$.
Finally, the restrictions of this truncated metric are by $\ka>b$
$$
\min(d,\ka)\big|_{|\p^\al\Kk|} \;=\; \min(d^\al,\ka) \;=\; d^\al \qquad\text{for}\; \al=0,1 .
$$

\NI (iii)  
Let $(\Kk,d)$ be a metric tame topological Kuranishi cobordism on $Y$
with collar width $\eps>0$. Then for any $0<\de<\eps$ the $\de$-neighbourhood of the inclusion of $Y$, 
\begin{equation}\label{eq:metcoll2}
\Ww_\de: = B_\de\bigl(\io_\Kk(Y)\bigr) 
= \bigl\{y\in |\Kk| \, \big| \, 
\exists z\in  \io_\Kk(Y)
 : d(y, z)<\eps\bigr\},
\end{equation}
is collared 
with collars of width $\eps-\de$, that is
$$
\Ww_\de \cap \io^\al_{|\Kk|}\bigl(A^\al_{\eps-\de}\times |\p^\al\Kk|\bigr) 
= \io^\al_{|\Kk|}\bigl(A^\al_{\eps-\de}\times B_\de(\io_{\p^\al\Kk}(\p^\al Y)) )
 \bigr).
$$
This holds because  \eqref{eq:epscoll} implies that $B_\de\bigl(\io_\Kk\bigl(Y\less \io_Y^\al(A^\al_\eps\times \p^\al Y )
 \bigr)\bigr)$ does not intersect the collar $\io^\al_{|\Kk|}\bigl(A^\al_{\eps-\de}\times \p^\al\Kk\bigr)$, and on the other hand
$B_\de\bigl(\io_\Kk\bigl(\io_Y^\al(A^\al_\eps\times \p^\al Y) 
\bigr)\bigr)\cap \io^\al_{|\Kk|}\bigl(A^\al_\eps\times |\p^\al\Kk|
\bigr)$ has product form because the metric in the collar is a product metric.
$\hfill\er$
\end{example}

The following result establishes the existence of collared metrics on Kuranishi cobordisms with given boundary restrictions. This is useful for proving invariance of homological information in applications (when e.g.\ homotopies of almost complex structures give rise to Kuranishi cobordisms) and is also used abstractly below in proving the uniqueness of metrics in Theorem~\ref{thm:K}.

\begin{prop} \label{prop:metcob}
Metric tame Kuranishi cobordism is an equivalence relation. Moreover, admissible metrics on tame Kuranishi atlases are unique in the following sense: 
\begin{enumerate}
\item
Given a tame topological Kuranishi atlas $\Kk$ and two admissible metrics $d^0, d^1$ on $|\Kk|$, there exists an admissible collared metric $D$ on $|[0,1]\times  \Kk|$ with restrictions $D|_{|\{\al\}\times \Kk|}=d^\al$ at the boundaries $\p^\al |[0,1]\times  \Kk|=\{\al\}\times |\Kk|\cong|\Kk|$ for $\al=0,1$.
\item
Suppose that $\Kk$ is a metrizable tame topological Kuranishi cobordism and $d^\al$ are admissible metrics on $|\p^\al\Kk|$ for $\al=0,1$.
Then there exists an admissible collared metric $D$ on $|\Kk|$ with $D|_{|\p^\al \Kk|}=d^\al$ for $\al=0,1$.
\end{enumerate}
\end{prop}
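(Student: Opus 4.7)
The plan is to first reduce (i) to (ii) by applying (ii) to the product concordance $[0,1]\times \Kk$: this is a tame concordance by Remark~\ref{rmk:restrict2}(ii) and carries the admissible metric $d_\R+d^0$ from Example~\ref{ex:mtriv}(i), hence is metrizable, so (ii) yields precisely the collared metric sought in (i). Thus the main content lies in the equivalence relation claim and in (ii). Reflexivity is witnessed by $([0,1]\times\Kk, d_\R+d)$, and for symmetry I would reverse the time parameter $t\mapsto 1-t$ in all collar structures as in the symmetry argument of Lemma~\ref{lem:cobord1}; this operation preserves the product form in the collars and the inequality \eqref{eq:epscoll}.

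For transitivity, given matching metric tame cobordisms $(\Kk^{[0,1]},d^{[0,1]})$ and $(\Kk^{[1,2]},d^{[1,2]})$ with common boundary $(\Kk^1,d^1)$, I would form the boundary-connected-sum $\Kk^{[0,2]}$ exactly as in the transitivity argument of Lemma~\ref{lem:cobord1}, and define the metric on its realization by
$$
D(p,q):=\begin{cases} d^{[0,1]}(p,q) & p,q\in |\Kk^{[0,1]}|,\\ d^{[1,2]}(p,q) & p,q\in |\Kk^{[1,2]}|,\\ \inf_{z\in|\Kk^1|}\bigl(d^{[0,1]}(p,z)+d^{[1,2]}(z,q)\bigr) & \text{otherwise.}\end{cases}
$$
The key facts are that $d^{[0,1]}|_{|\Kk^1|}=d^1=d^{[1,2]}|_{|\Kk^1|}$ by assumption, and both metrics have product form on the $\eps$-collars of $|\Kk^1|$ (Remark~\ref{rmk:Ceps}), so the infimum reduces locally to the one-sided formula. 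The triangle inequality then follows by case analysis, and admissibility on each glued chart $\bK^{[0,2]}_I$ is inherited from the constituent metrics. The collar structures at $\p^0 Y^{[0,2]}$ and $\p^1 Y^{[0,2]}$ persist unchanged, and a final rescaling $t\mapsto t/2$ (with doubling of the $|t-t'|$ summand) yields a metric tame concordance of the standard form.

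Part (ii) is the main technical obstacle. My approach is, given a metrizable tame cobordism $\Kk$ with an admissible metric $\ti d$ of collar width $\eps>0$ and prescribed boundary metrics $d^\al$ on $|\p^\al\Kk|$, to replace $\ti d$ in a small collar of width $\de<\eps/3$ by the pullback product metric $|t-\al|+d^\al(x,x')$, keeping $\ti d$ outside the $2\de$-collar, and to glue these by a chain-based infimum. Explicitly, define
$$
\ell(p,q):=\begin{cases} |t-t'|+d^\al(x,x') & p=\io^\al_{|\Kk|}(t,x),\,q=\io^\al_{|\Kk|}(t',x')\in\io^\al_{|\Kk|}([0,2\de)\times|\p^\al\Kk|),\\ \ti d(p,q) & p,q\in|\Kk|\smallsetminus\bigcup_\al\io^\al_{|\Kk|}([0,\de]\times|\p^\al\Kk|),\\ +\infty & \text{otherwise,}\end{cases}
$$
and set $D(p,q):=\inf\sum_i\ell(p_{i-1},p_i)$ over finite chains $p=p_0,\dots,p_n=q$, followed by truncation at a large constant as in Example~\ref{ex:mtriv}(ii) to enforce boundedness and the inequality \eqref{eq:epscoll}.

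The hard part will be verifying that $D$ is admissible, i.e.\ that its pullback to each $U_I$ induces the given topology, and that $D$ has strict product form on a smaller collar. The key ingredient will be that for any compact subset $K\subset|\p^\al\Kk|$ the two admissible metrics $d^\al|_K$ and $\ti d|_K$ are uniformly equivalent, since they induce the same topology on a compact metrizable set; this uniform equivalence will control the chain infimum so that it coincides locally with the product metric inside the $\de$-collar (where any detour through the exterior costs at least $\de$ by \eqref{eq:epscoll} for $\ti d$) and with $\ti d$ outside the closed $2\de$-collar (where excursions into the collar are longer than the direct $\ti d$-length), while in the intermediate annulus the two local formulas are comparable by uniform equivalence. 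The collar distance bound \eqref{eq:epscoll} for $D$ will then follow from the corresponding bound for $\ti d$ after truncation.
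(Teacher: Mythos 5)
Your treatment of the equivalence relation itself is essentially sound: reflexivity and symmetry are as in the paper, and for transitivity your amalgamation formula $D(p,q)=\inf_{z\in|\Kk^1|}\bigl(d^{[0,1]}(p,z)+d^{[1,2]}(z,q)\bigr)$ does give a metric with the right properties \emph{because} the two metrics literally agree on the seam $|\Kk^1|$ and have product form on its collars (the paper instead stretches the neck and truncates, precisely to avoid infima, but your route can be made to work there).

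The genuine gap is in your proof of (ii), and it propagates to (i) since you deduce (i) from (ii) — note the paper does the opposite: it proves (i) first \emph{because} it is needed inside the proof of (ii). The failing step is the chain-infimum gluing of the product metric $|t-t'|+d^\al(x,x')$ with the ambient metric $\ti d$ on the overlap annulus. There the effective length function is the minimum of two metrics which merely induce the same topology, and the largest pseudometric below such a minimum can degenerate: take two metrics on $[0,1]$, the standard one and the pullback of the standard one under a strictly increasing singular homeomorphism $h$ (so $h'=0$ a.e.); a monotone chain from $0$ to $1$ alternating cheap links in each metric has total cost $\lambda(E_1)+\lambda(h(E_2))$ for a partition of $[0,1]$ into finite unions of intervals, which can be made arbitrarily small, so the chain infimum of the min vanishes on distinct points even though the two metrics are uniformly equivalent on the compact set $[0,1]$. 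This is exactly why your appeal to ``uniform equivalence on compact subsets'' is insufficient: an $\eps$--$\de$ modulus gives no control over sums of many small terms along a chain, and since $\ti d$ and $d^\al$ are completely unrelated apart from inducing the same topology, your $D$ need not be a metric, and even where it is, it need not be admissible or of product form on a smaller collar. (A second, smaller issue: you assume the ambient admissible metric $\ti d$ is already collared and invoke \eqref{eq:epscoll} for it; an arbitrary admissible metric on a metrizable cobordism has no such collar, and producing one is itself the content of Lemma~\ref{le:metcob}~(i).)

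The missing idea is the paper's mechanism for interpolating between two admissible metrics that are not comparable: never glue them on an overlap, but pass monotonically through their (rescaled) sum, $d_r=d+\be(r)d'$, as in Lemma~\ref{le:metcoll}, where the formula $D((t,x),(t',x'))=d_{\min(t,t')}(x,x')+|t-t'|$ satisfies the triangle inequality thanks to monotonicity of $\be$ and the bound $\sup\be'<2$, $d'\le 1$ — no comparison of $d$ and $d'$ is ever needed. The paper's proof of (ii) then first makes $\ti d$ collared via Lemma~\ref{le:metcob}~(i), splits $\Kk$ into two product collar pieces and an interior piece, uses (i) (i.e.\ Lemma~\ref{le:metcoll} plus concatenation) on the product pieces to interpolate from the prescribed $d^\al$ to the restriction of the collared ambient metric, and concatenates with Lemma~\ref{le:metcob}~(ii), which is only applied where the boundary restrictions agree on the nose. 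To repair your argument you would need to replace the chain-infimum gluing by such a sum/monotone interpolation in the collar.
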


The proof of this proposition will be based on the following techniques for interpolating, collaring, and concatenating admissible metrics. 
Here, as in Remark~\ref{rmk:cobordreal} and Lemma~\ref{lem:cobord1}, we will consider product atlases $A\times \Kk$ for various intervals $A\subset \R$, that is with domain category $A\times \Obj_{\bB_\Kk}$. Their virtual neighbourhoods $|A\times \Kk|$ are canonically identified with $A\times |\Kk|$ 
as topological spaces by Remark~\ref{rmk:cobordreal}.
We will write $B\times |\Kk|$ for the quotients $|B\times \Kk|\subset |A\times \Kk|$ for any $B\subset A$.

\begin{lemma} \label{le:metcoll}
Let $\Kk$ be a metrizable tame topological Kuranishi atlas and
suppose that $d$ and $d'$ are admissible metrics on $|\Kk|$, where $d'$ is bounded by~$1$.
Then for any $0<\eps<\frac 14$ there is an admissible $\eps$-collared metric $D$ on  
 $|[0,1]\times  \Kk|$ 
that 
restricts to  $d_\R +  d$ on $ [0,\eps]\times  |\Kk|$ and to $d_\R + d+d' $ on $[1-\eps,1]\times  |\Kk|$.
\end{lemma}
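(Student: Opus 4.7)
The plan is to give an explicit formula. Fix once and for all a continuous non-decreasing cut-off function $\lambda : [0,1] \to [0,1]$ with $\lambda \equiv 0$ on $[0,\eps]$ and $\lambda \equiv 1$ on $[1-\eps, 1]$, and, using the identification $|[0,1]\times\Kk| \cong [0,1]\times|\Kk|$ of Remark~\ref{rmk:cobordreal}, set
\begin{equation} \label{eq:Dformula}
D\bigl((t,x),(t',x')\bigr) \,:=\, |t-t'| + d(x,x') + \min(\lambda(t),\lambda(t')) \, d'(x,x') + |\lambda(t)-\lambda(t')|.
\end{equation}
The required boundary restrictions are then immediate: on $[0,\eps]\times|\Kk|$ one has $\lambda(t)=\lambda(t')=0$, giving $D = d_\R + d$, and on $[1-\eps,1]\times|\Kk|$ one has $\lambda(t)=\lambda(t')=1$, giving $D = d_\R + d + d'$. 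In particular $D$ is $\eps$-collared in the sense of \eqref{eq:epsprod}. The estimate \eqref{eq:epscoll} follows at once from $D\bigl((s,z),(\al+t,x)\bigr)\geq |s-(\al+t)|\geq \eps-|t|$ whenever $(\al+t,x)\in A^\al_\eps\times|\Kk|$ and $(s,z)$ lies outside the $\eps$-collar at $\al$.

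Next I would verify that $D$ is a genuine metric. Symmetry is obvious, and positive definiteness follows because $D=0$ forces both $|t-t'|=0$ and $d(x,x')=0$. The main work is the triangle inequality. The summands $|t-t'|$, $d(x,x')$, and $|\lambda(t)-\lambda(t')|$ each satisfy the triangle inequality individually, so after abbreviating $\lambda_i:=\lambda(t_i)$ and $S:=|\lambda_1-\lambda_2|+|\lambda_2-\lambda_3|-|\lambda_1-\lambda_3|\geq 0$ the remaining claim reduces to
\begin{equation} \label{eq:triangleclaim}
\min(\lambda_1,\lambda_3)\, d'(x_1,x_3) \,\leq\, \min(\lambda_1,\lambda_2)\, d'(x_1,x_2) + \min(\lambda_2,\lambda_3)\, d'(x_2,x_3) + S.
\end{equation}
In the easy case $\lambda_2 \geq \min(\lambda_1,\lambda_3)$ one has $\min(\lambda_1,\lambda_3) \leq \min(\lambda_1,\lambda_2)$ and $\min(\lambda_1,\lambda_3) \leq \min(\lambda_2,\lambda_3)$, so the triangle inequality for $d'$ yields \eqref{eq:triangleclaim} without even using $S$.

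The main obstacle, and the reason for including the penalty term $|\lambda(t)-\lambda(t')|$ in~\eqref{eq:Dformula}, is the remaining case $\lambda_2 < \min(\lambda_1,\lambda_3)$. Here $\min(\lambda_1,\lambda_2) = \min(\lambda_2,\lambda_3) = \lambda_2$ and a direct computation gives $S = 2\bigl(\min(\lambda_1,\lambda_3) - \lambda_2\bigr)$. Combining with $d'(x_1,x_3) \leq d'(x_1,x_2) + d'(x_2,x_3)$ reduces \eqref{eq:triangleclaim} to the scalar inequality $\bigl(\min(\lambda_1,\lambda_3) - \lambda_2\bigr)\, d'(x_1,x_3) \leq 2\bigl(\min(\lambda_1,\lambda_3) - \lambda_2\bigr)$, which holds precisely because $d' \leq 1$ by assumption. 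Thus the linear penalty term rescues the $\min$-interpolation from failing the triangle inequality at ``$\lambda$-valleys'', and the hypothesis $d' \leq 1$ enters in exactly this step.

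Finally, admissibility remains. For each $I \in \Ii_{[0,1]\times\Kk}$ the pullback $D_I$ of $D$ to the domain $[0,1]\times U_I$ of $[0,1]\times\Kk$ satisfies the two-sided bounds
$$
\max\bigl(|t-t'|,\, d_I(x,x')\bigr) \,\leq\, D_I\bigl((t,x),(t',x')\bigr) \,\leq\, |t-t'| + d_I(x,x') + d'_I(x,x') + |\lambda(t)-\lambda(t')|.
$$
Since $d_I$ and $d'_I$ both induce the given topology on $U_I$ (by admissibility of $d$ and $d'$) and $\lambda$ is continuous, these bounds show that $D_I$ induces the product topology on $[0,1]\times U_I$. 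This verifies admissibility of $D$ on the tame concordance $[0,1]\times\Kk$, which is itself tame by Remark~\ref{rmk:restrict2}~(ii), completing the construction.
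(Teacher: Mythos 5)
Your proof is correct, and it follows the paper's strategy (an explicit interpolation formula, case analysis for the triangle inequality, pullback bounds for admissibility) with one genuine technical variation. The paper takes $D((t,x),(t',x')) = |t-t'| + d(x,x') + \beta(\min(t,t'))\,d'(x,x')$ for a smooth nondecreasing cutoff $\beta$ with $\sup\beta'<2$ (this is the only place $\eps<\tfrac14$ is used), and in the problematic case $t'<t\le t''$ the defect $(\beta(t)-\beta(t'))\,d'(x,x'')\le 2|t-t'|$ is absorbed by the doubled time term, using $d'\le 1$. Since your $\lambda$ is nondecreasing, your coefficient $\min(\lambda(t),\lambda(t'))$ equals $\lambda(\min(t,t'))$, so your formula differs from the paper's only by the extra summand $|\lambda(t)-\lambda(t')|$; this explicit penalty absorbs the same defect at the ``$\lambda$-valleys'' (again via $d'\le 1$), which buys you freedom from any Lipschitz or smoothness requirement on the cutoff and makes the construction work for any $\eps<\tfrac12$, not just $\eps<\tfrac14$. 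The remaining verifications coincide: the boundary restrictions and \eqref{eq:epscoll} come from the constancy of the cutoff on the collars and the $|t-t'|$ term, and your two-sided bound for admissibility is the same observation the paper uses, that a sum of metrics inducing a given topology again induces it. The only point left implicit in your write-up is boundedness of $D$ (required for admissibility in Definition~\ref{def:metric}), which is immediate from boundedness of $d$ and $d'\le 1$, and which the paper also passes over.
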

\begin{proof}
Choose a smooth nondecreasing function $\be: [0,1]\to [0,1]$  
such that $\be|_{[0,\eps]}=0$, $\be|_{[1-\eps,1]}=1$, and the derivative is bounded by $\sup \be'<2$. 
(At this point we need to know that $(1-\eps)-\eps > \frac 12$, which holds by assumption $\eps<\frac 14$.)
For $r\in [0,1]$ we then obtain a metric $d_r$ on $|\Kk|$ by
$$
d_r(x,x'): = d(x,x') + \be(r) d'(x,x') 
$$
and note that $d_r(x,x') \le d_s(x,x')$ whenever $r\le s$.
Moreover, each $d_r$ is admissible on $|\Kk|$ since their pullback to the charts are analogous sums, and the sum of two metrics that induce the same topology also induces this topology.
Now we claim that 
$$
D\bigl( (t,x) , (t',x') \bigr) = d_{\min(t,t')}(x,x') + |t-t'|
$$
provides the required metric $D$ on $|[0,1]\times \Kk |$. This is evidently symmetric and positive definite, and by symmetry it suffices to check the following triangle inequality 
\begin{equation}\label{eq:tri}
D\bigl( (t,x) , (t'',x'') \bigr)\le D\bigl( (t,x) , (t',x') \bigr) + D\bigl( (t',x') , (t'',x'') \bigr)
\end{equation}
for $t\le t''$. 
In the case $t'< t\le t''$ we use $0\le \be(t) - \be(t') \le 2 (t-t')$ 
and $d'\le 1$
to obtain
\begin{align*}
& \; D\bigl( (t,x) , (t',x') \bigr) + D\bigl( (t',x') , (t'',x'') \bigr)\\
&\qquad \qquad  = \;
d(x,x') + d(x',x'') + \be(t') d'(x,x') +  \be(t') d'(x',x'') + |t-t'| + |t'-t''|\\
&\qquad \qquad \ge\;  d(x,x'') + \be(t') d'(x,x'') +  2|t-t'| \\
&\qquad \qquad\ge\; d(x,x'') + \be(t) d'(x,x'') 
\; =\; D\bigl( (t,x) , (t'',x'') \bigr).
\end{align*}
In the other cases $t\le t'\le t''$ resp.\ $t\le t''\le t'$, we can use the monotonicity $\be(t')\geq \be(t)$ resp.\ $\be(t'')\geq \be(t)$ to check \eqref{eq:tri}. Therefore $D$ is a metric. It is admissible because each $d_r$ is admissible on $|\Kk|$ so that the pullback metric on each set $[0,1]\times U_I$ induces the product topology. 
Finally it is $\eps$-collared by construction. In particular, it satisfies \eqref{eq:epscoll} due to the term $|t-t'|$ in its formula.
This finishes the proof.
\end{proof}

The next lemma first shows how to stretch a collar by a factor $L$, and then uses this as a tool to patch together metrics on different halves of a cobordism, using the long neck to establish the triangle inequality.
 
\begin{lemma} \label{le:metcob}
\begin{enumerate}
\item 
Let $\Kk$ be a tame topological Kuranishi cobordism with collar width $2\eps$, and equipped with an admissible metric $d$ on $|\Kk|$. 
Given a boundary component $\al\in\{0,1\}$ and a length parameter $L\ge 1$, there exists an admissible metric $D$ on $|\Kk|$ which on $|\Kk|\less \io^\al_{|\Kk|}(A^\al_{2\eps}\times|\p^\al\Kk|)$ coincides with $d$ and which is $\eps$-collared with stretching factor 
$L$ near $|\p^\al\Kk|$ in the following sense: 
We have $$
D|_{\io^\al_{|\Kk|}(A^\al_\eps\times|\p^\al\Kk|)} = (\io^\al_{|\Kk|})_*\bigl( d^\al + L d_\R \bigr),\quad\mbox{ where }\;   d^\al:= d|_{|\p^\al\Kk|},
$$
 and require
in addition that for all  $y\in |\Kk| \less \io^\al_{|\Kk|} \bigl( A^\al_\eps\times  |\p^\al \Kk|\bigr)$
\begin{equation}\label{eq:Lepscoll}
d\bigl( y , \io^\al_{|\Kk|}(\al + t, x )\bigr) \;\ge\; 
L ( \eps - |t| )
\qquad \forall \;
(\al + t,x)\in  A^\al_\eps \times  |\p^\al\Kk| .
\end{equation}
\item
Let $\Kk^{[0,1]}, \Kk^{[1,2]}$ be two tame topological Kuranishi cobordisms with $\p^1\Kk^{[0,1]}=\p^0\Kk^{[1,2]}$ and collar width $2\eps$, and equipped with admissible $2\eps$-collared metrics $d^{[0,1]},d^{[1,2]}$ whose boundary restrictions agree $d^{[0,1]}|_{|\p^1\Kk^{[0,1]}|}= d^{[1,2]}|_{|\p^0\Kk^{[1,2]}|}$.
Then for the tame topological Kuranishi cobordism $\Kk$ obtained from composing $\Kk^{[0,1]}$ and $\Kk^{[1,2]}$ as in Lemma~\ref{lem:cobord1} there exists an admissible $2\eps$-collared metric $D$ on $|\Kk|$ with $D|_{|\p^0 \Kk|}=d^{[0,1]}|_{|\p^0 \Kk^{[0,1]}|}$ and $D|_{|\p^1 \Kk|}=d^{[1,2]}|_{|\p^1 \Kk^{[1,2]}|}$. 
\end{enumerate}
\end{lemma}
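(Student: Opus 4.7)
For part (i), the plan is to stretch the $\eps$-collar additively by a continuous height function rather than attempting to rescale the collar direction by a homeomorphism (which for $L>1$ is obstructed since one cannot monotonically re-squeeze a stretched interval back into its original length). Define $H\colon|\Kk|\to[0,\infty)$ by
$$H\bigl(\io^\al_{|\Kk|}(\al+t,x)\bigr):=(L-1)\max(\eps-|t|,\,0)$$
for $(\al+t,x)\in A^\al_{2\eps}\times|\p^\al\Kk|$, and $H\equiv 0$ outside this collar image. Continuity on $|\Kk|$ is automatic: the formula already vanishes on the annulus $|t|\in[\eps,2\eps)$, matching the zero extension across the outer edge of the collar image. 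Then $D(y,y'):=d(y,y')+|H(y)-H(y')|$ is a metric as the sum of $d$ with the continuous pseudometric pulled back from the absolute-value metric on $\R$ by $H$. Admissibility on each $U_I$ follows because adding a continuous pseudometric to an admissible metric preserves the induced topology.

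I would then verify the four required properties in turn. Agreement $D=d$ outside the $2\eps$-collar is immediate from $H\equiv 0$ there. On the $\eps$-collar, $|H(\io^\al(\al+t,x))-H(\io^\al(\al+t',x'))|=(L-1)|t-t'|$ (since $t,t'$ have the same sign in a single collar), so combining with the product form of $d$ on the $2\eps$-collar produces $D=L\,d_\R+d^\al$ as required. For \eqref{eq:Lepscoll}, consider $y\notin\io^\al(A^\al_\eps\times|\p^\al\Kk|)$ and $y'=\io^\al(\al+t,x)$ with $|t|<\eps$, and split: either $y$ lies outside the $2\eps$-collar, so \eqref{eq:epscoll} for the $2\eps$-collared $d$ gives $d(y,y')\ge 2\eps-|t|$, or $y=\io^\al(\al+s,w)$ with $|s|\in[\eps,2\eps)$, so the product form of $d$ gives $d(y,y')\ge|s|-|t|\ge\eps-|t|$. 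In either sub-case $H(y)=0$, so the height contribution is $|H(y)-H(y')|=(L-1)(\eps-|t|)$, and summing yields $D(y,y')\ge L(\eps-|t|)$.

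For part (ii) the plan is to use the amalgamated (glued) metric on the pushout $|\Kk|\cong|\Kk^{[0,1]}|\cup_{|\Kk^1|}|\Kk^{[1,2]}|$:
\begin{equation*}
D(y,y'):=\begin{cases}
d^{[0,1]}(y,y') & y,y'\in|\Kk^{[0,1]}|,\\
d^{[1,2]}(y,y') & y,y'\in|\Kk^{[1,2]}|,\\
\inf_{z\in|\Kk^1|}\!\bigl(d^{[0,1]}(y,z)+d^{[1,2]}(z,y')\bigr) & \text{mixed case.}
\end{cases}
\end{equation*}
Consistency of the three cases on $|\Kk^1|$ holds because the hypothesis $d^{[0,1]}|_{|\Kk^1|}=d^{[1,2]}|_{|\Kk^1|}$ makes the infimum attained at $z=y$ whenever $y\in|\Kk^1|$. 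Symmetry and nondegeneracy are immediate; the triangle inequality reduces to a case analysis using this matching. The $2\eps$-collared property at $\p^0|\Kk|=\p^0|\Kk^{[0,1]}|$ is inherited directly: same-side pairs recover $d^{[0,1]}$ (product on the $2\eps$-collar), and for $y\in|\Kk^{[1,2]}|$ and $y'=\io^0(t,x)$, every intermediate $z\in|\Kk^1|$ satisfies $d^{[0,1]}(z,y')\ge 2\eps-|t|$ by \eqref{eq:epscoll} for $d^{[0,1]}$ (since $|\Kk^1|=\p^1|\Kk^{[0,1]}|$ lies outside the $\p^0$-collar), yielding the same lower bound for the infimum; analogously at $\p^1|\Kk|$.

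The main technical obstacle will be verifying admissibility on each glued chart $U_I^{[0,2]}$ for $I\in\Ii_{\Kk^1}$. The chart contains a product neck $(1-2\eps,1+2\eps)\times U^1_I$ formed from the $2\eps$-collars of both constituents. A key computation shows that for points $y=\io^1_{U^{[0,1]}_{I^{01}}}(s,x)$ and $y'=\io^0_{U^{[1,2]}_{I^{12}}}(s',x')$ in the neck (with $s\le 1\le s'$), the mixed-case formula evaluates to $(s'-s)+\inf_{z\in|\Kk^1|}\bigl(d^1(x,z)+d^1(z,x')\bigr)=(s'-s)+d^1(x,x')$, using the product form of both constituents and the triangle inequality on $d^1$. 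This recovers the unique product metric $|s-s'|+d^1_I$ on the neck, ensuring the glued topology is correct; away from the neck the pullback of $D$ agrees with the relevant admissible constituent.
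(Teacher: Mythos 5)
Your part (i) has a genuine gap. The hypothesis of (i) gives only an admissible metric $d$ on $|\Kk|$; the collar width $2\eps$ is a property of the cobordism atlas (its charts and coordinate changes), not of $d$. Your verification, however, invokes ``the product form of $d$ on the $2\eps$-collar'' and ``\eqref{eq:epscoll} for the $2\eps$-collared $d$'', i.e.\ it assumes $d$ is already $2\eps$-collared. That assumption is essentially the conclusion: for $L=1$ the lemma says precisely that an arbitrary admissible metric can be replaced by one of product form on the collar, and this is exactly how it is applied in the proof of Proposition~\ref{prop:metcob}\,(ii) (``use Lemma~\ref{le:metcob}\,(i) with $L=1$ to obtain a $2\eps$-collared admissible metric''). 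For $L=1$ your height function $H$ vanishes identically and $D=d$, so your construction cannot produce the product form $(\io^\al_{|\Kk|})_*(d^\al+L\,d_\R)$ on the $\eps$-collar or the estimate \eqref{eq:Lepscoll} for a general admissible $d$. The missing idea is the paper's reparametrization step: push $d$ forward by the collar compression $F$ which (for $\al=0$) sends $\io^0_{|\Kk|}(t,x)\mapsto\io^0_{|\Kk|}(\eps+\tfrac12 t,x)$ and is the identity outside the $2\eps$-collar, so that $d$ now lives on the complement of the $\eps$-collar; then install the genuine product metric $d^\al+L\,d_\R$ on $A^\al_\eps\times|\p^\al\Kk|$ and define mixed distances by passing through the slice at collar parameter $\eps$, checking the triangle inequality and admissibility case by case. (If one strengthens the hypothesis to ``$d$ is $2\eps$-collared'', your additive-height trick does achieve the stretching by $L$, but that weaker statement would not serve the applications.)

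Your part (ii) is correct and takes a genuinely different route. The paper first uses (i) with the large stretching factor $L=\eps^{-1}B$ ($B$ a bound on both metrics) to create a neck of length at least $B$ around $|\Kk^1|$, then sets all remaining mixed distances equal to $B$ and truncates by $\min(\cdot,B)$, the long neck being what forces the triangle inequality. Your amalgamated metric $D(y,y')=\inf_{z\in|\Kk^1|}\bigl(d^{[0,1]}(y,z)+d^{[1,2]}(z,y')\bigr)$ avoids the stretching entirely: since the two metrics agree on the gluing locus, detours through the other piece never shorten within-piece distances, so the triangle inequality and the identities $D|_{|\Kk^{[0,1]}|}=d^{[0,1]}$, $D|_{|\Kk^{[1,2]}|}=d^{[1,2]}$ hold directly, and the collar form and boundary restrictions of $D$ are then immediate. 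Your neck computation (mixed pairs in the glued $2\eps$-neck give $|s-s'|+d^1$) together with the lower bound $\ge 2\eps$ from \eqref{eq:epscoll} for mixed pairs with one point outside the relevant collar is exactly what admissibility at the glued charts of Lemma~\ref{lem:cobord1} requires; note also that nondegeneracy for mixed pairs uses the same collar estimates, and boundedness of $D$ (required by Definition~\ref{def:metric}) is inherited from the constituents. Since your argument for (ii) does not rely on (i), the gap above affects only part (i); what it buys is a shorter proof of (ii), while the paper's long-neck construction is the price it pays for not assuming any gluing formula and for reusing (i).
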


\begin{proof}  
We give the proof of (i) in case $\al=0$; the construction for $\al=1$ is similar.
We begin by pushing forward the metric $d$ on $|\Kk|$ by the bijection
$F : |\Kk| \overset{\cong}{\longmapsto} |\Kk| \less \io^0_{|\Kk|}\bigl( A^0_\eps\times |\p^0\Kk|\bigr)$
given by $F : \io^0_{|\Kk|}(t,x) \mapsto \io^0_{|\Kk|}( \eps + \frac 12 t ,x)$ and the identity on the complement of the $2\eps$-collar. The pushforward $F_*d$ is admissible since $F$ pulls back to homeomorphisms supported in the collars of the boundaries $\p^0 U_I\subset U_I$ of domains in $\Kk$.
Now $d^0$ is equal to the restriction $(F_*d)|_{\io^0_{|\Kk|}(\{\eps\}\times |\p^0\Kk|)}$, pulled back via $\io^0_{|\Kk|}(\eps,\cdot)$, and we extend $F_*d$ to an admissible metric on $|\Kk|$ by symmetric extension of
\[
D\bigl( p , p' \bigr) \; :=\; 
\left\{\begin{array}{ll}
 (F_*d)\bigl( p , p' \bigr)  &\mbox{ if } p,p' \in \im F ;\\ 
 (\io^0_{|\Kk|})_*(d^0 + L d_\R ) (p,p') &\mbox{ if } p,p'\in \io^0_{|\Kk|}\bigl( [0,\eps)\times |\p^0\Kk|\bigr)  ;\\
(F_*d)\bigl( p , \io^0_{|\Kk|}(\eps,x') \bigr) + L |\eps - t' | &\mbox{ if } p\in \im F, p'=\io^0_{|\Kk|}(t',x'), t'\in[0,\eps).
\end{array}\right.
\]
This evidently restricts to $(\io^0_{|\Kk|})_*d^0$ on $\io^0_{|\Kk|}\bigl(\{0\}\times |\p^0\Kk|\bigr)$ as required,
and restricts on the $\eps$-collar $\io^0_{|\Kk|}\bigl([0,\eps)\times |\p^0\Kk|\bigr)$ to a product metric with stretching factor $L$. Moreover the collaring requirement \eqref{eq:Lepscoll} 
holds due to the $L |\eps-t'|$ summand in the construction.
Boundedness, symmetry, and positive definiteness of $D$ follow from that of $d$, and we will check the triangle inequality below.
For charts $\bK_I$ in the interior of $\Kk$ we have $\pi_\Kk(U_I)\subset\im F$, so that admissibility follows from admissibility of $F_*d$. This also implies compatibility of the pullback metric $(\pi_\Kk|_{U_I})^*D$ with the topology on $U_I\cap\pi_\Kk^{-1}(\im F)$ for charts $\bK_I$ with boundary. To check compatibility of the metric on the collars $\io^0_{U_I}([0,2\eps)\times \p^0 U_I)\subset U_I$ note that the collar pullback to $A^0_{2\eps}\times \p^0 U_I$ is the symmetric extension of
\[
(\io^0_{U_I})^*(\pi_\Kk|_{U_I})^*D \bigl( (t,x) , (t',x') \bigr) \; :=\; 
\left\{\begin{array}{ll}
d_I\bigl( (2 (t-\eps), x) ,  ( 2(t'-\eps), x') \bigr)  &\mbox{ if } t,t'\ge \eps ;\\ 
d^0(x,x') + L |t-t'| &\mbox{ if } t,t'<\eps ;\\
d_I\bigl( 2 (t-\eps), x) , (0, x') \bigr) + L |\eps - t' | &\mbox{ if }  t\ge \eps > t' .
\end{array}\right.
\]
This restricts to $d_I:=(\io^0_{U_I})^*(\pi_\Kk|_{U_I})^*d$ on $[\eps,2\eps)\times \p^0 U_I$ 
and to $d^0+L d_\R$ on $[0,\eps]\times \p^0 U_I$, both of which are admissible (i.e.\ induce the product topology) by admissibility of $d$ resp.\ $d^0=d|_{|\p^\al\Kk|}$. 
Moreover, balls around $(\eps,x)\in\{\eps\}\times \p^0 U_I$ are unions of balls in $[\eps,2\eps)\times \p^0 U_I$ and in $[0,\eps]\times \p^0 U_I$, so that the metric topology on $[0,2\eps)\times \p^0 U_I$ is the quotient topology obtained from the product topologies on the disjoint union $[0,\eps]\times \p^0 U_I \;\sqcup\;
[\eps,2\eps)\times \p^0 U_I$ by identifying the two copies of $ \{\eps\}\times \p^0 U_I$. 
Since this coincides with the product topology on $[0,2\eps)\times \p^0 U_I$, it shows that $D|_{\im\io^0_{|\Kk|}}$ is admissible. The analogous argument applies to $D|_{\im\io^1_{|\Kk|}}$, and together with admissibility of $D|_{\im F}$ shows that the metric $D$ is admissible.

Finally, we need to check the triangle inequality $D(p, p') + D(p',p'') \ge D( p , p'') $ when $p,p',p''$ do not all lie in the same components of $|\Kk| = \io^0_{|\Kk|}\bigl( A^0_\eps\times |\p^0\Kk|\bigr) \sqcup \im F$. By symmetry under exchange of $p,p''$ it suffices to go through the following cases, in which we use the triangle inequalities for $d, d^0, d_\R$ 
and the fact that e.g.\ $d\bigl( y , \io^0_{|\Kk|}(0,x') \bigr) = (F_*d)\bigl(F(y), \io^0_{|\Kk|}(\eps,x') \bigr)$.
\begin{itemlist}
\item
For $p = F(y),p''=F(y'')\in \im F$ and $p'\in 
\io^0_{|\Kk|}\bigl( A^0_\eps\times |\p^0\Kk|\bigr)$
we have
\begin{align*}
&D\bigl( F(y) , \io^0_{|\Kk|}(t',x') \bigr) + D\bigl(\io^0_{|\Kk|}(t',x') ,  F(y'') \bigr) \\
&\; = \;
d\bigl( y , \io^0_{|\Kk|}(0,x') \bigr) + L |\eps-t'|
+ d\bigl(\io^0_{|\Kk|}(0,x') ,  y'' \bigr)  + L |\eps-t'| 
\;\ge\; d(y,y'') \;=\; D\bigl( F(y), F(py') \bigr).
\end{align*}
\item
For $p,p'\in \im F$ and $p''\in \io^0_{|\Kk|}\bigl( A^0_\eps\times |\p^0\Kk|\bigr)$ we have
\begin{align*}
D\bigl( F(y) , F(y') \bigr) + D\bigl( F(y'), \io^0_{|\Kk|}(t'',x'') \bigr) 
&= 
d\bigl( y ,y' \bigr) +  d\bigl( y' , \io^0_{|\Kk|}(0,x'') \bigr)  + L |\eps-t''|  \\
&\geq \bigl( y , \io^0_{|\Kk|}(0,x'') \bigr)  + L |\eps-t''|   \;=\; D\bigl( F(y), \io^0_{|\Kk|}(t'',x'') \bigr).
\end{align*}

\item
For $p\in \im F$ and $p',p''\in \io^0_{|\Kk|}\bigl( A^0_\eps\times |\p^0\Kk|\bigr)$ we have
\begin{align*}
& D\bigl( F(y) , \io^0_{|\Kk|}(t',x') \bigr) + D\bigl( \io^0_{|\Kk|}(t',x'), \io^0_{|\Kk|}(t'',x'') \bigr)  \\
&\qquad= \; 
d\bigl( y , \io^0_{|\Kk|}(0,x') \bigr)  + L |\eps-t'| + d^0(x',x'') + L |t''-t'|  \\
&\qquad\ge\; d\bigl( y, \io^0_{|\Kk|}(0,x'') \bigr)  + L |\eps-t''|   \;=\; D\bigl( F(y), \io^0_{|\Kk|}(t'',x'') \bigr).
\end{align*}

\item
For $p' \in \im F$ and $p,p''\in \io^0_{|\Kk|}\bigl( A^0_\eps\times |\p^0\Kk|\bigr)$ we have
\begin{align*}
&D\bigl( \io^0_{|\Kk|}(t,x) , F(y') \bigr) + D\bigl(  F(y') , \io^0_{|\Kk|}(t'',x'') \bigr) \\
&\qquad= 
d\bigl( \io^0_{|\Kk|}(0,x) , y' \bigr)  + L |\eps-t| +  d\bigl( y' , \io^0_{|\Kk|}(0,x'') \bigr)  + L |\eps-t''| 
 \\
&\qquad \ge\; d\bigl( \io^0_{|\Kk|}(0,x) , \io^0_{|\Kk|}(0,x'') \bigr)  + L |t''-t'|   
 \;=\;
 D\bigl(  \io^0_{|\Kk|}(t,x), \io^0_{|\Kk|}(t'',x'') \bigr).
\end{align*}
\end{itemlist}
This proves the triangle inequality and thus finishes the proof of~(i).

%PBF  \pagebreak

To prove (ii), let us denote $\Kk^1:=\p^1\Kk^{[0,1]}=\p^0\Kk^{[1,2]}$ and 
$d^1:=d^{[0,1]}|_{|\p^1\Kk^{[0,1]}|\cong|\Kk^1|}= d^{[1,2]}|_{|\p^0\Kk^{[1,2]}|\cong|\Kk^1|}$, 
and let $B\ge 1$ be a common upper bound of $d^{[0,1]}, d^{[1,2]}$.
Then in a first step we replace the metric $d^{[0,1]}$ by the metric $D^{[0,1]}$ on $|\Kk^{[0,1]}|$ constructed  in (i) with $L:=\eps^{-1} B$ and $\al = 1$, which restricts to $d^{[0,1]}$ outside of $\io^1_{|\Kk^{[0,1]}|}(A^1_{2\eps}\times|\Kk^1|)$ and to $(\io^1_{|\Kk^{[0,1]}|})_*\bigl( d^1 + L d_\R \bigr)$ on $\io^1_{|\Kk^{[0,1]}|}(A^1_{\eps}\times|\Kk^1|)$.
Similarly, (i) provides a metric $D^{[1,2]}$ on $|\Kk^{[1,2]}|$ which restricts to $d^{[1,2]}$ outside of $\io^0_{|\Kk^{[1,2]}|}(A^0_{2\eps}\times|\Kk^1|)$ and to $(\io^0_{|\Kk^{[1,2]}|})_*\bigl( d^1 + L d_\R \bigr)$ on $\io^0_{|\Kk^{[1,2]}|}(A^0_{\eps}\times|\Kk^1|)$.
Now the virtual neighbourhood induced by the composed Kuranishi cobordism $\Kk$ is
$|\Kk|=|\Kk^{[0,1]}|\cup_{|\Kk^1|} |\Kk^{[1,2]}|$, where we identify 
$\io^1_{|\Kk^{[0,1]}|}(\{1\}\times|\Kk^1|) \cong |\Kk^1| \cong \io^0_{|\Kk^{[1,2]}|}(\{0\}\times|\Kk^1|)$.
It can also be decomposed as
$$
|\Kk| \;=\; C^{[0,1]} \underset{\io^1(\{1-\eps\}\times|\Kk^1|) \cong \{-\eps\}\times|\Kk^1|}{\cup}
[-\eps,\eps]\times |\Kk^1|
\underset{\io^0(\{\eps\}\times|\Kk^1|) \cong \{\eps\}\times|\Kk^1|}{\cup}
C^{[1,2]} 
$$
into pieces $C^A=|\Kk^A|\less \io^\al_{|\Kk^A|}(A^\al_\eps\times|\Kk^1|)$ for $A=[0,1]$, $\al=1$, resp.\ for $A=[1,2]$, $\al=0$, and a cylindrical piece 
$$
[-\eps,\eps] \times |\Kk^1| \;\cong\;
\io^1_{|\Kk^{[0,1]}|}(A^1_{\eps}\times|\Kk^1|) 
\;\underset{|\Kk^1|}\cup\;
\io^0_{|\Kk^{[1,2]}|}(A^0_{\eps}\times|\Kk^1|) \;\subset\; |\Kk| ,
$$
obtained by identifying $A^1_\eps \underset{0\sim 1}{\cup} A^0_\eps \cong [-\eps,\eps]$.
We claim that the required metric on $|\Kk|$ can be obtained by gluing the metrics $D^A$ on $C^A$ via a long neck metric $d^1+L d_\R$ on $[-\eps,\eps] \times |\Kk^1|$, that is by symmetric extension of
$$
D(p, p' ): = \left\{
\begin{array}{ll} 
\min\bigl(\, D^A(p,p') \,,\, B \,\bigr)  &\mbox{ if } p,p'\in |\Kk^A|;\\
\min\bigl( \, d^1(x,x') + L |1-t| + L |t'| \, ,\, B \,\bigr) \phantom{\int_A^B} \!\!\!\!
 &\mbox{ if } p=\io^1_{|\Kk^{[0,1]}|}(t,x) , t\in (1-\eps,1], \\
 & \quad  p'=\io^0_{|\Kk^{[1,2]}|}(t',x'), t'\in [0,\eps); \\
B &\mbox{ otherwise}.
\end{array}\right.
$$
Indeed, $D$ is well defined since $D^A$ for $A=[0,1]$ resp.\ $A=[1,2]$ pulls back via the respective collar embedding to 
$d^1 + L d_\R$ 
on $\bigl([1-\eps,1+\eps]\cap A\bigr)\times |\Kk^1|$.
It is constructed to be symmetric, positive definite, and bounded by $B$, and is $2\eps$-collared with the required restrictions since $D$ restricts to $d^A$ on $|\Kk^A|\less \io^\al_{|\Kk^A|}(A^\al_{2\eps}\times|\Kk^1|)\subset C^A$, where $\al = 1$ for $A = [0,1]$ and  $\al = 0$ for $A = [1,2]$.
Further, $D$ is admissible because each $D^A$ is admissible, and as in (i) the pullback of $D$ induces the product topology on each concatenation of collars
$(1-\eps,1]\times \p^1U^{[0,1]}_{I^{01}} \underset{\sim}{\cup} [0,\eps)\times \p^0 U^{[1,2]}_{I^{12}} \cong (-\eps,\eps)\times U^1_I$ via $\io^1_{U^{[0,1]}_{I^{01}}}(1,x)\sim \io^0_{U^{[1,2]}_{I^{12}}}(0,x) \sim x\in U^1_I$ in Lemma~\ref{lem:cobord1}.

Finally, to see that $D$ satisfies the triangle inequality $D(p, p') + D(p',p'') \ge D( p , p'')$, note that $D$ is bounded by $B$ and already restricts to metrics $\min(D^A,B)$ on the subsets $|\Kk^A|\subset|\Kk|$
as well as to $\min(d^1+L d_\R,B)$ on the middle neck $[-\eps,\eps] \times |\Kk^1|$.
So it remains to check triples with $D(p, p') + D(p',p'') < B$ 
that lie in different parts of $|\Kk|$. Up to symmetries (switching $p,p''$ or switching $[0,1]$ for $[1,2]$), the only remaining case is $p\in C^{[0,1]}$, $p'=\io^1_{|\Kk^{[0,1]}|}(t',x')$ with $t'\in (1-\eps,1]$, and $p''=\io^0_{|\Kk^{[1,2]}|}(t'',x'')$ with $t''\in [0,\eps)$, when 
\begin{align*}
& D\bigl( p , \io^1_{|\Kk^{[0,1]}|}(t',x')\bigr) + D\bigl( \io^1_{|\Kk^{[0,1]}|}(t',x') , \io^0_{|\Kk^{[1,2]}|}(t'',x'')\bigr) \\
&\qquad=  \;
\min\bigl( \, D^{[0,1]}\bigl( p , \io^1_{|\Kk^{[0,1]}|}(t',x')\bigr) \,,\, B\, \bigl) +  
\min\bigl( \, d^1(x',x'') + L |1-t'| + L |t''|  \,,\, B\, \bigl) \\
&\qquad\geq \;
\min\bigl( 
(F_*d^{[0,1]})\bigl( p , \io^1_{|\Kk^{[0,1]}|}(1-\eps,x')\bigr) + L | 1-\eps -t'| +  d^1(x',x'') + L |1-t'| + L |t''| 
 \,,\, B\, \bigl) \\
&\qquad
\geq \;
\min\bigl( L | 1-\eps -t'| + L |1-t'|  \,,\, B\, \bigl) 
\;\ge\; \min\bigl( L \eps  \,,\, B\, \bigl)  \;=\; B \;=\;   D\bigl( p, \io^0_{|\Kk^{[1,2]}|}(t'',x'') \bigr).
\end{align*}
This proves the triangle inequality, so $D$ is a metric, which finishes the proof of (ii).
\end{proof}

\begin{proof}[Proof of Proposition~\ref{prop:metcob}]
To show that metric tame cobordism is an equivalence relation, we have to extend the constructions in the proof of Lemma~\ref{lem:cobord1} by constructions of admissible metrics, and check the tameness identities \eqref{eq:tame1}, \eqref{eq:tame2} for the given filtrations.
For reflexivity one can use the product metric of Example~\ref{ex:mtriv}~(i), and tameness transfers automatically to the product atlas.
The reflection, reparametrization, and relabeling in the proof of symmetry in Lemma~\ref{lem:cobord1} does not affect the metric or tameness properties.
For transitivity note that the concatenation construction in the proof of Lemma~\ref{lem:cobord1} transfers the tameness properties of the constituent atlases directly to the concatenated atlas.
Moreover, concatenation of metrics is provided by Lemma~\ref{le:metcob}~(ii).  
This proves the first statement.

Although the uniqueness statement (i) is a special case of (ii) for the trivial Kuranishi cobordism $[0,1]\times\Kk$, 
we need to prove it first, so we can use it in the proof of (ii).
So let two admissible metrics $d^0,d^1$ on $|\Kk|$ be given, and let $c\ge 1$ be a common upper bound for $d^0$ and $d^1$.
By Lemma~\ref{le:metcoll} there is an admissible $\frac 16$-collared metric $d^{[0,1]}$ on $[0,1]\times |\Kk|$ with boundary restrictions $d_\R +\frac 1{c} d^0$ on $\{0\}\times|\Kk|$ and  $d_\R + \frac 1{c} (d^0 +d^1)$ on $\{1\}\times |\Kk|$. 
Similarly, there is an admissible $\frac 16$-collared metric $d^{[1,2]}$ on $[1,2]\times |\Kk|$ with boundary restrictions $\frac 1{c} (d^0 +d^1)$ on $\{1\}\times |\Kk|$ and  $\frac 1{c} d^1$ on $\{2\}\times |\Kk|$.
These satisfy the assumptions of Lemma~\ref{le:metcob}~(ii), so that we obtain an admissible $\frac 16$-collared metric $D'$ on $[0,2]\times |\Kk|$ with boundary restrictions $\frac 1c d^0$ on $\{0\}\times |\Kk|$ and $\frac 1c d^1$ on $\{2\}\times |\Kk|$.
Now define $D$ on $[0,1]\times |\Kk|$ to be the pullback of $ c D'$ by a rescaling map $(t,x)\mapsto (\be(t),x)$,
where $\be:[0,1]\to[0,2]$ is a   monotone smooth function  such that
$\be(t) = \frac {t}{c}$ for $0\le t \le \frac 16$ and $\be(t) = 2- \frac {1- t}{c}$ for $\frac 56 \le t \le 1$.  Then $D$ restricts to $d_\R+d^0$ on $[0,\frac 16]\times |\Kk|$ and to $d_\R + d^1$ on $[\frac 56,1]\times  |\Kk|$ and also satisfies \eqref{eq:epscoll} with $\eps=\frac 16$ since for $0\leq t <\frac 16 \le s$ 
(and similarly for $s\le \frac 56 < t < 1$) we either have $\be(s)\ge \frac 1{6}$, which by \eqref{eq:epscoll} for $D'$ implies
$$
D\bigl((s,y), (t,x) \bigr) =  c \cdot D'\bigl( (\be(s),y) , ( \tfrac {t}{c},x)   \bigr)   
\ge c \bigl( \tfrac 1{6} - \tfrac {t}{c} \bigr)
= \tfrac c6 -  t  \ge \tfrac 16 - t , 
$$ 
or we have $\frac 1{6} > \be(s) \ge \be(\frac 16)= \frac 1{6c}$ so that the collar form of $D'$ implies
$$
D\bigl((s,y), (t,x) \bigr) =  c \ D'\bigl( (\be(s),y) , ( \tfrac t{c},x)  \bigr)   
\ge c\bigl( \be(s) - \tfrac {t}{c} \bigr)
\ge c \tfrac {1}{6c} - t
= \tfrac 16 - t.
$$ 

Moreover, this rescaling does not affect the admissibility.
Hence it provides the required metric tame concordance  $([0,1]\times \Kk,D)$ from $(\Kk,d^0) $ to $(\Kk,d^1)$, which proves (i).

Now in (ii) we consider a general tame topological Kuranishi cobordism $\Kk$ and may assume that it is equipped with an admissible metric on $|\Kk|$. Let $4\eps>0$ be the collar width of $\Kk$, then we can use Lemma~\ref{le:metcob}~(i) with $L =1$ to obtain a $2\eps$-collared admissible metric $d$ on $|\Kk|$.
Next, we may view $\Kk$ as the result of concatenating the product Kuranishi atlases $[0,\eps]\times\p^0\Kk$, $[1-\eps,1]\times\p^1\Kk$ with the Kuranishi cobordism $\Kk'$ that is obtained from $\Kk$ by removing an $\eps$-collar $\io^\al_{U_I}(A^\al_\eps\times \p^\al U_I)$ from each domain $U_I$.
This $\Kk'$ still has collar width $3\eps$ 
and canonically identified boundaries $\p^\al\Kk'\cong\p^\al\Kk$, 
and the metric $d$ restricts to an admissible $\eps$-collared metric $d'$ on 
$|\Kk'|=|\Kk|\less \bigl( \io^0_{|\Kk|}([0,\eps)\times|\p^0\Kk|)\cup \io^1_{|\Kk|}((1-\eps,1]\times|\p^1\Kk|)\bigr)$.
Now we apply the construction of (i) to the above product Kuranishi atlases to obtain metrics $D^0$ and $D^1$ on  $[0,\eps]\times|\p^0\Kk|$ resp.\ $[1-\eps,1]\times|\p^1\Kk|$ with
$D^0|_{\{0\}\times|\p^0\Kk|}=d^0$, 
$D^0|_{\{\eps\}\times|\p^0\Kk|}=d'|_{|\p^0\Kk'|}$,
$D^1|_{\{1-\eps\}\times|\p^1\Kk|}= d'|_{|\p^1\Kk'|}$,
$D^1|_{\{1\}\times|\p^1\Kk|}=d^1$.
These satisfy the assumptions of Lemma~\ref{le:metcob}~(ii), so that by two consecutive concatenations we obtain an admissible collared metric $D$ on 
$$
|\Kk|= 
[0,\eps]\times|\p^0\Kk| \underset{\{\eps\}\times|\p^0\Kk|\cong |\p^0\Kk'|}{\cup} |\Kk'| 
 \underset{\{1-\eps\}\times|\p^1\Kk|\cong |\p^1\Kk'|}{\cup} [1-\eps,1]\times|\p^1\Kk|
$$ 
with the required boundary restrictions $D|_{|\p^0\Kk|\cong \{0\}\times|\p^0\Kk|}=d^0$, $D|_{|\p^1\Kk|\cong\{1\}\times|\p^1\Kk|}=d^1$. This finishes the proof.
\end{proof}

We are now in a position to prove the uniqueness part of Theorem~\ref{thm:K}, namely that different tame 
shrinkings of the same filtered weak topological Kuranishi atlas are concordant. 
This is a crucial ingredient in establishing that the virtual fundamental class associated to a given Kuranishi atlas is well defined. 
In fact, in practice one only associates a well defined concordance class of additive weak Kuranishi atlases to a given moduli space, which requires the full generality of the next result for concordances together with the uniqueness of metrics from Proposition~\ref{prop:metcob}~(i).
Moreover, different geometric choices (e.g.\ of the almost complex structure in Gromov--Witten moduli spaces) yield more general Kuranishi cobordisms as in Example~\ref{ex:natcol}, not just concordances, so that uniqueness of the resulting (e.g.\ Gromov--Witten) invariants requires the full generality of the following result.  For the proof, we must revisit the construction of shrinkings in the proof of Proposition~\ref{prop:proper} and will construct the following notions of cobordism shrinkings.

\begin{defn}\label{def:cshr}
A {\bf tame shrinking} of a weak topological Kuranishi cobordism $\Kk$ is a tame topological Kuranishi cobordism $\Kk'$ that is a shrinking of $\Kk$ in the sense of Definition~\ref{def:shr}.
That is, $\Kk'$ is a restriction of $\Kk$ to a suitable choice of collared subsets $U'_I\sqsubset U_I$ of the Kuranishi domains. 
Moreover, a tame shrinking $\Kk''$ of $\Kk$ is {\bf preshrunk} if $\Kk''$ is also a shrinking of an intermediate tame shrinking $\Kk'$ of $\Kk$.
\end{defn}

\begin{thm}\label{thm:cobord2}
Let $\Kk$ be a filtered weak topological Kuranishi cobordism on a compact collared cobordism $(Y, \io^0_Y,\io^1_Y)$, let $\Kk^0_{sh}, \Kk^1_{sh}$ be preshrunk tame shrinkings of $\p^0\Kk$ and $\p^1\Kk$, and let admissible metrics $d^\al$ on $|\Kk^\al_{sh}|$ be given (which exist by Proposition~\ref{prop:metric}).
Then there is a preshrunk tame shrinking of $\Kk$ and an admissible collared metric $d$ on $|\Kk|$ that provide a metric tame topological Kuranishi cobordism 
$\widetilde\Kk$ on $(Y, \io^0_Y,\io^1_Y)$ 
with $\p^\al \widetilde\Kk = \Kk^\al_{sh}$ and $d|_{|\p^\al \widetilde\Kk|} = d^\al$.
\end{thm}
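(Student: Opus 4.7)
The plan is to prove this by combining a cobordism version of the tame shrinking construction (Proposition~\ref{prop:proper}) with the metric uniqueness for cobordisms (Proposition~\ref{prop:metcob}~(ii)). The existence statement splits naturally into first producing a preshrunk tame shrinking of $\Kk$ whose boundary restrictions are exactly $\Kk^0_{sh}$ and $\Kk^1_{sh}$, and then equipping its realization with an admissible collared metric having the prescribed boundary restrictions $d^\al$. The latter step will be a direct application of Proposition~\ref{prop:metcob}~(ii) once the shrinking has been constructed, since any preshrunk tame shrinking is metrizable by the cobordism analogue of Proposition~\ref{prop:metric}, and the boundary restrictions of that metric can be modified to $d^\al$ without changing the underlying topological atlas.

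First, I will fix a uniform collar width $\eps>0$ for $\Kk$ as in Remark~\ref{rmk:Ceps}, and transport the footprint covers of $\Kk^\al_{sh}$ into a collared shrinking $(F'_i)$ of the footprint cover $(F_i)$ of $\Kk$. Concretely, the boundary restrictions $\p^\al\bK_i$ already carry shrunk footprints $(F^\al_i)'\sqsubset \p^\al F_i$ prescribed by $\Kk^\al_{sh}$, and I extend these into the collar by the product construction $\io_Y^\al(A^\al_\eps\times (F^\al_i)')$, then pick any collared shrinking $F'_i\sqsubset F_i$ of the remaining part of $F_i$ that agrees with this product on a slightly smaller collar. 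Analogously I extend the chart and coordinate change domains $U^{\al,'}_I$, $U^{\al,'}_{IJ}$ of $\Kk^\al_{sh}$ into collared open sets $U^{(0)}_{IJ}\subset U_{IJ}$ in $\Kk$ via $\io_{U_I}^\al(A^\al_\eps\times U^{\al,'}_{IJ})$, which serve as the zero-th stage of the iteration in the proof of Proposition~\ref{prop:proper}.

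The key step is then to run that iteration verbatim, but requiring at each stage $k$ and for each pair $I\subset J$ that the constructed set $U^{(k)}_{IJ}$ agree with the fixed product $\io_{U_I}^\al\bigl(A^\al_{\eps/2}\times U^{\al,'}_{IJ}\bigr)$ on the inner collar of width $\eps/2$. Because $\Kk^\al_{sh}$ is already tame, the boundary data automatically satisfy the tameness identities~\eqref{eq:tame1} and~\eqref{eq:tame2} in the product form; what must be done is to upgrade Lemma~\ref{le:set} to a ``collared'' version in which one is given a closed subset on which the output $U_K$ must coincide with a prescribed open set, and to verify that the functions $f_i$ used there can still be chosen lower semi-continuous with the collar constraint built in by setting them equal to the product distance function on the collar. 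With that collared Lemma~\ref{le:set} in hand, Steps A and B of Proposition~\ref{prop:proper} go through unchanged, and the resulting tame shrinking $\Kk'$ has $\p^\al\Kk' = \Kk^\al_{sh}$ by construction. A second application of the same argument (the cobordism analogue of Proposition~\ref{prop:metric}) yields a preshrunk tame shrinking $\widetilde\Kk$ of $\Kk$ with the same boundary restrictions, whose realization is metrizable.

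Finally, Proposition~\ref{prop:metcob}~(ii) applied to the metrizable tame cobordism $\widetilde\Kk$ with the prescribed boundary data $d^\al$ produces the desired admissible $\eps'$-collared metric $d$ on $|\widetilde\Kk|$ with $d|_{|\p^\al\widetilde\Kk|} = d^\al$. The main obstacle I expect is the collared upgrade of Lemma~\ref{le:set} together with the bookkeeping needed to ensure that the inductively defined sets $U^{(k)}_{IJ}$ remain in product form on a collar of width at least $\eps/2$ throughout all $N$ steps of the iteration. In particular, one must check that the auxiliary sets $W_{K'}$ in \eqref{eq:WwK} and the complements taken in \eqref{eq:UJK(k)} are themselves of collar form, which follows because the collar compatibility of the filtration (Definition~\ref{def:Cccf}) and of the coordinate changes (Definition~\ref{def:Ccc}) guarantees that the operations $\phi_{IJ}^{-1}$, $\s_J^{-1}(\E_{IJ})$, and intersection with prescribed domains all respect the product structure in the collar.
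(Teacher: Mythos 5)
There is a genuine gap in how you produce a \emph{preshrunk} tame shrinking whose boundary restrictions are exactly $\Kk^\al_{sh}$. Your plan is to first build a tame shrinking $\Kk'$ of $\Kk$ with $\p^\al\Kk' = \Kk^\al_{sh}$, and then apply the construction a second time to get a preshrunk (hence metrizable) shrinking ``with the same boundary restrictions.'' But a shrinking requires precompact inclusions of domains, so any tame shrinking $\widetilde\Kk$ of $\Kk'$ has $\p^\al \widetilde U_I \sqsubset \p^\al U'_I = U^{\al,sh}_I$, which is incompatible with $\p^\al\widetilde\Kk = \Kk^\al_{sh}$ unless the boundary domains are compact. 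So the second application cannot preserve the boundaries, and without it you have no preshrunk structure, hence no metrizability of $|\widetilde\Kk|$ and no access to Proposition~\ref{prop:metcob}~(ii). Note that your argument never uses the hypothesis that $\Kk^0_{sh},\Kk^1_{sh}$ are \emph{preshrunk} tame shrinkings of $\p^\al\Kk$ -- that hypothesis is exactly what repairs this: write $\Kk^\al_{sh}$ as a shrinking of an intermediate tame shrinking $\Kk^\al$ of $\p^\al\Kk$, run your collared iteration once to get a tame shrinking $\Kk'$ of $\Kk$ with $\p^\al\Kk' = \Kk^\al$ (the intermediate atlases), and then run it a second time, relative to $\Kk'$, to get a tame shrinking $\Kk_{sh}$ of $\Kk'$ with $\p^\al\Kk_{sh} = \Kk^\al_{sh}$; this two-stage object is preshrunk with the correct boundaries, is metrizable by the argument of Proposition~\ref{prop:metric}, and then Proposition~\ref{prop:metcob}~(ii) gives the collared metric with restrictions $d^\al$, as you intended.

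A secondary point: you do not actually need a ``collared upgrade'' of Lemma~\ref{le:set}, and your sketch of one (building the collar constraint into the lower semi-continuous functions $f_i$) is not verified -- the sets $U_f = \{x \,|\, d(x,Z) < \inf f|_{M_x}\}$ are not easily forced to coincide \emph{exactly} with a prescribed product $A^\al_{\eps/2}\times U^{\al}_{IJ}$ on the collar. The cleaner route, which is what the paper's argument does, is to apply Lemma~\ref{le:set} as stated to produce sets $V^{(k)}_{IK}\subset W_{K'}$ and then define $U^{(k)}_{IK} := V^{(k)}_{IK}\cup \bigl(\io^0_I(A^0_\eps\times U^{0}_{IK})\cup\io^1_I(A^1_\eps\times U^{1}_{IK})\bigr)$, checking (i$'$), (ii$'$), (iii$''$) directly for this union; the mixed intersections in (ii$'$) land in the collar, where the tameness of the boundary atlases $\Kk^\al$ gives the required identities, and in Step~B the product form of $\s_J$, $\phi_{IJ}$ and tameness of $\Kk^\al$ show that no collar points are removed in \eqref{eq:UJK(k)}. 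If you keep your Lemma~\ref{le:set} modification, you must actually prove it; otherwise replace it by this union-and-verify argument.
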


\begin{remark}\rm
In the case of shrinkings $\Kk^0,\Kk^1$ of a fixed topological Kuranishi atlas $\Kk$ there is an easier construction of a concordance in one special case:
If the shrinkings of the footprint covers $(F^\al_I)$ are compatible in the sense that their intersection
$(F^0_I\cap F^1_I)$ is also a shrinking (i.e.\ covers $X$ and has the same index set of nonempty intersections of footprints), then by Remark~\ref{rmk:shrink} the intersection of domains $U^0_{IJ}\cap U^1_{IJ}$ defines another shrinking of~$\Kk$. 
Thus one obtains a  shrinking of the product concordance $[0,1]\times \Kk$ by
$$ 
U_{IJ}^{[0,1]} :=  \Bigl([0,\tfrac 13) \times  U^0_{IJ}  \Bigr)
\;\cup\; \Bigl([\tfrac 13,\tfrac 23] \times  \bigl( U^0_{IJ}\cap U^1_{IJ} \bigr) \Bigr)
\;\cup\; \Bigl( (\tfrac 23,1]\times U^1_{IJ}  \Bigr) . 
$$
Further it is straightforward to check that if $\Kk$ is filtered, and if the shrinkings 
$\Kk^0,\Kk^1$ are tame, then the above shrinking is also tame.  (Here, we use the canonical filtration 
that is induced from the product filtration of $[0,1]\times \Kk$ as in Lemma~\ref{le:filter}.)
\end{remark}

\begin{proof}[Proof of Theorem~\ref{thm:cobord2}]
As in the proof of Proposition~\ref{prop:metric} we first construct a tame shrinking between any pair of  
tame shrinkings  $\Kk^0, \Kk^1$ of $\p^0\Kk$ and $\p^1\Kk$.
We will use this first to obtain a tame shrinking $\Kk'$ of $\Kk$ with $\p^\al\Kk'=\Kk^\al$ 
(where by assumption we have chosen tame shrinkings $\Kk^\al$ of $\p^\al\Kk$ for $\al=0,1$ so that $\Kk^\al_{sh}$ is a shrinking of $\Kk^\al$), and second to obtain a tame shrinking $\Kk_{sh}$ of $\Kk'$ with $\p^\al\Kk_{sh}=\Kk^\al_{sh}$.
The latter topological Kuranishi cobordism $\Kk_{sh}$ supports an admissible metric $d_{sh}$ by the same argument as in Proposition~\ref{prop:metric}. 
We may arrange that the metric is collared with the given restrictions $d^\al$ to the boundary atlases by Proposition~\ref{prop:metcob}~(ii). 
So it remains to show how to construct a tame shrinking  of $\Kk$ with given boundary shrinkings.

We write the index set as the union $\Ii_{\Kk} = \Ii_0 \cup \Ii_{(0,1)}\cup \Ii_1$ of
$\Ii_\al := \Ii_ {\p^\al\Kk} \subset \Ii_{\Kk}$ and $\Ii_{(0,1)}:=\Ii_{\Kk} \less (\Ii_0\cup\Ii_1)$.
Since the footprint of a chart $\Kk$ might intersect both $\p^0 Y$ and $\p^1 Y$
the sets $\Ii_0$ and $\Ii_1$ may not be disjoint, though they are both disjoint from $\Ii_{(0,1)}$,
which indexes the charts with precompact footprint in $Y\less (\p^0 Y\sqcup \p^1 Y)$. 
We will denote the charts of the Kuranishi cobordism $\Kk$ 
by $\bK_I=(U_I,\ldots)$, while $\bK^\al_I=(U^\al_I,\ldots) = \p^\al \bK_I |_{U^\al_I}$ denotes the charts of the shrinking $\Kk^\al$ of $\p^\al\Kk$  with domains $U^\al_{IJ}\subset \partial^\al U_{IJ}$.
Recall moreover that by definition of a shrinking the index sets $\Ii_ {\Kk^\al}=\Ii_ {\p^\al\Kk}=\Ii_\al$ coincide.
We suppose that the charts and coordinate changes of $\Kk$ 
 have uniform collar width $5\eps>0$ as in Remark~\ref{rmk:Ceps}. Then the footprints have induced $5\eps$-collars
$$
F_I\cap \bigl(\io_Y^\al (A^\al_{5\eps}\times \p^\al Y)\bigr)  =  \io_Y^\al (A^\al_{5\eps}\times \partial^\al F_I)
$$
with $\partial^\al F_I = F_I\cap \p^\al Y$ as in Definition~\ref{def:Ycob}. 
It will be convenient to denote for any $0<\delta\leq 5\eps$ the complement in $Y$ of the $\delta$-collar of the boundary by the following shorthand, which denotes by $\io_\delta$ the indicated restriction of the collar embeddings for $Y$:
$$
Y\less  \im \iota_\delta: = Y\less 
{\textstyle \;\bigsqcup_{\al=0,1} \io_Y^\al(A_\delta^\al \times \p^\al Y) .}
$$
By construction of the shrinkings $\Kk^\al$ of $\p^\al\Kk$, 
we have  precompact inclusions $F^\al_I \sqsubset \partial^\al F_I$.
Now one can form a  $3\eps$-collared shrinking $(F'_i\sqsubset F_i)_{i=1,\ldots,N}$ of the cover of  $Y$  
by the footprints of the basic charts by first choosing an arbitrary shrinking $(F_i'')$ as in Definition~\ref{def:shr0}, and then adding $3\eps$-collars to the boundary charts.
Namely, for $i\in \Ii_0\cup \Ii_1$ we define
$$
F_i': = \bigl(F_i''\cup \bigl(\io_Y^\al(A^\al_{4 \eps}\times F^\al_i ) \bigr)\bigr)\;
\less \; \io_Y^\al \bigl(\, \overline{A^\al_{3\eps}}\times 
(\p^\al Y \less F^\al_i) \bigr)  .
$$
By construction, these sets still cover $\im \io_{4\eps} = \sqcup_\al\io_Y^\al(A^\al_{4\eps}\times \p^\al Y )$,
and so, because the sets $F'_i:=F''_i$ for $i\in\Ii_{(0,1)}$ cover $Y\less \im \io_{3\eps}$, their union covers all of $Y$. 
Moreover, each $F'_i$ is open with $3\eps$-collar $F_i'\cap \io_Y^\al\bigl(A^\al_{3\eps}\times \p^\al Y \bigr) = \io_Y^\al(A^\al_{3\eps}\times F^\al_i)$ and has compact closure in $F_i$ because $F_i''$ does by construction and $A^\al_{4\eps}\times F^\al_i \sqsubset A^\al_{5\eps}\times \partial^\al F_i  \subset F_i $.
Next, the induced footprints $F'_I=\bigcap_{i\in I} F'_i$ also have $3\eps$-collars, and $F_I\neq\emptyset$ implies $F'_I\neq\emptyset$ since either $F_I\cap \im \io_{5\eps} \neq \emptyset$ so that $\emptyset\neq \io_Y^\al( A^\al_{4\eps}\times F^\al_I)
 \subset F'_I$, or $F_I\subset Y\less \im \io_{5\eps}$ so that $\emptyset\neq F''_I\subset F'_I$. 
Hence $(F_i')_{i=1,\ldots,N}$ is a  shrinking of the footprint cover with $3\eps$-collars.

We now carry through the proof of Proposition~\ref{prop:proper},
in the $k$-th step choosing domains $  U^{(k)}_{IJ}\subset U^{(k-1)}_{IJ}\subset U_{IJ}$ for $I, J\in \Ii_{\Kk^{[0,1]}}$ satisfying the conditions
(i$'$), (ii$'$),(iii$'$) as well as
the following collar requirement which ensures that the resulting shrinking of $\Kk$ 
is a topological Kuranishi cobordism between the given tame atlases $\Kk^0$ and $\Kk^1$:
\begin{equation}\label{collar}
 (\iota^\al_I)^{-1} \bigl( U^{(k)}_{IJ} \bigr) \;=\; A^\al_\eps\times U^{\al}_{IJ}   
\qquad\forall \; \al \in\{0,1\}, \; I\subset J\in \Ii_\al  .
\end{equation}
For $k=0$ we first must choose precompact sets $U^{(0)}_I\sqsubset U_I$ satisfying the zero set condition \eqref{eq:U(0)}, namely $U_I^{(0)}\cap \s_I^{-1}(0_I) = \psi_I^{-1}(F_I')$.
For that purpose we apply Lemma~\ref{le:restr0} to
$$
F_I'\cap   \bigl(Y\less \im \io_{2\eps} \bigr)  
\;\sqsubset \; \psi_I \bigl(\s_I^{-1}(0_I)\less{\textstyle  \,\bigsqcup_\al \,} \io^\al_I(\p^\al U_I\times \ov{A^\al_{\eps}})\bigr)
$$
to find
$
U_I' \;\sqsubset \; U_I \;\less\;  {\textstyle  \bigsqcup_\al} \io^\al_I(\ov{A^\al_{\eps}}\times \p^\al U_I)$ with
$$
U_I'\cap \s_I^{-1}(0_I) = \psi_I^{-1}\bigl(F_I'\cap \bigl(Y\less \im \io_{2\eps} Y\bigr)\bigr).
$$
Then we add the image under $\io^\al_I$ of the precompact subsets
$A^\al_{3\eps}\times U_I^\al \sqsubset   A^\al_{4\eps}\times \partial^\al U_I \subset U_I$, 
which have footprint 
$$
\io_Y^\al(A^\al_{3\eps}\times F_I^\al) = F_I' \cap \io_Y^\al(A^\al_{3\eps}\times \p^\al Y), 
$$
 to obtain the required domains
$$
U_I^{(0)} := U_I'\;\cup\; {\textstyle \bigsqcup_{\al=0,1}} \io^\al_I(A^\al_{3\eps}\times U_I^\al) \quad\sqsubset\; U_I 
$$ 
with boundary $\p^\al U_I^{(0)} = U_I^\al$ and collar width $\eps$.
Next, the domains $U_{IJ}^{(0)}$ for $I\subsetneq J$ are determined by \eqref{eq:UIJ(0)} and satisfy \eqref{collar} since 
both $U_{IJ}$ and $U_{I}^{(0)}, U_{J}^{(0)}$ have $\eps$-collars,
and $\phi_{IJ}$ has product form on the collar.
For $I\in\Ii_{(0,1)}$ these constructions also apply, and reproduce the construction without boundary, if we denote $\im\io^\al_I:=\emptyset$ and recall that the footprints are contained in $Y\less \im \io_{2\eps} $. 
In the following we will use the same conventions and hence need not mention $\Ii_{(0,1)}$ separately.

Now in each iterative step for $k\geq 1$ there are two adjustments of the domains. First, in Step~A the domains $U^{(k)}_{IK}\subset W_{K'}$ for $|I|=k$ are chosen using Lemma~\ref{le:set}, where $W_{K'}$ is given by~\eqref{eq:WwK}.
In order to give these sets $\eps$-collars, we denote the sets provided by Lemma~\ref{le:set} by $V_{IK}^{(k)}\subset W_{K'}$ and define 
\begin{equation}\label{eq:UIJeps1}
U_{IK}^{(k)} \,:=\; V_{IK}^{(k)} \cup U_{IK}^{0,1,\eps}
\qquad \text{with}\quad
U_{IK}^{0,1,\eps} \,:=\; 
\iota^0_I\bigl( A^0_\eps\times U_{IK}^0)  \;\cup\;  \iota^1_I\bigl( A^1_\eps \times U_{IK}^1 ) .
\end{equation}
This set is open and satisfies \eqref{collar} because $V_{IK}^{(k)}$ is a subset of $U^{(k-1)}_{IK}$, which by induction hypothesis has the required $\eps$-collar.
We now show that  \eqref{eq:UIJeps1} satisfies the requirements of Step A.

\begin{itemlist}
\item[(i$'$)]
holds since $U_{IJ}^{(k-1)}\cap \bigl(\s_I\bigr)^{-1}(\E_{HI})
 \;\subset\; V_{IJ}^{(k)}\subset
U_{IJ}^{(k)}$, where the first inclusion holds by construction of $V_{IJ}^{(k)}$.
\item[(ii$'$)]
holds since $V_{IJ}^{(k)}\cap V_{IK}^{(k)}= V_{I (J\cup K)}^{(k)}$ by construction and
$U_{IJ}^{0,1,\eps}\cap U_{IK}^{0,1,\eps} =  U_{I (J\cup K)}^{0,1,\eps}$ by the tameness of the collars, so
\begin{align*}
U_{IJ}^{(k)}\cap U_{IK}^{(k)}
&\;=\;
\bigr(V_{IJ}^{(k)}\cap V_{IK}^{(k)}\bigl)
\;\cup\;
\bigr(U_{IJ}^{0,1,\eps}\cap V_{IK}^{(k)}\bigl)
\;\cup\;
\bigr(V_{IJ}^{(k)}\cap U_{IK}^{0,1,\eps} \bigl)
\;\cup\;
\bigr(U_{IJ}^{0,1,\eps} \cap U_{IK}^{0,1,\eps} \bigl)  \\
&\;=\;
V_{I(J\cup K)}^{(k)}
\;\cup\;
U_{I(J\cup K)}^{0,1,\eps}  \;=\; U_{I (J\cup K)}^{(k)} .
\end{align*}
Here the two mixed intersections are subsets of the collar $U^{0,1,\eps}_{IJ}\cap U^{0,1,\eps}_{IK}$ by ${V_{I\bullet}^{(k)}\subset U_{I\bullet}^{(k-1)}}$.
\item[(iii$''$)]
holds since $V_{IK}^{(k)} \subset (\phi_{IJ})^{-1}(V_{JK}^{(k-1)})$ by construction and
$U^{0,1,\eps}_{IK} \subset (\phi^{[0,1]}_{IJ})^{-1}(U^{0,1,\eps}_{JK})$
by the tameness of the shrinkings $U^0_\bullet, U^1_\bullet$.
\end{itemlist}

\NI
This completes Step A.
In Step B the domains $U^{(k)}_{JK}$ for $|J|>k$ are constructed by \eqref{eq:UJK(k)}, namely
$$
U_{JK}^{(k)}\, :=\; U_{JK}^{(k-1)}\less \bigcup_{I\subset J, |I|= k} 
\bigl( \s_J^{-1}(\E_{IJ}) \less \phi_{IJ}(U^{(k)}_{IJ}) \bigr) .
$$
We must check that this removes no points in the collars, i.e.\
$$
\iota^\al_J(A^\al_\eps\times U_{JK}^\al )\cap \s_J^{-1}(\E_{IJ}) \;\subset\; \phi_{IJ}(U^{(k)}_{IJ}) .
$$
But in this collar $\s_J$ and $\phi_{IJ}$ have product form induced from the corresponding maps in the atlases $\Kk^\al$, where tameness implies $U_{JK}^\al\cap (\s_J^\al)^{-1}(\E_{IJ}) =
\phi_{IJ}^\al (U^\al_{IJ})$.
Since the $U^{(k)}_{IJ}$ already have $\eps$-collars by construction, this guarantees the above inclusion. Thus, with these modifications, the $k$-th step in the proof of Proposition~\ref{prop:proper} carries through. After a finite number of iterations, we find a tame shrinking $\Kk'$ of 
$\Kk$  
with given restrictions $\p^\al\Kk 
=\Kk^\al$ for $\al=0,1$.
This completes the proof.
\end{proof}

%%%%%%%%%%%%%%%%%%%%%%%%%%%%%%%%%%%%%%%%%%%%%%%%
\section{Reductions and 
perturbations}\label{s:red}
%%%%%%%%%%%%%%%%%%%%%%%%%%%%%%%%%%%%%%%%%%%%%%%%%

The next step in the construction of a virtual moduli cycle is to show that the canonical section $\s_\Kk$ has suitable transverse perturbations. 
The discussion at the beginning of Section~\ref{ss:red0} shows that one cannot expect to be able to define suitable perturbations as functors ${\bB_\Kk}\to \bE_\Kk$ since the category $\bB_\Kk$ has too many morphisms.  In this section we explain the construction and properties of a suitable subcategory of $\bB_\Kk$ whose coordinate changes are governed by the natural partial order $I\subset J$ on the indexing set $\Ii_\Kk$ of the charts in the atlas.  This subcategory is called a {\bf reduction}; it is the closest we come to the notion from \cite{FO,FOOO} of a \lq\lq good coordinate system".   
While transverse perturbations can only be constructed in a smooth context, e.g.\ \cite{MW1}, we introduce the notion of a precompact perturbation of the reduced section and its perturbed zero set in Definition~\ref{def:pert}.
The main results of this section are Theorem~\ref{thm:red}, which establishes the existence and uniqueness  of reductions, and Theorem~\ref{thm:zeroS0}, which establishes compactness of the perturbed zero set.

%%%%%%%%%%%%%%%%%%%%%%%%%%%%%%%%%%%%%%%%%%%%%%%%
\subsection{Definitions and properties}\label{ss:red0} \hspace{1mm}\\ \vspace{-3mm}
%%%%%%%%%%%%%%%%%%%%%%%%%%%%%%%%%%%%%%%%%%%%%%%%

Suppose that $\Kk$ is a smooth and tame atlas.
The cover of $X$ by the footprints $(F_I)_{I\in \Ii_\Kk}$ of all the Kuranishi charts
(both the basic charts and those that are part of the transitional data) is closed under intersection. This makes it easy to express compatibility of the charts, since the overlap of footprints of any two charts $\bK_I$ and $\bK_J$ is covered by another chart $\bK_{I\cup J}$.
However, this yields so many compatibility conditions that a construction of nonzero compatible perturbations in the Kuranishi charts may not be possible.
For example, suppose that $I\cap J=\emptyset$ but $K: = I\cup J\in \Ii_\Kk$. Then for the perturbations
$\nu_I:U_I\to\E_I$ and $\nu_J:U_J\to \E_J$  to be compatible they must induce the same perturbation 
over the intersection $\im \phi_{IK}\cap \im \phi_{JK}\supset \psi_K^{-1}(F_K)$.  
In particular, when working with an additive atlas -- i.e.\ the smooth version of a filtered atlas, in which $\E_{IJ}=\im\Hat\Phi_{IJ}$ -- we must have for all $x\in\im \phi_{IK}\cap \im \phi_{JK}$
$$
\E_{IK}\; \ni\; \Hat\Phi_{IK}\circ  \nu_I (\phi_{IK}^{-1}(x)) \; = \; \nu_K(x) \;=\; \Hat\Phi_{JK}\circ \nu_J (\phi_{JK}^{-1}(x))\;\in\; \E_{JK}.
$$
But the filtration conditions imply that $\E_{IK}\cap \E_{JK}   = \E_{\emptyset K} = \im\, 0_K$, so that $\nu_K$ must vanish at such points. 
In particular this means that $\s_K^{-1}(0)$ is contained in any perturbed zero set $(\s_K+\nu_K)^{-1}(0)$, so that compatible perturbations of all $\s_I:U_I\to \E_I$ generally cannot achieve transversality.

We will avoid these difficulties, and also make a step towards compactness, by reducing the domains of the Kuranishi charts to precompact subsets $V_I\sqsubset U_I$ such that all compatibility conditions between $\bK_I|_{V_I}$ and $\bK_J|_{V_J}$ are given by direct coordinate changes $\Hat\Phi_{IJ}$ or $\Hat\Phi_{JI}$.
The left diagram in Figure~\ref{fig:1} illustrates a typical family of sets $V_I$ for $I\subset\{1,2,3\}$ with the appropriate intersection properties.
As we explain in Remark~\ref{rmk:nerve} this reduction process is analogous to replacing the star cover of a simplicial set by the star cover of its first barycentric subdivision.
This method was introduced in the current context by \cite{LiuT}.

\begin{figure}[htbp] 
   \centering
   \includegraphics[width=4in]{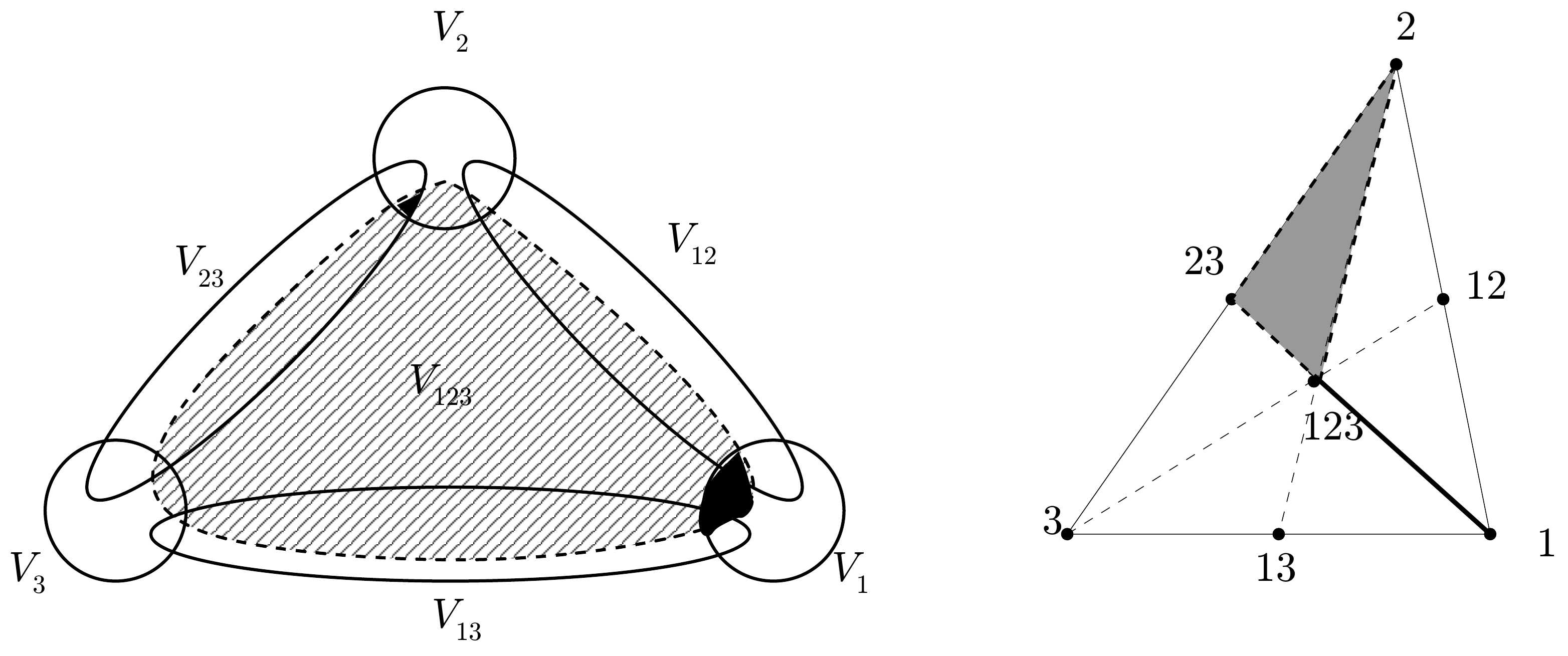}
     \caption{
The right diagram shows the
first barycentric subdivision of the triangle with vertices $1,2,3$.
It has three new vertices labelled $ij$ at the barycenters of the three edges and one vertex labelled $123$ at the barycenter of the triangle.
The left is a schematic picture of a 
reduction of the cover as in Lemma~\ref{le:cov0}.
The black sets are examples of multiple intersections of the new cover, which correspond to the simplices in the barycentric subdivision. E.g.\ $V_2\cap V_{23}\cap V_{123}$ corresponds to the triangle with vertices $2, 23, 123$, whereas $V_1\cap V_{123}$ corresponds to the edge between $1$ and $123$. }  \label{fig:1}
\end{figure}

\begin{rmk}\label{rmk:nerve}\rm
In algebraic topology it is often useful to consider the nerve $\Nn: = \Nn(\Uu)$ of an
open cover $\Uu: = (F_i)_{i=1,\ldots,N}$ of a space $X$, namely the
simplicial complex with one vertex for each open subset $F_i$ and a $k$-simplex for each nonempty intersection of $k+1$ subsets.\footnote
{
A simplicial complex is defined in \cite[\S2.1]{Hat} as a finite set of vertices (or $0$-simplices) $V$ and a subset of the power set $\Ii\subset 2^V$, whose $(k+1)$-element sets are called $k$-simplices for $k\geq 0$. The only requirements are that any subset $\tau \subsetneq \si$ of a simplex $\si\in\Ii$ is also a simplex $\tau\in\Ii$, and that each simplex is linearly ordered, compatible with a partial order on $V$.
In our case the ordering is provided by the linear order on $V=\{1,\dots,N\}$.
Then the $j$-th face of a $k$-simplex $\si: = \{i_0,\dots, i_k\}$, where $i_0<\dots<i_k$,
is given by the subset of $\si$ obtained by omitting its $j$-th vertex $i_j$. This provides the order in which faces are identified when constructing the realization of the simplicial complex.
}
We denote its set of simplices by
$$
\Ii_\Uu: = \bigl\{ I\subset \{1,\ldots,N\} \,\big|\, \cap _{i\in I} F_i\ne \emptyset \bigr\} .
$$
This combinatorial object is often identified with its realization,
the topological space
$$
|\Nn| :=\; \quotient{{\textstyle \coprod_{I\in \Ii_\Uu}} \{I\} \times \De^{|I|-1} }{\sim}
$$
where $\sim$ is the
equivalence relation under which the $|I|$ codimension $1$ faces of the simplex $\{I\}\times\De^{|I|-1}$ are identified with $\{I\less \{i\}\} \times \De^{|I|-2}$ for $i\in I$.
The realization $|\Nn|$ has a natural open cover by the stars $St(v)$ of its vertices $v$,
where $St(v)$ is the union of all (open) simplices whose closures contain $v$.
Notice that the nerve of the star cover of $|\Nn|$ can be identified with~$\Nn$.

Next, let $\Nn_1:=\Nn_1(\Uu)$ be the first barycentric subdivision of $\Nn$.
That is, $\Nn_1$ is a simplicial complex with one vertex $v_I$ at the barycenter of each simplex
$I\in\Ii_\Uu$ and a $k$ simplex for each {\it chain} $I_0\subsetneq I_1\subsetneq\ldots\subsetneq I_k$ of simplices $I_0,\ldots,I_k\in\Ii_\Uu$.
This linear order on each simplex is induced from the partial order on the set of vertices
$\Ii_\Uu$ given by the inclusion relation for subsets of $\{1,\ldots,N\}$.
Further, the star $St(v_I)$ of the vertex $v_I$ in $\Nn_1$ is the union of all simplices given by chains that contain $I$ as one of its elements.
Hence two stars $St(v_I), St(v_J)$ have nonempty intersection if and only if $I\subset J $ or $J\subset I$, because this is a necessary and sufficient condition for there to be a chain containing both $I$ and $J$.
For example in the right hand diagram in Figure~\ref{fig:1} the stars of the vertices $v_{12}$ and $v_{13}$ are disjoint, as are the stars of $v_1$ and $v_2$.
As before the nerve of the star cover of $|\Nn_1|$ can be identified with $\Nn_1$ itself.
In particular, each nonempty intersection of sets in the star cover of $|\Nn_1|$  corresponds to a
simplex in $|\Nn_1|$, namely to  a chain $I_0\subset \ldots \subset I_k$ in the poset $\Ii_\Uu$.
Therefore the indexing set for this cover is the set $\Cc$ of chains in $\Ii_\Uu$; cf.~\cite[p.119ff]{Hat}.

Now suppose that $\Uu = (F_i)_{i=1,\ldots,N}$ is the footprint cover of $X$ by the basic charts of a tame Kuranishi atlas. Then $\Ii_\Uu=\Ii_\Kk$ is the index set of the Kuranishi atlas.
Hence $\Kk$ consists of one basic chart for each vertex of the nerve $\Nn(\Uu)$ and one transition chart $\bK_I$ for each simplex in $\Nn(\Uu)$.  We are aiming to construct from the original cover
$(F_i)$  a {\it reduced} cover $(Z_I)_{I\in \Ii_\Uu}$ whose pattern of intersections mimics that of the star cover of $|\Nn_1(\Uu)|$.
In particular, we will require $\ov{Z_I}\cap \ov{Z_J}= \emptyset$ unless $I\subset J$ or $J\subset I$.
Next, we will aim to construct corresponding subsets $V_I\subset U_I$
with $V_I\cap \s_I^{-1}(0_I)=\psi_I^{-1}(Z_I)$ and $\pi_\Kk(\ov{V_I})\cap \pi_\Kk(\ov{V_J})= \emptyset$ unless $I\subset J$ or $J\subset I$.
We proved in \cite{MW0}  that such a reduction gives rise to a
Kuranishi atlas $\Kk^\Vv$ that has one basic chart for each vertex in $\Nn_1(\Uu)$, i.e.\ for
each element in $\Ii_\Kk$, and one transition chart for each simplex in $\Nn_1(\Uu)$, i.e.\ for each chain $C$ of elements in the poset $\Ii_\Kk$.
However, because such a Kuranishi atlas contains extra structure with no real purpose for us, we work below with the full subcategory of $\bB_\Kk$
with objects $\Vv$. 
$\hfill\er$
\end{rmk}

We will prove the existence of the following type of reduction in Theorem~\ref{thm:red} below.
As always, we denote the closure of a set $Z\subset X$ by $\ov Z$
and write $V\sqsubset U$ to mean that the inclusion $V\hookrightarrow U$ is precompact, i.e.\ the closure $\ov V \subset U$ is compact.
The notions of reductions make sense for general Kuranishi atlases and cobordisms, but we will throughout assume tameness since that is the context in which perturbations will be constructed.

\begin{defn}\label{def:vicin}  
Let $\Kk$ be a tame topological Kuranishi atlas. 
A {\bf reduction} of 
$\Kk$ is an open subset $\Vv=\bigsqcup_{I\in \Ii_\Kk} V_I \subset \Obj_{\bB_\Kk}$ i.e.\ a tuple of (possibly empty) open subsets $V_I\subset U_I$, satisfying the following conditions:
\begin{enumerate}
\item
$V_I\sqsubset U_I $ 
is precompact 
for all $I\in\Ii_\Kk$, and if $V_I\ne \emptyset$ then $V_I\cap \s_I^{-1}(0_I)\ne \emptyset$;
\item
if $\pi_\Kk(\ov{V_I})\cap \pi_\Kk(\ov{V_J})\ne \emptyset$ then
$I\subset J$ or $J\subset I$;
\item
the zero set $\iota_\Kk(X)= |\s_\Kk|^{-1}(|0_\Kk|)$ 
is contained in 
$
\pi_\Kk(\Vv) \;=\; {\textstyle{\bigcup}_{I\in \Ii_\Kk}  }\;\pi_\Kk(V_I).
$
\end{enumerate}
Given a reduction $\Vv$, we define the {\bf reduced domain category} $\bB_\Kk|_\Vv$ and the {\bf reduced obstruction category} $\bE_\Kk|_\Vv$ to be the full subcategories of $\bB_\Kk$ and $\bE_\Kk$ with objects $\bigsqcup_{I\in \Ii_\Kk} V_I$ resp.\ $\bigsqcup_{I\in \Ii_\Kk}  \E_I|_{V_I}$ 
and denote by $\s_\Kk|_\Vv: \bB_\Kk|_\Vv\to \bE_\Kk|_\Vv$ the section given by restriction of $\s_\Kk$. 
\end{defn}

In some ways, which we will further explore in Section~\ref{ss:pert}, the closest we come in this paper to constructing a ``good cover" in the sense of \cite{FO,J1} is the subcategory $\bB_\Kk|_\Vv$ of the category $\bB_\Kk$. Though it is not a Kuranishi atlas, Lemma~\ref{lem:full} below shows that its realization $|\bB_\Kk|_\Vv|$ injects into $|\Kk|$ with image $|\Vv|=\pi_\Kk(\Vv)$, and we proved in \cite{MW0} 
that there is a well defined Kuranishi atlas $\Kk^\Vv$ with virtual neighbourhood $|\Kk^\Vv|\cong |\bB_\Kk|_\Vv|$.

Uniqueness of the virtual fundamental class 
requires a relative notion of reduction.

\begin{defn} \label{def:cvicin}
Let $\Kk$ be a tame topological Kuranishi cobordism. 
Then a {\bf cobordism reduction} of $\Kk$ is an open subset $\Vv=\bigsqcup_{I\in\Ii_{\Kk}}V_I\subset \Obj_{\bB_{\Kk}}$ that satisfies the conditions of Definition~\ref{def:vicin}  
and
is collared in the following sense: 
\begin{enumerate}
\item[(iv)]
For each $\al\in\{0,1\}$ and $I\in 
\Ii_{\p^\al\Kk}\subset\Ii_{\Kk}$ 
there exists $\eps>0$ and a subset $\partial^\al V_I\subset \partial^\al U_I$ such that $\partial^\al V_I\ne \emptyset$ iff 
$V_I \cap \psi_I^{-1}\bigl(
\partial^\al F_I \bigr)\ne \emptyset$,
and 
$$
(\iota^\al_I)^{-1} \bigl( V_I \bigr) \cap \bigl(A^\al_\eps \times  \partial^\al U_I \bigr)
 \;=\; A^\al_\eps \times \partial^\al V_I .
$$
\end{enumerate}
We call 
$\partial^\al\Vv := \bigsqcup_{I\in\Ii_{\p^0\Kk}} \partial^\al V_I \subset \Obj_{\bB_{\p^\al\Kk}}$  
the {\bf restriction} of $\Vv$ to 
$\p^\al\Kk$.
\end{defn}

\begin{remark}\rm 
The restrictions $\partial^\al\Vv$ of a cobordism reduction $\Vv$ of a topological Kuranishi cobordism $\Kk$ are reductions of the restricted topological Kuranishi atlases $\p^\al\Kk$ for $\al=0,1$.
In particular condition (i) holds because part~(iv) of Definition~\ref{def:cvicin} implies that if $\p^\al V_I\ne \emptyset$ then $\p^\al V_I \cap \psi_I^{-1}\bigl( \partial^\al F_I\bigr)\ne \emptyset$.
This is the reason for making a stronger collaring requirement in (iv), which implies that $V_I\subset U_I$ is a collared subset in the sense of Definition~\ref{def:collarset}.
$\hfill\er$
\end{remark}

Before stating the existence and uniqueness result for reductions, we introduce the notion of a nested pair of reductions, which is extensively used both for the control of compactness of perturbed zero sets in Section~\ref{ss:pert}, and for the construction of perturbations in \cite{MW1}.

\begin{definition}\label{def:nest}
Let $\Kk$ be a topological Kuranishi atlas (or cobordism). Then we call a pair of subsets $\Cc,\Vv\subset\Obj_{\bB_\Kk}$ a {\bf nested (cobordism) reduction} if both are (cobordism) reductions of $\Kk$ and $\Cc\sqsubset \Vv$.
\end{definition}

\begin{thm} \label{thm:red}
\begin{enumerate}
\item
Any tame topological Kuranishi atlas $\Kk$ has a unique concordance class of reductions as follows:
There exists a reduction of $\Kk$ in the sense of Definition~\ref{def:vicin}, 
and given any two reductions $\Vv^0,\Vv^1$ of $\Kk$, there exists a cobordism reduction $\Vv$ of $[0,1]\times \Kk$ such that $\p^\al\Vv = \Vv^\al$ for $\al = 0,1$. 
\item
Any tame topological Kuranishi cobordism has a cobordism reduction in the sense of Definition~\ref{def:cvicin}.

\item
For any reduction $\Vv$ of a metric topological Kuranishi atlas $(\Kk,d)$ there exist smaller and larger nested reductions as follows:
\begin{enumerate}
\item
Given any open subset $\Ww\subset |\Kk|$ with respect to the metric topology such that $\io_\Kk(X)\subset\Ww$,  there is a nested reduction $\Cc_\Ww\sqsubset\Vv$ such that $\pi_\Kk(\Cc_\Ww)\subset\Ww$.
\item
There exists $\de>0$ such that $\Vv \sqsubset \bigsqcup_{I\in\Ii_\Kk} B^I_\de(V_I)$ is a nested reduction, and we moreover have
$B_{2\de}^I({V_I})\sqsubset U_I$ for all $I\in\Ii_\Kk$, and for any $I,J\in\Ii_\Kk$
$$
B_{2\de}(\pi_\Kk({V_I}))\cap B_{2\de}(\pi_\Kk({V_J}))
 \neq \emptyset \qquad \Longrightarrow \qquad I\subset J \;\text{or} \; J\subset I. 
$$
\end{enumerate}
\item
For any cobordism reduction $\Vv$ of a metric topological Kuranishi cobordism $(\Kk,d)$
there exist nested reductions with prescribed boundary as follows:
\begin{enumerate}
\item
Let $\Ww\subset |\Kk|$ be a collared subset, open with respect to metric topology, and such that $\io_\Kk(X)\subset\Ww$, and let $\Cc^\al\sqsubset \p^\al \Vv$ for $\al = 0,1$ be nested cobordism reductions of $\p^\al\Kk$ with $\pi_{\p^\al\Kk}(\Cc^\al)\subset \Ww\cap |\p^\al\Kk|$.
Then there is a nested cobordism reduction $\Cc\sqsubset\Vv$ such that $\pi_\Kk(\Cc)\subset\Ww$ and $\p^\al \Cc = \Cc^\al$ for $\al=0,1$.
\item
There exists $\de>0$ such that $\Vv \sqsubset \bigsqcup_{I\in\Ii_\Kk} B^I_\de(V_I)$ is a nested cobordism reduction, and moreover the properties of {\rm (iii)(b)} hold.
\end{enumerate}
\end{enumerate}
\end{thm}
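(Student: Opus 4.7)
The plan is to prove all four parts by reducing to a single construction of a ``nested'' shrinking of the footprint cover $(F_I)_{I\in\Ii_\Kk}$ in $X$, which I then lift to Kuranishi domains. The auxiliary object I will build is a family of precompact open sets $(Z_I\sqsubset F_I)_{I\in\Ii_\Kk}$ covering $X$ and satisfying
\[
\ov{Z_I}\cap\ov{Z_J}\ne\emptyset \quad\Longrightarrow\quad I\subset J \;\text{or}\; J\subset I,
\]
which is the footprint analogue of the star-cover intersection property of the first barycentric subdivision discussed in Remark~\ref{rmk:nerve}. To construct $(Z_I)$ I induct on $|I|$ in decreasing order: at each level $k$, with $Z_J$ already chosen for $|J|>k$, I pick $Z_I\sqsubset F_I$ for $|I|=k$ so that the family still covers $X$ and so that the new $\ov{Z_I}$ are pairwise disjoint from one another and from each previously chosen $\ov{Z_J}$ with $I,J$ non-nested. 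At each step this reduces to separating finitely many disjoint compact subsets of the compact metric space $X$, which is immediate by normality.

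The next step is to lift $(Z_I)$ to a reduction $\Vv=\bigsqcup V_I$ of $\Kk$. Lemma~\ref{le:restr0} provides precompact open sets $V_I^{0}\sqsubset U_I$ with $V_I^{0}\cap\s_I^{-1}(0_I)=\psi_I^{-1}(Z_I)$, and I further shrink $V_I^{0}$ to $V_I$ to enforce condition (ii) of Definition~\ref{def:vicin}. This is the \emph{main obstacle}: one must kill overlaps $\pi_\Kk(\ov{V_I})\cap\pi_\Kk(\ov{V_J})$ for non-nested $I,J$ even away from the zero sets. Tameness is the decisive tool. By Lemma~\ref{le:Ku2}~(a), any such overlap forces either $\emptyset\ne H:=I\cap J\in\Ii_\Kk$ with a point $w\in U_H$ whose images under $\phi_{HI},\phi_{HJ}$ lie in $\ov{V_I^{0}},\ov{V_J^{0}}$, or $I\cap J=\emptyset$ with matching footprints on the zero sets; in either case the intermediate point has a footprint lying in both $\ov{Z_I}$ and $\ov{Z_J}$, so overlaps meeting the zero section are already forbidden by $\ov{Z_I}\cap\ov{Z_J}=\emptyset$. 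Hence the closed set $C_{IJ}:=\{p\in V_I^{0}\mid\pi_\Kk(p)\in\pi_\Kk(\ov{V_J^{0}})\}$ is disjoint from the compact set $\psi_I^{-1}(Z_I)$, and local compactness of $U_I$ together with finiteness of $\Ii_\Kk$ lets me choose $V_I\subset V_I^{0}$ as an open neighbourhood of $\psi_I^{-1}(Z_I)$ disjoint from every $C_{IJ}$. This completes existence in (i).

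For the collared versions needed for (ii) and uniqueness in (i), I repeat these two steps with collared data on a tame topological Kuranishi cobordism $(Y,\io_Y^0,\io_Y^1)$: for $I\in\Ii_{\p^\al\Kk}$ the set $Z_I$ is required to be collared as in Definition~\ref{def:collarset} with $\p^\al Z_I$ prescribed, and the lift uses the product form of charts and coordinate changes with collared boundary from Definitions~\ref{def:Cchart} and~\ref{def:Ccc} to inherit the collar condition (iv) of Definition~\ref{def:cvicin}. For uniqueness in (i), I apply this to the product concordance $[0,1]\times\Kk$ of Example~\ref{ex:triv} (tame by Remark~\ref{rmk:restrict2}(ii)) with boundary data $\p^\al Z_I:=\psi_I(V_I^\al\cap\s_I^{-1}(0_I))$, and carry out the inductive footprint reduction on $(\eps,1-\eps)\times X$ to match the two prescribed collars.

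Finally, for the nested reductions in (iii) and (iv) I use the metric $d$. For (iii)(a), given an open $\Ww\supset\iota_\Kk(X)$, compactness of $X$ yields $\eta>0$ with $B_\eta(\iota_\Kk(X))\subset\Ww$; I then apply the construction above to a further shrinking $(Z'_I\sqsubset Z_I)$ and choose $V'_I\sqsubset V_I$ so close to $\psi_I^{-1}(Z'_I)$ that $\pi_\Kk(\ov{V'_I})\subset B_\eta(\iota_\Kk(X))$, using Lemma~\ref{le:metric} for the compatibility of $d$ with the chart topologies. For (iii)(b), the sets $\pi_\Kk(\ov{V_I})$ are compact in $(|\Kk|,d)$ by Lemma~\ref{le:metric}~(i) and precompactness $V_I\sqsubset U_I$, so for non-nested $(I,J)$ the disjoint compact sets $\pi_\Kk(\ov{V_I})$ and $\pi_\Kk(\ov{V_J})$ have positive distance; finiteness of $\Ii_\Kk$ yields a uniform $\delta>0$ with $B_{2\delta}(\pi_\Kk(V_I))\cap B_{2\delta}(\pi_\Kk(V_J))=\emptyset$ and $B^I_{2\delta}(V_I)\sqsubset U_I$. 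Part (iv) combines (ii) with these metric arguments, using the product form \eqref{eq:epsprod} of the collared metric so that $\delta$-neighbourhoods and shrinkings into $\Ww$ respect the collar structure.
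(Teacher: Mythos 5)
The central gap is in your downward induction for the footprint cover reduction, which you claim ``reduces to separating finitely many disjoint compact subsets ... immediate by normality.'' This step can in fact get \emph{stuck}: nothing in your scheme forces the previously-chosen $Z_J$ with $|J|>k$ to be \emph{large enough} to make the choices at level $k$ possible. Concretely, take $I\ne I'$ with $|I|=|I'|=k$ and consider the compact set $K := (F_I\cap F_{I'})\less\bigl(\bigcup_{|J|>k} Z_J \cup \bigcup_{|I''|<k} F_{I''}\bigr)$. Any point of $K$ must eventually lie in some $Z_{I''}$ with $|I''|=k$, and since $F_I\cap F_{I'}\subset F_J$ only for $J\supset I\cup I'$ (so $|J|>k$), the only candidates that are allowed to meet $\ov{Z_I}$ or $\ov{Z_{I'}}$ are $Z_I$ and $Z_{I'}$ themselves; but $\ov{Z_I}\cap\ov{Z_{I'}}=\emptyset$ forces $Z_I\sqcup Z_{I'}$ to be a \emph{disconnection} of $K$. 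If $K$ has a connected component that is not a point, this is impossible. For instance, on $S^1$ covered by three arcs $F_1,F_2,F_3$ with pairwise intersections but $F_{123}=\emptyset$, choosing $Z_{12}$ to be a very small subarc of $F_{12}$ at level $2$ leaves a long connected arc of $F_{12}$ uncovered, and no disjoint-closure pair $Z_1,Z_2$ can close the gap at level $1$. So ``pick $Z_I$ so the family still covers $X$'' is not a self-fulfilling prescription; it silently imposes a constraint on the higher-level choices that the induction, as you state it, does not enforce.

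The paper sidesteps this entirely in Lemma~\ref{le:cov0} by fixing, \emph{once and for all}, a nested chain of covers $F_i^0\sqsubset G_i^1\sqsubset F_i^1\sqsubset\cdots\sqsubset F_i^N=F_i$ and defining $Z_I := G_I^{|I|}\less\bigcup_{j\notin I}\ov{F_j^{|I|}}$. The nesting guarantees both the covering (via the ``maximal $I_x$'' argument, which automatically assigns every point to the right stratum) and the disjointness: for $|I|\le|J|$ with $I\less J\ne\emptyset$ one takes $i\in I\less J$ and gets $\ov{Z_I}\subset\ov{G_i^{|I|}}\subset F_i^{|J|}$ while $\ov{Z_J}\subset X\less F_i^{|J|}$. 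Neither property is ``just normality''; both hinge on the monotone nested structure, which your level-by-level choice does not preserve or exploit. Your lifting step from $(Z_I)$ to $(V_I)$ is essentially correct and parallels the paper's Proposition~\ref{prop:cov2}(a) (modulo taking closures: you need $\psi_I^{-1}(\ov{Z_I})$ and $\ov{C_{IJ}}$ for the compactness/normality argument, and you need $\ov{V_I}$, not just $V_I$, disjoint from $\ov{C_{IJ}}$), but it rests on the unproven existence of the cover reduction. The uniqueness and collared parts are also much sketchier than the paper's Lemma~\ref{le:cobred} and Proposition~\ref{prop:cov2}(b),(c), but the primary fix needed is to replace the inductive footprint-reduction claim with the explicit nested-cover construction, or to strengthen the induction with a quantitative ``room'' hypothesis on the higher-level $Z_J$.
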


\begin{proof}
The existence and uniqueness of reductions in (i), (ii) are proven in Proposition~\ref{prop:cov2}.
The $\de>0$ enlargement of a given reduction in (iii)(b) and (iv)(b) is proven in Lemma~\ref{le:delred}.
Moreover, Lemma~\ref{le:delred}~(b), (c) construct a nested (cobordism) reduction (with prescribed boundaries), which proves (iii)(a) and (iv)(a) in the case $\Ww=|\Kk|$. 

Given a more general open subset $\Ww\subset(|\Kk|,d)$, we start from a nested (cobordism) reduction $\Cc \sqsubset \Vv$ of $\Kk$, which is provided by Lemma~\ref{le:delred}.
Then we obtain a further precompact subset $\Cc \cap \pi_\Kk^{-1}(\Ww) \sqsubset \Vv$, which is open by Lemma~\ref{le:metric}~(i).
After discarding components $C_I \cap \pi_\Kk^{-1}(\Ww)$ that have empty intersection with $s_I^{-1}(0)$, the resulting subset $\Cc_\Ww \sqsubset \Vv$ forms another nested reduction since $\io_\Kk(X)=|\s_\Kk|^{-1}(0)\subset\pi_\Kk(\Cc)\cap \Ww$.
Finally, if $\Kk$ is a cobordism, then $\Cc_\Ww$ has product form in the boundary collars since $\Ww$ does.
\end{proof}

%%%%%%%%%%%%%%%%%%%%%%%%%%%%%%%%%%%%%%%%%%%%%%%%%%%
\subsection{Compactness of perturbed zero sets} \label{ss:pert} \hspace{1mm}\\ \vspace{-3mm}
%%%%%%%%%%%%%%%%%%%%%%%%%%%%%%%%%%%%%%%%%%%%%%%%%%%

Given a reduction $\Vv$, the reduced section $\s_\Kk|_\Vv: \bB_\Kk|_\Vv\to \bE_\Kk|_\Vv$ has local zero sets $(\s_\Kk|_{\Vv})^{-1}(0)  := {\textstyle \bigsqcup_{I\in \Ii_\Kk}}(s_I|_{V_I})^{-1}(0) \subset\Obj_{\bB_\Kk}$
which still cover $X$ by the reduction condition (iii), in the sense that 
$\pi_\Kk\bigl( (\s_\Kk|_{\Vv})^{-1}(0) \bigr)= \iota_\Kk(X) \subset |\Kk|$.
In particular that means that the zero set is contained in the image of any other reduction $\Cc$, i.e.\ $\pi_\Kk\bigl( (\s_\Kk|_{\Vv})^{-1}(0) \bigr)\subset\pi_\Kk(\Cc)$.
While $\pi_\Kk(\Cc)\subset|\Kk|$ is rarely an open neighbourhood of $\io_\Kk(X)$, it plays the role of a precompact neigbhourhood in that perturbations of the zero set are constructed in \cite{MW1} to remain contained in $\pi_\Kk(\Cc)$, yielding the following notion of desirable perturbations.

\begin{definition}\label{def:pert}
Let $\Vv$ be a (cobordism) reduction of a tame Kuranishi atlas/cobordism~$\Kk$. 
Then a  {\bf precompact perturbation} of $\s_\Kk|_\Vv$ is a continuous functor $\nu:\bB_\Kk|_\Vv\to\bE_\Kk|_\Vv$ between the reduced domain and obstruction categories such that $\pr_\Kk\circ\nu$ is the identity functor and such that $\pi_\Kk\bigl((\s_\Kk|_\Vv + \nu)^{-1}(0)\bigr)\subset\pi_\Kk(\Cc)$
for some nested reduction $\Cc\sqsubset \Vv$.
That is, $\nu=(\nu_I)_{I\in\Ii_\Kk}$ is given by a family of continuous maps $\nu_I: V_I\to \E_I$ such that \begin{align}\label{eq:comp}
\Hat\Phi_{IJ} \circ \nu_I  \; = \; \nu_J \circ\phi_{IJ} \quad\text{on}\;  V_I\cap \phi_{IJ}^{-1}(V_J) 
\qquad&\forall I,J\in\Ii_\Kk, I\subsetneq J , \\
\label{eq:C}
(s_I|_{V_I}+\nu_I)^{-1}(0) \;\subset\; \pi_\Kk^{-1}\bigl(\pi_\Kk(\Cc)\bigr) \cap V_I \qquad  &\forall I\in \Ii_\Kk.
\end{align}
Its {\bf perturbed zero set} $|\bZ_\nu|$ is the realization of the full subcategory $\bZ_\nu$ of $\bB_\Kk$ with objects
$$
(\s_\Kk|_{\Vv} + \nu)^{-1}(0)  
 := {\textstyle \bigsqcup_{I\in \Ii_\Kk}}(s_I|_{V_I}+\nu_I)^{-1}(0) \;\subset\;\Obj_{\bB_\Kk} . 
$$
That is, we equip
$
|\bZ_\nu| : = \bigl|( \s_\Kk|_{\Vv}  + \nu)^{-1}(0)  
\bigr| \,=\; \quotient{ {\textstyle\bigsqcup_{I\in\Ii_\Kk} (s_I|_{V_I}+\nu_I)^{-1}(0) }}{\!\sim} 
$
with the quotient topology generated by the morphisms of $\bB_\Kk|_\Vv$.
\end{definition}

The key topological property of precompact perturbations is the following sequential compactness of the perturbed zero set, whose use in the construction of the virtual moduli cycle we will explain in Remark~\ref{rmk:vmc} below.

\begin{thm} \label{thm:zeroS0}
Let $\Kk$ be a tame Kuranishi atlas (or cobordism) with a (cobordism) reduction $\Vv$, and 
suppose that $\nu: \bB_\Kk|_\Vv \to \bE_\Kk|_\Vv$ is a precompact perturbation.
Then the realization $|(\s_\Kk|_\Vv+ \nu)^{-1}(0)|$ is a sequentially compact Hausdorff space. 
\end{thm}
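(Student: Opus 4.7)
The plan is to treat the two claims separately, with the sequential compactness being the substantive step.

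For Hausdorffness, I would consider the map $\iota_\nu : |\bZ_\nu| \to |\Kk|$ induced by the inclusion of $\bZ_\nu$ as a full subcategory of $\bB_\Kk$. It is continuous by the universal property of the quotient, since the composition ${\textstyle \bigsqcup_I} (s_I|_{V_I}+\nu_I)^{-1}(0) \hookrightarrow \Obj_{\bB_\Kk} \to |\Kk|$ respects the equivalences defining $|\bZ_\nu|$ (all generators are morphisms in $\bB_\Kk$). To verify injectivity, suppose $x \in (s_I|_{V_I}+\nu_I)^{-1}(0)$ and $y \in (s_J|_{V_J}+\nu_J)^{-1}(0)$ satisfy $\pi_\Kk(x) = \pi_\Kk(y)$. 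Then $\pi_\Kk(V_I) \cap \pi_\Kk(V_J) \ne \emptyset$, so reduction property (ii) of Definition~\ref{def:vicin} forces $I \subset J$ or $J \subset I$; in either case Lemma~\ref{le:Ku2}(a) expresses the equivalence by a direct coordinate change, say $y = \phi_{IJ}(x)$ when $I \subset J$, and since $x \in V_I$ and $y \in V_J$ the morphism $(I,J,x)$ lies in $\Mor_{\bB_\Kk|_\Vv}$ and witnesses the equality of classes in $|\bZ_\nu|$. Since $|\Kk|$ is Hausdorff by Proposition~\ref{prop:Khomeo} (or Lemma~\ref{le:cob0} in the cobordism case), the continuous injection $\iota_\nu$ implies $|\bZ_\nu|$ is Hausdorff.

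For sequential compactness, let $[I_k, x_k] \in |\bZ_\nu|$ be a sequence. Since $\Ii_\Kk$ is finite, a pigeonhole argument produces a subsequence with $I_k = I$ constant. The precompactness condition \eqref{eq:C} places each $x_k \in \pi_\Kk^{-1}(\pi_\Kk(\Cc)) \cap V_I$ for the nested reduction $\Cc \sqsubset \Vv$ supplied by the definition of a precompact perturbation, so I pick representatives $y_k \in \Cc$ with $\pi_\Kk(y_k) = \pi_\Kk(x_k)$. Pigeonhole on the components of $\Cc = \bigsqcup_J C_J$ gives a further subsequence with $y_k \in C_J$ for a fixed $J$, and the compactness of $\ov{C_J} \subset V_J$ (from $C_J \sqsubset V_J$) yields yet another subsequence converging to some $y_\infty \in \ov{C_J} \subset V_J$.

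It remains to show that $y_\infty$ represents the limit of $[I, x_k]$ in $|\bZ_\nu|$. The identity $\pi_\Kk(x_k) = \pi_\Kk(y_k) \in \pi_\Kk(V_I) \cap \pi_\Kk(V_J)$ together with reduction property (ii) again forces $I \subset J$ or $J \subset I$. In the case $I \subset J$ (the other is symmetric), Lemma~\ref{le:Ku2}(a) yields $x_k \in U_{IJ}$ with $y_k = \phi_{IJ}(x_k)$, and since $y_k \in V_J$ we have $x_k \in V_I \cap \phi_{IJ}^{-1}(V_J)$. The perturbation compatibility \eqref{eq:comp}, combined with the defining intertwining identities $\Hat\Phi_{IJ} \circ s_I = s_J \circ \phi_{IJ}$ and $\Hat\Phi_{IJ} \circ 0_I = 0_J \circ \phi_{IJ}$, transports the zero condition, so that $y_k \in (s_J|_{V_J}+\nu_J)^{-1}(0)$ for every $k$. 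This set is relatively closed in $V_J$ by continuity, hence $y_\infty$ is a zero; continuity of the quotient projection restricted to $V_J$ then yields $[I, x_k] = [J, y_k] \to [J, y_\infty]$ in $|\bZ_\nu|$. The argument adapts verbatim to the cobordism case, as the collar structure plays no role. The principal difficulty, which this strategy overcomes, is that the quotient topology on $|\bZ_\nu|$ is not a priori controlled by any single metric; by exploiting the finite indexing set, the precompactness $\Cc \sqsubset \Vv$, and the nesting dichotomy enforced by reduction property (ii), one reduces every convergence question to a problem inside a single Kuranishi domain $V_J$, where elementary metric-space arguments apply.
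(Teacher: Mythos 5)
Your proposal is correct and follows essentially the same route as the paper: Hausdorffness via the continuous injection of the perturbed zero set into $|\Kk|$ (the paper cites Lemma~\ref{lem:full}, which you re-derive inline), and sequential compactness via pigeonholing on the finite index set, the precompactness condition \eqref{eq:C}, the separation property (ii) of reductions, Lemma~\ref{le:Ku2}~(a) to realize equivalences by direct coordinate changes, compatibility \eqref{eq:comp} to transport the zero condition, and precompactness $C_J\sqsubset V_J$ to extract the limit. The only cosmetic difference is that in the case $J\subset I$ you extract the convergent subsequence directly from $y_k\in C_J$, whereas the paper first converges $x_k$ in $\ov{V_I}\cap \im\phi_{JI}$ (using closedness of $\im\phi_{JI}$ from Lemma~\ref{le:phitrans}) and then pulls back via the homeomorphism $\phi_{JI}$ -- both are valid.
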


The proof of Hausdorffness in this theorem, at the end of this section, will be based on the following comparison of topologies.

\begin{lemma}\label{lem:full}  
Let $\Kk$ be a tame topological Kuranishi atlas with reduction $\Vv$, and suppose that $\bC$ is a full subcategory of the  reduced domain category $\bB_\Kk|_\Vv$. Then the map $|\bC|\to |\Kk|$, induced by the inclusion of object spaces, is
a continuous injection.
In particular, the realization $|\bC|$ is homeomorphic to its image $|\Obj_{\bC}|=\pi_\Kk(\Obj_{\bC})$ with the quotient topology in the sense of Definition~\ref{def:topologies}. 
\end{lemma}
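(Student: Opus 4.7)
The plan is to verify continuity of the induced map $|\bC|\to|\Kk|$, then injectivity (the crux), and finally identify the two quotient topologies on the image $\pi_\Kk(\Obj_\bC)$. Continuity is formal: the inclusion $\Obj_\bC\hookrightarrow\Obj_{\bB_\Kk}$ carries the subspace topology and is by definition continuous on object spaces, so by the general principle recorded in the proof of Lemma~\ref{le:Knbhd1} it descends to a continuous map between the realizations.

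The main step is injectivity. Given $(I,x),(J,y)\in\Obj_\bC$ with $\pi_\Kk(I,x)=\pi_\Kk(J,y)$, I would argue as follows. Since $x\in V_I$ and $y\in V_J$, the common image shows $\pi_\Kk(\ov{V_I})\cap\pi_\Kk(\ov{V_J})\neq\emptyset$, so axiom (ii) of a reduction (Definition~\ref{def:vicin}) forces $I\subset J$ or $J\subset I$; without loss of generality assume $I\subset J$. If $I=J$, Lemma~\ref{le:Ku2}(c) immediately gives $x=y$. If $I\subsetneq J$, tameness of $\Kk$ lets me invoke Lemma~\ref{le:Ku2}(a), producing $z\in U_{I\cup J}=U_J$ with $(I,x)\preceq(J,z)\succeq(J,y)$. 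But $\phi_{JJ}=\id_{U_J}$ forces $z=y$, so $y=\phi_{IJ}(x)$, and $(I,J,x)$ is a morphism of $\bB_\Kk$ whose source lies in $V_I$ and whose target $\phi_{IJ}(x)=y$ lies in $V_J$. Since $\bC$ is full in $\bB_\Kk|_\Vv$, this morphism belongs to $\bC$, so $(I,x)$ and $(J,y)$ are identified in $|\bC|$.

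For the final claim, let $\sim_\bC$ and $\sim$ denote the equivalence relations on $\Obj_{\bB_\Kk}$ generated by the morphisms of $\bC$ and of $\bB_\Kk$, respectively. Trivially $\sim_\bC$ is contained in $\sim$, and the injectivity just proved is precisely the reverse inclusion on $\Obj_\bC\times\Obj_\bC$. Therefore the two quotients $|\bC|=\Obj_\bC/\!\!\sim_\bC$ and $|\Obj_\bC|=\Obj_\bC/\!\!\sim$ (each equipped with the quotient topology inherited from the common subspace topology on $\Obj_\bC\subset\Obj_{\bB_\Kk}$) carry the same topology, so the continuous bijection between them is a homeomorphism.

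The hard part is the injectivity step: a priori two objects of $\bC$ indexed by incomparable subsets $I,J$ could be connected by a long zig-zag of morphisms in $\bB_\Kk$ passing through index sets that do not appear among the indices of objects of $\bC$, and fullness alone cannot pull such a chain back into $\bC$. The reduction axiom (ii) is precisely designed to rule this out by enforcing comparability $I\subset J$ or $J\subset I$, after which tameness collapses the zig-zag to a single direct coordinate change via Lemma~\ref{le:Ku2}(a), which then lies in $\bC$ by fullness.
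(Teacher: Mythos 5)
Your proof is correct and follows essentially the same route as the paper: reduction axiom (ii) forces $I\subset J$ or $J\subset I$, Lemma~\ref{le:Ku2} then yields a direct morphism which fullness places in $\bC$, and the resulting agreement of the equivalence relations on $\Obj_\bC$ gives the homeomorphism onto the image with its quotient topology. The only cosmetic differences are that you handle continuity via the general functor-realization fact from Lemma~\ref{le:Knbhd1} rather than Proposition~\ref{prop:Ktopl1}~(i), and you spell out the $I=J$ and $z=y$ details that the paper leaves to Lemma~\ref{le:Ku2}.
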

\begin{proof}  
The map $|\bC|\to |\Kk|$ is well defined because $(I,x) \sim_{\bC} (J,y)$ implies $(I,x) \sim_{\bB_\Kk} (J,y)$ since the morphisms in $\bC$ are a subset of those in $\bB_\Kk$.
In order for $|\bC|\to |\Kk|$ to be injective we need to check the converse implication, that is we consider objects $(I,x),(J,y)\in \Obj_{\bC}$, identify them with points $x\in V_I$ and $y\in V_J$, and assume $\pi_\Kk(I,x) = \pi_\Kk(J,y)$. 
Then we have $I\subset J$ or $J\subset I$ by Definition~\ref{def:vicin}~(ii), so that Lemma~\ref{le:Ku2}~(a) implies either $y=\phi_{IJ}(x)$ or $x=\phi_{JI}(y)$.  Since $\bC$ is a full subcategory of $\bB_\Kk|_\Vv$ and hence of $\bB_\Kk$, the corresponding morphism $(I,J,x)$ (or $(J,I,y)$) belongs to $\bC$. 
Hence $(I,x) \sim_{\bB_\Kk} (J,y)$ implies $(I,x) \sim_{\bC} (J,y)$, so that $|\bC|\to |\Kk|$ is injective.
In fact, this shows that the relations $\sim_{\bB_\Kk}$ and $\sim_{\bC}$ agree on $\Obj_{\bC}\subset\Obj_{\bB_\Kk}$, 
and thus $|\bC|\to |\pi_\Kk(\Obj_{\bC})|$ is a homeomorphism with respect to the quotient topology on $\pi_\Kk(\Obj_{\bC})$. Finally, Proposition~\ref{prop:Ktopl1}~(i) asserts that the identity map $|\pi_\Kk(\Obj_{\bC})| \to \|\pi_\Kk(\Obj_{\bC})\|\subset|\Kk|$ is continuous from this quotient topology to the relative topology induced by $|\Kk|$, which finishes the proof.
\end{proof}

\begin{example}\rm 
The inclusion $|\bC| \hookrightarrow |\Kk|$ does {\it not} hold for arbitrary full subcategories of $\bB_\Kk$.  For example, the full subcategory $\bC$ with objects $\bigsqcup_{i=1,\dots, N} U_i$ (the union of the domains of the basic charts) has only identity morphisms, so that $|\bC| = \Obj_{\bC}$ equals $|\Kk|$ only if there are no transition charts.
$\hfill\er$
\end{example}

As explained in Definition~\ref{def:topologies}, the subset $\pi_\Kk(\Vv) \subset|\Kk|$ 
has two different topologies: its quotient topology and the subspace topology.  
If $(\Kk,d)$ is metric, there might conceivably be a third topology, namely that induced by 
restriction of the metric.  
Although we will not use this explicitly, let us show that the metric topology on $\pi_\Kk(\Vv)$ agrees with the subspace topology, so that we only have two different topologies in play.

\begin{lemma}\label{le:Zz}
Let $\Vv$ be a reduction of a metric tame topological Kuranishi atlas 
$(\Kk,d)$. Then the metric topology on $\pi_\Kk(\Vv)$ equals the subspace topology. 
\end{lemma}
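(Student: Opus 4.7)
The plan is to reduce the statement to the closure $\pi_\Kk(\ov\Vv)$, where Proposition~\ref{prop:Ktopl1}~(ii) already identifies the quotient and subspace topologies, and where compactness lets me upgrade the continuous identity of Lemma~\ref{le:metric}~(i) to a homeomorphism. Since $\Vv = \bigsqcup_I V_I$ with $V_I \sqsubset U_I$ and $\Ii_\Kk$ finite, $\ov\Vv = \bigsqcup_I \ov{V_I}$ is a precompact subset of $\Obj_{\bB_\Kk}$. Hence Proposition~\ref{prop:Ktopl1}~(ii) applies to give that $\pi_\Kk(\ov\Vv)$ is compact, and that its quotient topology $|\ov\Vv|$ and subspace topology $\|\ov\Vv\|$ (inherited from $|\Kk|$) coincide.

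Next, by Lemma~\ref{le:metric}~(i) the identity $|\Kk|\to (|\Kk|,d)$ is continuous from the quotient topology to the metric topology. Restricting to $\pi_\Kk(\ov\Vv)$, this gives a continuous bijection from the compact space $\|\ov\Vv\|$ to $\pi_\Kk(\ov\Vv)$ with the topology induced by restriction of $d$, which is Hausdorff as a metric space. By the nesting uniqueness of compact Hausdorff topologies recalled in Remark~\ref{rmk:hom}, this bijection is in fact a homeomorphism. Hence the subspace topology and the metric topology coincide on $\pi_\Kk(\ov\Vv)$.

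Finally, both the metric topology on $\pi_\Kk(\Vv)$ inherited from $(|\Kk|,d)$ and the subspace topology on $\pi_\Kk(\Vv)$ inherited from $|\Kk|$ agree, by transitivity of subspace topologies, with the corresponding topologies obtained by first restricting to $\pi_\Kk(\ov\Vv)$ and then to the subset $\pi_\Kk(\Vv)$. Since the previous step shows these already agree on the larger set $\pi_\Kk(\ov\Vv)$, they also agree on $\pi_\Kk(\Vv)$, which is the claim. There is no genuine obstacle here; the only care needed is in the bookkeeping between the four relevant topologies (quotient and metric, each restricted to $\Vv$ and to $\ov\Vv$), with the essential input being the compactness trick applied to the precompact set $\ov\Vv$.
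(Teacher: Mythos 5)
Your argument is correct and follows essentially the same route as the paper's proof: pass to the compact set $\pi_\Kk(\ov\Vv)$, note that the identity from the subspace topology to the metric topology is a continuous bijection from a compact space to a Hausdorff space (Lemma~\ref{le:metric}~(i) and Remark~\ref{rmk:hom}), hence a homeomorphism, and then restrict to $\pi_\Kk(\Vv)$. The only cosmetic difference is that you obtain compactness (and the unneeded identification $|\ov\Vv|=\|\ov\Vv\|$) from Proposition~\ref{prop:Ktopl1}~(ii), whereas the paper deduces compactness directly from the continuity of $\pi_\Kk$ and of $\id_{|\Kk|}$.
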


\begin{proof}  
Since every reduction $\Vv \subset\Obj_{\bB_\Kk}$ is precompact, the continuity of $\pi_\Kk:\Obj_{\bB_\Kk}\to |\Kk|$ (to $|\Kk|$ with its quotient topology) and of $\id_{|\Kk|}: |\Kk| \to (|\Kk|,d)$ from Lemma~\ref{le:metric} imply that $\pi_\Kk(\ov\Vv)\subset|\Kk|$ is compact in both topologies. Thus the identity map $\id_{\pi_\Kk(\ov\Vv)}: |\Kk|\supset \pi_\Kk(\ov\Vv) \to \bigl(\pi_\Kk(\ov\Vv), d \bigr)$ is a continuous bijection from the compact space $\pi_\Kk(\ov\Vv)$ with the subspace topology to the Hausdorff space $\bigl(\pi_\Kk(\ov\Vv), d \bigr)$ with the induced metric. But this implies that $\id_{\pi_\Kk(\ov\Vv)}$ is a homeomorphism, see Remark~\ref{rmk:hom}, and hence restricts to a homeomorphism $\id_{\pi_\Kk(\Vv)} :  |\Kk|\supset \pi_\Kk(\Vv) \to \bigl(\pi_\Kk(\Vv), d \bigr)$.
Thus, the relative and metric topologies on $\pi_\Kk(\Vv)$ agree.
\end{proof}

Before proving the sequential compactness of precompactly perturbed zero sets, the following remark explains its use in the construction of the virtual moduli cycle.

\begin{remark}{\rm  \label{rmk:vmc}
In an orbibundle context one would expect that any precompact perturbation $\nu$ has closed zero set $\pi_\Kk\bigl((\s_\Kk|_\Vv + \nu)^{-1}(0)\bigr)\subset\pi_\Kk(\Cc)$.
Since $\ov{\pi_\Kk(\Cc)}=\pi_\Kk(\ov\Cc)\subset|\Kk|$ is compact by Proposition~\ref{prop:Ktopl1}~(ii),(iii), this would imply compactness of the zero set.
At the same time, the Hausdorffness of $|\Kk|$ transfers to its subset.
Theorem~\ref{thm:zeroS0} establishes the same for the Kuranishi context, but has to overcome a number of topological obstacles, beginning with an abundance of topologies:
As in Definition~\ref{def:topologies}, we need to differentiate between the quotient topology and the subspace topology on $\pi_\Kk\bigl((\s_\Kk|_\Vv + \nu)^{-1}(0)\bigr)$, which we denote by 
$\bigl|( \s_\Kk|_{\Vv}  + \nu)^{-1}(0)\bigr|$ and $\bigl\|( \s_\Kk|_{\Vv}  + \nu)^{-1}(0)\bigr\|\subset |\Kk|$, respectively. In fact, there are two quotient topologies induced by the morphisms of $\bB_\Kk|_\Vv$ and by $\pi_\Kk$, respectively. If $(\Kk,d)$ is metric, a fourth topology is induced by restriction of the metric.
Fortunately, Lemma~\ref{le:Zz} above shows that the metric and subspace topologies coincide, while Lemma~\ref{lem:full} identifies the quotient topologies induced by the morphisms resp.\ $\pi_\Kk$.
Moreover, the inclusion $(\s_\Kk|_{\Vv} +\nu)^{-1}(0) \subset\Vv = \Obj_{\bB_\Kk|_\Vv}$ induces a continuous injection
\begin{equation}\label{eq:Zinject} 
i_\nu \,:\;  |(\s_\Kk|_\Vv+\nu)^{-1}(0)| \;\longrightarrow\;  \bigl\|( \s_\Kk|_{\Vv}  + \nu)^{-1}(0)\bigr\| \;\subset\; |\Kk| .
\end{equation}
The Hausdorffness of $|\Kk|$ also transfers to the domain of this injection, so the perturbed zero set is equipped with two Hausdorff topologies: the quotient topology on $|(\s_\Kk|_\Vv+\nu)^{-1}(0)|=\qu{(\s_\Kk|_\Vv+\nu)^{-1}(0)}{\sim}$  and the relative topology on $\|(\s_\Kk|_\Vv+\nu)^{-1}(0)\|\subset|\Kk|$.

In a smooth context, transversality of the perturbation will imply local smoothness of the perturbed zero set, though only in the quotient topology, which may contain smaller neighbourhoods than the relative topology.
On the other hand, compactness of the perturbed zero set is easier to obtain in the relative topology than in the quotient topology, which may have more open covers:
One could use the fact that $\|(\s_\Kk|_\Vv+\nu)^{-1}(0)\|\subset\|\Vv\|$ is precompact in $|\Kk|$ by Proposition~\ref{prop:Ktopl1}~(iii), so it would suffice to deduce closedness. This would follow if the continuous map $|\s_\Kk|_\Vv+\nu|:\|\Vv\| \to |\bE_\Kk|_\Vv|$ had a continuous extension to $|\Kk|$ with no further zeros. 
However, such an extension may not exist. In fact, generally $\|\Vv\|\subset |\Kk|$ fails to be open, 
$\io_\Kk(X)\subset |\Kk|$ does not have any precompact neighbourhoods (see Example~\ref{ex:Khomeo}), and even those assumptions would not guarantee an appropriate extension.

So compactness of the perturbed zero set in either topology will not hold in general without further hypotheses on the perturbation that force its zero set to be ``away from the boundary"  of $\pi_\Kk(\Vv)$. 
This is exactly what the notion of precompactness in Definition~\ref{def:pert} accomplishes, and what guarantees the sequential compactness of $|(\s_\Kk|_\Vv+\nu)^{-1}(0)|$.

In the smooth context, $|(\s_\Kk|_\Vv+ \nu)^{-1}(0)|$ will moreover be second countable, so that compactness is equivalent to sequential compactness (see e.g.\ \cite[Theorem~5.5]{Kel}) and hence $|(\s_\Kk|_\Vv+ \nu)^{-1}(0)|$ has a fundamental cycle that represents the virtual moduli cycle.

In that case, the map \eqref{eq:Zinject} then is a continuous bijection between the compact space $|(\s_\Kk|_\Vv+ \nu)^{-1}(0)|$ and the Hausdorff space $\|(\s_\Kk|_\Vv+\nu)^{-1}(0)\|\subset|\Kk|$. By Remark~\ref{rmk:hom}, this in fact is a homeomorphism $|(\s_\Kk|_\Vv+ \nu)^{-1}(0)|\cong \|(\s_\Kk|_\Vv+\nu)^{-1}(0)\|$, so that eventually all four topologies on the perturbed zero set agree.
}
$\hfill\er$
\end{remark}

\begin{proof}[Proof of Theorem~\ref{thm:zeroS0}] 
The continuous injection $|(\s_\Kk|_\Vv+ \nu)^{-1}(0)| \to |\Kk|$ from Lemma~\ref{lem:full} transfers the Hausdorffness of $|\Kk|$ from Proposition~\ref{prop:Khomeo} to the domain $|(\s_\Kk|_\Vv+ \nu)^{-1}(0)|$.

To prove sequential compactness we consider a sequence $(p_k)_{k\in\N}\subset |(\s_\Kk|_\Vv+ \nu)^{-1}(0)|$ (whose subsequences we will index by $k\in\N$ as well).
By finiteness of $\Ii_\Kk$ there is $I\in\Ii_\Kk$ and a subsequence of $(p_k)$ that has lifts in $(s_I|_{V_I}+\nu_I)^{-1}(0)$. 
In fact, by the precompactness assumption \eqref{eq:C}, and using the language of Definition~\ref{def:preceq}, the subsequence lies in
$$
V_I \cap \pi_\Kk^{-1}\bigl(\pi_\Kk(\Cc)\bigr)   \;=\;  V_I \cap {\textstyle \bigcup_{J\in\Ii_\Kk}} \eps_I(C_J)
\;\subset\; U_I .
$$
Here we have $\eps_I(C_J)=\emptyset$ unless $I\subset J$ or $J\subset I$, due to the intersection property (ii) of Definition~\ref{def:vicin} and the inclusion $C_J\subset V_J$.
So we can choose another subsequence and lifts $(x_k)_{k\in\N}\subset V_I$ with $\pi_\Kk(x_k)=p_k$ such that either 
$$
(x_k)_{k\in\N}\subset V_I \cap \phi_{IJ}^{-1}(C_J)
\qquad\text{or}\qquad 
(x_k)_{k\in\N}\subset V_I \cap \phi_{JI}(C_J\cap U_{JI})
$$ 
for some $I\subset J$ or some $J\subset I$.
In the first case, compatibility of the perturbations \eqref{eq:comp} implies that there are other lifts
$\phi_{IJ}(x_k)\in (s_J|_{V_J}+\nu_J)^{-1}(0)\cap C_J$, which by precompactness $\ov{C_J}\sqsubset V_J$ have a convergent subsequence $\phi_{IJ}(x_k)\to y_\infty \in (s_J|_{V_J}+\nu_J)^{-1}(0)$.
Thus we have found a limit point in the perturbed zero set 
$p_k = \pi_\Kk(\phi_{IJ}(x_k)) \to \pi_\Kk(y_\infty) \in |(\s_\Kk|_\Vv+ \nu)^{-1}(0)|$,
as required for sequential compactness.

In the second case we use the relative closedness of $\phi_{JI}(U_{JI})=s_I^{-1}(E_J)\subset U_I$ 
from Lemma~\ref{le:phitrans} and the precompactness $V_I\sqsubset U_I$ to find a convergent subsequence 
$x_k\to x_\infty \in \ov{V_I} \cap \phi_{JI}(U_{JI})$.
Since $\phi_{JI}$ is a homeomorphism to its image, this implies convergence of the preimages
$y_k:= \phi_{JI}^{-1}(x_k) \to \phi_{JI}^{-1}(x_\infty) =: y_\infty \in U_{JI}$.
By construction and compatibility of the perturbations \eqref{eq:comp}, this subsequence $(y_k)$ of lifts of $\pi_\Kk(y_k)=p_k$ lies in $(s_J|_{V_J}+\nu_J)^{-1}(0)\cap C_J$.
Now precompactness $C_J\sqsubset V_J$ implies $y_\infty\in V_J$, and continuity of the section implies $y_\infty\in (s_J|_{V_J}+\nu_J)^{-1}(0)$. Thus we again have a limit point 
$p_k = \pi_\Kk(y_k) \to \pi_\Kk(y_\infty) \in |(\s_\Kk|_\Vv+ \nu)^{-1}(0)|$.
This proves the claimed sequential compactness.
\end{proof}

%%%%%%%%%%%%%%%%%%%%%%%%%%%%%%%%%%%%%%%%%%%%%%%%
\subsection{Existence and uniqueness  of reductions} \label{ss:exred} \hspace{1mm}\\ \vspace{-3mm}
%%%%%%%%%%%%%%%%%%%%%%%%%%%%%%%%%%%%%%%%%%%%%%%%

In order to prove the existence and uniqueness up to cobordism of (nested) reductions, we start by analyzing the induced footprint cover of $X$.
Since $\pi_\Kk(\Vv)$ contains the zero set $\io_\Kk(X)$, the further conditions on reductions imply that the reduced footprints $Z_I = \psi_I(V_I\cap \s_I^{-1}(0_I))$ form a reduction of the footprint cover $X=\bigcup_{i=1,\ldots,N} F_i$ in the sense of the following lemma.
This lemma makes the first step towards existence of reductions by showing how to reduce the footprint cover. We will use the fact that every compact Hausdorff space is a {\it shrinking space} in the sense that every open cover has a shrinking -- in the sense of Definition~\ref{def:shr0} without requiring condition \eqref{same FI}.

\begin{lemma}\label{le:cov0}
For any finite open cover of a compact Hausdorff
space $X=\bigcup_{i=1,\ldots,N} F_i$ there exists a {\bf cover reduction} $\bigl(Z_I\bigr)_{I\subset \{1,\ldots,N\}}$ in the following sense:
The $Z_I\subset X$ are (possibly empty) open subsets satisfying
\begin{enumerate}
\item
$Z_I\sqsubset F_I
:= \bigcap_{i\in I} F_i$
for all $I$;
\item
if $\ov{Z_I}\cap \ov{Z_J}\ne \emptyset$ then $I\subset J$ or $J\subset I$;
\item
$X\,=\, \bigcup_{I} Z_I$.
\end{enumerate}
\end{lemma}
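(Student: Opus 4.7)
My plan is to construct the reduction via a partition of unity, which realizes the ``star of the first barycentric subdivision'' heuristic from Remark~\ref{rmk:nerve}. Since $X$ is compact Hausdorff, hence normal, the finite open cover $(F_i)_{i=1,\ldots,N}$ admits a subordinate continuous partition of unity $(\rho_i)_{i=1}^N$, that is $\rho_i:X\to[0,1]$ with $\{\rho_i>0\}\subset F_i$ and $\sum_{i=1}^N\rho_i\equiv 1$. Fix $\eps$ with $0<\eps<1/N^2$ and define, for each nonempty $I\subset\{1,\ldots,N\}$,
\[
Z_I \,:=\, \bigl\{x\in X \,\big|\, \min_{i\in I}\rho_i(x) \,>\, \eps + \max_{j\notin I}\rho_j(x)\bigr\}
\]
with the convention $\max_\emptyset:=0$, and set $Z_\emptyset:=\emptyset$. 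Each $Z_I$ is open by continuity of the $\rho_i$.

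Properties (i) and (ii) will then follow directly. For (i), a limit argument in the defining inequality gives $\rho_i(x)\ge \min_{i'\in I}\rho_{i'}(x) \ge \eps+\max_{j\notin I}\rho_j(x)\ge\eps>0$ for every $x\in\ov{Z_I}$ and every $i\in I$, so $x\in F_i$; thus $\ov{Z_I}\subset F_I$, and since $X$ is compact the closure $\ov{Z_I}$ is compact, giving $Z_I\sqsubset F_I$. For (ii), assume $I,J$ are incomparable and pick $i\in I\less J$ and $j\in J\less I$. For $x\in\ov{Z_I}\cap\ov{Z_J}$, one gets $\rho_i(x)\ge\rho_j(x)+\eps$ (since $i\in I$ and $j\notin I$) and symmetrically $\rho_j(x)\ge\rho_i(x)+\eps$ (since $j\in J$ and $i\notin J$); summing yields $0\ge 2\eps$, a contradiction.

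The main point is the covering property (iii), which I would establish by a pigeonhole on the sorted values of the partition of unity. Fix $x\in X$ and list $0$ together with $\{\rho_k(x)\}_{k=1,\ldots,N}$ in nondecreasing order as $0=v_0\le v_1\le\cdots\le v_N$. Since $v_N=\max_k\rho_k(x)\ge 1/N$ (as the maximum of $N$ nonnegative values with sum $1$) and $\sum_{\ell=0}^{N-1}(v_{\ell+1}-v_\ell)=v_N$, some consecutive gap satisfies $v_{\ell+1}-v_\ell\ge v_N/N\ge 1/N^2>\eps$. Take $I:=\{k:\rho_k(x)\ge v_{\ell+1}\}$, which is nonempty because it contains an index where $\rho_k$ attains $v_N$; then $\min_{i\in I}\rho_i(x)=v_{\ell+1}$ and $\max_{j\notin I}\rho_j(x)=v_\ell$, and the gap estimate gives $x\in Z_I$. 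The delicate step is precisely this pigeonhole: properties (i) and (ii) only need \emph{some} positive $\eps$, whereas (iii) requires the uniform $\eps$ to be simultaneously smaller than a gap in the sorted profile of $(\rho_k(x))$ at every $x\in X$, which is what forces the threshold $\eps<1/N^2$.
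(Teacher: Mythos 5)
Your proof is correct, and it takes a genuinely different route from the paper. The paper's construction is purely set-theoretic: it chooses a chain of precompactly nested shrinkings $F_i^0\sqsubset G_i^1\sqsubset F_i^1\sqsubset\cdots\sqsubset F_i^N=F_i$ and sets $Z_I:=\bigl(\bigcap_{i\in I}G_i^{|I|}\bigr)\less\bigcup_{j\notin I}\ov{F_j^{|I|}}$, proving the covering property by assigning to each $x$ the maximal index set $I_x=\bigcup\{I: x\in G_I^{|I|}\}$; the separation property comes from comparing the levels $|I|\le|J|$ of the nested covers. You instead use a partition of unity $(\rho_i)$ subordinate to $(F_i)$ and define $Z_I$ by the quantitative gap condition $\min_{i\in I}\rho_i>\eps+\max_{j\notin I}\rho_j$ with $\eps<1/N^2$, which literally realizes the barycentric-subdivision picture of Remark~\ref{rmk:nerve}: your $Z_I$ is an $\eps$-thickened preimage of the open star of the barycenter $v_I$ under the nerve map. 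Your verifications are sound: the $\eps$-buffer is exactly what makes the \emph{closures} of $Z_I$ and $Z_J$ disjoint for incomparable $I,J$ (without it, condition (ii) would fail on the locus $\rho_i=\rho_j$), and the pigeonhole on the sorted values — $N$ consecutive gaps summing to $v_N\ge 1/N$, hence some gap $\ge 1/N^2>\eps$ — correctly yields (iii), with the minor observation that one only needs $\min_{i\in I}\rho_i(x)\ge v_{\ell+1}$ and $\max_{j\notin I}\rho_j(x)\le v_\ell$ rather than the equalities you state (which do hold, but require a small case check when $v_\ell=0$). What each approach buys: yours is shorter, makes the simplicial heuristic explicit, and gives a uniform quantitative margin between the pieces; the paper's nested-cover formulation requires no auxiliary functions and, more importantly, is reused verbatim later — the collared version (Lemma~\ref{le:cobred}(a)) and the concordance/uniqueness argument (Lemma~\ref{le:cobred}(b)) reduce arbitrary cover reductions to ones of the explicit nested-cover form \eqref{eq:ZGI}, so adopting your construction would require either adapting those arguments or showing your reductions are concordant to ones of that standard form (which the paper's Claim in Lemma~\ref{le:cobred}(b) in fact provides).
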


\begin{proof}
Since $X$ is compact Hausdorff, we may choose precompact open subsets $F_i^0\sqsubset F_i$ that still cover $X$.
Next, any choice of precompactly nested sets
\begin{equation}\label{eq:FGI}
F_i^0\,\sqsubset\, G_i^1 \,\sqsubset\, F_i^1
\,\sqsubset\, G_i^2 \,\sqsubset\,\ldots \,\sqsubset\,
F_i^{N} = F_i
\end{equation}
yields further open covers  
$X=\bigcup_{i=1,\ldots,N} F_i^n$ and $X=\bigcup_{i=1,\ldots,N} G_i^n$ 
for $n=1,\ldots,N$.
Now we claim that the required cover reduction can be constructed by
\begin{equation}\label{eq:ZGI}
Z_I \,: =\; \Bigl( {\textstyle\bigcap_{i\in I}} G_i^{|I|} \Bigr) \;\less\; {\textstyle \bigcup_{j\notin I}} \ov{F^{|I|}_j} .
\end{equation}
To prove this we will use the following notation: Given any open cover $X=\bigcup_{i=1,\ldots,N} H_i$ of $X$, we denote the intersections of the covering sets by $H_I: = \bigcap_{i\in I} H_i$ for all $I\subset \{1,\ldots,N\}$. This convention will apply to define $F_I^k$ resp.\ $G_I^k$ from the $F_i^k$ resp.\ $G_i^k$, but it does not apply to the sets $Z_I$ constructed above, since in particular the $Z_i$ generally do not cover $X$. With this notation we have
$$
Z_I \,: =\; G_I^{|I|} \;\less\; {\textstyle \bigcup_{j\notin I}} \ov{F^{|I|}_j}
\qquad\text{for all}\;\; I\subset \{1,\ldots,N\}.
$$
These sets are open since they are the complement of a finite union of closed sets in the open set $G_I^{|I|}$. The precompact inclusion $Z_I\sqsubset F_I$ in (i) holds since $G_I^{|I|}\sqsubset F_I$.

To prove the covering in (iii)  let $x\in X$ be given. Then we claim that $x \in Z_{I_x}$ for
$$
I_x :=  \underset{I\subset\{1,\ldots,N\}, x \in G^{|I|}_I}{\textstyle \bigcup}  I  \;\;\;\subset\;\;\; \{1,\ldots, N\} .
$$
Indeed, we have $x\in G^{|I_x|}_{I_x}$ since $i\in I_x$ implies $x\in G^{|I|}_i$ for some $|I|\leq |I_x|$, and hence $x\in G^{|I_x|}_i$
since $G_i^{|I|}\subset G_i^{|I_x|}$.
On the other hand, for all $j\notin I_x$ we have $x\notin G^{|I_x|+1}_{I_x\cup j}$ by definition.
However, $x\in G^{|I_x|+1}_{I_x}$ by the nesting of the covers, so for every $j\notin I_x$ we obtain $x\in X\less G^{|I_x|+1}_j$, which is a subset of $X\less \ov{F^{|I_x|}_j}$. This proves $x \in Z_{I_x}$ and hence~(iii).

To prove the intersection property (ii), suppose to the contrary that $x\in \ov{Z_I}\cap\ov{Z_J}$ where $|I|\le |J|$ but $I\less J\ne \emptyset$.  Then given $i\in I\less J$, we have 
$x\in \ov{Z_I} \subset \ov{G^{|I|}_I}\subset F^{|J|}_i$
since $|I|\le |J|$, which contradicts $x\in \ov{Z_J} \subset X\less F^{|J|}_i$.
Thus the sets $Z_I$ form a cover reduction.
\end{proof}

To construct cobordism reductions with given boundary restrictions we need the following notion of collared concordance of cover reductions.

\begin{defn}\label{def:cobred}  
Given a finite open cover $X=\bigcup_{i=1,\ldots,N} F_i$ of a compact Hausdorff space, we say that two {\bf cover reductions $(Z_I^0)_{I\subset \{1,\ldots,N\}}, (Z_I^1)_{I\subset \{1,\ldots,N\}}$  
are collared concordant} if there exists a family of open subsets $Z_I\sqsubset [0,1]\times F_I$ satisfying conditions (i),(ii),(iii) in Lemma~\ref{le:cov0} for the cover $[0,1]\times X=\bigcup_{i=1,\ldots,N} [0,1]\times F_i $,
and in addition are collared in the sense of Definition~\ref{def:collarset}, i.e.\
\begin{enumerate}
\item[(iv)]
There is $\eps>0$ such that $Z_I\cap \bigl(A^\al_\eps\times X\bigr)= A^\al_\eps\times Z_I^\al$
for all $I\subset\{1,\ldots,N\}$ and $\al=0,1$.
\end{enumerate}
\end{defn}

\begin{lemma} \label{le:cobred1}
The collared concordance relation for cover reductions is reflexive, symmetric, and transitive.
\end{lemma}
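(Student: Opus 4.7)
The plan is to verify the three properties directly, using the collar structure both to produce concordances and to glue them. Throughout, we fix the cover $X=\bigcup_{i=1,\ldots,N} F_i$ and write $F_I=\bigcap_{i\in I}F_i$ as in Lemma~\ref{le:cov0}.

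For reflexivity, given a cover reduction $(Z_I)$ of $(F_i)$, I would take $Z_I^{[0,1]}:=[0,1]\times Z_I\sqsubset [0,1]\times F_I$. Conditions (i)--(iii) of Lemma~\ref{le:cov0} applied to the product cover $[0,1]\times F_i$ follow immediately from the corresponding properties of $(Z_I)$: precompactness and closure conditions are stable under product with $[0,1]$, and the covering property $[0,1]\times X=\bigcup_I [0,1]\times Z_I$ follows from $X=\bigcup_I Z_I$. The collar condition (iv) holds for any $\eps\in(0,\tfrac12)$ by construction, with $Z_I^0=Z_I^1=Z_I$. For symmetry, given a concordance $(Z_I)$ from $(Z_I^0)$ to $(Z_I^1)$ with collar width $\eps$, I would pull back by the reflection $\tau:[0,1]\to[0,1]$, $t\mapsto 1-t$, setting $\widetilde Z_I:=(\tau\times\id_X)(Z_I)$. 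Precompactness, the intersection property (ii), and the covering (iii) are preserved, and the collar condition transfers with $\widetilde Z_I^0=Z_I^1$, $\widetilde Z_I^1=Z_I^0$.

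The main step is transitivity, which is where the collared structure is essential. Suppose $(Z_I^0)\sim(Z_I^1)$ via $Z^{[0,1]}_I\sqsubset[0,1]\times F_I$ with collar width $\eps_1>0$, and $(Z_I^1)\sim(Z_I^2)$ via $Z^{[1,2]}_I\sqsubset[0,1]\times F_I$ with collar width $\eps_2>0$. Set $\eps:=\min(\eps_1,\eps_2)/2$. I would first rescale each concordance so that its nontrivial part lies in a subinterval away from the gluing point. Concretely, choose a homeomorphism $\si_1:[0,\tfrac12]\to[0,1]$ that is the identity near $0$ and linear-affine with slope $1$ near $\tfrac12$, mapping $[\tfrac12-\eps,\tfrac12]$ onto $[1-\eps,1]$, and similarly $\si_2:[\tfrac12,1]\to[0,1]$ that is linear near $\tfrac12$ with slope $1$ and the identity near $1$. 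Push forward $Z^{[0,1]}_I$ by $\si_1\times\id$ and $Z^{[1,2]}_I$ by $\si_2\times\id$ to obtain subsets of $[0,\tfrac12]\times F_I$ and $[\tfrac12,1]\times F_I$, respectively. On the overlap $[\tfrac12-\eps,\tfrac12+\eps]\times F_I$ both pushed-forward sets coincide, by the collar condition (iv), with $[\tfrac12-\eps,\tfrac12+\eps]\times Z_I^1$. I would then define
\[
Z_I := \bigl(\si_1\times\id\bigr)_*Z^{[0,1]}_I \;\cup\; \bigl(\si_2\times\id\bigr)_*Z^{[1,2]}_I \;\subset\; [0,1]\times F_I.
\]

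It remains to check conditions (i)--(iv) for $(Z_I)$ with respect to the cover $[0,1]\times X=\bigcup_i[0,1]\times F_i$. Openness and precompactness in $[0,1]\times F_I$ follow from the corresponding properties of the two pieces (since the gluing takes place on a common open subset). Condition (iii), that $[0,1]\times X=\bigcup_I Z_I$, holds because each $t\in[0,1]$ lies in $\si_1([0,\tfrac12])$ or $\si_2([\tfrac12,1])$ and the respective slice is covered. The intersection property (ii) follows by checking it separately on the halves $[0,\tfrac12]\times X$ and $[\tfrac12,1]\times X$, where it is inherited from $Z^{[0,1]}$ and $Z^{[1,2]}$; a pair with closures meeting on the middle slice $\{\tfrac12\}\times X$ meets inside both halves, and we use that $\ov{Z_I^1}\cap\ov{Z_J^1}\neq\emptyset$ already forces $I\subset J$ or $J\subset I$. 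Finally, (iv) holds with collar width $\eps$: near $t=0$ the set $Z_I$ equals $(\si_1\times\id)_* Z_I^{[0,1]}$, which on $A_\eps^0\times X$ is the product $A_\eps^0\times Z_I^0$, since $\si_1$ is the identity there; symmetrically near $t=1$, with $Z_I^2$. The main obstacle is the gluing: one must ensure the two reparametrizations match on an open neighbourhood of the seam, which is exactly what the common boundary restriction to $(Z_I^1)$ together with the collar condition guarantees. This completes the proof.
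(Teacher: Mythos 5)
Your proof is correct and follows essentially the same route as the paper, which proves this lemma simply by reference to the (harder) argument of Lemma~\ref{lem:cobord1}: reflexivity via the product concordance, symmetry via the reflection $t\mapsto 1-t$, and transitivity by gluing the two concordances along the common boundary restriction $(Z_I^1)$, using the collar condition to obtain product form near the seam (the paper's later construction in Proposition~\ref{prop:cov2}(c) uses the same gluing with linear rescaling). Only a notational slip: the rescaled pieces should be the preimages $(\si_1\times\id)^{-1}(Z_I^{[0,1]})$ and $(\si_2\times\id)^{-1}(Z_I^{[1,2]})$, i.e.\ pushforwards by $\si_\al^{-1}\times\id$ rather than by $\si_\al\times\id$.
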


\begin{proof}
The proof is similar to (but much easier than) that of Lemma~\ref{lem:cobord1}.
\end{proof}

With these preparations we can prove uniqueness of cover reductions up to collared 
concordance, and also provide reductions for footprint covers of Kuranishi cobordisms.

\begin{lemma}\label{le:cobred}
\begin{enumerate}
\item[(a)]
Let 
$(Y,\io^0_Y,\io^1_Y)$ be a compact collared cobordism.
Any cover $Y=\bigcup_{i=1,\ldots,N} F_i$ by collared open sets $F_i\subset  Y$
has a cover reduction $(Z_I)_{I\subset\{1,\ldots,N\}}$  by collared sets $Z_I\subset Y$.
\item[(b)]
Let $X$ be a compact Hausdorff space.
Any two cover reductions $(Z_I^0)_{I\subset\{1,\ldots,N\}}$, $(Z_I^1)_{I\subset\{1,\ldots,N\}}$ of an open cover $X=\bigcup_{i=1,\ldots,N} F_i$ are collared concordant. 
\end{enumerate}
\end{lemma}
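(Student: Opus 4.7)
For (a), I plan to follow the construction of Lemma~\ref{le:cov0}, refined so that the nested chains $F_i^0 \sqsubset G_i^1 \sqsubset F_i^1 \sqsubset \ldots \sqsubset F_i^N = F_i$ consist of collared open subsets of $F_i$. First apply Lemma~\ref{le:cov0} to each boundary cover $\p^\al Y = \bigcup_i \p^\al F_i$ to obtain nested chains $(\p^\al F_i)^0 \sqsubset (\p^\al G_i)^1 \sqsubset \ldots \sqsubset \p^\al F_i$. Choose a uniform collar width $\eps > 0$ as in Remark~\ref{rmk:Ceps}. On the boundary collars, declare $F_i^k \cap \im \io_Y^\al := \io_Y^\al(A_\eps^\al \times (\p^\al F_i)^k)$ and similarly for $G_i^k$. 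Extend these product sets to open subsets of $Y$ by iteratively applying normality of $Y$ restricted to the closed set $Y\less \im \io_{\eps/2}$ (where $\im \io_\delta := \bigsqcup_{\al} \io_Y^\al(A_\delta^\al\times \p^\al Y)$), as in the proof of Lemma~\ref{le:cov0}. The resulting open sets are collared by construction and precompactly nested, and applying the formula $Z_I := G_I^{|I|} \less \bigcup_{j\notin I} \ov{F_j^{|I|}}$ produces the desired cover reduction: each $Z_I$ is collared because it is built from collared sets, and its boundary restriction is the cover reduction of $\p^\al Y$ produced by the boundary nested chains.

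For (b), my plan is to apply part (a) to the collared cobordism $(Y = [0,1]\times X, \io^0, \io^1)$ with product cover $[0,1]\times F_i$, aiming for a cover reduction whose boundary restrictions equal the prescribed $(Z^0_I)$ and $(Z^1_I)$. The obstacle is that part (a) only produces boundary reductions of a specific form, namely those arising from nested chains via Lemma~\ref{le:cov0}'s formula, which need not include every cover reduction. To bypass this, I would first prove the auxiliary claim that every cover reduction $(Z^\al_I)$ of $(F_i)$ is collared concordant to some cover reduction $(\tilde Z^\al_I)$ arising from nested chains. Granting this claim, transitivity of collared concordance (Lemma~\ref{le:cobred1}) reduces (b) to the case where both $(Z^\al_I)$ arise from nested chains $(F_i^{\al, k})$. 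In that case, apply the construction of (a) on $[0,1]\times F_i$ using nested chains $\tilde F_i^k$ that restrict to $A_\eps^\al \times F_i^{\al, k}$ on the boundary collars and interpolate between them in the middle, for instance by choosing common precompact refinements $H_i^k \sqsubset F_i^{0,k}\cap F_i^{1,k}$ in the interior slice $\{\tfrac12\}\times F_i$ and extending continuously, producing nested chains globally on $[0,1]\times F_i$. Applying the formula of Lemma~\ref{le:cov0} then yields a cover reduction of $[0,1]\times X$ with the required boundary behavior.

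The main expected obstacle is the auxiliary claim in (b), namely nested-chain realizability up to concordance. To prove it, one would choose nested chains $H_i^0 \sqsubset K_i^1 \sqsubset H_i^1 \sqsubset \ldots \sqsubset H_i^N = F_i$ such that the resulting cover reduction $\tilde Z^\al_I := K_I^{|I|} \less \bigcup_{j\notin I} \ov{H_j^{|I|}}$ satisfies $Z^\al_I \subset \tilde Z^\al_I$ componentwise, using the defining intersection pattern of $(Z^\al_I)$ (that $\ov{Z^\al_I}\cap \ov{Z^\al_J} \ne \emptyset$ forces $I\subset J$ or $J\subset I$) to guarantee compatibility with the nesting $H_i^k \sqsubset K_i^{k+1}$. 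The required concordance $[0,1]\times X \supset Z_I \sqsubset [0,1]\times F_I$ between $(Z^\al_I)$ and $(\tilde Z^\al_I)$ is then built explicitly by opening up gradually from $Z^\al_I$ in a collar of $\{\al\}\times X$ to $\tilde Z^\al_I$ on the remainder. This construction is technical but follows the iterative normality argument of Lemma~\ref{le:cov0} itself, applied one step at a time in the $|I|$-filtration.
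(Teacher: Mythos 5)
Your part (a) follows the paper's route (make the nested chains of Lemma~\ref{le:cov0} collared and rerun the construction \eqref{eq:ZGI}), and your skeleton for (b) — first prove that every cover reduction is collared concordant to one arising from nested chains, then use transitivity to reduce to that case, then interpolate the chains over $[0,1]\times X$ — is exactly the paper's architecture. However, your concrete proposal for the interpolation step would fail. Choosing ``common precompact refinements $H_i^k\sqsubset F_i^{0,k}\cap F_i^{1,k}$'' in a middle slice breaks the one property the construction cannot do without: at every level the sets must form an open \emph{cover} (this is what makes condition (iii) of Lemma~\ref{le:cov0} work, via the nonemptiness of $I_x$), and there is no reason for $\bigcup_i \bigl(F_i^{0,k}\cap F_i^{1,k}\bigr)$ to cover $X$ — two unrelated shrinkings of the same cover can even have $F_i^{0,k}\cap F_i^{1,k}=\emptyset$ for every $i$. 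The paper's trick avoids intersections altogether: it sets $F_i^k:=\bigl([0,\de_{2k})\times F_i^{k,0}\bigr)\cup\bigl((\eps_{2k},1]\times F_i^{k,1}\bigr)$ (and similarly $G_i^k$) with staggered constants $\tfrac13=\eps_{2N}<\dots<\eps_0<\tfrac12<\de_0<\dots<\de_{2N}=\tfrac23$, so that each time slice is covered by one of the two boundary chains and the precompact nesting follows from the nesting of the intervals together with that of each boundary chain. This sliding-slab construction is the missing idea in your step (3).

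On your auxiliary claim, your plan differs from the paper's: you want a nested-chain reduction $(\tilde Z_I)$ with $Z_I\subset\tilde Z_I$ componentwise, followed by a single inclusion-induced concordance. This is not obviously achievable as stated — the condition $Z_I\cap\ov{H_j^{|I|}}=\emptyset$ for all $j\notin I$ forces each $H_j^0$ to avoid $\bigcup_{I\not\ni j}\ov{Z_I}$ while the $H_j^0$ must still cover $X$, which requires a genuine argument (e.g.\ that the sets $F_j\less\bigcup_{I\not\ni j}\ov{Z_I}$ cover $X$, using that the indices $I$ with $x\in\ov{Z_I}$ form a chain and taking $j$ in its minimal element), plus a check that the ``contain'' and ``avoid'' constraints at each level are compatible via the separation axiom; your sketch asserts compatibility but supplies none of this. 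The paper sidesteps the issue: it shrinks $(Z_I)$ to $(Z_I^0)$, builds a nested-chain reduction $(Z_I')$ of the smaller cover $F_i'=\bigcup_{I\ni i}Z_I$, verifies that the union $Z_I'':=Z_I^0\cup Z_I'$ is again a cover reduction, and then composes three concordances coming from the inclusions $Z_I^0\subset Z_I$, $Z_I^0\subset Z_I''$, $Z_I'\subset Z_I''$, each given by the explicit formula $\bigl([0,\tfrac23)\times\,\cdot\,\bigr)\cup\bigl((\tfrac13,1]\times\,\cdot\,\bigr)$. If you prefer your direct version you must supply the covering argument above; otherwise the paper's zig-zag is the cleaner repair.
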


\begin{proof}
To prove (a) first note that any finite open cover $Y =\bigcup_{i=1,\ldots,N} F_i$ by 
collared open subsets can be shrunk to sets $F_i'\sqsubset F_i$ that are also collared.
Indeed, taking a common collar width $\eps>0$, one can first choose a shrinking $F^\al_i \sqsubset\partial^\al F_i$ of the covers of the ``boundary components'' $\p^\al Y$ and a general shrinking $F''_i \sqsubset F_i$.  Then we obtain a shrinking with the required collars by setting 
$$
F'_i:=  \io^0_Y\bigl([0,\tfrac \eps 2)\times F^0_i\bigr)  \;\cup\;   \io^1_Y\bigl((1-\tfrac \eps 2,1]\times F^1_i \bigr) \;\cup\;
\bigl( F''_i \less \bigl(
 \io^0_Y\bigl([0,\tfrac \eps 4]\times \p^0 F_i\bigr) 
\cup  
\io^1_Y\bigl([1-\tfrac \eps 2,1]\times \p^1 F_i \bigr)
\bigr)
 \bigr) .
$$
Hence we may choose nested covers $F_i^0\ldots F_i^k\sqsubset G_i^{k+1}\sqsubset \ldots F_i$ as in \eqref{eq:FGI} of $Y$ that 
consist of collared open subsets.
Then the sets $(Z_I)$ defined by intersections in \eqref{eq:ZGI} 
are also collared, and the arguments of Lemma~\ref{le:cov0} prove (a).

To prove (b), we first transfer to the standard form constructed in Lemma~\ref{le:cov0}.

\MS
\noindent
{\bf Claim:} {\em
Any cover reduction $(Z_I)$ of a finite open cover $X=\bigcup_i F_i$ is collared concordant to a cover reduction constructed from nested covers $F_i^0\ldots F_i^k\sqsubset G_i^{k+1}\sqsubset \ldots F_i$
by \eqref{eq:ZGI}.
}
\MS

To prove this claim, choose a shrinking $Z_I^0\sqsubset Z_I$ such that $X = \bigcup_I Z^0_I$.  Then
these covers induce precompactly nested open covers
$$
F_i^0: = {\textstyle \bigcup_{i\in I}} Z_I^0  \quad \sqsubset \quad F_i': = {\textstyle \bigcup_{i\in I}} Z_I   \quad \sqsubset \quad F_i = {\textstyle \bigcup_{i\in I}} F_I .
$$
As in \eqref{eq:FGI} we can choose interpolating sets
$$
F_i^0\sqsubset \ldots F_i^k\sqsubset G_i^{k+1}\sqsubset \ldots \sqsubset F_i^{2N} = F_i',
$$
and let $\bigl(Z_I' := G_I^{|I|}\less \bigcup_{j\notin I} \ov{F_j^{|I|}}\bigr)$ be the resulting cover reduction of $F_i'$ and hence of $F_i$.
Then we will show
that the union $(Z_I'':= Z^0_I\cup Z'_I)$ is a cover reduction of $(F_i)$ as well.
Since $Z_I^0\sqsubset Z_I\sqsubset F_I$ we only have to check the mixed terms in the intersection axiom 
(ii)  in Lemma~\ref{le:cov0}, i.e.\ we need to verify
$$
\Bigl(J\less I\ne
\emptyset,\; I\less J\ne \emptyset\Bigr)\; \Longrightarrow\;
\Bigl((\ov{Z^0_I}\cup \ov{Z'_I})\cap (\ov{Z^0_J}\cup \ov{Z'_J}) = \emptyset\Bigr).
$$
Indeed, for $j\in J\less I$ we obtain $Z^0_J\subset F_j^0\sqsubset F_j^{|I|}\subset X\less Z'_I$
so that $\ov{Z^0_J}\cap \ov{Z_I'}=\emptyset$.  Conversely, $\ov{Z^0_I}\cap \ov{Z_J'}=\emptyset$ follows from the existence of $i\in I\less J$.
Now we have a chain of inclusions between cover reductions of $(F_i)$, namely $Z_I^0\subset Z_I$, $Z_I^0\subset Z_I''$, and  $Z_I'\subset Z_I''$. 
We will show that this induces collared concordances $(Z_I^0)\sim(Z_I)$, $(Z_I^0)\sim(Z_I'')$, and $(Z_I')\sim(Z_I'')$, so that Lemma~\ref{le:cobred1} implies that $(Z_I)$ is collared concordant to $(Z_I')$, which is constructed by \eqref{eq:ZGI}.
To show this, 
consider any two cover reductions $Z_I'\subset Z_I''$ of $(F_i)$ and note that a collared 
concordance of reductions as in Definition~\ref{def:cobred}
 is given by
$$
 \bigl( [0,\tfrac 23)\times Z_I'\bigr)\cup \bigl((\tfrac 13,1] \times Z_I''\bigr)  \;\subset\;  [0,1]  \times X.
$$
Indeed, these open subsets are collared and form a cover reduction since each of $(Z_I'),(Z_I'')$ satisfies the axioms (i),(ii), and the mixed intersection in (iii) is
$$
\Bigl([0,\tfrac 23]\times \ov{Z_I'} \Bigr)\cap \Bigl( [\tfrac 13,1]\times \ov{Z_J''}\Bigr)
\subset
\Bigl([\tfrac 13,\tfrac 23] \times \ov{Z_I'} \cap \ov{Z_J''} \Bigr) ,
$$
which is empty unless $I\subset J$ or $J\subset I$. This proves the Claim.
\MS

To prove (b), 
note that by the above Claim and Lemma~\ref{le:cobred1} it now suffices
to consider cover reductions $(Z^\al_I)$ that are constructed from nested covers $F_i^{0,\al} \ldots F_i^{k,\al} \sqsubset G_i^{k+1,\al}\sqsubset \ldots\partial^\al F_i$ as in \eqref{eq:FGI}.
We extend these to nested 
reductions 
of the product cover $[0,1]  \times X=\bigcup_i [0,1]  \times F_i$ by choosing constants
$$
\tfrac 13=\eps_{2N}<\ldots<\eps_{0} < \tfrac 12<\de_{0}<\ldots<\de_{2N} = \tfrac 23,
$$
and then setting
\begin{align*}
F_i^{k}: & = \Bigl( [0,\de_{2k})\times F_i^{k,0}\Bigr) \cup  \Bigl( (\eps_{2k}, 1]\times F_i^{k,1}\Bigr)
\;\sqsubset\; 
 [0,1] \times F_i ,\\
G_i^{k}: & = \Bigl([0,\de_{2k-1})\times G_i^{k,0}\Bigr) \cup \Bigl( (\eps_{2k-1}, 1] \times  G_i^{k,1}\Bigr)\;\sqsubset\;  [0,1]  \times F_i.
\end{align*}
Since these sets satisfy the nested property in \eqref{eq:FGI} and 
are collared subsets with boundaries 
given by the nested covers $G_i^{k,\al}\sqsubset F_i^{k,\al}$, the cover reduction $(Z_I)$ defined as in \eqref{eq:ZGI} is a collared concordance between the reductions $(Z^0_I)$ and $(Z^1_I)$.
\end{proof}

We are now in a position to prove existence and uniqueness of reductions in Theorem~\ref{thm:red}.

\begin{prop} \label{prop:cov2} \hspace{1mm}
\vspace{-1mm}
\begin{itemize}
\item[(a)]
Every tame topological Kuranishi atlas 
has a reduction.
\item[(b)] 
Every tame topological Kuranishi cobordism 
has a cobordism reduction. 
\item [(c)]  
Let $\Vv^0,\Vv^1$ be reductions of a tame topological Kuranishi atlas $\Kk$. Then there exists a cobordism reduction $\Vv$ of $[0,1]\times \Kk$ such that $\p^\al\Vv = \Vv^\al$ for $\al = 0,1$. 
\end{itemize}
\end{prop}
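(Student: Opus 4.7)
The strategy for all three parts is to first reduce the footprint cover via Lemma~\ref{le:cov0}, lift this reduction to precompact open sets $V_I^0\sqsubset U_I$ via Lemma~\ref{le:restr0}, and then shrink the $V_I^0$ inside $|\Kk|$ using a metric on the compact set $\pi_\Kk(\ov\Aa)$, where $\Aa:=\bigsqcup_I V_I^0$, to enforce the disjointness condition~(ii) of Definition~\ref{def:vicin}. The key enabling fact is Proposition~\ref{prop:Ktopl1}(iv), which says $\pi_\Kk(\ov\Aa)$ is compact metrizable in the subspace topology for any precompact $\Aa\sqsubset\Obj_{\bB_\Kk}$.

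For (a), apply Lemma~\ref{le:cov0} to $X=\bigcup_{i=1}^N F_i$ to obtain a cover reduction $(Z_I)_{I\in\Ii_\Kk}$. For each $I$ with $Z_I\neq\emptyset$, use Lemma~\ref{le:restr0} to choose $V_I^0\sqsubset U_I$ with $V_I^0\cap\s_I^{-1}(0_I)=\psi_I^{-1}(Z_I)$; otherwise set $V_I^0=\emptyset$. Fix a metric $d$ on $\pi_\Kk(\ov\Aa)$ inducing its subspace topology. For every pair $I,J$ with neither $I\subset J$ nor $J\subset I$, Lemma~\ref{le:cov0}(ii) yields $\ov{Z_I}\cap\ov{Z_J}=\emptyset$, so by injectivity of $\io_\Kk$ (Lemma~\ref{le:Knbhd1}) the compact sets $\io_\Kk(\ov{Z_I}),\io_\Kk(\ov{Z_J})\subset\pi_\Kk(\ov\Aa)$ are disjoint, hence separated by positive distance in $d$. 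Let $\eta>0$ be the minimum of these finitely many distances, and define
$$
V_I \;:=\; \bigl\{ x\in V_I^0 \,:\, d(\pi_\Kk(x),\io_\Kk(\ov{Z_I})) < \tfrac{\eta}{3} \bigr\}.
$$
Then $V_I\subset V_I^0\sqsubset U_I$ is open and precompact, $V_I\cap\s_I^{-1}(0_I)=\psi_I^{-1}(Z_I)$ (one inclusion since $\io_\Kk(Z_I)\subset\io_\Kk(\ov{Z_I})$, the other since $V_I\subset V_I^0$), and $\io_\Kk(X)=\bigcup_I\io_\Kk(Z_I)\subset\pi_\Kk(\Vv)$, verifying conditions~(i) and~(iii). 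Condition~(ii) follows from the triangle inequality: any $p\in\pi_\Kk(\ov{V_I})\cap\pi_\Kk(\ov{V_J})$ for incomparable $I,J$ would lie within $\eta/3$ of each of $\io_\Kk(\ov{Z_I}),\io_\Kk(\ov{Z_J})$, contradicting $d(\io_\Kk(\ov{Z_I}),\io_\Kk(\ov{Z_J}))\ge\eta$.

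For (b), run the same construction using the collared cover reduction from Lemma~\ref{le:cobred}(a); the $V_I^0$ are made collared by first applying Lemma~\ref{le:restr0} to the boundary charts $\p^\al\bK_I$ to obtain $\p^\al V_I^0\sqsubset\p^\al U_I$, and then gluing products $A^\al_\eps\times\p^\al V_I^0$ to an interior restriction provided by Lemma~\ref{le:restr0} for the interior part of $Z_I$. The shrinking step uses an $\eps$-collared metric on $\pi_\Kk(\ov\Aa)$, obtained by gluing product metrics near $\p^\al|\Kk|$ to a metric on the interior in the spirit of Lemma~\ref{le:metcob}, so that the resulting $V_I$ inherit the collared structure required by Definition~\ref{def:cvicin}(iv). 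For (c), apply Lemma~\ref{le:cobred}(b) to the cover reductions $Z_I^\al:=\psi_I(V_I^\al\cap\s_I^{-1}(0_I))$ of the footprint cover to obtain a collared cover reduction $(Z_I)$ of $[0,1]\times X$ with prescribed boundary restrictions $Z_I^\al$. Choose collared $V_I^0\subset[0,1]\times U_I$ with $\p^\al V_I^0=V_I^\al$ by extending $V_I^\al$ into a collar via the product with $A^\al_\eps$ and gluing to an interior restriction, and then perform the shrinking from (a) with a collared metric, but using a cutoff factor in the definition of $V_I$ that is trivial on a narrower subcollar of $\{0,1\}\times X$. This yields a cobordism reduction $\Vv$ of $[0,1]\times\Kk$ with $\p^\al\Vv=\Vv^\al$, exploiting the fact that each $\Vv^\al$ already satisfies the disjointness property~(ii), so nothing needs to be shrunk at the boundary.

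The main obstacle is ensuring in~(c) that the metric-based shrinking preserves the prescribed boundary reductions $\Vv^\al$ exactly; this is resolved by combining the $\eps$-collared metric with a subcollar cutoff that renders the shrinking trivial on a neighborhood of each $\{\al\}\times X$ while still achieving the required disjointness property in the interior.
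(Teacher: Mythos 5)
Your proof of part~(a) is correct, and it follows a genuinely different route from the paper. The paper's argument works entirely inside the domains $U_I$: it defines the ``bad'' compact sets $Y_{IJ}:=\ov{W_I}\cap\pi_\Kk^{-1}(\pi_\Kk(\ov{W_J}))$ for incomparable $I,J$, uses the tameness/filtration identities to verify that $Y_{IJ}$ misses $\psi_I^{-1}(\ov{Z_I})$, and removes a closed neighbourhood $\Nn(Y_{IJ})$ (using the ambient metric on $U_I$, which exists by Definition~\ref{def:tchart}). You instead pass to the quotient: you push forward to the compact metrizable space $\pi_\Kk(\ov\Aa)$ furnished by Proposition~\ref{prop:Ktopl1}(iv), use injectivity of $\io_\Kk$ and compactness to get a uniform gap $\eta>0$ between $\io_\Kk(\ov{Z_I})$ and $\io_\Kk(\ov{Z_J})$, and shrink each $V_I^0$ to the preimage of the $\eta/3$-neighbourhood of $\io_\Kk(\ov{Z_I})$. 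The triangle inequality then forces separation. The verifications (openness of $V_I$ via continuity of $\pi_\Kk|_{\ov{V_I^0}}$ into the subspace $=$ quotient topology on $\pi_\Kk(\ov\Aa)$; the footprint identity; condition~(iii)) all go through. This is a valid alternative; the paper's route has the minor advantage of never needing a metric on any quotient, while yours is perhaps more conceptual in that the shrinking is explicitly ``towards the zero set.''

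For parts~(b) and~(c), however, there is a genuine gap: you invoke an ``$\eps$-collared metric on $\pi_\Kk(\ov\Aa)$'' but do not construct it, and neither of the results you cite supplies one. Proposition~\ref{prop:Ktopl1}(iv) only gives \emph{some} metric inducing the subspace topology on a compact subset, with no control near the boundary collars, and Lemma~\ref{le:metcob} is about admissible metrics on all of $|\Kk|$ for a \emph{metrizable} Kuranishi cobordism -- which a general tame cobordism need not be. You would need to build a collared metric on the compact collared space $\pi_\Kk(\ov\Aa)$ from scratch (glue product metrics $d_\R+d^\al$ on the collars to an interior metric with a long-neck argument as in Lemma~\ref{le:metcob}), and then verify that your cutoff really produces product form; as written this step is asserted, not proved. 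For~(c) there is the additional issue that the ``cutoff factor trivial on a subcollar'' construction needs to be shown to yield an open, precompact, collared set satisfying~(ii) in the transition region between collar and interior -- your observation that the boundary reductions $\Vv^\al$ already enforce disjointness in the $\eps$-collar is the right idea, but it is not written out. The paper sidesteps both issues: for~(b) it runs the argument of~(a) with no metric on the quotient to obtain an uncollared reduction and then \emph{deterministically replaces} the collar portion of each $V_I'$ by the product $\ov{A^\al_\eps}\times V^\al_I$ (the set defined in~\eqref{eq:Vcoll}); for~(c) it reduces, via Steps~1--3 and transitivity of cobordism reductions, to the trivial case $\Vv^0\subset\Vv^1$ with common footprints, where the interlocking product $([0,\frac 23)\times V_I^0)\cup((\frac 13,1]\times V_I^1)$ works by inspection. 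You should either carry out the collared-metric construction carefully, or adopt the paper's adjust-after-the-fact (b) and transitivity (c) strategy, which avoids needing one.
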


\begin{proof} 
For (a) we begin by using Lemma~\ref{le:cov0} to find a cover reduction 
$(Z_I)_{I\subset \{1,\ldots,N\}}$ of the footprint cover $X=\bigcup_{i=1,\ldots,N} F_i$
of a given Kuranishi atlas $\Kk$. Since $Z_I\subset F_I=\emptyset$ for $I\notin\Ii_\Kk$, we can index the potentially nonempty sets in this cover reduction by $(Z_I)_{I\in\Ii_\Kk}$.
Then Lemma~\ref{le:restr0} provides precompact open sets $W_I\sqsubset U_I$ for each $I\in\Ii_\Kk$
with $Z_I\ne \emptyset$, satisfying
\begin{equation}\label{eq:wwI}
W_I\cap \s_I^{-1}(0_I) = \psi_I^{-1}(Z_I),\qquad
\ov{W_I}\cap \s_I^{-1}(0_I) = \psi_I^{-1}(\ov{Z_I}).
\end{equation}
The set $\Vv=(W_I)_{I\in\Ii_\Kk}$ now satisfies condition (iii) in Definition~\ref{def:vicin}, namely $\bigcup_I \pi_{\Kk}(W_I)$ contains $\bigcup_I \pi_{\Kk}\bigl(\psi_I^{-1}(Z_I)\bigr)$, which covers $\iota_\Kk(X)$.
We will construct the reduction by choosing $V_I \subset W_I$ so that (ii) is satisfied, while the intersection with the zero set does not change, i.e.\
\begin{equation}\label{eq:zeroV}
V_I\cap \s_I^{-1}(0_I) = \psi_I^{-1}(Z_I),
\end{equation}
which guarantees (iii).
Further, condition (i) holds automatically since $V_I\subset W_I$,
and we define $V_I:=\emptyset$ when $Z_I=\emptyset$.
To begin the construction of the $V_I$, define 
$$
\Cc(I):=\{J\in \Ii_\Kk \,|\, I\subset J  \;\text{or}\; J\subset I \},
$$
and  for each $J\notin\Cc(I)$ define 
$$
Y_{IJ} \,:= \ov{W_I}\cap \pi_\Kk^{-1}(\pi_\Kk(\ov{W_J}))
\; = \; 
U_I \cap \pi_\Kk^{-1}\bigl(\pi_\Kk(\ov{W_I})\cap \pi_\Kk(\ov{W_J})\bigr)
$$
which is closed by the continuity of $\pi_\Kk$ and the fact that each set $\pi_\Kk(\ov{W_I}), \pi_\Kk(\ov{W_J})$ is compact and hence closed by the Hausdorff property of $|\Kk|$.
If $J\notin\Cc(I)$, then using $\psi_I^{-1}(\ov{Z_I}) =\s_I^{-1}(0_I)\cap \ov{W_I}$ and the notation $\eps_I$ from Definition~\ref{def:preceq} we obtain
\begin{align*}
\psi_I^{-1}(\ov{Z_I}) \cap Y_{IJ}&\;\subset
\;\psi_I^{-1}(\ov{Z_I})\cap \s_I^{-1}(0_I)\cap 
\eps_I(\ov{W_J}) \\
&\;=\; \psi_I^{-1}(\ov{Z_I}) \cap \eps_I\bigl(\s_J^{-1}(0_J) \cap \ov{W_J} \bigr)\\
&\;= \; \psi_I^{-1}(\ov{Z_I})\cap \eps_I\bigl(\psi_J^{-1}(\ov{Z_J})\bigr)
\;= \; \psi_I^{-1}(\ov{Z_I}) \cap \psi_I^{-1}(\ov{Z_J})\;=\; \emptyset,
\end{align*}
where the first equality holds because $\s$ is compatible with the coordinate changes.
The inclusion $Y_{IJ}\subset W_I\sqsubset U_I$ moreover ensures that $Y_{IJ}$ is compact, 
so has a nonzero Hausdorff distance from the closed set $\psi_I^{-1}(\ov{Z_I})$.
Thus we can find closed neighbourhoods 
$\Nn(Y_{IJ})\subset U_I$ of $Y_{IJ}$
for each $J\notin\Cc(I)$ such that  
$$
\Nn(Y_{IJ})\cap \psi_I^{-1}(\ov{Z_I})= \emptyset.
$$
We now claim that we obtain a reduction by removing these neighbourhoods,
\begin{equation}\label{eq:QIVI}
V_I := W_I \;\less {\textstyle\bigcup_{J\notin \Cc(I)}} \Nn(Y_{IJ}).
\end{equation}
Indeed, each $V_I\subset W_I$ is open, and 
$V_I\cap \s_I^{-1}(0_I) = \psi_I^{-1}(Z_I) \;\less \bigcup_{J\notin \Cc(I)} \Nn({Y_{IJ}}) = \psi_I^{-1}(Z_I)$ 
by construction of $\Nn({Y_{IJ}})$.
Moreover, because $\ov{V_I}\subset \ov{W_I}$ for all $I$, 
we have for all $J\notin\Cc(I)$
$$
\ov{V_I}\cap \pi_\Kk^{-1}(\pi_\Kk(\ov{V_J}))\;\subset\; \ov{V_I}\cap \ov{W_I}\cap \pi_\Kk^{-1}(\pi_\Kk(\ov{W_J}))
\;\subset\; {\ov{V_I}}\cap  Y_{IJ} \;=\; \emptyset .
$$
This completes the proof of  (a).

To prove (b) we use Lemma~\ref{le:cobred}~(a) to obtain a  collared cover reduction $(Z_I)_{I\in \Ii_{\Kk^{[0,1]}}}$ of the footprint cover of a given Kuranishi cobordism $\Kk^{[0,1]}$, which is collared by Remark~\ref{rmk:Ceps}.
Then the arguments for (a) also show that $\Kk^{[0,1]}$ has a reduction $(V_I')_{I\in \Ii_{\Kk^{[0,1]}}}$.
It remains to adjust it to achieve
the collaring property in Definition~\ref{def:cvicin}~(iv).
For that purpose note that the footprint of the reduction is given by the cover reduction, that is \eqref{eq:zeroV} provides the identity
$$
V'_I\cap \s_I^{-1}(0_I) = \psi_I^{-1}(Z_I) \qquad\forall I\in \Ii_{\Kk^{[0,1]}} .
$$
In particular, if 
$\p^\al Z_I\ne \emptyset$ then $\p^\al V'_I\ne \emptyset$,
though the converse may not hold.  
Now choose $\eps>0$ less or equal to half the collar width of $\Kk^{[0,1]}$ in Remark~\ref{rmk:Ceps} and so that condition (iv) in Definition~\ref{def:cobred} holds with $A^\al_{2\eps}$
for the footprint reduction $(Z_I)_{I\in \Ii_{\Kk^{[0,1]}} }$, 
in particular
\begin{equation}\label{eq:Zcoll}
\psi_I^{-1}(Z_I) \cap \io^\al_I\bigl(A^\al_{2\eps}\times \p^\al U_I \bigr)
\;=\;
\psi_I^{-1}( A^\al_{2\eps}\times \partial^\al Z_I  )
\qquad \forall \al\in\{0,1\}, I\in \Ii_{\Kk^\al} .
\end{equation}
Then we set $V_I:=V'_I$ for interior charts 
$I\in \Ii_{\Kk^{[0,1]}}\less(\Ii_{\Kk^0}\cup \Ii_{\Kk^1})$.
If $I\in\Ii_{\Kk^\al}$ for $\al=0$ or $\al=1$ (or both) 
we define $V^\al_I \subset \p^\al U^\al_I$ so that, with $\eps_0: = \eps$ and $\eps_1:=1-\eps$, 
$$
\io^\al_I\bigl(\{\eps_\al\}\times  V^\al_I  \bigr) 
= \left\{ \begin{array}{ll} 
V_I' \cap \io^\al_I \bigl( \{\eps_\al\}\times \p^\al U_I \bigr) & \mbox{ if } \p^\al Z_I\ne \emptyset,\\
\emptyset &  \mbox{ if } \p^\al Z_I = \emptyset.
\end{array}\right.
$$
Since 
$(\p^\al Z_I)_{I\in\Ii_{\Kk^\al}}$ is a cover reduction of the footprint cover of $\Kk^\al$, and 
$$
\psi_I^{-1}(Z_I) \cap \io^\al_I \bigl( \{\eps_\al\}\times \p^\al U_I \bigr) = \psi_I^{-1}(\{\eps_\al\}\times \partial^\al Z_I ),
$$ this defines reductions 
$(V^\al_I)_{I\in\Ii_{\Kk^\al}}$ of $\Kk^\al$.
With that, we obtain collared subsets of $U_I$ for each 
$I\in \Ii_{\Kk^0}\cup \Ii_{\Kk^1}$ by
\begin{equation}  \label{eq:Vcoll}
V_I:= \Bigl( \, V_I' \;\less  {\textstyle \bigsqcup_{\al=0,1}\,} \io^\al_I\bigl( \ov{A^\al_\eps}\times \p^\al U_I
\,\bigr) \Bigr) 
\;\cup\;
 {\textstyle \bigsqcup_{\al=0,1}\,} \io^\al_I\bigl( \ov{A^\al_\eps}\times V^\al_I\bigr) \quad\subset\; U_I .
\end{equation}
These are open subsets of $U_I$ because, firstly, each $ \ov{A^\al_\eps}\times \p^\al U_I$ is a relatively closed subset of the domain of the embedding $\io^\al_I$.
Secondly, each point in the boundary 
of $\io^\al_I\bigl(\ov{A^\al_\eps}\times V^\al_I\bigr)\subset U_I$ is of the form $\io^\al_I(x,\eps_\al)$ and, since the collar width is $2\eps$, has neighbourhoods $\io^\al_I\bigl( (\eps_\al -\eps, \eps_\al + \eps)\times \Nn_x  \bigr)\subset V_I$ for any neighbourhood $\Nn_x\subset V^\al_I$ of $x$.
Moreover, $V_I$ has the same footprint as $V_I'$, since adjustment only happens on the collars $\iota^\al_I(\ov{A^\al_\eps}\times \p^\al U_I)$, where the footprint is of product form, thus preserved by the construction.
Therefore the sets $(V_I)_{I\in \Kk^{[0,1]}}$ satisfy conditions (i) and (iii) in Definition~\ref{def:vicin}.
They also satisfy (ii) because, in the notation of Remark~\ref{rmk:cobordreal}, we can check this separately in the collars 
$\io^\al_{|\Kk|}({\ov{A^\al_\eps}}\times |\Kk^\al|)$ of $|\Kk^{[0,1]}|$ (where it holds because $(V^\al_I)$ is a reduction) and in their complements (where it holds because $(V'_I)$ is a reduction). 

Finally, condition (iv) in Definition~\ref{def:cvicin} holds by construction, namely 
$(\iota^\al_I)^{-1} \bigl( V_I \bigr) \cap \bigl(  A^\al_\eps \times \partial^\al U_I \bigr) = A^\al_\eps \times  V_I^\al $, and the condition $\bigl( \; \partial^\al V_I\ne \emptyset \; \Rightarrow \; V_I \cap \psi_I^{-1}\bigl( \{\al\} \times \partial^\al F_I \bigr)\ne \emptyset \;\bigr)$ holds since
$\partial^\al V_I = V_I^\al \ne \emptyset$ implies $\p^\al Z_I \neq \emptyset$ by definition of $V_I^\al$.
This completes (b).

To prove (c) we will use 
reflexivity and transitivity for cobordism reductions of the product cobordism $[0,1]\times \Kk$. 
More precisely, given a cobordism reduction $\Vv$ of $[0,1]\times \Kk$, we can apply the isomorphism on $[0,1]\times \Obj_{\bB_\Kk}$ that reverses the inverval $[0,1]$ to turn $\Vv$ into a cobordism reduction $\Vv'$ with $\p^0\Vv'=\p^1\Vv$ and $\p^1\Vv'=\p^0\Vv$.
Similarly, given two cobordism reductions $\Vv$ and $\Vv'$ of $[0,1]\times \Kk$ with $\p^1\Vv=\p^0\Vv'$ we obtain a reduction $\Vv''\subset [0,1]\times \Obj_{\bB_\Kk}$ with $\p^0\Vv''=\p^0\Vv$ and $\p^1\Vv''=\p^1\Vv'$ by gluing and rescaling 
$$
\Vv'' := \bigl\{ (\tfrac 12 s,x) \,\big|\, (s,x)\in \Vv \bigr\} \cup  \bigl\{ (\tfrac 12 (1+s),x) \,\big|\, (s,x)\in \Vv' \bigr\}.
$$
Just as in the proof of filtration in Lemma~\ref{lem:cobord1}, the resulting sets satisfy the separation condition (ii) of Definition~\ref{def:vicin} because the reductions $\Vv, \Vv'$, and $\partial^1\Vv=\p^0\Vv'$ do.
Based on this, we will prove (c) in several stages.

\MS\NI
{\bf Step 1:} {\it The result holds if $\Vv^0 \subset \Vv^1$ and  $V^0_I\cap \s_I^{-1}(0_I) = V^1_I\cap \s_I^{-1}(0_I)$ for all $I\in \Ii_\Kk$.
}

\smallskip\NI
In this case the footprints $Z_I := \psi_I\bigl(V^\al_I\cap \s_I^{-1}(0_I)\bigr)$ are the same for $\al=0,1$ by assumption.
Hence the sets for $I\in \Ii_\Kk$ 
$$
V_I \, 
 : =\; 
\Bigl( [0,\tfrac 23)\times  V^0_I\Bigr) \cup\Bigl((\tfrac 13,1]\times  V^1_I \Bigr)
\;\subset\;  [0,1]\times U_I 
$$
form the required cobordism reduction. 
In particular, they satisfy the intersection condition (ii) over the interval $(\tfrac 13,\tfrac 23)$ because $V^0_I\subset V^1_I$ for all $I$. 
Their footprints are $[0,1]\times Z_I$ and so cover $[0,1]\times X$, and they satisfy the collar form requirement (iv) because $V_I\ne \emptyset$ iff $V^1_I\ne \emptyset$, and the latter implies $Z_I\ne \emptyset$  by condition (i) for $\Vv^1$.

\MS\NI
{\bf Step 2:} {\it The result holds if all footprints coincide, i.e.\ $V^0_I\cap \s_I^{-1}(0_I) = V^1_I\cap \s_I^{-1}(0_I)$.
}

\smallskip\NI
Note that $\Vv^{01}:= \bigcup_{I\in\Ii_\Kk} V^0_I\cap V^1_I$ is another reduction of $\Kk$ since it has the common footprints $Z_I$ as above, thus covers $\io_\Kk(X)$.
So Step 1 for $\Vv^{01}\subset\Vv^\al$ (together with reflexivity for $\al=0$) provides cobordism reductions $\Vv$ and $\Vv'$ of $[0,1]\times \Kk$ with $\p^0\Vv=\Vv^0$, $\p^1\Vv=\Vv^{01}=\p^0\Vv'$, and $\p^1\Vv'=\Vv^1$.
Now transitivity provides the required reduction with boundaries $\Vv^0,\Vv^1$.

\MS\NI
{\bf Step 3:} {\it The result holds for all reductions $\Vv^0,\Vv^1\subset\Obj_{\bB_\Kk}$.
}

\smallskip\NI
First use Lemma~\ref{le:cobred}~(b) to obtain a family of collared sets $Z_I\sqsubset  [0,1]\times F_I$ for $I\in\Ii_\Kk$ that form a cover reduction of $[0,1]\times X=\bigcup_{i=1,\ldots,N} [0,1] \times F_i $ and restrict to the cover reductions $\partial^\al Z_I =  \psi_I(V_I^\al \cap \s_I^{-1}(0_I))$ induced by the reductions $\Vv^\al$ for $\al=0,1$.
Next, as in the proof of (b), we construct a cobordism reduction $\Vv'$ of $[0,1]\times \Kk$ with footprints $Z_I$. 
Its restrictions $\partial^\al\Vv'$ are reductions of $\Kk^\al$ with the same footprint $\partial^\al Z_I$ as $\Vv^\al$  for $\al=0,1$.
Now Step 2 provides further cobordism reductions $\Vv$ and $\Vv''$ with $\Vv^0=\p^0\Vv$, $\p^1\Vv=\p^0\Vv'$, $\p^1\Vv'=\p^0\Vv''$, and $\p^1\Vv''=\Vv^1$, so that another transitivity construction provides the required cobordism reduction with boundaries $\Vv^0,\Vv^1$.
\end{proof}

\begin{rmk}\label{rmk:disjoint}  \rm 
If $\Kk$ is also preshrunk and hence metrizable, then the above construction of a reduction $\Vv = (V_I)_{I\in \Ii_\Kk}$ with footprints equal to a given cover reduction $(Z_I)_{I\in \Ii_\Kk}$ can be improved to satisfy the additional condition 
 $$
  \ov Z_I\cap \ov Z_J = \emptyset \quad \Longrightarrow\quad \pi_\Kk(\ov{V_I}) \cap \pi_\Kk(\ov{V_J})  = \emptyset.
  $$
To see this, choose an admissible  metric on $|\Kk|$ and choose $\de>0$ so that each pair of disjoint sets
$\io_\Kk(\ov Z_I), \io_\Kk(\ov Z_J)$ in $|\Kk|$ has disjoint $\de$-neighbourhoods.  Then choose the initial neighbourhood $W_I\subset U_I$ of $\psi_I^{-1}(Z_I)$ to lie in
$B^I_{\de/2}(\psi_I^{-1}(Z_I))$ so that $\ov{W_I}\subset B^I_{\de}(\psi_I^{-1}(\ov{Z_I}))$.   Because $\pi_\Kk$ is an isometry, we have
 $$
 \pi_\Kk(\ov{W_I})\cap \pi_\Kk(\ov{W_J}) \subset B_{\de}(\io_\Kk(\ov Z_I))\cap B_{\de}(\io_\Kk(\ov Z_J)),
 $$
 which is  empty if $\ov Z_I, \ov Z_J$ are disjoint.
Since $V_I\subset W_I$, the conclusion follows.
\hfill$\er$
\end{rmk}

Our last task is to establish the relevant existence and uniqueness results for nested cobordism reductions. 
Given one reduction $\Vv$, the smaller reductions $\Cc\sqsubset\Vv$ in (b),(c) below are used to control compactness of the perturbed zero set, as described in Section~\ref{ss:pert}. The larger reductions ``$\Vv\sqsubset B_\de(\Vv)$'' are used in \cite{MW1} for the construction of transverse perturbations by iteration over a hierarchy of charts along with nested reductions 
$$
\Cc \;\sqsubset\; \Vv   \;\sqsubset\;  \ldots \;\sqsubset\; B_{2^{-k}\de}(\Vv) \;\sqsubset\;  \ldots  \;\sqsubset\;  B_{\de}(\Vv) .
$$

\begin{lemma}\label{le:delred} 
Let  $\Vv$ be a (cobordism) reduction of a metric topological Kuranishi atlas (or cobordism) $\Kk$.
\begin{enumerate}
\item[(a)]
There exists $\de>0$ such that $\Vv \sqsubset \bigsqcup_{I\in\Ii_\Kk} B^I_\de(V_I)$ is a nested (cobordism) reduction, and we moreover have
$B_{2\de}^I({V_I})\sqsubset U_I$ for all $I\in\Ii_\Kk$, and for any $I,J\in\Ii_\Kk$
$$
B_{2\de}(\pi_\Kk({V_I}))\cap B_{2\de}(\pi_\Kk({V_J}))
 \neq \emptyset \qquad \Longrightarrow \qquad I\subset J \;\text{or} \; J\subset I. 
$$
\item[(b)] 
If $\Vv$ is a reduction of a Kuranishi atlas, then there is a nested reduction $\Cc\sqsubset \Vv$.
\item[(c)]
If $\Vv$ is a cobordism reduction of the Kuranishi cobordism $\Kk$, and if $\Cc^\al\sqsubset \p^\al \Vv$ for $\al = 0,1$ are nested cobordism reductions of the boundary restrictions $\p^\al\Kk$, then there is a nested cobordism reduction $\Cc\sqsubset \Vv$ such that $\p^\al \Cc = \Cc^\al$ for $\al=0,1$.
\end{enumerate}
\end{lemma}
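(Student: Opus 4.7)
The three parts are essentially increasingly delicate elaborations of the same theme: combine finiteness of $\Ii_\Kk$, compactness of the closures $\ov{V_I}\sqsubset U_I$, and the admissibility of the metric $d$ on $|\Kk|$ (which on each chart pulls back to an admissible metric $d_I=(\pi_\Kk|_{U_I})^*d$ on $U_I$) to enlarge or shrink the reduction by a uniform parameter. In the cobordism setting one additionally chooses parameters smaller than the collar width so that the collar product structure is preserved.

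For (a) the plan is as follows. Since $\Vv$ is a reduction, for each pair $I,J\in\Ii_\Kk$ with neither $I\subset J$ nor $J\subset I$ the compact sets $\pi_\Kk(\ov{V_I}),\pi_\Kk(\ov{V_J})\subset (|\Kk|,d)$ are disjoint, so have a positive Hausdorff separation $4\de_{IJ}>0$. Simultaneously, for each $I$ the compact set $\ov{V_I}\subset U_I$ sits in a precompact open neighbourhood of $U_I$ by local compactness, so $B_{2\de_I}^I(\ov{V_I})\sqsubset U_I$ for some $\de_I>0$. Taking $\de>0$ to be smaller than each $\de_{IJ}$ and each $\de_I$ (and in the cobordism case smaller than the $\eps$ from Definition~\ref{def:mCKS}) yields $B_{2\de}^I(V_I)\sqsubset U_I$ and, via the triangle inequality applied with the $\pi_\Kk$-isometries, $B_{2\de}(\pi_\Kk(V_I))\cap B_{2\de}(\pi_\Kk(V_J))=\emptyset$ for non-comparable $I,J$. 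The resulting $(B_\de^I(V_I))_{I\in\Ii_\Kk}$ then satisfies axioms (i)--(iii) of Definition~\ref{def:vicin}, and in the cobordism case the collar form (iv) is inherited because the metric is $\eps$-collared so that $B_\de^I$ respects the product decomposition on $A^\al_\eps\times\p^\al U_I$.

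For (b) the plan is to reduce the problem to a single application of Lemma~\ref{le:restr0}. The footprints $Z_I:=\psi_I(V_I\cap\s_I^{-1}(0_I))$ already cover the compact Hausdorff space $X$, so one can choose a shrinking $Z_I'\sqsubset Z_I$ with $\bigcup_I Z_I'=X$ by the standard normality argument. Applying Lemma~\ref{le:restr0} to each restricted chart $\bK_I|_{V_I}$ with open subset $Z_I'\subset Z_I$ of its footprint produces precompact $C_I\sqsubset V_I$ with $C_I\cap\s_I^{-1}(0_I)=\psi_I^{-1}(Z_I')$. Axiom (i) of Definition~\ref{def:vicin} holds by construction, (iii) follows from $\bigcup_I Z_I'=X$, and the intersection axiom (ii) is inherited from $\Vv$ via $\ov{C_I}\subset V_I$, since then $\pi_\Kk(\ov{C_I})\cap\pi_\Kk(\ov{C_J})\subset\pi_\Kk(\ov{V_I})\cap\pi_\Kk(\ov{V_J})=\emptyset$ whenever $I,J$ are non-comparable. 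Finally, $\ov{C_I}\sqsubset V_I$ gives $\Cc\sqsubset\Vv$.

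Part (c) is the main obstacle, and will be proved by carrying out the (b) strategy with careful attention to collars. The hard step will be constructing, on the compact collared cobordism $Y$ underlying $\Kk$, a cover reduction $(Z_I')_{I\in\Ii_\Kk}$ with $Z_I'\sqsubset Z_I:=\psi_I(V_I\cap\s_I^{-1}(0_I))$, still covering $Y$, collared, and with prescribed boundary restrictions $\p^\al Z_I'=\psi_I(C_I^\al\cap(\s_I^\al)^{-1}(0_I))=:\p^\al Z_I^{\rm small}$. On each $\eps$-collar of $Y$ one simply puts $Z_I'\cap\io_Y^\al(A_{\eps'}^\al\times\p^\al Y)=\io_Y^\al(A_{\eps'}^\al\times\p^\al Z_I^{\rm small})$ for some $\eps'<\eps$, which by the hypothesis $\Cc^\al\sqsubset\p^\al\Vv$ is precompact in the collar of $Z_I$; in the interior complement of these collars one repeats the shrinking-of-cover technique of Lemma~\ref{le:cov0} applied to the nested open covers \eqref{eq:FGI}--\eqref{eq:ZGI}, and then interpolates between the collar and interior pieces exactly as in the proof of Lemma~\ref{le:cobred}(a) and the collar construction \eqref{eq:Vcoll} in Proposition~\ref{prop:cov2}(b). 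Once $(Z_I')$ is in hand, the collared version of Lemma~\ref{le:restr0} (as used in Proposition~\ref{prop:cov2}(b) to produce collared reductions with prescribed collars) yields collared $C_I\sqsubset V_I$ with $C_I\cap\s_I^{-1}(0_I)=\psi_I^{-1}(Z_I')$ and $\p^\al C_I=C_I^\al$. As in (b), the intersection axiom and the precompactness $\Cc\sqsubset\Vv$ are inherited from $\Vv$, while axiom (iv) of Definition~\ref{def:cvicin} is built in by the collar construction of $Z_I'$ and of the lift $C_I$.
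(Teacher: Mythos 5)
Your proposal takes essentially the same route as the paper in all three parts: (a) by separating the compact images $\pi_\Kk(\ov{V_I})$ of non-comparable pieces and using local compactness of the $U_I$ (with $\de$ below the collar width in the cobordism case), (b) by shrinking the footprint cover and applying Lemma~\ref{le:restr0}, and (c) by a collared footprint shrinking extending the prescribed boundary data (via the construction of Lemma~\ref{le:cobred}(a)) followed by a collared lift to the domains. The only step stated too loosely is the end of (c): a ``collared version of Lemma~\ref{le:restr0}'' cannot by itself give $\p^\al C_I=C^\al_I$, since restriction lemmas only control footprints, not the full boundary restriction; the paper first takes a collared $C_I'\sqsubset V_I$ with $C_I'\cap\s_I^{-1}(0_I)=\psi_I^{-1}(Z_I')$ and then performs the \eqref{eq:Vcoll}-type surgery at the domain level, setting $C_I:=\bigl(C_I'\less \io^\al_I(\ov{A^\al_\eps}\times\p^\al C'_I)\bigr)\cup\io^\al_I\bigl(A^\al_{2\eps}\times C^\al_I\bigr)$, checking openness and $C_I\sqsubset V_I$ from the collar form of $V_I$ and $C^\al_I\sqsubset\p^\al V_I$ --- so the surgery you gesture at via \eqref{eq:Vcoll} must be made explicit for the domains with the prescribed $C^\al_I$ spliced in, rather than attributed to the restriction lemma.
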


\begin{proof}
To prove (a) for a Kuranishi atlas $\Kk$  we need to find $\de>0$ so that
\begin{itemize} 
\item[(I)]
$\displaystyle\phantom{\int\!\!\!\!\!\!}  
B^I_{2\de}(V_I)\sqsubset U_I$ for all $I\in\Ii_\Kk$;
\item[(II)]
if $B_{2\de}(\pi_\Kk({V_I})) \; \cap \; B_{2\de}(\pi_\Kk({V_J})) \ne \emptyset$ then $I\subset J$ or $J\subset I$.
\end{itemize}
The latter implies the separation condition (ii) in Definition~\ref{def:vicin}, due to the 
inclusion $\pi_\Kk(\ov{B^I_\de(W)})\subset B_{2\de}(\pi_\Kk(W))$ by compatibility of the metrics for any $W\subset U_I$.
The other conditions $B^I_\de(V_I)\cap \s_I^{-1}(0_I)\neq\emptyset$ and $\io_\Kk(X)\subset\pi_\Kk(\Aa)$ for $\Aa := \bigsqcup_{I\in\Ii_\Kk} B^I_\de(V_I)\sqsubset\Obj_{\bB_\Kk}$ 
to be a reduction then follow directly from the inclusion $\Vv\subset\Aa$.
Finally, the inclusion $\Vv\sqsubset\Aa$ is precompact since each component $V_I\sqsubset B^I_\de(V_I)$ is precompact.

In order to obtain cobordism reductions from this construction, recall that, by definition, a metric Kuranishi cobordism carries a product metric in the collar neighbourhoods $\iota^\al_I( A^\al_\eps\times \partial^\al U_I )\subset U_I$ of the boundary, which ensures that for $\de<\frac \eps 2$ the $\de$-neighbourhood of a collared set (in this case $V_I$ with $(\iota^\al_I)^{-1}(V_I)=A^\al_\eps\times \partial^\al V_I $) is $\frac\eps 2$-collared. 
More precisely, with $B^{\p^\al I}_\de(W)\subset \p^\al U_{I}$ denoting balls in the domains of $\p^\al\Kk$ we have
$$
B^I_\de(V_I) \cap \iota^\al_I\bigl(A^\al_{\frac \eps 2}\times \partial^\al U_I \bigr) \;=\; \iota^\al_I\bigl( 
A^\al_{\frac\eps 2}\times B^{\p^\al I}_\de(\partial^\al V_I)  \bigr) .
$$
So it remains to find $\delta>0$ satisfying (I) and (II).
Property (I) for sufficiently small $\delta>0$ follows from the precompactness $V_I\sqsubset U_I$ in Definition \ref{def:vicin}~(i) and a covering argument based on the fact that, in the locally compact 
space $U_I$, every $p\in\ov{V_I}$ has a compact neighbourhood $\ov{B^I_{\delta_p}(p)}$ for some ${\delta_p>0}$.
To check (II) recall that by Definition~\ref{def:vicin} the subsets $\pi_\Kk(\ov{V_I})$ and $\pi_\Kk(\ov{V_J})$ of $|\Kk|$ are disjoint unless $I\subset J$ or $J\subset I$. Since each $\pi_\Kk|_{U_I}$ maps continuously to the quotient topology on $|\Kk|$, and the identity to the metric topology is continuous by Lemma~\ref{le:metric}, the $\pi_\Kk(\ov{V_I})$ are also compact subsets of the metric space 
$(|\Kk|,d)$. 
Hence (II) is satisfied if we choose $\de>0$ less than quarter the distance between each disjoint pair $\pi_\Kk(\ov{V_I}),\pi_\Kk(\ov{V_J})$.

For (b) choose any shrinking $(Z_I')_{I\in \Ii_\Kk}$ of the footprint cover $\bigl(Z_I = \psi_I(V_I\cap \s_I^{-1}(0_I))\bigr)_{I\in \Ii_\Kk}$ of $\Vv$ as in Definition~\ref{def:shr0}.  By Lemma~\ref{le:restr0} there are open subsets $C_I\sqsubset V_I$ such that $C_I\cap \s_I^{-1}(0_I)  = \psi_I^{-1}(Z_I')$.  
This guarantees $\io_\Kk(X)\subset \pi_\Kk(\Cc)$, and  since the $(V_I)$ satisfy the separation condition (ii) in Definition~\ref{def:vicin}, so do the sets $(C_I)$.

Note that the statement of (b) also holds if $\Kk$ is a Kuranishi cobordism and we require that $\Cc$ be collared.   
For this construction we must start with a collared shrinking $(Z_I')_{I\in \Ii_\Kk}$ of the footprint cover 
of the cobordism reduction $\Vv$, which exists by Lemma~\ref{le:cobred}~(a).
Then we choose $C_I'\sqsubset V_I$ such that $C_I'\cap \s_I^{-1}(0_I)  = \psi_I^{-1}(Z_I')$ as above.
Finally we adjust each $C_I'$ to be a product in the collar by the method described in \eqref{eq:Vcoll}.

To prove (c), we proceed as above, starting with a collared shrinking $(Z_I')_{I\in \Ii_\Kk}$ of the footprint cover $\bigl(Z_I = \psi_I(V_I\cap \s_I^{-1}(0_I)\bigr)_{I\in \Ii_\Kk}$ of the cobordism 
reduction $\Vv$ that extends the shrinkings at $\al=0,1$ determined by the reductions $\Cc^\al$. 
This exists by the construction in the proof of Lemma~\ref{le:cobred}~(a).
Then choose collared open subsets $C_I'\sqsubset V_I$ as above with $C_I'\cap \s_I^{-1}(0_I)  = \psi_I^{-1}(Z_I')$ for all $I$.   
Finally, let $2\eps>0$ be less than the collar width of the sets $V_I$, $Z_I'$, and $C_I'$, then
we adjust $C_I'$ in these collars so that they have the needed restrictions by setting
$$
C_I: = \Bigl(C_I'\less \io^\al_I\bigl( \ov{A^\al_\eps} \times \p^\al C'_I \bigr)\Bigr) \cup \io^\al_I\bigl( A^\al_{2\eps} \times C^\al_I\bigr) .
$$
This set is open by the same arguments as for \eqref{eq:Vcoll}.
To see that it yields a nested reduction note that because $\Cc^\al$ and $\p^\al \Cc'$ are both precompactly contained in $\p^\al\Vv$, their union is also a nested reduction.
\end{proof}

\bibliographystyle{alpha}

\end{document}